\documentclass[11pt]{amsart}
\usepackage{amstext,amssymb,amsmath,amsbsy,dsfont,tikz}

 \usepackage[left=2.55cm,right=2.55cm,top=3.2cm,bottom=3.2cm]{geometry}
 
\usepackage{amsmath,amssymb,latexsym,dsfont}
\usepackage[small]{caption}
\usepackage{graphicx,color,mathrsfs,tikz}
\usepackage{subfigure,color}
\usepackage{cite}
\usepackage[colorlinks=true,urlcolor=blue,
citecolor=red,linkcolor=blue,linktocpage,pdfpagelabels,
bookmarksnumbered,bookmarksopen]{hyperref}
\usepackage[italian,english]{babel}
\usepackage{units}
\usepackage{enumitem}
  
\usepackage{hyperref}
\usepackage{cleverref}

\usepackage{tikz}
\usetikzlibrary{intersections}

\usepackage{amscd}
\usepackage{amsfonts}
\usepackage{indentfirst}
\usepackage{verbatim}
\usepackage{amsmath}
\usepackage{amsthm}
\usepackage{enumerate}
\usepackage{graphicx}
\usepackage{color}
\usepackage[OT1]{fontenc}
\usepackage[latin1]{inputenc}
\usepackage[english]{babel}
\usepackage{amssymb,esint}

\newtheorem{theorem}{Theorem}[section]
\newtheorem{lemma}{Lemma}[section]
\newtheorem{proposition}{Proposition}[section]
\newtheorem{corollary}{Corollary}[section]
\newtheorem{remark}{Remark}[section]

\newtheorem{definition}{Definition}[section]
\setcounter{equation}{0}

\newcommand{\tr}{^\mathsf{T}}

\newcommand{\ds}{\displaystyle}
  \newcommand{\beq}{\begin{equation}}
\newcommand{\eeq}[1]{\label{#1}\end{equation}}

      \newcommand{\R}{{\mathbb{R}}}

      \newcommand{\mN}{{\mathbb{N}}}

      \newcommand{\hgamma}{\gamma}

      \newcommand{\hL}{\hat L}
      
       \newcommand{\wSigma}{\widetilde \Sigma}

      \newcommand{\curl}{\operatorname{curl}}
      \newcommand{\dive}{\operatorname{div}}
      \newcommand{\Div}{\operatorname{div}}
      \newcommand{\N}{\mathbb{N}}

      \newcommand{\loc}{\operatorname{loc}}
      
      \newcommand{\eps}{\varepsilon}
      \newcommand{\mR}{\mathbb{R}}

\newcommand{\pB}{\partial B}

\newcommand{\testfn}{e^{-\beta r^{-p}}}
\newcommand{\testfnn}{e^{2\beta r^{-p}}}

\newcommand{\Y}{{\mathcal O}}

\newcommand{\pY}{{\partial \Y}}
      \newcommand{\T}{{\mathcal T}}

      \newcommand{\bH}{\operatorname{\textbf{H}}}

      \newcommand{\wtE}{\widetilde{E}}
      \newcommand{\wtH}{\widetilde{H}}

     \newcommand{\supp}{\mbox{supp }}
           \newcommand{\dsp}{\displaystyle}

      \makeatletter
      \def\@setcopyright{}
      \def\serieslogo@{}
      \makeatother

\numberwithin{equation}{section}

\newcommand{\cH}{{\mathcal H}}
\newcommand{\cE}{{\mathcal E}}

\newcommand{\cO}{{\mathcal O}}

\newcommand{\hH}{{\hat H}}
\newcommand{\hE}{{\hat E}}

\newcommand{\tH}{\widetilde H}
\newcommand{\tE}{\widetilde E}

\newcommand{\tK}{\widetilde K}

\newcommand{\tx}{\widetilde x}
\newcommand{\ty}{\widetilde y}
\newcommand{\tz}{\widetilde z}

\newcommand{\tZ}{\widetilde Z}

\newcommand{\dist}{\mbox{dist}}

\newcommand{\hy}{\hat y}
\newcommand{\hj}{\hat j}

\newcommand{\hM}{\hat M}

\newcommand{\hB}{\hat B}

\newcommand{\hU}{\hat U}

\newcommand{\hg}{\hat g}

\newcommand{\wtD}{\widetilde D}

\newcommand{\wtSigma}{\widetilde \Sigma}
 
\newcommand{\rr}{\hat r}

\newcommand{\M}{{\mathcal M}}

\newcommand{\hLambda}{\hat \Lambda}

\newcommand{\hhgamma}{\hat \gamma}

\newcommand{\teps}{\widetilde \eps}
\newcommand{\tmu}{\widetilde \mu}

          \newcommand{\hmu}{{\hat \mu}}
      \newcommand{\heps}{{\hat \eps }}

      \newcommand{\I}{{\mathcal I}}

           \newcommand{\mup}{\mu^+}
           \newcommand{\mun}{\mu^-}

           \newcommand{\ep}{\eps^+}
           \newcommand{\en}{\eps^-}

   \newcommand{\cF}{{\mathcal F}}

   \newcommand{\cG}{{\mathcal G}}

   \newcommand{\bF}{{\bf F}}

   \newcommand{\D}{\mbox{Data}}

\usepackage{rotating}


%
%
%







\begin{document}

 \title[Cloaking via anomalous localized resonance]{
Cloaking property of a plasmonic structure in  doubly complementary media and three-sphere inequalities with partial data}

\author[H.-M. Nguyen]{Hoai-Minh Nguyen}

\address[Hoai-Minh Nguyen]{Department of Mathematics, EPFL SB CAMA, Station 8,  \newline\indent
	 CH-1015 Lausanne, Switzerland.}
\email{hoai-minh.nguyen@epfl.ch}

\begin{abstract} 

We investigate cloaking property of negative-index metamaterials in the time-harmonic electromagnetic setting for the so-called doubly complementary media. These are media consisting   of negative-index metamaterials in a shell  (plasmonic structure) and positive-index materials in its complement for which  the shell is complementary to a part of the core and a part of the exterior of the core-shell structure.  We show that an {\it arbitrary} object is invisible when it  is  placed close  to a plasmonic structure of a doubly complementary medium as long as its cross section is smaller than a threshold given by the property of the plasmonic structure. To handle the loss of the compactness and of the ellipticity of the modeling Maxwell equations with sign-changing coefficients, we first obtain  Cauchy's problems associated with two Maxwell systems using reflections. We then derive information from them,  and combine it with the removing localized singularity technique to deal with the localized resonance.   A central part of the analysis on the Cauchy's problems is to establish three-sphere inequalities with partial data for  general elliptic systems,  which are interesting in themselves.  The proof of these inequalities first relies on an appropriate change of variables,  inspired by conformal maps,  and  is then based on Carleman's estimates for a class of degenerate elliptic systems. 

\end{abstract}


\maketitle

\medskip 
\noindent{\bf Key words}: Maxwell equations, sign-changing coefficients, localized resonance, three-sphere inequalities, Carleman's estimates, Cauchy's problems, degenerate elliptic equations,  complementary media.

\medskip 

\noindent{\bf AMS subject classifications}: 35B34, 35B35, 35B40, 35J05, 78A25.

\tableofcontents


\section{Introduction}
Negative-index metamaterials  are artificial structures whose  refractive index has a negative value over some frequency range. Their existence was  postulated by Veselago  in 1964  \cite{Veselago} and  confirmed experimentally by Shelby, Smith, and Schultz in 2001 \cite{SSS01}.   Negative-index metamaterial research  has been a very active topic of investigation  not only because of potentially interesting applications, such as  superlensing \cite{Veselago,PendryNegative}, cloaking \cite{Lai1, MN06, Ng-CALR-O}, biomedical imaging \cite{Jain}, and heat generation \cite{BGQ}, but also because of challenges in understanding their surprising properties.  From a mathematical point of view,   
the subtlety and the challenging in the study of negative-index metamaterials come from the sign-changing  coefficients in the modeling  equations, hence the ellipticity and the compactness  are lost in general. Moreover, localize resonance, i.e.,  a phenomenon in which the field blows up in some regions and remains bounded in some others as the loss goes to 0, might occur.

In this paper, we investigate cloaking property of negative-index metamaterials for electromagnetic waves in the time-harmonic regime for the so-called doubly complementary media. This is a part of our program on understanding of properties and applications of negative-index metamaterials in the electromagnetic setting  from  mathematical perspectives \cite{Ng-Superlensing-Maxwell, Ng-Negative-Cloaking-M, Ng-CALR-M, NgSil}. Doubly complementary media, introduced in  \cite{Ng-CALR-M},  are media consisting   of negative-index metamaterials in a shell and positive-index materials in its complement for which  the shell is complementary to a part of the core and a part of the exterior of the core-shell structure (\Cref{def-DCM}).  In this paper, we show that an arbitrary  object with  small cross-section placed close to a plasmonic structure of a doubly complementary medium is cloaked (\Cref{thm-cloaking}). 
 This cloaking property is  know as cloaking an object via anomalous localized resonance. 
We also address  the necessity for having doubly complementary properties in various schemes of cloaking and superlensing using complementary media (\Cref{pro-lensing,pro-cloaking}). In particular, the possibility that a cloak can act like a lens and conversely is confirmed. This possibility has been debated from  Pendry's  celebrate work on superlensing \cite{PendryNegative} (see also \cite{MN06}).

One of the consequences of our result on cloaking property established in \Cref{sect-CALR} (\Cref{thm-cloaking}) can be described as follows (see also \Cref{rem-5.2}). 
Denote $B_R(x)$ as the  open ball in $\mR^d$ ($d \ge 2$) centered at $x \in \mR^d$ and of radius $R>0$; when $x = 0$, we simply denote $B_{R}$.   
Let $d=3$, $0 < r_1 < r_2$, and set 
 $m = r_2^{2} / r_1^2$. Set, for $\delta > 0$,  
\begin{equation}\label{def-epsmu-special}
\dsp (\eps_\delta, \mu_\delta) = \left\{ \begin{array}{cl} \dsp  \big(  -  \frac{r_2^2}{ r^{2}} I + i \delta I,  -  \frac{r_2^2}{ r^{2}} I + i \delta I\big) & \mbox{ in } B_{r_2} \setminus B_{r_1}, \\[6pt]
\big(m I, m I \big) & \mbox{ in } B_{r_1}, \\[6pt]
(I, I) & \mbox{ otherwise}. 
\end{array} \right. 
\end{equation}
Denote  $\Gamma_2 = \Big\{ x \in \mR^3; |x| = r_2 \mbox{ and } x_3 =0 \Big\}$ and $\Gamma_1 = \Big\{ x \in \mR^3; |x| = r_1 \mbox{ and } x_3 =0 \Big\}$, and   set $
O_{j, \gamma}: = \Big\{x \in \mR^3; \dist(x, \Gamma_j) < \gamma \Big\}$ for $j =1, 2$,  and $\gamma >  0$. Let $(\eps_c, \mu_c)$ be a pair of piecewise  $C^1$, real, symmetric, uniformly elliptic, matrix-valued functions defined in $\cO_\gamma: = (O_{1, \gamma} \cup O_{2, \gamma}) \setminus  (B_{r_2} \setminus B_{r_1})$. Define
\begin{equation}\label{def-em-c}
(\eps_{c, \delta}, \mu_{c, \delta}) = \left\{ \begin{array}{cl} (\eps_c, \mu_c) & \mbox{ in } \cO_\gamma, \\[6pt]
(\eps_\delta, \mu_\delta) & \mbox{ otherwise}. 
\end{array}\right. 
\end{equation} 
Set $r_3 = r_2^2/ r_1$ and let $\omega > 0$. There exists $\gamma_0  = \gamma_0 (r_2, r_3)$ depending only on $r_2$, and $r_3$ ($\gamma_0$ is independent of $(\eps_c, \mu_c)$) such that for $0< \gamma < \gamma_0$, and for $J \in [L^2(\mR^3)]^3$ with compact support in $\mR^3 \setminus B_{r_3}$, we have
\begin{equation}\label{thm1-special-state}
\lim_{\delta \to 0} \| (E_{c, \delta}, H_{c, \delta}) - (\tE, \tH)\|_{L^2(B_R \setminus B_{r_3})} = 0. 
\end{equation}
Here  $(E_{c, \delta}, H_{c, \delta})$,  $(\tE, \tH)$ are respectively the unique radiating solution of the Maxwell equations 
$$
\left\{\begin{array}{cl}
\nabla \times E_{c, \delta} = i \omega \mu_{c, \delta} H_{c, \delta}&  \mbox{ in } \mR^3, \\[6pt]
\nabla \times H_{c, \delta} = - i \omega \eps_{c, \delta} E_{c, \delta} + J &  \mbox{ in } \mR^3 
\end{array} \right.
\mbox{ and }
\left\{\begin{array}{cl}
\nabla \times \tE = i \omega \tH&  \mbox{ in } \mR^3, \\[6pt]
\nabla \times \tH = - i \omega \tE + J &  \mbox{ in } \mR^3.  
\end{array} \right.  
$$
Physically, $\eps_\delta$ and $\mu_\delta$ describe the permittivity and the permeability of the considered medium, $B_{r_2} \setminus B_{r_1}$ is   a (shell) plasmonic structure  in which the permittivity and the permeability are negative, and  $i \delta I$ describes its loss, $\omega$ is the frequency, and $J$ is a density of charge.  As a consequence of  \eqref{thm1-special-state}, $\lim_{\delta \to 0} (E_{c, \delta}, H_{c, \delta}) = (\tE, \tH)$ in $\mR^3 \setminus B_{r_3}$ for all $J$ with compact support outside $B_{r_3}$.  One therefore cannot detect the difference between $(\eps_{c, \delta}, \mu_{c, \delta})$ and the homogeneous medium $(I, I)$, where $I$ denotes the $3 \times 3$ identity matrix,  as $\delta \to 0$ by observing  $(E_{c, \delta}, H_{c,\delta})$ outside $B_{r_3}$ using the excitation $J$: cloaking is achieved for observers outside $B_{r_3}$ in the limit as $\delta \to 0$.

Cloaking property of a plasmonic structure for small objects/sources nearby in some  superlensing settings satisfying doubly complementary property was raised in the literature about fifteen years ago \cite{MN06}. The possibility  that a lens consisting of negative-index materials can act  like a cloak and conversely was also debated in the literature, see e.g. \cite{MN06, Lai-Cloak}. The mathematical study for these problems was given in   \cite{Ng-CALR-O} for the acoustic setting for a subclass of complementary media. This class contains some but not  all plasmonic structures which are complementary with homogeneous medium  in three dimensions.  
This left widely open the question of whether cloaking property of plasmonic structures holds for the whole class of doubly complementary media  in the electromagnetic setting. This work answers this question completely. In fact, we establish a stronger statement saying that not only small objects but also  objects with small cross section close to the plasmonic structure are cloaked.

The cloaking method/property considered in this paper is related to but different from the so called cloaking using complementary media \cite{Ng-Negative-Cloaking-M} and is inspired by cloaking a source via anomalous localized resonance \cite{Ng-CALR-M} with its roots in \cite{MN06, Ng-CALR-O} (see also \cite{NMM94}). Mathematical works on applications and properties of negative-index metamaterials in the acoustic setting  such as superlensing, 
cloaking using complementary media,  cloaking via anomalous localized resonance for a source or for an object, and stability aspects of negative-index materials  can be found in  \cite{Ng-Superlensing}, \cite{Ng-Negative-Cloaking, MinhLoc2},  \cite{A-M13,  KLSW14, Ng-CALR, Ng-CALR-F}, \cite{Ng-CALR-O}, \cite{CS85, BCC12, Ng-WP},  respectively, and the references therein. 

Our analysis is in the spirit of \cite{Ng-CALR-O} but requires essentially  new ideas and techniques. To deal with the loss of ellipticity and compactness, and the occurence of  localized resonance, we first derive Cauchy's problems associated with two Maxwell systems from reflections originally proposed in \cite{Ng-Complementary} for the acoustic setting. We then  
apply the removing  localized singularity technique introduced in \cite{Ng-Negative-Cloaking, Ng-Superlensing}.  To be able to apply these techniques, the crucial and  difficult  point  is to  establish three-sphere inequalities with partial data for  Maxwell  equations (Theorem~\ref{thm-3SP-M}). To this end, we first prove new three-sphere inequalities with partial data for general elliptic systems (Theorems~\ref{thm-3SP}). We then derive the corresponding ones for the Maxwell  equations using their weakly coupled,  second-order elliptic property. These inequalities are the core part of our analysis.  They  are interesting in themselves, and can be used in other contexts, e.g. control theory  \cite{LR95,Coron07} or inverse problems \cite{KSU07,Y09}.

\medskip
 \noindent{\bf Outline of the paper:} The rest of the paper is organized as follows.  In \Cref{sect-3SP-statement}, we state  three-sphere inequalities for second-order elliptic systems and Maxwell equations with partial data. \Cref{sect-3SP,sect-3SP-M} are devoted to the proof of these inequalities for elliptic systems and Maxwell equations, respectively.  In \Cref{sect-CALR}, we state and give the proof of the main cloaking results for doubly complementary media.   In \Cref{sect-discussion}, we make several comments on the construction of  cloaking and superlensing devices using complementary media used in the literature and give the analysis for various contexts where a lens can act like a cloak and conversely.

\section{Three-sphere inequalities with partial data}\label{sect-3SP-statement}

Let $v$ be an holomorphic function defined in  $B_{R_3}$, Hadamard \cite{Hadamard} proved 
 the following famous three-sphere (circle) inequality:
 \beq
	\|v\|_{L^{\infty}(\pB_{R_2})} \leq \|v\|_{L^{\infty}(\pB_{R_1})}^{\alpha} \|v\|_{L^{\infty}(\pB_{r_3})}^{1 - \alpha}
\eeq{Hadamard}  
for all $0 < R_1 < R_2 < R_3$,  where 
\begin{equation*} 
\dsp 	\alpha = \ln \left( \frac{R_3}{R_2} \right) \Big/ \ln \left(\frac{R_3}{R_1} \right).
\end{equation*}
A three-sphere inequality for general second-order elliptic equations was established by Landis \cite{Landis} using Carleman type estimates with its roots in \cite{Carleman}. 
His result \cite[Theorem 2.2 on page 44]{Landis} can be stated as follows:  if $v$ is a solution to 
\begin{equation}\label{Equation}
\dive (\M \nabla v) +  c \cdot \nabla v + b v  = 0 \mbox{ in } B_{R^*},  
\end{equation}
where $\M$ is elliptic, symmetric, matrix-valued defined in $B_{R^*}$ of class $C^2$,  $c \in [C^1(\bar B_{R^*})]^d,  \, b \in C^1(\bar B_{R^*})$, {\it and  $b \le 0$}, then 
there is a constant $C>0$ such that, for $0 < R_1  < R_2 < R_3 < R^*$, 
\beq
	\|v\|_{L^{\infty}(\pB_{R_2})} \leq C \|v\|_{L^{\infty}(\pB_{R_1})}^{\alpha} \|v\|_{L^{\infty}(\pB_{r_3})}^{1 - \alpha}
\eeq{Landis}  
for some $\alpha  \in (0, 1)$ depending only on $R_2/R_1, R_2/R_3$, the ellipticity constant of $\M$,  and the regularity constants of $\M$, $b$, and $c$, and $R^*$. The assumption $b \le 0$ is  necessary to avoid the scenario  in which $v = 0$ on $\partial B_{R_1}$ or on $\partial B_{R_3}$ and $v \neq 0$ on $\partial B_{R_2}$, see, e.g. \cite{MinhLoc2} for comments on this point. Another proof of this inequality was obtained by Agmon \cite{Agmon} in which he used the logarithmic convexity. Garofalo and Lin \cite{GL86, GL87} established similar results for singular coefficients where the $L^\infty$-norm is replaced by the $L^2$-norm, and $\M$ is of class $C^1$, $c$ and $b$ are in $L^\infty$: 
\beq
	\|v\|_{L^{2}(\pB_{R_2})} \leq C \|v\|_{L^{2}(\pB_{R_1})}^{\alpha} \|v\|_{L^2(\pB_{r_3})}^{1 - \alpha}
\eeq{Lin}  
using  the  Almgren type frequency function approach.  For a general $b$,  a variant of \eqref{Lin} using balls holds (see e.g.  \cite[Theorem 1.10]{AR09}). The proof given in  \cite{AR09} is first based on a variant of \eqref{Lin} with some limitation on the radii and then involves arguments of  propagation of smallness.  In  \cite[Theorem 2]{MinhLoc2}, it was shown that \eqref{Lin} holds with the $\| v\|_{L^2(\partial B_r)}$-norm replaced by 
\begin{equation}\label{def-Hnorm}
\| v\|_{\bH(\partial B_r)} = \|v \|_{H^{1/2}(\partial B_r)}  + \| \M \nabla v \cdot e_r\|_{H^{-1/2}(\partial B_r)}
\end{equation}
for $r  = R_1, R_2$, or $R_3$, where $\alpha$  can be chosen independently of  $b$ and $c$. Here  $H^{-1/2}(\partial B_r)$ denotes the dual space of $H^{1/2}_0(\partial B_r) \big( =  H^{1/2}(\partial B_r)\big)$ and is equipped with the corresponding norm. The same notations are also used later for an open subset of a boundary of a smooth open subset of $\mR^d$ ($d \ge 2$).   

A closely related topic is the unique continuation principle. Some  seminar contributions in this context include the work of  Aronszajn \cite{Ar57}, Protter \cite{Protter60}, H\"ormander \cite{HorIII},  Kenig,  Ruiz, and Sogge \cite{KRS87}, Jerison and Kenig \cite{JK85}, and  Koch and Tataru \cite{KT01}.   Interesting surveys on these aspects can be found in \cite{RL12, AR09}. 

\medskip 
In this section, we  are concerned about  three-sphere inequalities for second-order,  elliptic systems and Maxwell equations with partial data.  
These inequalities have their own interests beside their applications in cloaking studied in this paper.   For $d \ge 2$, denote 
$$
\mR^d_+ = \Big\{x \in \mR^d; x_1  > 0 \Big\}  \quad \mbox{ and } \quad  
\mR^d_0 = \Big\{x \in \mR^d; x_1  = 0 \Big\}. 
$$
Set  $Q = (-1, 1)^d$   and $Q_+ = Q \cap \mR^d_+$ and  $Q_0 = Q \cap \mR^d_0$.  We first introduce 

\begin{definition}\label{def-geometry} Let $\Omega$ be a bounded, open subset $\Omega \subset \mR^d$ of class $C^1$. 
A compact subset $\Gamma$ of  $\partial \Omega$ is called a (d-2)-compact,  smooth submanifold of $\partial \Omega$ if for every $x \in \Gamma$, there exists a diffeomorphism $\bF: Q \to U$ for some open neighborhood $U$ of $x$   such that 
\begin{equation*}
\bF(Q_+) = U \cap \Omega, \quad \bF(Q_0) =  U  \cap \partial \Omega, \quad \bF(Q_0 \cap \{x_2  = 0 \}) = \Gamma \cap U. 
\end{equation*}
When $d=3$, a 1-compact, smooth submanifold of $\partial \Omega$ is simply called a compact,  smooth curve of $\partial \Omega$. 
\end{definition}

Our main result on three-sphere inequalities for second-order elliptic systems with partial data is 

\begin{theorem} \label{thm-3SP}
Let $d \ge 2, \, m \ge 1$, $\Lambda \ge 1$,  $0  < R_1 < R_3 $,   and let $\Gamma$ be a  $(d-2)$-compact, smooth submanifold of $\partial B_{R_1}$.
Denote  $O_r = \Big\{x \in \mR^d; \dist(x, \Gamma) < r \Big\}$, $D_r = B_{R_3} \setminus (\overline{B_{R_1} \cup O_r})$, and  $\Sigma_{r} = \partial B_{R_1} \setminus \bar O_{r}$ for $r>0$. For every $\alpha \in (0,  1)$, there exists $r_2 \in (0, R_3 - R_1)$ depending only on $\alpha$, $\Lambda$, $\Gamma$, 
$R_1$, and $R_3$   such that for every $r_1 \in (0, r_2)$, there exists $r_0 \in (0, r_1)$ depending only on $r_1$,  $\alpha$, $\Lambda$, $\Gamma$, 
$R_1$, and $R_3$ such that  for  $(d \times d)$ Lipschitz, uniformly elliptic, symmetric, matrix-valued function $\M^\ell$ defined in $D_{r_0}$ for $1 \le \ell \le m$, verifying, in $D_{r_0}$,  
\begin{equation}\label{thm-3SP-cdM}
\Lambda^{-1} |\xi|^2 \le \langle \M^{\ell} (x) \xi, \xi \rangle \le \Lambda |\xi|^2  \; \;  \forall \,  \xi \in \mR^d  \quad \mbox{ and } \quad   |\nabla \M^{\ell}(x) | \le \Lambda, 
\end{equation}
for $g \in L^2(D_{r_0})$, and for  $V  \in [H^1(D_{r_0})]^m$ satisfying, for  $1 \le \ell \le m$,  
\begin{equation}\label{fund-thm-Ineq} 
|\dive (\M^\ell \nabla V_\ell)| \le  \Lambda_1 \big( |\nabla V| + |V| + |g|\big)  \mbox{ in } D_{r_0} \mbox{ for some } \Lambda_1 \ge 0, 
\end{equation}
we have 
\begin{equation}\label{thm-3SP-cl}  
\| V \|_{H^1(B_{R_1 + r_2} \setminus B_{R_1 + r_1})} \le C  \Big(\| V\|_{\bH(\Sigma_{r_0})} + \| g\|_{L^2(D_{r_0})} \Big)^\alpha \Big( \|V\|_{H^1(D_{r_0})} + \| g\|_{L^2(D_{r_0})} \Big)^{1 - \alpha}, 
\end{equation}
for some positive constant $C$ depending only on $\alpha$, $\Lambda$,  $\Lambda_1$, $\Gamma$, $R_1$, $R_3$, $m$, and $d$. 
\end{theorem}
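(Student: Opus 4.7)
The plan follows the strategy advertised in the abstract: first transform the geometry by a conformal-type change of coordinates that opens up the forbidden neighborhood of $\Gamma$, and then apply a Carleman estimate for the resulting (mildly degenerate) elliptic system to produce the propagation of smallness \eqref{thm-3SP-cl}.

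I first reduce to a local statement near $\Gamma$: by a finite covering argument with the charts $\bF$ of \Cref{def-geometry}, together with the classical $L^2$ three-sphere inequality on annular pieces where the inner boundary $\partial B_{R_1}$ is entirely accessible (so no Cauchy data is missing), only a neighborhood of each $x_0\in\Gamma$ requires genuinely new work. In such a chart I straighten $\partial B_{R_1}$ to $\{x_1=0\}$ and $\Gamma$ to $\{x_1=x_2=0\}$, with the exterior of $B_{R_1}$ corresponding to $\{x_1>0\}$. Inspired by conformal inversions, I then apply in the $(x_1,x_2)$-plane a M\"obius-type map (essentially $(x_1,x_2)\mapsto (x_1,x_2)/(x_1^2+x_2^2)$ composed with a shift) that sends the missing portion of $\partial B_{R_1}$ off to infinity, turning the local piece of $D_{r_0}$ into an unbounded domain whose boundary splits into the image of the \emph{accessible} piece $\Sigma_{r_0}$ and the image of $\partial B_{R_3}$. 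The pulled-back operators $\wtM^\ell$ remain symmetric and second-order elliptic, but their ellipticity degenerates polynomially near infinity, exactly where Cauchy data is missing. The lower-order inequality \eqref{fund-thm-Ineq} is preserved up to adjusting $\Lambda_1$.

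For this degenerate system I would establish a Carleman estimate of the form
\begin{equation*}
\tau\sum_{\ell=1}^{m}\int e^{2\tau\varphi}\bigl(\tau^2|V_\ell|^2+|\nabla V_\ell|^2\bigr)\,dy \;\le\; C\sum_{\ell=1}^{m}\int e^{2\tau\varphi}\bigl|\dive(\wtM^\ell\nabla V_\ell)\bigr|^2\,dy,
\end{equation*}
valid for smooth $V$ compactly supported in the transformed domain and all large $\tau$, with a radial-type weight $\varphi$ that is strictly convex along the level surfaces of the transformed principal symbol. Because the coupling in \eqref{fund-thm-Ineq} is only through zeroth- and first-order terms, a componentwise Carleman estimate with common $\varphi$ suffices and the coupling is absorbed for $\tau$ large. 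Applying the estimate to $\chi V$, where $\chi$ is a cut-off equal to $1$ on the image of $B_{R_1+r_2}\setminus B_{R_1+r_1}$ and vanishing near the images of $\Sigma_{r_0}$ and $\partial B_{R_3}$, produces the usual commutator terms supported on a shell near $\Sigma_{r_0}$ (controlled by $\|V\|_{\bH(\Sigma_{r_0})}+\|g\|_{L^2(D_{r_0})}$) and a shell near $\partial B_{R_3}$ (controlled by $\|V\|_{H^1(D_{r_0})}+\|g\|_{L^2(D_{r_0})}$). The strict convexity of $\varphi$ ensures that these three regions sit on three distinct levels $M_1>M_2>M_3$; optimizing in $\tau$ then yields \eqref{thm-3SP-cl} with $\alpha=(M_2-M_3)/(M_1-M_3)\in(0,1)$, and the parameters $r_2,r_1,r_0$ can be tuned to make this ratio equal to any prescribed value in $(0,1)$.

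The main obstacle is the Carleman step: choosing a weight $\varphi$ that is simultaneously pseudoconvex for the \emph{degenerate} principal symbol coming from the conformal change of coordinates, radially strictly convex so that the three level sets used in the propagation-of-smallness argument are ordered correctly, and strong enough to absorb the system coupling uniformly in the ellipticity constant $\Lambda$. Once the right weight and the corresponding degenerate Carleman estimate are in hand, the cut-off argument and the optimization in $\tau$ producing \eqref{thm-3SP-cl} are essentially standard.
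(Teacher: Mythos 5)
Your overall skeleton — local charts straightening $\partial B_{R_1}$ near $\Gamma$, a conformal-type change of variables, then a Carleman estimate for a degenerate elliptic system — is the right shape, but the specific transformation you propose is not the one the paper uses and does not deliver the theorem's key feature: that $\alpha$ can be taken \emph{arbitrarily} close to $1$.

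The paper's change of variables is the root map $L_n(x_1,x_2,\tilde x)=(\hat r^{1/n}\cos(\theta/n),\hat r^{1/n}\sin(\theta/n),\tilde x)$ composed with an explicit linear normalization $H=Q_1SQ^{\mathsf T}$ chosen so that the transformed coefficient $M_n$ is isotropic at the base point (see \eqref{LF-CV}). This map has a \emph{tunable parameter} $n$: it compresses the half-space $\{x_1>0\}$ into a sector of opening angle $\pi/n$, and one then places the three Carleman shells around the carefully chosen center $\hat Z_0$ of \eqref{LF-z0} so that $R_2(n)-R_1(n)=O(1/n^2)$ while $R_3(n)-R_1(n)=O(1/n)$, giving $\rho(n)=\frac{R_1(n)^{-p}-R_3(n)^{-p}}{R_2(n)^{-p}-R_3(n)^{-p}}\to 1$ as $n\to\infty$. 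This is exactly how the theorem achieves an arbitrary $\alpha\in(0,1)$. Your M\"obius inversion $(x_1,x_2)\mapsto(x_1,x_2)/(x_1^2+x_2^2)$ (plus shift) carries no such parameter. You assert near the end that ``the parameters $r_2,r_1,r_0$ can be tuned to make this ratio equal to any prescribed value in $(0,1)$,'' but this is precisely the nontrivial step and nothing you write justifies it; the paper explicitly cautions (end of Section 2) that propagation-of-smallness arguments with a fixed geometry only give small $\alpha$, even for the Laplacian with one point removed from $\partial B_{R_1}$. Also, your geometric description is off: since $D_{r_0}$ excludes the neighborhood $O_{r_0}$ of $\Gamma$, inverting through a point inside $O_{r_0}$ produces a \emph{bounded} image of $D_{r_0}$, not the unbounded domain you describe.

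A second, unresolved gap you yourself flag: after the change of variables the coefficients degenerate. In the paper's setup, $|\nabla K_n|\sim n^2$ (see \eqref{LF-c4-C}) so the Lipschitz constant of the transformed matrix blows up with $n$, while the domain shrinks to scale $1/n$. Making the Carleman constants \emph{uniform in $n$} requires isolating exactly the right structural quantities — $\langle \hM_n x,x\rangle\ge\hLambda^{-1}|x|^2$ on $\Y_n$ only, $|\dive(\hM_n x)|$, $|x|^{-2}|\nabla(x\cdot\hM_n x)\cdot\hM_n x|$, and $|\langle\hat B_n y,y\rangle|\le\hLambda\langle\hM_n y,y\rangle$ — which is what Lemmas \ref{lem-prepare1}--\ref{lem-main} are engineered to track (see Remark \ref{rem-lem1}). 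The verification of these conditions hinges on $M_n(Z_0)$ being block-diagonal (eq.\ \eqref{LF-CV}), which in turn requires the linear normalization $H$; you omit this. Your proposal acknowledges ``the main obstacle is the Carleman step'' but does not resolve it, and without the root-map structure and its accompanying cancellations, it is unclear such a weight exists.

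Finally, one smaller technical point: your cut-off approach leaves commutator terms near $\Sigma_{r_0}$ that involve bulk $H^1$-norms of $V$ on a shell, not the trace norm $\|V\|_{\bH(\Sigma_{r_0})}$. The paper instead proves a Carleman estimate \emph{with boundary terms} (Lemma \ref{lem-main}) and then subtracts an auxiliary function $U_\ell$ solving an elliptic problem fed by the Cauchy data (Section \ref{sect-fund-thm}, equations \eqref{fund-thm-p2}--\eqref{fund-thm-p4}) to convert boundary terms into the trace norm. This is fixable in your framework, but it is another step you skipped.
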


The geometry of \Cref{thm-3SP} is given in \Cref{fig-Thm}.

\begin{figure}
\centering
\begin{tikzpicture}[scale=1.8]

\filldraw[gray!20!] (0,0) circle (2);

\filldraw[gray!80!] (0,0) circle (1.25);

\filldraw[gray!20!] (0,0) circle (0.95);

\draw[blue, line width=1mm] (0,0) circle (0.6);

\filldraw[white] (0,0) circle (0.6);

\filldraw[white] (0.6,0) circle (0.2);

\draw[blue] (0,0.6) node[below]{$\Sigma_{r_0}$};

\draw (0,0) node[right]{$0$};

\draw [->] (1.32,0) -- (0.72,0);  

\draw[] (1.35,0) node[right]{$O_{r_0}$};

\draw[dashed] (0.6,0) circle (0.2);

\draw (0.68,0.05) node[left]{$\Gamma$};

\fill (0.6,0) circle(0.5pt); 

\fill[black] (0,0) circle(0.5pt); 

\draw (0,0) -- ({0.6*cos(-60)}, {0.6*sin(-60)});

\draw ({0.35*cos(-60)}, {0.35*sin(-60)}) node[left]{$R_1$};

\draw ({0.6*cos(-120)}, {0.6*sin(-120)}) -- ({0.95*cos(-120)}, {0.95*sin(-120)});

\draw ({0.7*cos(-120)}, {0.7*sin(-120)}) node[left]{$r_1$};

\draw ({0.6*cos(-180)}, {0.6*sin(-180)}) -- ({1.25*cos(-180)}, {1.25*sin(-180)});

\draw ({0.95*cos(-180)}, {0.95*sin(-180)}) node[below]{$r_2$};

\draw (0,0) -- ({2*cos(-220)}, {2*sin(-220)});

\draw ({1.6*cos(-220)}, {1.6*sin(-220)}) node[below]{$R_3$};

\draw ({1.6*cos(-90)}, {1.6*sin(-90)}) node[]{$D_{r_0}$};

\draw[->] ({2.5*cos(45)}, {2.5*sin(45)}) -- ({1.1*cos(45)}, {1.1*sin(45)});

\draw[] ({2.5*cos(45)}, {2.5*sin(45)}) node[above]{$B_{R_1 + r_2} \setminus B_{R_1 + r_1}$};

\end{tikzpicture}
\caption{Geometry of \Cref{thm-3SP} in two dimensions}
\label{fig-Thm}
\end{figure}
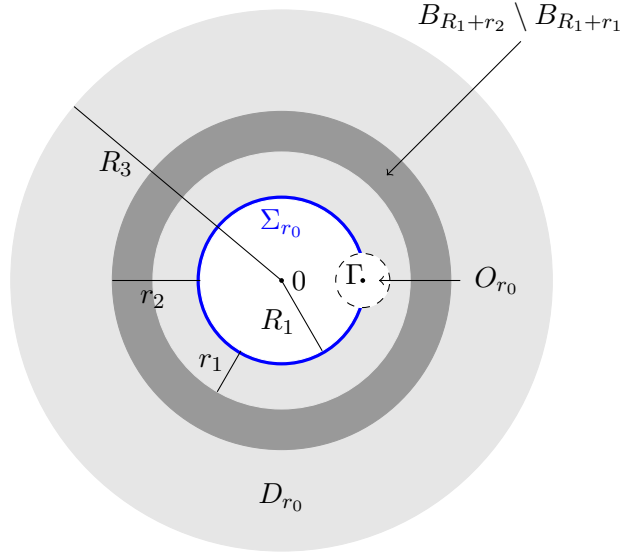

\medskip 
Some comments on \Cref{thm-3SP} are in order.  In  \eqref{thm-3SP-cl}, one only uses the information of $\Sigma_{r_0}$ (a portion  of $\partial B_{R_1}$, see \Cref{fig-Thm}) in the first interpolation term (see \Cref{fig-Thm}). The terminology {\it partial data} comes from this.  The constant $r_1$, $r_2$, and $r_0$ are independent of $\Lambda_1$ but the constant $C$ does.  If instead of $\Sigma_{r_0}$, one uses $\partial  B_{R_1}$, inequality  \eqref{thm-3SP-cl} 
is then known.  In this case, one can even use $\| V\|_{\bH(\partial B_{R_3})}$ instead of $\| V \|_{H^1(D_{r_0})}$ in \eqref{thm-3SP-cl} by applying  \cite[Theorem 2]{MinhLoc2}. 
Using known three-sphere inequalities and the arguments of propagation of smallness, one can prove \eqref{thm-3SP} for some $\alpha \in (0, 1)$. Nevertheless, the non-triviality and the novelty of \Cref{thm-3SP} lies on the fact that, for a given $\alpha \in (0, 1)$  arbitrary, \eqref{thm-3SP-cl} holds for some $r_0, r_1, r_2$. Even if $v$ is a solution of the Laplace equation in two dimensions, using Hadamard  there-sphere (circle) inequalities  and  the arguments of propagation of smallness, as far as we know, one can only obtain  \eqref{thm-3SP-cl} for some small $\alpha$ even though one replaces $\Sigma_{r_0}$ by $\partial B_{R_1} \setminus \{x_0 \}$ for some $x_0 \in \partial B_{R_1}$.  The requirement $\alpha > 1/2$ is necessary for the cloaking application presented later in this paper. Moreover, the rate of the convergence in our cloaking application is  better and better if $\alpha$ is closer and closer to 1. 
Note that, for $d \ge 3$,  the set $B_{R_1} \cap O_r$ in \Cref{thm-3SP} is not small in term of radius but has a small cross-section.  To our knowledge, \Cref{thm-3SP} is new even for the Laplace equation and $d \ge 3$, i.e., $m=1$, $\M^{1}  = I$, $\Lambda_1 = 0$, and $d \ge 3$. For $d=2$, the setting for the Laplace equation is previously established in \cite{Ng-CALR-O}, which is indeed one of the main motivations of our present work (see \Cref{sect-heu}). \Cref{thm-3SP} is  surprising when compared with the fact that the Cauchy problems for second-order elliptic equations are  unstable.  It is worth noting that obtaining sharp interpolation constants for three-sphere inequalities has its own interest and plays an important role in 
several applications. For example, this is the main tool in the analysis of Landis' conjecture, see \cite{Meshkov91,BK05, KSW15} and the references therein, where the real potential case is still widely open.

As a consequence of  Theorem~\ref{thm-3SP}, one can derive  three-sphere inequalities with partial data for $R_1 < R_2 < R_3$ from classical three-sphere inequalities.  Here is an illustration in the spirit of Hadamard. Let $d = 2, 3$, $\omega > 0$, and $R_1< R_2 < R_3$, and let $v \in H^1(B_{R_3} \setminus B_{R_1})$ be a solution of  $\Delta v + \omega^2 v = 0$. We have, see e.g. \cite[Lemma 4.2]{Ng-CALR-F}, with $\alpha_0 = \ln(R_3/ R_2) \Big/ \ln (R_3/ R_1)$, 
\begin{equation}\label{3SP-H}
\| v\|_{{\bf H}(\partial B_{R_2})} \le C \| v\|_{{\bf H}(\partial B_{R_1})}^{\alpha_0} \| v\|_{{\bf H}(\partial B_{R_2})}^{1 - \alpha_0},  
\end{equation}
for some positive constant $C$ depending only on $\omega$, $R_1$, $R_2$, and $R_3$.  Using \Cref{thm-3SP}, we establish the following new variant of \eqref{3SP-H}:

\begin{corollary}\label{cor-3SP} Let $d = 2, 3$, and $0< R_1< R_2 < R_3$, and let 
$\Gamma$ be a  $(d-2)$-compact, smooth submanifold of $\partial B_{R_1}$.  Denote  $O_r = \Big\{x \in \mR^d; \dist(x, \Gamma) < r \Big\}$, $D_r = B_{R_3} \setminus (\overline{B_{R_1} \cup O_r})$, and $\Sigma_{r} = \partial B_{R_1} \setminus O_{r}$ for $r>0$.
Set $\alpha_0 = \ln(R_3/ R_2) \Big/ \ln (R_3/ R_1)$. Then, for any $ \alpha \in (0,  \alpha_0)$, there exists $r_0  \in (0, R_2 - R_1)$, depending only on $R_1$, $R_2$,  $R_3$, $\Gamma$, and $\alpha$ such that,  for $\omega > 0$ and  for $v \in H^1(D_{r_0})$ satisfying  $\Delta v + \omega^2 v = 0$ in $D_{r_0}$, we have
\begin{equation*}  
\| V \|_{\bH(\partial B_{R_2})} \le C \| V\|_{\bH(\Sigma_{r_0})}^\alpha \|V\|_{H^1(D_{r_0})}^{1 - \alpha}, 
\end{equation*}
for some positive constant $C$ depending only on  $\alpha$, $\omega$, $\Gamma$, $R_1$, and $R_3$. 
\end{corollary}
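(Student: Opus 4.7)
The strategy is to combine the new partial-data three-sphere inequality of Theorem~\ref{thm-3SP} (applied to the Laplacian with a zeroth-order term) with the classical Hadamard three-sphere inequality~\eqref{3SP-H}. The former transfers information from the partial boundary $\Sigma_{r_0}$ to a thin annulus just outside $\partial B_{R_1}$; the latter then propagates that information outward to $\partial B_{R_2}$.

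Given $\alpha \in (0,\alpha_0)$, I would first set $\tilde\alpha := \alpha/\alpha_0 \in (0,1)$ and apply Theorem~\ref{thm-3SP} with $m=1$, $\M^1 = I$ (so $\Lambda = 1$), $g \equiv 0$, and $\Lambda_1 = \omega^2$; the identity $\Delta v = -\omega^2 v$ delivers~\eqref{fund-thm-Ineq} pointwise. After replacing the output $r_2$ by $\min(r_2, (R_2 - R_1)/2)$ if necessary (this only shrinks the annulus on the left-hand side of~\eqref{thm-3SP-cl}, so the estimate remains valid), one obtains $r_1 \in (0, r_2)$ and $r_0 \in (0, r_1)$, depending only on $\alpha, R_1, R_2, R_3, \Gamma$, such that
\begin{equation*}
\|v\|_{H^1(B_{R_1+r_2}\setminus B_{R_1+r_1})} \le C \|v\|_{\bH(\Sigma_{r_0})}^{\tilde\alpha}\,\|v\|_{H^1(D_{r_0})}^{1-\tilde\alpha}.
\end{equation*}

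Next, I would fix the intermediate radius $R_1' := R_1 + (r_1+r_2)/2 \in (R_1, R_2)$. Since every point of $O_{r_0}$ lies within distance $r_0$ of $\partial B_{R_1}$, and since $r_0 < r_1 < R_1' - R_1$, the shell $B_{R_3}\setminus \overline{B_{R_1'}}$ is contained in $D_{r_0}$, and there $v$ satisfies $\Delta v + \omega^2 v = 0$. The standard trace theorem, combined with the PDE used to define the conormal trace weakly on $\partial B_{R_1'}$ and on $\partial B_{R_3}$, gives $\|v\|_{\bH(\partial B_{R_1'})} \le C \|v\|_{H^1(B_{R_1+r_2}\setminus B_{R_1+r_1})}$ and $\|v\|_{\bH(\partial B_{R_3})} \le C \|v\|_{H^1(D_{r_0})}$. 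Applying~\eqref{3SP-H} on the radii $R_1' < R_2 < R_3$ with Hadamard exponent $\beta_1 := \ln(R_3/R_2)/\ln(R_3/R_1')$ and chaining the three displays leads to
\begin{equation*}
\|v\|_{\bH(\partial B_{R_2})} \le C\, \|v\|_{\bH(\Sigma_{r_0})}^{\tilde\alpha \beta_1}\,\|v\|_{H^1(D_{r_0})}^{1-\tilde\alpha \beta_1}.
\end{equation*}

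The final step is a downgrade of exponents. Because $R_1' > R_1$ forces $\beta_1 > \alpha_0$, the achieved exponent $\alpha' := \tilde\alpha\beta_1$ satisfies $\alpha' > \tilde\alpha\alpha_0 = \alpha$, so the trivial trace bound $\|v\|_{\bH(\Sigma_{r_0})} \le C\|v\|_{H^1(D_{r_0})}$ lets one absorb the surplus factor $\|v\|_{\bH(\Sigma_{r_0})}^{\alpha'-\alpha}$ into the $H^1$ factor, bringing the exponent on the partial-boundary term down from $\alpha'$ to the target $\alpha$. The main subtlety of the argument is this exponent bookkeeping: the Hadamard step applied on $B_{R_3}\setminus B_{R_1'}$ necessarily produces $\beta_1 > \alpha_0$ because we must push $R_1'$ strictly past $R_1$, so $\tilde\alpha\beta_1$ overshoots $\alpha$; the hypothesis $\alpha < \alpha_0$ is precisely what leaves room to choose $\tilde\alpha = \alpha/\alpha_0 < 1$ in Theorem~\ref{thm-3SP}, and the downgrade trick absorbs the overshoot to recover the precise target exponent.
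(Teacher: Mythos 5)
Your proof is correct and follows essentially the same route as the paper's: apply Theorem~\ref{thm-3SP} with a suitable intermediate exponent to control an $\bH$-norm on a sphere $\partial B_{R_1'}$ with $R_1' \in (R_1, R_2)$, propagate to $\partial B_{R_2}$ via the Hadamard-type inequality~\eqref{3SP-H}, and then downgrade the resulting exponent to $\alpha$ using the trace bound $\|v\|_{\bH(\Sigma_{r_0})} \le C\|v\|_{H^1(D_{r_0})}$. The only difference is cosmetic bookkeeping: you fix $\tilde\alpha = \alpha/\alpha_0$ up front and deduce $\tilde\alpha\beta_1 > \alpha$ directly from $\beta_1 > \alpha_0$, whereas the paper selects a pair $(s, \hat R_1)$ with $s$ close to $1$ and $\hat R_1$ close to $R_1$ to achieve the same overshoot.
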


The proofs of  \Cref{thm-3SP} and \Cref{cor-3SP},  and variants of  \Cref{thm-3SP}  (\Cref{fund-thm} and \Cref{cor-3SP-*}) are given in \Cref{sect-3SP}. 

\medskip 
We next discuss the Maxwell equations. Our main result in this direction  is 

\begin{theorem} \label{thm-3SP-M}
Let $d = 3$, $\Lambda \ge 1$,  $0  < R_1 < R_3 $,   and let $\Gamma$ be a compact, smooth curve of  $\partial B_{R_1}$. 
For $r>0$, denote  $O_r = \Big\{x \in \mR^3; \dist(x, \Gamma) < r \Big\}$, $D_r = B_{R_3} \setminus (\overline{B_{R_1} \cup O_r})$, and $\Sigma_{r} = \partial B_{R_1} \setminus O_{r}$. For every $\alpha \in (0,  1)$, there exists $r_2 \in (0, R_3 - R_1)$ depending only on $\alpha$, $\Lambda$, $\Gamma$, 
$R_1$, and $R_3$  such that for every $r_1 \in (0, r_2)$, there exists $r_0 \in (0, r_1)$ 
depending only on $r_1$,  $\alpha$, $\Lambda$, $\Gamma$, $R_1$, and $R_3$ 
such that  for $(\eps, \mu)$ a pair of   $(3 \times 3)$ real, uniformly elliptic, symmetric, matrix-valued functions defined in $D_{r_0}$ of class $C^2$ verifying, in $D_{r_0}$,  
\begin{equation}
\Lambda^{-1} |\xi|^2 \le \langle \M^{\ell} (x) \xi, \xi \rangle \le \Lambda |\xi|^2  \; \;  \forall \,  \xi \in \mR^d  \quad \mbox{ and } \quad   |\nabla \M^{\ell}(x) | \le \Lambda, 
\end{equation}
for $\omega > 0$, for $J_e, \, J_m \in [L^2(D_{r_0})]^3$,  and  for
  $(E, H) \in [H(\curl, D_{r_0})]^2$ satisfying 
\begin{equation*} 
\left\{\begin{array}{cl}
\nabla \times E = i \omega \mu H + J_e& \mbox{ in } D_{r_0}, \\[6pt]
\nabla \times H = - i \omega \eps H + J_m & \mbox{ in } D_{r_0}, 
\end{array}\right. 
\end{equation*}
 we have
\begin{multline*}  
  \| (E,   H) \|_{L^2(B_{R_1 + r_2} \setminus B_{R_1 + r_1})}
  \le   C \Big(   \| (E \times \nu, H \times \nu)\|_{H^{-1/2}(\dive_\Gamma, \Sigma_{r_0})}  + \| (J_e, J_m) \|_{L^2(D_{r_0})} \Big)^\alpha  \times \\[6pt] \times \Big(  \|(E, H)\|_{L^2(D_{r_0})} + \| (J_e, J_m) \|_{L^2(D_{r_0})} \Big)^{1 - \alpha} , 
\end{multline*}
for some positive constant $C$ depending only  on $r_1$, $\alpha$,  $\omega$, $\Lambda$, $\Gamma$,  $R_1$, $R_3$, and 
the upper bound of \\$\| (\eps, \mu) \|_{C^2(\bar D_{r_0})}$.
\end{theorem}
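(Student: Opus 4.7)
\smallskip
\noindent\emph{Strategy.} My plan is to reduce the Maxwell system to a weakly coupled second-order elliptic system for the six-component vector $V=(E,H)$ and then to invoke \Cref{thm-3SP} with $d=3$ and $m=6$. Taking the curl of each Maxwell equation, using the identity $\nabla\times\nabla\times=-\Delta+\nabla\,\dive$, and substituting $\nabla\times H$ and $\nabla\times E$ from the other Maxwell equation yields
\[
-\Delta E+\nabla(\dive E)=\omega^{2}\mu\eps E+i\omega(\nabla\mu)\wedge H+i\omega\mu J_{m}+\nabla\times J_{e},
\]
and a symmetric identity for $H$. The divergence identities $\nabla\cdot(\eps E)=(i\omega)^{-1}\nabla\cdot J_{m}$ and $\nabla\cdot(\mu H)=-(i\omega)^{-1}\nabla\cdot J_{e}$ are immediate from Maxwell, and, since $\eps,\mu\in C^{2}$, they allow one to rewrite $\nabla(\dive E)$ and $\nabla(\dive H)$ modulo terms linear in $V$ and $\nabla V$. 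Componentwise this yields exactly $|\dive(\M^{\ell}\nabla V_{\ell})|\le\Lambda_{1}(|\nabla V|+|V|+|g|)$ with uniformly elliptic Lipschitz matrices $\M^{\ell}$ built from $\eps$ and $\mu$, and with $\Lambda_{1}$ depending only on $\omega$ and $\|(\eps,\mu)\|_{C^{2}(\bar D_{r_{0}})}$.

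\smallskip
\noindent\emph{Source regularization.} Formally, the forcing also contains $\nabla\times J_{e}$ and $\nabla\times J_{m}$, which are not controlled by $\|(J_{e},J_{m})\|_{L^{2}}$ alone. To circumvent this, I would solve, in a slightly enlarged tubular neighborhood of $D_{r_{0}}$, an auxiliary inhomogeneous Maxwell problem with data $(J_{e},J_{m})$ and a convenient boundary condition (for example, perfectly conducting), obtaining $(E^{\dagger},H^{\dagger})$ satisfying
\[
\|(E^{\dagger},H^{\dagger})\|_{H(\curl,D_{r_{0}})}\le C\,\|(J_{e},J_{m})\|_{L^{2}(D_{r_{0}})}.
\]
Setting $\widetilde V=(E-E^{\dagger},H-H^{\dagger})$ yields a solution of the homogeneous Maxwell system, for which the reduced second-order system has $g=0$ and the hypothesis of \Cref{thm-3SP} is met exactly. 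The estimate for $(E^{\dagger},H^{\dagger})$ is then absorbed into the $\|(J_{e},J_{m})\|_{L^{2}(D_{r_{0}})}$ terms on the right-hand side.

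\smallskip
\noindent\emph{Boundary-trace reduction.} To match the $\bH(\Sigma_{r_{0}})$-norm required by \Cref{thm-3SP}, one needs to control Dirichlet and co-normal traces of $V$ on $\Sigma_{r_{0}}$ by $\|(E\times\nu,H\times\nu)\|_{H^{-1/2}(\dive_{\Gamma},\Sigma_{r_{0}})}$. The tangential components of $E$ and $H$ are read off directly. The normal components are recovered from the surface identity $(\nabla\times E)\cdot\nu=-\dive_{\Gamma}(E\times\nu)$ applied to Maxwell, which gives $(\mu H)\cdot\nu$ and, symmetrically, $(\eps E)\cdot\nu$, hence full Dirichlet traces of $E,H$ on $\Sigma_{r_{0}}$. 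The co-normal derivatives of $V_{\ell}$ on $\Sigma_{r_{0}}$ are then extracted by expressing $\partial_{\nu}E$ and $\partial_{\nu}H$ in terms of $\nabla\times E$ and $\nabla\times H$ (controlled by Maxwell in $D_{r_{0}}$) and of tangential derivatives of the already-recovered Dirichlet traces. Finally, interior elliptic regularity for the reduced second-order system lets one pass between $L^{2}$- and $H^{1}$-norms on nested subdomains, matching the $L^{2}$-norms in the statement.

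\smallskip
\noindent\emph{Main obstacle.} The most delicate step is the boundary-trace reduction: one must track Sobolev indices carefully when comparing $H^{-1/2}(\dive_{\Gamma},\Sigma_{r_{0}})$ with the $H^{1/2}\times H^{-1/2}$ regularity encoded in $\bH(\Sigma_{r_{0}})$, bearing in mind that $\Sigma_{r_{0}}$ is only an open piece of $\partial B_{R_{1}}$ whose boundary clusters near the curve $\Gamma$, so that surface differentiation can lose half a Sobolev index at the edge. In parallel, the algebraic reduction to an elliptic system relies crucially on $\eps,\mu\in C^{2}$, both to keep the coefficients $\M^{\ell}$ Lipschitz and uniformly elliptic and to ensure that the residual coupling terms stay genuinely lower-order, as demanded by the hypotheses of \Cref{thm-3SP}.
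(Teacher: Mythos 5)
Your overall strategy---reduce the Maxwell system to a second-order elliptic system, regularize the sources by an auxiliary solution, and apply the scalar partial-data three-sphere inequality---points in the right direction, and the source-regularization step via $(E^{\dagger},H^{\dagger})$ is sound. There are, however, two concrete gaps. First, the curl-curl reduction does not yield the hypothesis of \Cref{thm-3SP}: the term $\nabla(\dive E)$ is genuinely second order. After subtraction you have $\dive(\eps E)=0$, which controls the combination $\eps:\nabla E$, but $\dive E=\operatorname{tr}(\nabla E)$ is an independent linear functional of $\nabla E$ when $\eps$ is anisotropic, so $\nabla(\dive E)$ cannot be pushed to lower order (and your displayed identity tacitly takes $\mu$ scalar). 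The paper uses the different divergence-form reduction \eqref{eq-H}--\eqref{eq-E}, $\dive(\mu\nabla H_a)=-\dive(\partial_a\mu\,H-i\omega\mu\epsilon^a\eps E)$ together with its companion for $E_a$, whose principal coefficients are $\mu$ and $\eps$ respectively and whose right-hand side is genuinely first order once $\eps,\mu\in C^2$, exactly the format that \Cref{fund-pro} requires.

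Second, the boundary-trace reduction runs the wrong way. The norm $\|V\|_{\bH(\Sigma_{r_0})}$ demands an $H^{1/2}$ Dirichlet trace and an $H^{-1/2}$ conormal trace of $V$ on $\Sigma_{r_0}$, which is strictly stronger than what $\|(E\times\nu,H\times\nu)\|_{H^{-1/2}(\dive_\Gamma,\Sigma_{r_0})}$ supplies: the tangential components, and via the surface-divergence identity the normal fluxes $(\eps E)\cdot\nu,(\mu H)\cdot\nu$, are all controlled only in $H^{-1/2}$. There is no one-sided boundary-regularity gain to close this gap, and the cheap bound $\|V\|_{\bH(\Sigma_{r_0})}\le C\|V\|_{H^1(D_{r_0})}$ makes the interpolation in \Cref{thm-3SP} vacuous. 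The paper therefore does not apply \Cref{thm-3SP} as a black box; it reproves a Maxwell half-space version (\Cref{fund-thm-M}) directly from \Cref{fund-pro}: the tangential data on $\{x_1=0\}$ is absorbed by an auxiliary Maxwell jump problem (\Cref{lem-EH}) whose solution is bounded in $L^2$ by the $H^{-1/2}(\dive_\Gamma)$ data and the sources; subtracting it and reflecting evenly across $\{x_1=0\}$ yields a field that is smooth across the plane, so interior elliptic regularity furnishes the $H^1$ data needed on the interior arcs $\{\theta=\pm 3\pi/4\}$ where the Carleman estimate underlying \Cref{fund-pro} is finally applied. A covering argument, as in the scalar proof of \Cref{thm-3SP}, then yields \Cref{thm-3SP-M}.
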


Here and in what follows, for an open,   bounded   subset $\Omega$  of $\mR^3$ of class $C^1$,   one  denotes,  with $\Gamma = \partial \Omega$, 
\begin{equation*}
H^{-1/2}(\dive_\Gamma, \Gamma): = \Big\{ \phi \in [H^{-1/2}(\Gamma)]^3; \; \phi \cdot \nu = 0 \mbox{ and } \dive_\Gamma \phi \in H^{-1/2}(\Gamma) \Big\},
\end{equation*}
\begin{equation*}
\| \phi\|_{H^{-1/2}(\dive_\Gamma, \Gamma)} : = \| \phi\|_{H^{-1/2}(\Gamma)} +  \| \dive_\Gamma \phi\|_{H^{-1/2}(\Gamma)}.
\end{equation*}
For  an open subset $\Omega$ of $\mR^3$,  the following standard notations  are used: 
\begin{equation*}
H(\curl, \Omega) : = \Big\{ u \in [L^2(\Omega)]^3; \; \nabla \times u  \in [L^2(\Omega)]^3 \Big\}, 
\end{equation*}
\begin{equation*}
\| u\|_{H(\curl, \Omega)} : = \| u\|_{L^2(\Omega)} + \| \nabla \times u \|_{L^2(\Omega)}, 
\end{equation*}
\begin{equation*}
H_{\loc}(\curl, \Omega) : = \Big\{ u \in [L_{\loc}^2(\Omega)]^3; \; \nabla \times u  \in [L^2_{\loc}(\Omega)]^3 \Big\}. 
\end{equation*}

\begin{remark} \rm In \Cref{thm-3SP-M}, one requires $(\eps, \mu)$ to be of class $C^2$. Nevertheless, the constant $r_2$ depends on $\Lambda$ not on $\|(\eps, \mu)\|_{C^2(D_{r_0})}$.
\end{remark}

\Cref{thm-3SP-M} and its variant (\Cref{fund-thm-M}) are the {\it new crucial} ingredients in the proof of cloaking property for the doubly complementary media.  A consequence of \Cref{thm-3SP-M} in the spirit of Hadamard is given in \Cref{cor-Maxwell} in \Cref{sect-3SP-M}. 

\medskip 
The proofs of \Cref{thm-3SP} and \Cref{cor-3SP}  are given in \Cref{sect-3SP}. 
The proofs of \Cref{thm-3SP-M} and its consequence (\Cref{cor-Maxwell})  are given in  \Cref{sect-3SP-M}. The most important  ingredient of these proofs is \Cref{fund-thm} in Section~\ref{sect-3SP}. 
Concerning the proof of \Cref{fund-thm}, we  first use an appropriate change of variables inspired by conformal maps. We then establish a new type of three-sphere inequalities for a class of ``degenerate" second-order,  elliptic inequalities in which not only  the properties of the coefficients but also the way they interact with the domain considered  play an important role (see also the paragraph right after \Cref{fund-thm} and the first paragraph of \Cref{sect-UL}). 
 
\section{Three-sphere inequalities for second-order elliptic inequalities}\label{sect-3SP}

This section is on three-sphere inequalities for second-order elliptic inequalities with partial data. The key ingredient is the variant  of 
\Cref{thm-3SP} in a half plane. Throughout this section,  for $d \ge 2$ and $x = (x_1, x_2, \tx) \in \mR \times \mR \times \mR^{d-2}$, we use the polar coordinate $(\rr, \theta)$ with $\theta \in (-\pi, \pi]$ for the pair $(x_1, x_2)$;  the variable $\tx$ is irrelevant for $d = 2$. 
For $0 < \gamma_1 < \gamma_2 < 1$ and for $R>0$, we denote 
\begin{equation}\label{def-Y-M}
Y_{\gamma_1, \gamma_2,  R} 
= \Big\{ x  \in \mR^d; \; \theta \in (-\pi/2, \pi/2), \;   \gamma_1 R < \rr < \gamma_2 R,  \mbox{ and } |\tx| < R \Big\}
\end{equation}
(see also \Cref{fig}).

\medskip 
We have

\begin{theorem} \label{fund-thm}  Let $d \ge 2$, $m \ge 1$, $\Lambda \ge 1$,  and $R_* < R < R^*$.  Then, for any  $\alpha \in (0, 1)$, there exists a  constant $\hgamma_2 \in (0, 1)$, depending only on $\alpha$, $\Lambda$, $R_*$, $R^*$,   $m$, and $d$ such that
for every $\hgamma_1 \in (0,  \hgamma_2)$, there exists  $\hgamma_0 \in (0, \hgamma_1)$ depending only on $\alpha$, $\hgamma_1$,  $\Lambda$, $R_*$, $R^*$, $m$, and $d$ such that,   for real, symmetric, uniformly elliptic, Lipschitz matrix-valued functions  $\M^\ell$ with $1 \le \ell \le m$  defined in $D_{\hgamma_0}: = Y_{\hgamma_0, 1,  R}$  verifying, in $D_{\hgamma_0}$, 
\begin{equation}\label{fund-thm-pro-M}
\Lambda^{-1} |\xi|^2 \le \langle \M^{\ell} (x) \xi, \xi \rangle \le \Lambda |\xi|^2  \; \;  \forall \,  \xi \in \mR^d  \quad \mbox{ and } \quad   |\nabla \M^{\ell}(x) | \le \Lambda, 
\end{equation}
for $g \in L^2(D_{\hgamma_0})$, and for  $V  \in [H^1(D_{\hgamma_0})]^m$ satisfying, for  $1 \le \ell \le m$,  
\begin{equation} 
|\dive (\M^\ell \nabla V_\ell)| \le  \Lambda_1 \big( |\nabla V| + |V| + |g| \big)  \mbox{ in } D_{\hgamma_0} \mbox{ for some } \Lambda_1 \ge 0, 
\end{equation}
we have, with $\Sigma_{\hgamma_0} = \partial D_{\hgamma_0} \cap \big\{ x_1= 0 \big\}$,  
\begin{equation}\label{fund-thm-S}
\| V \|_{H^1(Y_{\hgamma_1, \hgamma_2, \frac{R}{4}})} 
 \le C \Big( \|V\|_{{\bf H}(\Sigma_{\hgamma_0})} + \| g\|_{L^2(D_{\hgamma_0})} \Big)^{\alpha}  
\Big(\| V\|_{H^1(D_{\hgamma_0})}  + \| g\|_{L^2(D_{\hgamma_0})}  \Big)^{1-\alpha},  
\end{equation}
for some positive constant $C$ depending only on $\alpha, \, \hgamma_1,  \,  \Lambda, \, \Lambda_1, \, R_*, \, R^*, \,  m$, and  $d$. 
\end{theorem}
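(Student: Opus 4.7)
The plan is to reduce the problem to a Carleman estimate on a rectangular slab via a conformal change of variables, and then to tune the Carleman weight so that the resulting interpolation exponent can be chosen arbitrarily close to $1$. In the $(x_1,x_2)$-plane I would pass to logarithmic polar coordinates $s = \log(\rr/R) \in (\log\hgamma_0,\,0)$ and $\theta \in (-\pi/2,\,\pi/2)$, keeping $\tx$ intact. This diffeomorphism carries $D_{\hgamma_0}$ onto the rectangular slab
\[
\wtD := (\log\hgamma_0,\,0) \times (-\pi/2,\,\pi/2) \times \{|\tx| < R\},
\]
sends the partial-data set $\Sigma_{\hgamma_0}$ onto the two flat faces $\{\theta = \pm\pi/2\}$, and sends the target $Y_{\hgamma_1,\hgamma_2,R/4}$ onto an interior subslab. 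Since the Euclidean Laplacian transforms as $\Delta_x = e^{-2s}(\partial_s^2 + \partial_\theta^2) + \Delta_{\tx}$, the pulled-back inequalities for $V_\ell$, after multiplication by $e^{2s}$, take the form
\[
|L^\ell v_\ell| \le C\bigl(|\nabla_{s,\theta}v| + e^s |\nabla_{\tx}v| + |v| + |g|\bigr),
\]
where each scalar operator $L^\ell$ is uniformly elliptic in $(s,\theta)$ but its principal part \emph{degenerates} in $\tx$ through an $e^{2s}$ factor; the degeneracy becomes severe as $\hgamma_0 \to 0$.

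Next I would construct a weight of the form $\phi(s,\theta,\tx) = -\psi(s) - \kappa|\tx|^2/R^2$, where $\psi$ is strictly convex on $(\log\hgamma_0,0)$ with its minimum at a point $s_* \in (\log\hgamma_1,\log\hgamma_2)$ and $\kappa > 0$ is a small parameter. Pseudoconvexity of $\phi$ with respect to the degenerate principal symbol $\sigma^2 + \eta^2 + e^{2s}|\xi|^2$ reduces to a lower bound on $\psi''$ together with a compatibility between $|\psi'(s)|$ and $e^{2s}$; both can be arranged by an appropriate profile for $\psi$. Standard Carleman machinery, combined with absorption of the inter-component coupling through the $\tau$-gain on the left, then yields an estimate of the schematic form
\[
\tau \int_{\wtD} e^{2\tau\phi} \bigl(|\nabla_{s,\theta}v|^2 + \tau^2 |v|^2\bigr) + \tau \int_{\wtD} e^{2\tau\phi} e^{2s} |\nabla_{\tx}v|^2 \le C \sum_\ell \int_{\wtD} e^{2\tau\phi} |L^\ell v_\ell|^2,
\]
valid for $\tau \ge \tau_0$ and for test fields compactly supported in the interior of $\wtD$ except possibly on $\{\theta = \pm\pi/2\}$, where integration by parts produces additional boundary integrals that are tracked explicitly.

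I would then apply this estimate to $\chi v_\ell$, with $\chi(s,\tx)$ a cutoff equal to $1$ on a slight enlargement of the target image and vanishing near the three no-data faces $\{s = \log\hgamma_0\}$, $\{s = 0\}$, $\{|\tx| = R\}$, but identically $1$ on a neighborhood of $\{\theta = \pm\pi/2\}$. Two contributions appear on the right-hand side: a commutator term $[L^\ell,\chi]v_\ell$ supported where $\nabla\chi \ne 0$, and boundary integrals on $\{\theta = \pm\pi/2\}$ that, after unwinding the change of variables and a standard trace argument, recombine into $\|V\|_{\bH(\Sigma_{\hgamma_0})}^2$ weighted by $e^{2\tau\Phi_{\mathrm{data}}}$, where $\Phi_{\mathrm{data}} := \max_{\Sigma_{\hgamma_0}} \phi$. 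With $\Phi_{\mathrm{tgt}} := \min_{\mathrm{target}}\phi$ and $\Phi_{\mathrm{comm}} := \max_{\operatorname{supp}\nabla\chi}\phi$, the weight is arranged so that $\Phi_{\mathrm{data}} > \Phi_{\mathrm{tgt}} > \Phi_{\mathrm{comm}}$. Optimizing over $\tau \ge \tau_0$ the resulting inequality
\[
e^{2\tau\Phi_{\mathrm{tgt}}} \|V\|_{H^1(\mathrm{target})}^2 \le C\Bigl( e^{2\tau\Phi_{\mathrm{data}}} \bigl(\|V\|_{\bH(\Sigma_{\hgamma_0})} + \|g\|_{L^2}\bigr)^2 + e^{2\tau\Phi_{\mathrm{comm}}} \bigl(\|V\|_{H^1(D_{\hgamma_0})} + \|g\|_{L^2}\bigr)^2 \Bigr)
\]
yields \eqref{fund-thm-S} with exponent $\alpha = (\Phi_{\mathrm{tgt}} - \Phi_{\mathrm{comm}})/(\Phi_{\mathrm{data}} - \Phi_{\mathrm{comm}})$. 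Choosing $\hgamma_2$ (and $\kappa$) small enough places $\Phi_{\mathrm{tgt}}$ arbitrarily close to $\Phi_{\mathrm{data}}$; then choosing $\hgamma_0 \ll \hgamma_1$ drives $\Phi_{\mathrm{comm}} \to -\infty$. Together these force $\alpha$ arbitrarily close to $1$, precisely matching the quantifier order in the statement.

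The crux of the argument is the Carleman step: producing an estimate for the degenerate operator $\partial_s^2 + \partial_\theta^2 + e^{2s}\Delta_{\tx}$ with a weight whose free parameters let $\alpha$ range over all of $(0,1)$, uniformly in the Lipschitz constants of the $\M^\ell$ and in $\Lambda_1$. Standard Carleman estimates on non-degenerate domains would furnish some admissible interpolation exponent, but the prescribed dependence ``$\hgamma_2$ depends only on $\alpha$, then $\hgamma_0$ on $\hgamma_1$'' requires a careful quantitative study of how pseudoconvexity of $\phi$ interacts with the $e^{2s}$ degeneracy as $\hgamma_0 \to 0$. Absorbing the weak coupling between components uniformly in this limit, and confirming that the boundary contribution on $\{\theta = \pm\pi/2\}$ really reassembles into the $\bH$-norm rather than a weaker trace norm, are the secondary technical points.
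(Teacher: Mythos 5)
Your logarithmic change of variables $z \mapsto \log z$ in the $(x_1,x_2)$-plane is a genuinely different conformal map from the one the paper uses ($T_n = L_n \circ H$ with $L_n$ the $1/n$-th power map and $H$ a linear normalization of $\M^\ell(z_0)$), and it would lead to a structurally different Carleman estimate: yours is on a rectangular slab with a degenerate $\tx$-principal part, while the paper's is on a thin annular sector around a point $\hat Z_0$ that lies just \emph{outside} the transformed domain, with the classical weight $e^{\beta r^{-p}}$ for large $p$ and $|\beta|$. The paper makes $\alpha$ arbitrarily close to $1$ by letting $n \to \infty$: the three radii $R_1(n),R_2(n),R_3(n)$ are all comparable to $1/n$ and their ratio tends to $1$ (see \eqref{LF-def-R13}--\eqref{LF-def-ha}), while a structural decomposition \eqref{LF-claim0-C}--\eqref{LF-claim2-C} of the transformed matrices keeps the Carleman constants uniform in $n$.

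There are, however, two concrete gaps in your plan, both concentrated in the ``crux'' step you flag. First, the weight $\phi = -\psi(s) - \kappa|\tx|^2/R^2$ is globally concave (its Hessian is $\mathrm{diag}(-\psi'',\,0,\,-2\kappa/R^2 I) \preceq 0$), and it has a critical point $\nabla\phi = 0$ at $(s_*,\theta,0)$ which lies \emph{in the interior of the target region}. The Carleman machinery you invoke requires $\nabla\phi$ to be bounded away from zero and some form of (strong pseudo-)convexity of the weight along $(\nabla\phi)^\perp$; both fail here, and the cross term $\langle u'', \tau\,\phi'\,u'\rangle$ that is supposed to produce the $\tau\|\nabla u\|^2$-gain in fact has the wrong sign. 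Compare this with the paper's choice: the center of $e^{\beta r^{-p}}$ is the point $\hat Z_0$, placed outside $\mathcal O_n$, so $\nabla(\beta r^{-p})$ never vanishes on the domain, and the radial convexity $\partial_r^2 (r^{-p}) = p(p+1)r^{-p-2}>0$ drives the estimate (\Cref{lem-prepare2-0}--\Cref{lem-main}). Second, even granting a valid Carleman estimate, your exponent computation with the fixed $\tx$-weight $\kappa|\tx|^2/R^2$ is bounded away from $1$: the cutoff near $|\tx| = R$ gives $\Phi_{\mathrm{comm}} \ge \Phi_{\mathrm{data}} - \kappa$ while $\Phi_{\mathrm{data}} - \Phi_{\mathrm{tgt}} \ge \kappa/16$ (from the target's $|\tx| < R/4$), so $\alpha \le 15/16$ independently of $\hgamma_0$; to get around this you would need to localize in $\tx$ and cover, as the paper does in the reduction to \eqref{LF-equiv}. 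Finally, the claim that the boundary contribution on $\{\theta = \pm\pi/2\}$ ``reassembles into $\|V\|_{\bH(\Sigma_{\hgamma_0})}$'' by a trace argument glosses over the fact that the Carleman boundary terms involve the strong traces $(V,\nabla V)|_\Sigma$ in $L^2$; the passage to the weaker $\bH$-norm in the paper is made via the auxiliary transmission problem for $U_\ell$ and an even extension across $\{x_1=0\}$ (\Cref{sect-fund-thm}), not by a direct trace estimate.
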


\begin{figure}
\centering
\begin{tikzpicture}[scale=1.6]


\fill[black!10] (-90:2.4) arc (-90:90:2.4)-- (90:0.5) arc (90:-90:0.5) -- cycle;

\fill[black!25] (-90:1.1) arc (-90:90:1.1)-- (90:1.6) arc (90:-90:1.6) -- cycle;

\draw (0,0) -- ({1.1*cos(150-90)},{1.1*sin(150-90)});
\draw ({0.6*cos(150-90)},{0.6*sin(150-90)}) node[right]{$\frac{\hgamma_1 R}{4}$};

\draw (0,0) -- ({1.6*cos(120-110)},{1.6*sin(120-110)});
\draw ({1.4*cos(120-110)},{1.4*sin(120-110)}) node[below]{{$\frac{\hgamma_2 R}{4}$}};

\draw[thick, blue] (0, -2.4) -- (0, -0.5);
\draw[thick, blue] (0, 0.5) -- (0, 2.4);

\draw[->] (-0.5, 2.15) --  (0, 2.15);
\draw[] (-0.5, 2.15) node[left]{$\Sigma_{\gamma_0}$};

\draw [white, domain =-90:90] plot ({0.5*cos(\x)}, {0.5*sin(\x)});

\draw [white, domain =-90:90] plot ({1.6*cos(\x)}, {1.6*sin(\x)});


\draw[] (0,0) -- ({0.5*cos(30-90)},{0.5*sin(30-90)});
\draw ({0.27*cos(30-90)},{0.27*sin(30-90)}) node[left]{{\small $\hgamma_0 R$}};

\draw ({1.55*cos(75)},{1.55*sin(75)}) node[below]{$Y_{\hgamma_1, \hgamma_2, \frac{R}{4}}$}; 

\draw ({2*cos(-75)},{2*sin(-75)}) node[]{$D_{\gamma_0}$};

\fill[black] (0,0)  circle(0.5pt);

\draw (-0.1,0)  node[left, above]{$0$};

\end{tikzpicture}
\caption{Geometry of $Y_{\gamma_1, \gamma_2, \frac{R}{4}}$, $\Sigma_{\gamma_0}$, and $D_{\gamma_0}$ in two dimensions.}
\label{fig}
\end{figure}
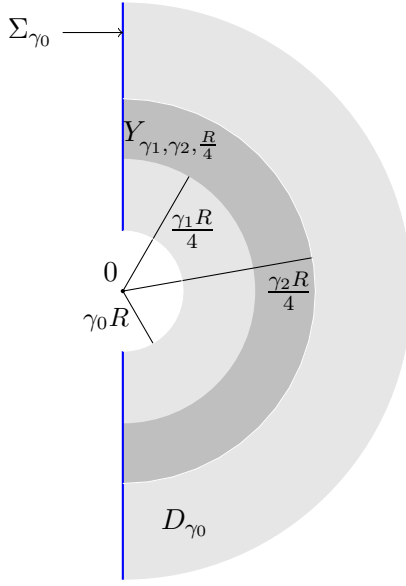

The geometry of $Y_{\gamma_1, \gamma_2, \frac{R}{4}}$, $\Sigma_{\gamma_0}$, and $D_{\gamma_0}$ are given in \Cref{fig} in two dimensions (see also \Cref{fig-Thm} for a comparison).

\begin{remark} \rm The constants $\gamma_0, \gamma_1$, and $\gamma_2$ are independent of $\Lambda_1$ but the constant $C$ does. 
\end{remark}

The proof of Theorem~\ref{fund-thm} is based on Carleman's estimates. The weight used is $e^{\beta r^{-p}}$ 
for which $\beta$ and $p$ are two (large) parameters as in \cite{MinhLoc2} (the work of Protter's \cite{Protter60} and of Fursikov and Imanuvilov's \cite{FI96} are also worth mentioning). A simple but critical  step of the proof is the  use of transformations,  related to the conformal type map,  $(x_1, x_2, \tx) \to \big(\rr^{1/n} \cos (\theta/n), \rr \sin(\theta/n), \tx \big)$  to transform the domain $Y_{\gamma, 1, R}$ into a domain for which the first two variables are in a 
sector of circulars with a small angle ($n$ is large then). We  then apply three-sphere inequalities for this domain  to deduce the desired estimate. The advantage of this process is that three-sphere inequalities with partial data are easier to handle for this new geometry, as noted in \cite{Ng-CALR-O} (see also \Cref{sect-heu}).  However, {\it new} difficulties appear. On one hand,   the lower bound of the ellipticity of the new set of matrix-valued functions obtained from $\M^\ell$ ($1 \le \ell \le m$) goes to 0 as $n \to + \infty$.
On the other hand, to be able to carry on three-sphere inequalities with partial data in this domain, one requires to establish three-sphere inequalities associated with the new set of matrices in which the output (the parameter $\alpha$) is independent of $n$ (see \Cref{sect-heu} for more details).  To overcome this obstacle,  a structure of the new set of  matrix-valued functions is formulated (see e.g. \Cref{rem-lem1} and \eqref{LF-claim2-C}) and new Carleman's estimates capturing this structure are derived.

\medskip 
As a direct consequence of \Cref{fund-thm}, we have the following result whose proof is omitted.

\begin{corollary} \label{cor-3SP-*}
Let $d \ge 2, \, m \ge 1$, $\Lambda \ge 1$,  let $\Omega$ be a bounded, open subset of $\mR^d$ of class $C^1$, and let $\Gamma$ be a $(d-2)$-compact, smooth submanifold of $\partial \Omega$ and belong to a connected component $\Sigma$ of $\partial \Omega$.
Denote  $O_r = \Big\{x \in \mR^d; \dist(x, \Gamma) < r \Big\}$, $D_r = \Omega \setminus \bar O_r$, and  $\Sigma_{r} = \Sigma \setminus \bar O_{r}$ for $r>0$. Then, for every $\alpha \in (0,  1)$, there exists 
$r_2>0$ depending only on $\alpha$, $\Lambda$, $\Gamma$, and  $\Omega$ such that for $r_1 \in (0, r_2)$, there exists $r_0 \in (0, r_1)$, depending only on $r_1$,  $\alpha$, $\Lambda$, $\Gamma$, and  $\Omega$ such that 
for  $(d \times d)$ Lipschitz, uniformly elliptic,  matrix-valued function $\M^\ell$ defined in $\Omega$ with $1 \le \ell \le m$, verifying, in $D_{r_0}$,   
\begin{equation*}
\Lambda^{-1} |\xi|^2 \le \langle \M^{\ell} (x) \xi, \xi \rangle \le \Lambda |\xi|^2  \; \;  \forall \,  \xi \in \mR^d  \quad \mbox{ and } \quad   |\nabla \M^{\ell}(x) | \le \Lambda, 
\end{equation*}
for $g \in L^2(D_{r_0})$, and for  $V  \in [H^1(D_{r_0})]^m$ satisfying, for  $1 \le \ell \le m$,  
\begin{equation}
|\dive (\M^\ell \nabla V_\ell)| \le  \Lambda_1 \big( |\nabla V| + |V| + |g| \big)  \mbox{ in } D_{r_0} \mbox{ for some } \Lambda_1 \ge 0, 
\end{equation}
we have 
\begin{equation*}  
\| V \|_{H^1(O_{r_2} \setminus O_{r_1})} \le 
C \Big( \| V\|_{\bH(\Sigma_{r_0})} + \| g\|_{L^2(D_{r_0})} \Big)^\alpha \Big( \|V\|_{H^1(D_{r_0})} + \| g\|_{L^2(D_{r_0})} \Big)^{1 - \alpha}, 
\end{equation*}
for some positive constant $C$ depending only on $r_1$, $\alpha$, $\Lambda$, $\Lambda_1$,  $\Gamma$, $\Omega$, $m$, and $d$. 
\end{corollary}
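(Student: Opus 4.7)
The plan is to deduce the corollary from Theorem~\ref{fund-thm} by a finite covering argument combined with a local flattening of $\partial\Omega$ and $\Gamma$. Since $\Gamma$ is a compact $(d-2)$-submanifold of $\partial\Omega$, Definition~\ref{def-geometry} furnishes, for each $x \in \Gamma$, a diffeomorphism $\bF_x : Q \to U_x$ sending $Q_+$ to $U_x \cap \Omega$, $Q_0$ to $U_x \cap \partial\Omega$, and $Q_0 \cap \{x_2 = 0\}$ to $U_x \cap \Gamma$. By compactness I would extract a finite subcover $\{U_{x_i}\}_{i=1}^N$ and fix $\rho_0 > 0$ so small that $\overline{O_{\rho_0}} \cap \overline\Omega \subset \bigcup_i U_{x_i}$; the eventual radii $r_0 < r_1 < r_2$ will all be chosen in $(0,\rho_0)$.

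In each chart I pull back the unknowns via $y = \bF_{x_i}^{-1}(x)$. A routine computation shows that $\tilde V_\ell = V_\ell \circ \bF_{x_i}$ satisfies an elliptic inequality of the same form as \eqref{fund-thm-Ineq} with transformed matrices $\tilde\M_i^\ell$ that remain Lipschitz and uniformly elliptic; their ellipticity and Lipschitz constants depend only on $\Lambda$ and the $C^2$-norms of $\bF_{x_i}^{\pm 1}$, and finiteness of the cover bounds them by a single $\tilde\Lambda = \tilde\Lambda(\Lambda, \Omega, \Gamma)$. Because $\bF_{x_i}^{-1}(\Gamma \cap U_{x_i}) = Q_0 \cap \{y_2 = 0\}$, there is a bi-Lipschitz equivalence $\dist(\bF_{x_i}(y), \Gamma) \simeq \sqrt{y_1^2 + y_2^2}$ in a fixed neighborhood of the origin; consequently the pullbacks of $O_r \cap U_{x_i}$, $D_r \cap U_{x_i}$, and $\Sigma_r \cap U_{x_i}$ are sandwiched between the sets $Y_{c_1 r, c_2 r, R}$, $D_{c r}$, and pieces of $\partial D_{c r} \cap \{y_1 = 0\}$ that appear in Theorem~\ref{fund-thm}, for universal constants $c, c_1, c_2$ and some $R = R(\Omega, \Gamma) > 0$.

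Given $\alpha \in (0,1)$, I would apply Theorem~\ref{fund-thm} in each chart with this same exponent $\alpha$, then take the minimum of the $N$ resulting radii $r_0 < r_1 < r_2$ and the maximum of the $N$ resulting constants; these uniform quantities depend only on $\alpha, \Lambda, \Lambda_1, \Omega, \Gamma$. Transferring the estimate back to $x$-coordinates via the bi-Lipschitz equivalence of the norms under $\bF_{x_i}$, I obtain, for each $i$, a local inequality
\begin{equation*}
\|V\|_{H^1((O_{r_2}\setminus O_{r_1}) \cap U_{x_i})} \le C\bigl(\|V\|_{\bH(\Sigma_{r_0})} + \|g\|_{L^2(D_{r_0})}\bigr)^\alpha \bigl(\|V\|_{H^1(D_{r_0})} + \|g\|_{L^2(D_{r_0})}\bigr)^{1-\alpha}.
\end{equation*}
Summing these $N$ local inequalities (which cover $O_{r_2}\setminus O_{r_1}$ by the choice $r_2 < \rho_0$) and absorbing $N$ into $C$ yields the desired global estimate.

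The only non-mechanical point is to verify that the transformed inequality fits the hypotheses of Theorem~\ref{fund-thm} with constants uniform over the cover, so that the exponent $\alpha$ does not degrade when the local estimates are combined. Finiteness of the cover makes this immediate, which is why I expect no real obstacle in this reduction, consistent with the author's remark that the proof may be omitted.
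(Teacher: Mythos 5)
Your reduction is correct and is exactly what the paper intends by ``direct consequence'': cover a tubular neighborhood of $\Gamma$ by finitely many flattening charts from \Cref{def-geometry}, pull the inequality back to the half-space model (where the push-forward coefficients stay uniformly elliptic and Lipschitz with constants controlled by $\Lambda$ and the chart norms, and $\dist(\cdot,\Gamma)$ is comparable to $\sqrt{y_1^2+y_2^2}$), apply \Cref{fund-thm} in each chart with the common exponent $\alpha$, and sum the finitely many local estimates. One small imprecision to tighten: \Cref{fund-thm}, and likewise the corollary, produce radii $r_0<r_1<r_2$ that do \emph{not} depend on $\Lambda_1$ (only the final constant $C$ does), so your phrase ``these uniform quantities depend only on $\alpha,\Lambda,\Lambda_1,\Omega,\Gamma$'' blurs that distinction; your construction does in fact preserve the $\Lambda_1$-independence of the radii, since they are determined only by the transformed ellipticity and Lipschitz bounds, which see $\Lambda$ and the charts but not $\Lambda_1$.
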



The rest of this section is organized as follows. In Section~\ref{sect-UL}, we establish several lemmas used in the proof of \Cref{fund-thm}. 
The main step of the proof is given in \Cref{sect-mainstep}. The complete proof of \Cref{fund-thm} is given in \Cref{sect-fund-thm}. 


\subsection{Preliminaries}\label{sect-UL}

In this section, we establish several lemmas used in the proof of Theorem~\ref{fund-thm}. The computations are as  in the spirit of Carleman's estimates,  nevertheless, the assumptions and conclusions are  importantly formulated  in a way that  can be used in the context of partial data. 
In what follows, $\Y$  denotes a bounded connected open subset of $\mR^d$ with Lipschitz boundary,  for $x \in \mR^d$ $(d \ge 2)$, $r$ denotes its (Euclidean) length, i.e., $r = |x|$, and $\langle \cdot, \cdot \rangle$ denotes the standard Euclidean scalar product unless otherwise stated. 
All quantities considered in this section are real. The key results of this section are \Cref{lem1} and its consequence \Cref{lem-main}.  Note that even the matrix-valued function $M$ considered here is assumed to be Lipschitz, uniformly elliptic,  and symmetric, the constants in Carleman's type-inequalities depend {\it  partially} on this information,  and depend on {\it the domain considered} as well (see e.g.  the assumptions in \Cref{lem1} and \Cref{rem-lem1}).   
The weight used in this section is of the form $e^{\beta r^{-p}}$ for large $\beta$ ($\beta$ can be positive or negative;  we consider here  Carleman's estimates in $\Y$,  which is not assumed to be a ball, with boundary terms) and for large positive $p$.  Other weights satisfying the strong pseudoconvexity condition with respect to the principal parts of the elliptic system might be also used. Nevertheless, we prefer to choose this type of weights since we can explicitly keep track the dependence of various constants on $\alpha$, $\beta$, and  $M$, and the domain considered $\Y$ in formulating and proving the new results. The computations/analysis in this section are quite standard as long as the results/statements are formulated (which are based on the need of the analysis in \Cref{sect-mainstep}). For the convenience of the reader, the details are mainly given   in the appendix so that the constants can be tracked.  

\medskip 

We begin with  

\begin{lemma} \label{lem-prepare1} Let $w \in H^2(\Y)$ and let $M$ be a Lipschitz, symmetric, uniformly elliptic, matrix-valued function defined in $\Y$.  
We have
\begin{equation*}
 \int_\Y   (x \cdot M \nabla w) \,   \Div (M\nabla w) \ge   -   \int_{\Y} \langle B \nabla w, \nabla w \rangle 
   -  \int_{\pY} C r  |M|^2 |\nabla w|^2, 
\end{equation*}
for some positive constant $C$ depending only on $d$. Here, for $ x \in \Y$,  
\begin{multline}\label{def-B}
\langle B (x) y, y \rangle  : =  \langle [(M (x) y) \cdot \nabla_x]  (M(x) x), y \rangle +  \frac{1}{2} \langle \dive_x(M(x) x) M(x) y, y \rangle \\[6pt]
+ \frac{1}{2} \langle [(M (x) x) \cdot \nabla_x ] M(x)  y , y \rangle \quad  \mbox{ for } y \in \mR^d. 
\end{multline}
\end{lemma}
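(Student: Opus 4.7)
The plan is to prove a Rellich--Pohozaev style identity for the variable-coefficient operator $\Div(M\nabla\cdot)$ with the multiplier $x\cdot M\nabla w = (Mx)\cdot\nabla w$, and then discard boundary terms by estimating them pointwise by $Cr|M|^2|\nabla w|^2$. Writing $\phi(x) := M(x)x$ and $Z := M\nabla w$, the scalar $x \cdot M\nabla w$ coincides with $\phi\cdot\nabla w$ because $M$ is symmetric, and the left-hand side equals $\int_\Y (\phi\cdot\nabla w)\,\Div Z$. A first integration by parts gives
\begin{equation*}
\int_\Y (\phi\cdot\nabla w)\,\Div Z = \int_{\pY}(\phi\cdot\nabla w)(Z\cdot\nu) - \int_\Y \nabla(\phi\cdot\nabla w)\cdot Z.
\end{equation*}

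The second step is to expand the integrand $\nabla(\phi\cdot\nabla w)\cdot Z$ via the product rule into two pieces: a first-order piece $Z_k(\partial_k\phi_j)\partial_j w$, which by the definition of $\phi$ equals $\langle[(M\nabla w)\cdot\nabla_x](Mx),\nabla w\rangle$ (this will supply the first term of $B$), and a second-order piece $\phi_j Z_k\,\partial_j\partial_k w$. The standard identity
\begin{equation*}
2\sum_k (M\nabla w)_k\,\partial_j\partial_k w = \partial_j\langle M\nabla w,\nabla w\rangle - \langle\nabla w,(\partial_j M)\nabla w\rangle,
\end{equation*}
which follows from the product rule and the symmetry of $M$ and of $\partial_j M$, rewrites the second-order piece as $\tfrac12\phi\cdot\nabla\langle M\nabla w,\nabla w\rangle - \tfrac12\phi_j\langle\nabla w,(\partial_j M)\nabla w\rangle$. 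Integrating the first summand by parts produces a boundary term $\tfrac12\int_{\pY}(\phi\cdot\nu)\langle M\nabla w,\nabla w\rangle$ and an interior term $-\tfrac12\int \Div\phi\,\langle M\nabla w,\nabla w\rangle$, which matches the $\tfrac12\Div_x(Mx)M$ contribution in $B$; the remaining $\phi_j\langle\nabla w,(\partial_j M)\nabla w\rangle$ is exactly $\langle[(Mx)\cdot\nabla_x](My),y\rangle$ evaluated at $y=\nabla w$ (again using that $y$ is held fixed inside the directional derivative of $M$), which matches the third term in the definition of $B$.

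Collecting the interior contributions, the integrand that survives is, pointwise, $-\langle B(x)\nabla w,\nabla w\rangle$ with $B$ exactly as in \eqref{def-B}. The residual boundary terms are
\begin{equation*}
\int_{\pY}(x\cdot M\nabla w)(M\nabla w\cdot\nu) \;-\; \tfrac12\int_{\pY}(\phi\cdot\nu)\langle M\nabla w,\nabla w\rangle,
\end{equation*}
and each factor containing $x$ or $\phi$ is bounded by $|x||M||\nabla w|\le r|M||\nabla w|$, so Cauchy--Schwarz gives $|\text{bdry}|\le \int_{\pY}Cr|M|^2|\nabla w|^2$ for a dimensional constant $C$. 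Dropping this negative contribution yields the stated lower bound.

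The proof is thus a careful but purely algebraic book-keeping exercise; the main technical obstacle is to track signs correctly when passing between the forms $\phi_j\langle\nabla w,(\partial_j M)\nabla w\rangle$ and $\langle[(Mx)\cdot\nabla_x](My),y\rangle$, where one must remember that in the latter the vector $y$ is frozen while $\nabla_x$ differentiates only the coefficient $M(x)$. A secondary point is that the argument requires $w\in H^2(\Y)$ only in order to justify the product-rule manipulation on second derivatives; by a standard approximation argument one may assume $w\in C^2(\overline{\Y})$ and pass to the limit, using the Lipschitz regularity of $\M^{\ell}$ and the $L^2$-control of $\nabla w$ on $\pY$ provided by the trace theorem.
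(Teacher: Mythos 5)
You follow the paper's own route exactly: the Rellich--Pohozaev multiplier identity with $\phi=Mx$ and $Z=M\nabla w$, the split of $\nabla(\phi\cdot\nabla w)\cdot Z$ into a first-order and a second-order piece, the rewrite $2(M\nabla w)_k\partial_j\partial_k w=\partial_j\langle M\nabla w,\nabla w\rangle-\langle(\partial_j M)\nabla w,\nabla w\rangle$, and one further integration by parts on the $\phi\cdot\nabla\langle M\nabla w,\nabla w\rangle$ piece; this is precisely what the paper's \eqref{1.4}, \eqref{1.6}, \eqref{1.6-1} encode.

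However, the ``collecting the interior contributions'' step contains a sign slip, in the exact place you yourself flag as the main obstacle. After the first integration by parts the object being manipulated is $-\int_\Y\nabla(\phi\cdot\nabla w)\cdot Z$, with an outer minus. The first-order piece then contributes $-\int_\Y T_1$ with $T_1=\langle[(M\nabla w)\cdot\nabla_x](Mx),\nabla w\rangle$, which does match $-$(first term of $B$). But the second-order piece also carries the outer minus, $-\int_\Y T_2$; pushing your identity and the subsequent integration by parts through this minus yields interior contributions
\begin{equation*}
+\frac12\int_\Y\Div(Mx)\,\langle M\nabla w,\nabla w\rangle \;+\; \frac12\int_\Y\langle[(Mx)\cdot\nabla_x]M\,\nabla w,\nabla w\rangle,
\end{equation*}
with $+\frac12$, whereas $-\langle B\nabla w,\nabla w\rangle$ requires $-\frac12$ on these two terms. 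The collected interior integrand is therefore $-\langle\widetilde B\nabla w,\nabla w\rangle$, where $\widetilde B$ is \eqref{def-B} with the sign of the last two contributions flipped, and \emph{not} $-\langle B\nabla w,\nabla w\rangle$ with $B$ ``exactly as in \eqref{def-B}.'' The difference $\int_\Y\Div(Mx)\langle M\nabla w,\nabla w\rangle+\int_\Y\langle[(Mx)\cdot\nabla_x]M\nabla w,\nabla w\rangle$ has no definite sign in general (take a Lipschitz, uniformly elliptic $M$ with $\Div(Mx)<0$ on part of $\Y$ and localize $\nabla w$ there), so it cannot simply be absorbed. A one-dimensional check with constant $M\equiv m>0$ already exhibits the mismatch: the exact interior reduction is $-\frac{m^2}{2}|w'|^2$, whereas $-\langle B\nabla w,\nabla w\rangle=-\frac{3m^2}{2}|w'|^2$. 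This is most likely a sign typo in the paper's \eqref{def-B}, and it is harmless downstream since \Cref{lem1} only invokes the two-sided hypothesis $|\langle By,y\rangle|\le\Lambda\langle My,y\rangle$, which is insensitive to that sign; but a blind proof asserting an exact pointwise match with the display as written should have detected it. Redo the bookkeeping carrying the outer minus explicitly, and either state the corrected $\widetilde B$ or record the resulting identity before discarding the boundary terms.
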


\begin{remark} \rm The quantity $B$ given in \eqref{def-B} plays a role in our analysis. An upper bound for $|\langle B (x) y, y \rangle|$ instead of $\| M \|_{W^{1, \infty}}$ is sufficient for our analysis (see \Cref{lem1}). 
\end{remark}

\begin{proof} The proof is standard as follows. An integration by parts gives
\begin{equation}\label{1.4}
	 \int_\Y   (x \cdot M \nabla w) \,   \Div (M\nabla w) 
	 = - \int_{\Y} \nabla ( x \cdot M  \nabla w) \cdot M\nabla w \\+ \int_{\pY}  (x \cdot M\nabla w) \,  M \nabla w \cdot \nu. 
\end{equation}
Using the symmetry of $M$, we have \footnote{In what follows, the repeated summation is used. }
\begin{equation}\label{1.6}
\frac{\partial }{\partial x_i}(x \cdot M\nabla w) = \frac{\partial }{\partial x_i} \Big( M_{k j} x_j \frac{\partial w}{\partial x_k} \Big) = M_{k j} x_j \frac{\partial^2 w}{\partial x_i \partial x_k} +  \frac{\partial }{\partial x_i} (Mx) \cdot \nabla w
\end{equation}
and
\begin{multline}\label{1.6-1}
		-\int_{\Y} 2 x_j M_{kj} \frac{\partial^2 w}{\partial x_i \partial x_k}M_{il} \frac{\partial w}{\partial x_l}
		 = - \int_\Y x_j M_{kj} M_{il} \frac{\partial}{\partial x_k}\left(\frac{\partial w}{\partial x_i} \frac{\partial w}{\partial x_l}\right) \\
		= \int_\Y \frac{\partial (x_j M_{kj} M_{il})}{\partial x_k}\frac{\partial w}{\partial x_i} \frac{\partial w}{\partial x_l} - \int_{\pY}  x_j M_{kj} M_{il}\frac{\partial w}{\partial x_i} \frac{\partial w}{\partial x_l} \nu_k. 
\end{multline}
The conclusion now follows from  \eqref{1.4}, \eqref{1.6}, and \eqref{1.6-1}.
\end{proof}

The second lemma, whose proof is given in \Cref{ap-lem-prepare2-0},  is 

\begin{lemma}\label{lem-prepare2-0} Let  $p \ge 1$, $\beta \in \mR$, $w \in H^2(\Y)$, 
and let $M$ be a Lipschitz, symmetric, uniformly elliptic, matrix-valued function defined in $\Y$. Set, for $x \in \Y$, 
\begin{equation*}
T_1(x)= (2p + 4) (x \cdot Mx)^2  r^{-4}  -  r^{-2}   \Div \big[ (x \cdot Mx) Mx\big ]  - r^{-2}   |\dive (Mx)|^2 \langle M  x, x \rangle 
\end{equation*}
and 
\begin{equation*}
T_2 (x) = -  (p + 4) (x \cdot Mx)^2 r^{-4}    +   r^{-2}   \Div \big[ (x \cdot Mx) Mx\big ]. 
\end{equation*}
Assume that $|x| \le 1$ for $x \in \Y$. 
We have
\begin{multline}\label{lem-prepare-2-1}
 \int_\Y    e^{\beta r^{-p}} (M x \cdot \nabla |w|^2) \,  \Div (M\nabla  e^{-\beta r^{-p}}) \\[6pt]
   \ge  \int_\Y  \Big(p^2 \beta^2 r^{-2p -2}T_1 + \beta p (p+2) r^{- p - 2} T_2 \Big) |w|^2 - \int_\Y   \langle M \nabla w, \nabla w \rangle    - \int_{\pY} C \beta^2 p^2 r^{-2p - 1} |M|^2 |w|^2, 
\end{multline}
for some positive constant $C$ depending only on $d$. 
\end{lemma}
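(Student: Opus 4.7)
My approach is to unpack the conjugated operator $e^{\beta r^{-p}} \Div(M\nabla e^{-\beta r^{-p}})$ explicitly, separate the pieces that depend only on the values of $M$ from the piece that involves $\Div(Mx)$, and then integrate by parts on the first group while using a weighted Young inequality on the second so as to avoid differentiating $M$ twice. This last point is where the Lipschitz-only regularity of $M$ forces a non-standard step.

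Using $\nabla r^{-p} = -p\, r^{-p-2} x$ one obtains
\begin{equation*}
e^{\beta r^{-p}} \Div(M \nabla e^{-\beta r^{-p}}) = A_1 + A_2 + A_3,
\end{equation*}
with $A_1 = \beta^2 p^2 r^{-2p-4} (x \cdot Mx)$, $A_2 = -\beta p(p+2) r^{-p-4} (x \cdot Mx)$, and $A_3 = \beta p\, r^{-p-2} \Div(Mx)$. For $j \in \{1,2\}$ the coefficient $A_j$ involves no derivative of $M$, so I integrate
\begin{equation*}
\int_{\Y} A_j (Mx \cdot \nabla|w|^2) = -\int_{\Y} |w|^2 \Div(A_j Mx) + \int_{\pY} |w|^2 A_j (Mx \cdot \nu)
\end{equation*}
by parts. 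A direct application of the product rule, together with $\nabla r^\alpha = \alpha r^{\alpha-2} x$, yields
\begin{align*}
-\Div(A_1 Mx) &= \beta^2 p^2 r^{-2p-2} \bigl[(2p+4) r^{-4}(x \cdot Mx)^2 - r^{-2} \Div[(x \cdot Mx) Mx]\bigr],\\
-\Div(A_2 Mx) &= \beta p(p+2) r^{-p-2} T_2(x),
\end{align*}
which reproduces the whole $\beta p(p+2) r^{-p-2} T_2$ contribution and the first two terms of $\beta^2 p^2 r^{-2p-2} T_1$.

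The main obstacle is the $A_3$ piece: a naive integration by parts would produce $\nabla(\Div(Mx)) \cdot Mx$, i.e.\ second derivatives of $M$, which the Lipschitz hypothesis does not control. I therefore keep it in non-divergence form, $A_3 (Mx \cdot \nabla|w|^2) = 2w A_3 (Mx \cdot \nabla w)$, and use the weighted Young inequality $2\langle a, b\rangle \ge -\langle M^{-1} a, a\rangle - \langle Mb, b\rangle$ (a consequence of the symmetry and positive-definiteness of $M$) with $a = w A_3 Mx$ and $b = \nabla w$. Since $\langle M^{-1}(Mx), Mx\rangle = x \cdot Mx$, this gives
\begin{equation*}
\int_{\Y} A_3 (Mx \cdot \nabla|w|^2) \ge -\int_{\Y} \beta^2 p^2 r^{-2p-4} |\Div(Mx)|^2 (x \cdot Mx)\,|w|^2 - \int_{\Y} \langle M \nabla w, \nabla w\rangle,
\end{equation*}
which contributes exactly the missing $-r^{-2} |\Div(Mx)|^2 \langle Mx, x\rangle$ term of $T_1$ together with the announced first-order correction. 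Finally, the boundary contribution from $A_1, A_2$ is estimated using $|x \cdot Mx| \le |M|\, r^2$ and the hypothesis $|x| \le 1$ (so that $r^{-p-1} \le r^{-2p-1}$) to give $|A_j Mx \cdot \nu| \le C \beta^2 p^2 r^{-2p-1} |M|^2$ on $\pY$; summing the three contributions produces the asserted inequality.
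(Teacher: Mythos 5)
Your proof is correct and follows essentially the same route as the paper's (Appendix A): expand the conjugated operator into the three pieces $A_1, A_2, A_3$, integrate $A_1, A_2$ by parts to produce the $T_1, T_2$ terms, and treat $A_3$ via a Cauchy/Young inequality $2w A_3 (Mx\cdot\nabla w) \ge -w^2 A_3^2 \langle Mx,x\rangle - \langle M\nabla w,\nabla w\rangle$ precisely to avoid a second derivative of the Lipschitz matrix $M$ — this is exactly what the paper does with its $P_3$. The boundary estimate is likewise identical to the paper's $Q$ bound.
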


Using \Cref{lem-prepare2-0}, one can derive 


\begin{lemma}\label{lem-prepare2} Let $w \in H^2(\Y)$, 
and let $M$ be a Lipschitz, symmetric, uniformly elliptic, matrix-valued function defined in $\Y$. Assume that $|x| \le 1$ for $x \in \Y$,  and   for some $\Lambda \ge 1$, 
$$
\langle M x, x \rangle \ge \Lambda^{-1} |x|^2  \quad \mbox{ for } x \in \Y
$$
and 
$$
|M| +   |\dive(Mx)| + |x|^{-2} |\nabla (x \cdot M x)  \cdot Mx | \le \Lambda \quad  \mbox{ for } x \in \Y. 
$$
There exist two constants $p_{\Lambda}, \beta_{\Lambda} \ge 1$, depending only on $\Lambda$ and $d$, such that,  for $p \ge p_{\Lambda}$ and $|\beta| \ge \beta_{\Lambda}$, we have 
\begin{multline}\label{lem-prepare-2-2}
 \int_\Y    e^{\beta r^{-p}} (M x \cdot \nabla |w|^2) \,  \Div (M\nabla  e^{-\beta r^{-p}}) \\[6pt]
   \ge  \int_\Y   p^3 \beta^2 \Lambda^{-2} r^{-2p - 2}|w|^2 - \int_\Y   \langle M \nabla w, \nabla w\rangle   - \int_{\pY} C \beta^2 p^2 r^{-2p - 1} \Lambda^2 |w|^2
\end{multline}
for some positive constant $C$ depending only on $d$. 
\end{lemma}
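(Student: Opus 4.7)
The plan is to derive \Cref{lem-prepare2} directly from \Cref{lem-prepare2-0} by establishing pointwise lower and upper bounds on the coefficients $T_1$ and $T_2$ appearing there, using only the structural hypotheses on $M$ together with the constraint $|x|\le 1$ (which allows absorbing lower powers of $r^{-1}$ into higher ones).

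First, I would expand $\Div[(x\cdot Mx) Mx] = \nabla(x\cdot Mx)\cdot Mx + (x\cdot Mx)\dive(Mx)$. Combining the hypotheses $|x|^{-2}|\nabla(x\cdot Mx)\cdot Mx|\le\Lambda$, $|\dive(Mx)|\le\Lambda$, and the pointwise bound $|x\cdot Mx|\le |M|\,|x|^2 \le \Lambda r^2$, yields
\begin{equation*}
\bigl|r^{-2}\Div[(x\cdot Mx)Mx]\bigr| \le \Lambda + \Lambda^2, \qquad r^{-2}|\dive(Mx)|^2\langle Mx,x\rangle \le \Lambda^3.
\end{equation*}
The coercivity $\langle Mx, x\rangle\ge \Lambda^{-1}r^2$ gives $(x\cdot Mx)^2 r^{-4}\ge \Lambda^{-2}$, so
\begin{equation*}
T_1(x) \ge (2p+4)\Lambda^{-2} - C\Lambda^3, \qquad |T_2(x)| \le Cp\Lambda^2,
\end{equation*}
for some $C=C(d)$. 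Choosing $p_\Lambda$ large enough (depending only on $\Lambda$ and $d$) so that $(2p+4)\Lambda^{-2}-C\Lambda^3 \ge 2p\Lambda^{-2}$ whenever $p\ge p_\Lambda$, I then get $T_1 \ge 2p\Lambda^{-2}$ pointwise.

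Substituting these estimates into \eqref{lem-prepare-2-1} of \Cref{lem-prepare2-0} yields, at each $x\in\Y$,
\begin{equation*}
p^2\beta^2 r^{-2p-2} T_1 + \beta p(p+2) r^{-p-2} T_2 \ge 2p^3\beta^2\Lambda^{-2} r^{-2p-2} - C|\beta|p^3\Lambda^2 r^{-p-2}.
\end{equation*}
Because $r=|x|\le 1$, we have $r^{-p-2}\le r^{-2p-2}$, so the negative term is bounded above by $C|\beta|p^3\Lambda^2 r^{-2p-2}$. Setting $\beta_\Lambda := 2C\Lambda^4$, for any $|\beta|\ge\beta_\Lambda$ one has $C|\beta|\Lambda^2 \le \beta^2\Lambda^{-2}$, so this remainder is absorbed into half of the main term, leaving at least $p^3\beta^2\Lambda^{-2}r^{-2p-2}$. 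The $-\int_\Y \langle M\nabla w,\nabla w\rangle$ contribution and the boundary term in \eqref{lem-prepare-2-2} are inherited directly from \eqref{lem-prepare-2-1}, with $|M|^2\le\Lambda^2$ giving the stated boundary factor.

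The calculation is essentially bookkeeping rather than a deep obstacle, but the delicate point is ensuring that the thresholds $p_\Lambda$ and $\beta_\Lambda$ depend only on $\Lambda$ and $d$ and not on $p$ or $\beta$; this forces the order of choices (first $p$ large to dominate the $\Lambda$-dependent error in $T_1$, then $|\beta|$ large to absorb the cross term containing $T_2$). The role of the hypothesis $|x|\le 1$ is precisely to enable the comparison $r^{-p-2}\le r^{-2p-2}$, without which the lower-order weight term coming from $T_2$ could not be absorbed into the leading $r^{-2p-2}$ weight.
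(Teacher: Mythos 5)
Your proof is the right fleshing-out of the paper's one-line argument (``direct consequence of \eqref{lem-prepare-2-1} for large $\beta$ and $p$''), and the structure — bound $T_1$ from below and $T_2$ from above pointwise, use $|x|\le 1$ to compare $r^{-p-2}$ with $r^{-2p-2}$, choose $p$ first and then $|\beta|$ — is exactly what is intended.

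One small but genuine arithmetic slip: you claim to choose $p_\Lambda$ so that $(2p+4)\Lambda^{-2}-C\Lambda^3 \ge 2p\Lambda^{-2}$ for $p\ge p_\Lambda$. But the difference of the two sides is $4\Lambda^{-2}-C\Lambda^3$, which is independent of $p$ and negative for large $\Lambda$, so no choice of $p_\Lambda$ makes this hold. The fix is to aim for a strictly smaller multiple of $p$: for instance, $(2p+4)\Lambda^{-2}-C\Lambda^3 \ge \tfrac32 p\Lambda^{-2}$ is equivalent to $(\tfrac{p}{2}+4)\Lambda^{-2}\ge C\Lambda^3$, i.e.\ $p\ge 2C\Lambda^5-8$, which does give an admissible $p_\Lambda=p_\Lambda(\Lambda,d)$. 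With $T_1\ge \tfrac32 p\Lambda^{-2}$ in hand, the subsequent absorption of the $T_2$ cross term for $|\beta|\ge \beta_\Lambda$ (with $\beta_\Lambda$ a fixed multiple of $\Lambda^4$) works exactly as you describe and yields the coefficient $p^3\beta^2\Lambda^{-2}$ in the conclusion.
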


\begin{remark} \rm Note that $\Lambda$ is determined via just for $x \in \Y$. It  contains only partially the information of the ellipticity of $M$ and the Lipschitz constant of $M$. 
\end{remark}

\begin{proof}  Estimate~\eqref{lem-prepare-2-2} is a direct consequence of \eqref{lem-prepare-2-1} for large $\beta$ and $p$. \end{proof}

Using Lemmas~\ref{lem-prepare1} and \ref{lem-prepare2}, we can establish the following result.

\begin{lemma}\label{lem1} Let    $v \in H^2(\Y)$, 
and let $M$ be a Lipschitz, symmetric, uniformly elliptic, matrix-valued function defined in $\Y$. Assume that,  for $x \in \Y$, the following conditions hold: $|x| \le 1$, 
\begin{equation}\label{lem1-as00}
\langle M x, x \rangle \ge \Lambda^{-1} |x|^2,
\end{equation}
\begin{equation}\label{lem1-as01}
|M| +  |\dive(Mx)| + |x|^{-2} |\nabla (x \cdot M x) \cdot Mx | \le \Lambda, 
\end{equation}
	 \begin{equation}\label{lem1-as1}
	 |\langle B y, y \rangle| \le \Lambda \langle M y, y \rangle \mbox{ for } y \in \mR^d, 
	 \end{equation}
for some $\Lambda \ge 1$ where $\langle B y, y \rangle$ is defined in \eqref{def-B}.  There exist two constants  $p_{\Lambda}, \beta_{\Lambda} \geq 1$, depending only on $\Lambda$ and $d$,  such that if $p \ge p_{\Lambda}$ and $|\beta| \ge \beta_{\Lambda}$ then
\begin{multline*}
\int_\Y \frac{r^{p + 2}e^{2\beta r^{-p}}}{2p|\beta|}\big[\Div (M\nabla v)\big]^2  
\ge  \int_\Y  \Lambda^{-2} p^3 \beta^2  r^{-2p - 2} e^{2 \beta r^{-p}}   |v|^2 -  C \Lambda  e^{2 \beta r^{-p}}  \langle M \nabla v, \nabla v \rangle  \\[6pt]
 -\int_\pY C \Lambda^2 r e^{2 \beta r^{-p}} \Big(  |\nabla v|^2 + \beta^2 p^2 r^{-2p -2} |v|^2 \Big) 
\end{multline*}
for some positive constant $C$ depending only on $d$. 
\end{lemma}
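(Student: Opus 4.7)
The natural strategy is the classical Carleman device: conjugate $\Div(M\nabla\cdot)$ by the weight $e^{\beta r^{-p}}$, split the conjugated operator into a formally self-adjoint and a formally antisymmetric piece, and reduce the resulting cross term to the quantities already estimated in Lemmas~\ref{lem-prepare1} and~\ref{lem-prepare2}. Setting $w := e^{\beta r^{-p}} v$ and using $\nabla e^{-\beta r^{-p}} = \beta p\, r^{-p-2} x\, e^{-\beta r^{-p}}$, a straightforward computation gives
\[
e^{\beta r^{-p}}\Div(M\nabla v) \;=\; P_1 w \,+\, P_2 w,
\]
with $P_1 w := \Div(M\nabla w) + A(x)\,w$, $P_2 w := 2\beta p\,r^{-p-2}\langle Mx,\nabla w\rangle$, and
\[
A(x) \;:=\; e^{\beta r^{-p}}\Div\bigl(M\nabla e^{-\beta r^{-p}}\bigr) \;=\; \beta^2 p^2 r^{-2p-4}\langle Mx,x\rangle \,-\, \beta p(p{+}2) r^{-p-4}\langle Mx,x\rangle \,+\, \beta p\,r^{-p-2}\Div(Mx).
\]

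\textbf{Reduction to the previous lemmas.} The elementary bound $(P_1 w + P_2 w)^2\ge 2 P_1 w\,P_2 w$, together with the identity $r^{p+2} P_2 w/(p|\beta|) = 2\sign(\beta)\langle Mx,\nabla w\rangle$ and $2w\langle Mx,\nabla w\rangle = \langle Mx,\nabla w^2\rangle$, reduces the left-hand side of the lemma to a lower bound on
\[
2\sign(\beta)\int_\Y \Div(M\nabla w)\langle Mx,\nabla w\rangle \;+\; \sign(\beta)\int_\Y A(x)\langle Mx,\nabla w^2\rangle.
\]
The first integral is controlled by Lemma~\ref{lem-prepare1}, whose integration-by-parts identity also yields the corresponding reverse inequality up to the same boundary term, so that, under the hypothesis $|\langle B\cdot,\cdot\rangle|\le \Lambda\langle M\cdot,\cdot\rangle$,
\[
\Bigl|\int_\Y \Div(M\nabla w)\langle Mx,\nabla w\rangle\Bigr|\;\le\; \Lambda\int_\Y \langle M\nabla w,\nabla w\rangle \,+\, C\int_{\pY} r\Lambda^2|\nabla w|^2.
\]
The second integral is \emph{exactly} the left-hand side of Lemma~\ref{lem-prepare2}, which for $p\ge p_\Lambda$ and $|\beta|\ge\beta_\Lambda$ gives the desired positive bulk $\Lambda^{-2}p^3\beta^2\int r^{-2p-2}|w|^2$, modulo a gradient error $\int\langle M\nabla w,\nabla w\rangle$ and a boundary error.

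\textbf{Passing back to $v$.} Using $|w|^2 = e^{2\beta r^{-p}}|v|^2$ and $|\nabla w|^2\le 2e^{2\beta r^{-p}}|\nabla v|^2 + 2\beta^2 p^2 r^{-2p-2}|w|^2$, the gradient error $\int\langle M\nabla w,\nabla w\rangle$ decomposes into a term of the form $C\Lambda\int e^{2\beta r^{-p}}\langle M\nabla v,\nabla v\rangle$ (which appears as the $-C\Lambda$-term in the conclusion) plus a term $C\Lambda\beta^2 p^2\int r^{-2p-2}|w|^2$ that is strictly dominated by the main bulk $\Lambda^{-2}p^3\beta^2\int r^{-2p-2}|w|^2$ once $p\ge p_\Lambda\sim\Lambda^{c}$. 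The boundary terms are transformed analogously and match those stated in the conclusion.

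\textbf{Main obstacle.} The delicate point is the factor $\sign(\beta)$ in front of the $A(x)$-integral: Lemma~\ref{lem-prepare2} provides a lower bound on $\int A(x)\langle Mx,\nabla w^2\rangle$, whereas for $\beta<0$ what is needed is a lower bound on its negative. This is resolved by noting that the dominant contribution to $A(x)$ is the $\beta^2 p^2 r^{-2p-4}\langle Mx,x\rangle$ term, which is invariant under $\beta\mapsto -\beta$, so the proof of Lemma~\ref{lem-prepare2} can be re-run with $-A(x)$ to yield the same positive bulk up to $\beta$-linear corrections that are absorbed for $|\beta|\ge\beta_\Lambda$. The secondary difficulty is purely bookkeeping: one must track every multiplicative constant so that the final bulk coefficient emerges as exactly $\Lambda^{-2}$ and the gradient coefficient as $C\Lambda$, with the thresholds $p_\Lambda$ and $\beta_\Lambda$ depending only on $\Lambda$ and $d$ as advertised.
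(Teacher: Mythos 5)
Your approach matches the paper's own proof in Appendix~\ref{ap-lem1} essentially line by line: conjugate $\Div(M\nabla\cdot)$ by the weight, split the conjugated operator as $P_1 w + P_2 w$, bound $\big[\Div(M\nabla v)\big]^2$ below by the cross term $2\,P_1 w\,P_2 w$, and reduce the two resulting integrals to Lemmas~\ref{lem-prepare1} and~\ref{lem-prepare2}. For $\beta>0$ the reduction is correct, the ``passing back to~$v$'' bookkeeping is right (the $\beta^2 p^2 r^{-2p-2}|w|^2$ error is absorbed by the bulk once $p\gtrsim\Lambda^{4}$), and $\beta>0$ is all the paper ever uses when invoking Lemma~\ref{lem1} or Lemma~\ref{lem-main}.

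The fix you propose for $\beta<0$ does not work. Integrating by parts, $\int_\Y A(x)\,\langle Mx,\nabla|w|^2\rangle = -\int_\Y |w|^2\,\Div(A\,Mx) + \text{bdry}$, and the leading term of $\Div(A\,Mx)$ is $-2\beta^2 p^3 r^{-2p-6}(x\cdot Mx)^2$, so the dominant bulk of $\int_\Y A\,\langle Mx,\nabla|w|^2\rangle$ is $+2\beta^2 p^3\int_\Y r^{-2p-6}(x\cdot Mx)^2|w|^2>0$. Replacing $A$ by $-A$ simply flips the sign of this bulk, giving $-2\beta^2 p^3\int_\Y(\cdots)<0$, so re-running Lemma~\ref{lem-prepare2} with $-A$ produces a negative, not a positive, bulk; the $\beta\mapsto-\beta$ invariance of the leading term of $A$ that you invoke is precisely the source of the obstruction, not the cure. (For what it is worth, the paper's Appendix~\ref{ap-lem1} has the same blind spot: the displayed inequality $\tfrac12\big[\Div(M\nabla v)\big]^2 \ge 2|\beta|p\,r^{-p-2}e^{-\beta r^{-p}}(x\cdot M\nabla w)(\cdots)$ follows from $(a+b+c)^2\ge 2a(b+c)$ with the factor $2\beta$, not $2|\beta|$, and so is only justified for $\beta>0$.) Since every application of these Carleman estimates in the proof of Proposition~\ref{fund-pro} takes $\beta\ge\beta_\Lambda>0$, the honest repair is to restrict to positive $\beta$, not to attempt a sign flip inside Lemma~\ref{lem-prepare2}.
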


\begin{remark} \rm \label{rem-lem1} It is worth noting that we do not assume that $M$ has a positive lower bound in \Cref{lem1}; the term $ \langle M \nabla v, \nabla v \rangle $ still appears in the conclusion. We only assume \eqref{lem1-as00}  for $x \in \Y$. 
Moreover, the constant $\Lambda$ encodes only partly the information of the Lipschitz property of $M$ through \eqref{lem1-as00} and \eqref{lem1-as1}. 
Conditions \eqref{lem1-as00}-\eqref{lem1-as1} are satisfied for the new set of matrix-valued functions obtained from ${\M}^{\ell}$ by the conformal type map $\big(\rr \cos(\theta/n), \rr \sin (\theta/n), \tx \big)$, see \Cref{sect-heu} and the proof of \Cref{fund-pro}. 
\end{remark}

The proof of \Cref{lem1} is quite standard from Lemmas~\ref{lem-prepare1} and \ref{lem-prepare2}. For the convenience of the reader, we give the details in \Cref{ap-lem1} so that  one can keep track the dependence of constants on various parameters. 

\medskip 

Another ingredient in the proof of the key result of this section, \Cref{lem-main}, is the following standard result: 

\begin{lemma}\label{lem2}  Let $p \ge 1$,  $\beta \in \mR$, and $v \in H^2(\Y)$,  and let $M$ be a Lipschitz, symmetric, uniformly elliptic, matrix-valued function defined in $\Y$. Then
\begin{multline*}
 \int_\Y e^{2\beta r^{-p}} v \,  \Div (M\nabla v) +  \int_\Y  \frac{1}{2} e^{2 \beta r^{-p}} \langle M \nabla v, \nabla v \rangle \\[6pt]
 \le \int_\Y  C |M| \beta^2 p^2 r^{-2p -2} e^{2 \beta r^{-p}}  |v|^2+ \int_{\pY}  C |M| e^{2\beta r^{-p}} \Big(|\nabla v|^2 +  |v|^2 \Big) 
\end{multline*}
for some positive constant $C$ depending only on $d$. 
\end{lemma}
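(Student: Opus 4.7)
The plan is to prove the inequality by a single integration by parts followed by a weighted Cauchy--Schwarz absorption and a routine trace estimate. The identity being invoked is entirely algebraic, so the argument is short; the only point requiring any care is choosing the Cauchy--Schwarz weights so that the final right-hand side has the advertised form $|M|\beta^2 p^2 r^{-2p-2}$ rather than a quantity involving the Lipschitz norm of $M$.

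First I would note that $\nabla(e^{2\beta r^{-p}}) = -2\beta p\, r^{-p-2}\, x\, e^{2\beta r^{-p}}$. Integrating by parts the divergence term gives
\begin{equation*}
\int_\Y e^{2\beta r^{-p}} v\, \Div(M\nabla v) = -\int_\Y e^{2\beta r^{-p}} \langle M\nabla v, \nabla v\rangle + 2\beta p \int_\Y r^{-p-2} e^{2\beta r^{-p}} v\, \langle Mx, \nabla v\rangle + \int_{\pY} e^{2\beta r^{-p}} v\, M\nabla v\cdot \nu.
\end{equation*}
Moving the quadratic gradient term to the left-hand side and then retaining only half of it, it remains to control the ``cross'' term
\begin{equation*}
I := 2\beta p \int_\Y r^{-p-2} e^{2\beta r^{-p}} v\, \langle Mx, \nabla v\rangle.
\end{equation*}

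Next I would apply the $M$-weighted Cauchy--Schwarz inequality $|\langle Mx,\nabla v\rangle| \le \langle Mx,x\rangle^{1/2}\langle M\nabla v,\nabla v\rangle^{1/2}$, followed by Young's inequality $ab \le \tfrac12 b^2 + \tfrac12 a^2$ calibrated so that the $b^2$-term is exactly $\tfrac12 e^{2\beta r^{-p}}\langle M\nabla v,\nabla v\rangle$. This yields
\begin{equation*}
|I| \le \frac{1}{2}\int_\Y e^{2\beta r^{-p}}\langle M\nabla v,\nabla v\rangle + 2\beta^2 p^2 \int_\Y r^{-2p-4} e^{2\beta r^{-p}} \langle Mx,x\rangle |v|^2.
\end{equation*}
Since $\langle Mx,x\rangle \le |M|\, r^2$ and $|x|\le 1$ on $\Y$, the second integral is bounded by $C|M|\beta^2 p^2 \int_\Y r^{-2p-2} e^{2\beta r^{-p}} |v|^2$. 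The first integral is absorbed into the gradient term that was retained, which produces the coefficient $1/2$ in front of $\int_\Y e^{2\beta r^{-p}}\langle M\nabla v,\nabla v\rangle$ on the left-hand side of the claimed inequality.

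Finally, for the boundary contribution $\int_{\pY} e^{2\beta r^{-p}} v\, M\nabla v\cdot \nu$, one applies the elementary bound $|v\, M\nabla v\cdot \nu| \le |M|\bigl(|v|^2 + |\nabla v|^2\bigr)$ (Cauchy--Schwarz and $2ab\le a^2+b^2$), which matches the boundary term on the right-hand side up to a dimensional constant. Combining these three contributions gives the stated inequality with a constant $C$ depending only on $d$. The main (mild) obstacle is ensuring the constant multiplying $|v|^2$ is of the form $|M|\beta^2 p^2 r^{-2p-2}$ and not $|M|^2$; this is precisely why the $M$-weighted Cauchy--Schwarz (rather than a naive one) must be used.
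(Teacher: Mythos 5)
Your proof is correct and follows essentially the same route as the paper's: a single integration by parts producing the gradient term and the weighted cross term, then the $M$-weighted Cauchy--Schwarz and Young's inequality to absorb half of $\langle M\nabla v,\nabla v\rangle$ while leaving $|M|\beta^2p^2r^{-2p-2}|v|^2$, and finally the elementary Cauchy--Schwarz bound on the boundary integral. Your closing remark about needing the $M$-weighted (rather than naive) Cauchy--Schwarz to keep the constant linear in $|M|$ is precisely the point the paper's chosen estimate encodes.
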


The proof of \Cref{lem2} is as usual. For the convenience of the reader, we reproduce the proof in  \Cref{ap-lem2}. 

\medskip 

Combining the inequalities in Lemmas \ref{lem1} and \ref{lem2}, we obtain
\begin{lemma}\label{lem-main}
	 Let $\beta \in \mR$,  $v \in H^2(\Y)$ and let $M$ be a Lipschitz, symmetric, uniformly elliptic, matrix-valued function defined in $\Y$. Assume that $|x| \le 1$ for $x \in \Y$, and,  for some $\Lambda \ge 1$, the following three conditions hold in $\Y$: 
$$
\langle M x, x \rangle \ge \Lambda^{-1} |x|^2, 
$$
$$
|M| + |\dive(Mx)| + |x|^{-2} |\nabla (x \cdot M x) \cdot Mx | \le \Lambda, 
$$
	 \begin{equation*}
	 |\langle B y, y \rangle| \le \Lambda \langle M y, y \rangle \mbox{ for } y \in \mR^d, 
	 \end{equation*}
where $\langle B y, y  \rangle $ is defined in \eqref{def-B}.  There exist two constants $p_{\Lambda}, \beta_\Lambda \geq 1$ such that if   $p \ge p_{\Lambda}$ and  $|\beta| \ge \beta_\Lambda$,  then
\begin{multline}
	\int_{\Y} \testfnn \Big(p^3 \beta^2 r^{-2p - 2}  |v|^2 +  \langle M \nabla v, \nabla v \rangle  \Big) \\[6pt]
	\leq C_{\Lambda} \int_\Y \frac{1}{p |\beta|} r^{p+2} \testfnn|\Div(M\nabla v)|^2   + C_{\Lambda} \int_{\pY}    e^{2\beta r^{-p}}  \Big(|\nabla v|^2 + p^2 \beta^2  r^{-2p -2}|v|^2 \Big),
\end{multline}
for some positive constant $C_\Lambda$ depending only on  $\Lambda$ and $d$. 
\end{lemma}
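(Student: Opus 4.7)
The plan is to combine the two ingredients \Cref{lem1} and \Cref{lem2} in the obvious way: \Cref{lem1} already produces the desired $|v|^2$-term with the correct weight $p^3\beta^2 r^{-2p-2}$, but at the price of a stray $\langle M\nabla v,\nabla v\rangle$-term on the right-hand side with coefficient $\sim C\Lambda$; \Cref{lem2} is tailor-made to absorb that stray term, but then introduces a gradient–Laplacian cross term $\int e^{2\beta r^{-p}}v\,\Div(M\nabla v)$ which must be treated by Young's inequality. First I would write down the inequality of \Cref{lem1} and, separately, the inequality of \Cref{lem2}, both for parameters $p\ge p_\Lambda$ and $|\beta|\ge \beta_\Lambda$ satisfying the hypotheses of the two lemmas (enlarging $p_\Lambda,\beta_\Lambda$ if necessary).

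Next I would estimate the cross term using a weighted Cauchy--Schwarz: for any $x\in\Y$,
\begin{equation*}
\bigl|e^{2\beta r^{-p}}\,v\,\Div(M\nabla v)\bigr|
\le \frac{1}{2p|\beta|}r^{p+2}e^{2\beta r^{-p}}|\Div(M\nabla v)|^{2}+\frac{p|\beta|}{2}r^{-p-2}e^{2\beta r^{-p}}|v|^{2}.
\end{equation*}
Since $r\le 1$ on $\Y$ and $p,|\beta|\ge 1$, the second term is bounded by $\tfrac{1}{2}p^{3}\beta^{2}r^{-2p-2}e^{2\beta r^{-p}}|v|^{2}$, i.e.\ is of exactly the type that appears on the left of the desired inequality, with a harmless constant. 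Using $|M|\le \Lambda$, this converts the conclusion of \Cref{lem2} into
\begin{equation*}
\tfrac{1}{2}\!\int_{\Y}\!\testfnn\langle M\nabla v,\nabla v\rangle
\le\!\int_{\Y}\!\tfrac{1}{2p|\beta|}r^{p+2}\testfnn|\Div(M\nabla v)|^{2}+C\Lambda\!\int_{\Y}\!p^{2}\beta^{2}r^{-2p-2}\testfnn|v|^{2}+\text{bdry},
\end{equation*}
with boundary term $C\Lambda\int_{\pY}e^{2\beta r^{-p}}(|\nabla v|^{2}+|v|^{2})$.

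Now I would form the linear combination (\Cref{lem1}) $+\,K_{\Lambda}\cdot($modified \Cref{lem2}$)$, where the constant $K_{\Lambda}$ depending only on $\Lambda$ and $d$ is chosen so large that the $\langle M\nabla v,\nabla v\rangle$-term produced by \Cref{lem2} dominates, by a factor of two, the corresponding negative term $-C\Lambda\langle M\nabla v,\nabla v\rangle$ appearing on the right of \Cref{lem1}. This yields, on the left-hand side,
\begin{equation*}
\Lambda^{-2}p^{3}\beta^{2}\!\int_{\Y}\!r^{-2p-2}\testfnn|v|^{2}+\tfrac{K_{\Lambda}}{4}\!\int_{\Y}\!\testfnn\langle M\nabla v,\nabla v\rangle,
\end{equation*}
while on the right-hand side the $|\Div(M\nabla v)|^{2}$-term is of the required form and the extra $|v|^{2}$-term has coefficient $C_{\Lambda}p^{2}\beta^{2}r^{-2p-2}e^{2\beta r^{-p}}$, strictly smaller (for $p\ge p_{\Lambda}$ enlarged one last time) than $\tfrac{1}{2}\Lambda^{-2}p^{3}\beta^{2}r^{-2p-2}e^{2\beta r^{-p}}$, so it is absorbed into the LHS. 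Dividing through by $\min(\Lambda^{-2},K_{\Lambda}/4)$ gives the claimed estimate with a constant $C_{\Lambda}$ depending only on $\Lambda$ and $d$.

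There is no real obstacle; the only point that requires attention is bookkeeping. One must verify, at each step, that all constants and all thresholds $p_{\Lambda}$, $\beta_{\Lambda}$ depend only on $\Lambda$ and $d$ and in particular not on $|\beta|$, $p$, or finer norms of $M$ — which is possible precisely because \Cref{lem1} and \Cref{lem2} have already been formulated so that only the quantities controlled by the hypotheses of \Cref{lem-main} enter. The power $p^{3}\beta^{2}$ on the left, versus the weight $\tfrac{1}{p|\beta|}r^{p+2}$ on the right, is exactly the margin needed both to beat the $p^{2}\beta^{2}$ coefficient produced by \Cref{lem2} through the Cauchy--Schwarz split above and to force the cross-term $p|\beta|\,r^{-p-2}|v|^{2}$ to be dominated by a small fraction of $p^{3}\beta^{2}r^{-2p-2}|v|^{2}$ once $p$ is sufficiently large.
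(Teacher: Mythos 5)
Your overall plan is the paper's: combine \Cref{lem1} and \Cref{lem2}, estimate the cross term $\int e^{2\beta r^{-p}} v\,\Div(M\nabla v)$ by Young's inequality, and absorb. But there is a quantitative slip that, as written, breaks the absorption step. When you bound the Young remainder $\frac{p|\beta|}{2}r^{-p-2}e^{2\beta r^{-p}}|v|^2$ by $\frac12 p^{3}\beta^{2}r^{-2p-2}e^{2\beta r^{-p}}|v|^{2}$ \emph{before} forming the $K_\Lambda$-weighted combination, you discard precisely the margin you later need. Indeed, the ``modified \Cref{lem2}'' must carry this term, so after multiplying by $K_\Lambda$ you pick up $\frac{K_\Lambda}{2}p^{3}\beta^{2}r^{-2p-2}e^{2\beta r^{-p}}|v|^{2}$ on the right. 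Since the left-hand side of the combined inequality only carries $\Lambda^{-2}p^{3}\beta^{2}r^{-2p-2}e^{2\beta r^{-p}}|v|^{2}$, absorption would require $K_\Lambda < 2\Lambda^{-2}$, which is incompatible with needing $K_\Lambda \ge 2C\Lambda$ to control the $\langle M\nabla v,\nabla v\rangle$ term from \Cref{lem1}. In short, the coefficient you wrote for the ``extra $|v|^2$-term'' in the modified \Cref{lem2} ($C\Lambda p^{2}\beta^{2}$) is not what Young actually produces — the $p^{3}\beta^{2}$ piece is there too, and with the constant $1/2$ it is not harmless.

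The fix is exactly what the paper does (and what you half-state in your last paragraph): do not convert $p|\beta|r^{-p-2}$ into $p^{3}\beta^{2}r^{-2p-2}$ prematurely. Keep the Young remainder in the form $p|\beta|r^{-p-2}e^{2\beta r^{-p}}|v|^{2}$ (or, equivalently, choose the Young parameter $\eps$ proportional to $1/K_\Lambda$), carry it through the $K_\Lambda$-weighted combination, and only at the very end use $p|\beta|r^{-p-2}\le \frac{\Lambda^{2}}{p^{2}|\beta|}\cdot\Lambda^{-2}p^{3}\beta^{2}r^{-2p-2}$, so that the factor $\frac{\Lambda^{2}}{p^{2}|\beta|}$ can be made $\le \frac{1}{4K_\Lambda}$ by taking $p_\Lambda$ large (depending on $\Lambda$ and $d$, hence on $K_\Lambda$). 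This restores the absorption and yields the claimed estimate with $C_\Lambda$ depending only on $\Lambda$ and $d$.
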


The proof of \Cref{lem-main} is quite standard after using Lemmas \ref{lem1} and \ref{lem2}. For the convenience of the reader, we ``reproduce" the proof in \Cref{lem-main} in \Cref{ap-lem-main}.

\subsection{Main step of the proof  Theorem~\ref{fund-thm}}  \label{sect-mainstep} This section, which is the main step of the proof of \Cref{fund-thm},  is devoted to the proof of  the following result

\begin{proposition} \label{fund-pro}  Let $d \ge 2$, $m \ge 1$, $\Lambda \ge 1$,  and $R_* < R < R^*$.  Then, for any  $\alpha \in (0, 1)$, there exists a  constant $\gamma_2 \in (0, 1)$, depending only on $\alpha$, $\Lambda$, $R_*$, $R^*$,   $m$, and $d$ such that
for every $\gamma_1 \in (0,  \gamma_2)$, there exists  $\gamma_0 \in (0, \gamma_1)$ depending only on $\alpha$, $\gamma_1$, $\gamma_2$, $\Lambda$, $R_*$, $R^*$, $m$, and $d$ such that,   for real, symmetric, uniformly elliptic, Lipschitz matrix-valued functions  $\M^\ell$ with $1 \le \ell \le m$  defined in $D_{\gamma_0}: = Y_{\gamma_0, 1,  R}$  verifying, in $D_{\gamma_0}$, 
\begin{equation}\label{fund-pro-pro-M}
\Lambda^{-1} |\xi|^2 \le \langle \M^{\ell} (x) \xi, \xi \rangle \le \Lambda |\xi|^2  \; \;  \forall \,  \xi \in \mR^d  \quad \mbox{ and } \quad   |\nabla \M^{\ell}(x) | \le \Lambda, 
\end{equation}
 for $g \in L^2(D_{\gamma_0})$, and for  $V = (V_1, \dots, V_m)  \in [H^2(D_{\gamma_0})]^m$ satisfying, for  $1 \le \ell \le m$,  
\begin{equation}\label{fund-pro-Ineq} 
|\dive (\M^\ell \nabla V_\ell)| \le  \Lambda_1 \big( |\nabla V| + |V| + |g| \big)  \mbox{ in } D_{\gamma_0} \mbox{ for some } \Lambda_1 \ge 0, 
\end{equation}
we have, with $\Sigma_{\gamma_0} = \partial D_{\gamma_0} \cap \big\{ x_1= 0 \big\}$,  
\begin{multline}\label{fund-pro-S}
\| V \|_{H^1(Y_{\gamma_1, \gamma_2, \frac{R}{2}})} \\[6pt]
 \le C \Big( \|(V, \nabla V)\|_{L^2(\Sigma_{\gamma_0})} + \| g\|_{L^2(D_{\gamma_0})} \Big)^{\alpha}  
\Big(\| V\|_{H^1(D_{\gamma_0})} + \|(V, \nabla V)\|_{L^2(\Sigma_{\gamma_0})} + \| g\|_{L^2(D_{\gamma_0})}  \Big)^{1-\alpha},  
\end{multline}
for some positive constant $C$ depending only on $\alpha, \, \gamma_1, \,  \Lambda, \, \Lambda_1, \, R_*, \, R^*, \,  m$, and  $d$. 
\end{proposition}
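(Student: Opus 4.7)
The plan is to unfold the half-annulus $D_{\gamma_0}$ into a thin sector by a conformal-type change of variables that carries the partial-data set $\Sigma_{\gamma_0}$ onto the two lateral walls of the sector, and then to extract the three-sphere estimate from the Carleman inequality of Lemma~\ref{lem-main} applied in the transformed domain. Fix an integer $n \ge 1$, to be chosen large depending on $\alpha$ at the end, and use polar coordinates $(\hat r, \theta)$ on $(x_1, x_2)$. Define
\begin{equation*}
\Phi_n \colon (\hat r, \theta, \tilde x) \longmapsto \bigl(\hat r^{1/n}\cos(\theta/n),\ \hat r^{1/n}\sin(\theta/n),\ \tilde x\bigr).
\end{equation*}
Then $\Phi_n$ sends $D_{\gamma_0}$ onto a thin sector $\widetilde D$ of angular aperture $\pi/n$ and radial extent $[(\gamma_0 R)^{1/n}, R^{1/n}]$ times $\{|\tilde x| < R\}$, and it carries $\Sigma_{\gamma_0}$ onto the two lateral walls $\theta' = \pm\pi/(2n)$ of $\widetilde D$. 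Setting $\widetilde V_\ell = V_\ell\circ\Phi_n^{-1}$ and $\widetilde g = g\circ\Phi_n^{-1}$, a direct computation shows that $\widetilde V_\ell$ satisfies on $\widetilde D$ an elliptic inequality $|\dive(\widetilde{\M}^\ell\nabla\widetilde V_\ell)| \le \widetilde\Lambda_1(|\nabla\widetilde V| + |\widetilde V| + |\widetilde g|)$ for transformed matrices $\widetilde{\M}^\ell$ and a transformed constant $\widetilde\Lambda_1$.

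The main obstacle is that the pulled-back matrices $\widetilde{\M}^\ell$ lose uniform ellipticity as $n \to \infty$: their smallest eigenvalue, in the angular direction, shrinks like $n^{-2}$. However, because $\Phi_n$ acts on the $(x_1, x_2)$-plane as a pure power of the complex variable, the \emph{radial} quadratic form $\langle \widetilde{\M}^\ell(x) x, x\rangle$ remains comparable to $|x|^2$ with constants depending only on the original $\Lambda$, and a similar computation controls $|\dive(\widetilde{\M}^\ell x)|$, $|x|^{-2}|\nabla(x\cdot\widetilde{\M}^\ell x)\cdot\widetilde{\M}^\ell x|$, and the quadratic form $B$ associated with $\widetilde{\M}^\ell$ as in \eqref{lem1-as1}, all with constants independent of $n$. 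These are precisely the assumptions of Lemma~\ref{lem-main}---radial ellipticity plus a structural bound on $B$, rather than full uniform ellipticity---and the $n$-independence of the constants is what will ultimately permit the interpolation exponent $\alpha$ to be taken arbitrarily close to $1$. Verifying these structural bounds carefully, with all constants uniform in $n$, is the delicate step of the argument.

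With these verifications in place, I would apply Lemma~\ref{lem-main} to each $\widetilde V_\ell$ on a subdomain $\widetilde Y \Subset \widetilde D$, obtained from $\widetilde D$ by a smooth cutoff $\chi$ in the $\tilde x$-direction together with radial cutoffs concentrated at two radii $r_1' < r_3'$ (the radial cutoffs are arranged to vanish on the inner and outer arcs, so that these boundary contributions disappear and only commutator errors supported in the transition strips remain). After summing in $\ell$ and using \eqref{fund-pro-Ineq} to absorb $|\dive(\widetilde{\M}^\ell\nabla\widetilde V_\ell)|^2$ into the bulk term $p^3\beta^2 r^{-2p-2}|\widetilde V|^2 + \langle \widetilde{\M}^\ell\nabla\widetilde V, \nabla\widetilde V\rangle$ (valid for $\beta, p$ large in terms of $\widetilde\Lambda_1$), I obtain an estimate of the form
\begin{equation*}
e^{2\beta(r_2')^{-p}}\|\widetilde V\|_{H^1(\widetilde Y_{\text{mid}})}^2 \le C\bigl(e^{2\beta(r_1')^{-p}} A^2 + e^{2\beta(r_3')^{-p}} B^2 + \mathcal{E}_{\text{lat}}\bigr),
\end{equation*}
where $A^2$ collects the $H^1$-mass on the inner transition, $B^2$ collects that on the outer transition together with that on all of $\widetilde D$, and $\mathcal{E}_{\text{lat}}$ is the lateral-wall boundary contribution, which pulls back to $\|(V,\nabla V)\|_{L^2(\Sigma_{\gamma_0})}^2$. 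Minimizing in $\beta \ge \beta_\Lambda$ produces an interpolation with exponent $\alpha^\ast = \bigl((r_3')^{-p}-(r_2')^{-p}\bigr)/\bigl((r_3')^{-p}-(r_1')^{-p}\bigr)$; choosing $n$ large and the ratios $r_2'/r_3'$, $r_1'/r_2'$ appropriately pushes $\alpha^\ast$ above the prescribed $\alpha$. Setting $\gamma_2 = (r_3')^n/R$, $\gamma_1 = (r_2')^n/R$, and taking $\gamma_0$ small enough that the inner-transition term is swallowed by the $\Sigma_{\gamma_0}$-contribution (this is where the separation $\gamma_0 \ll \gamma_1$ enters), and pulling back through $\Phi_n^{-1}$, yields \eqref{fund-pro-S}.
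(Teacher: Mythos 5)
Your overall strategy is the right one and matches the paper's: push forward through the conformal‐type map $L_n(\hat r,\theta,\tilde x)=(\hat r^{1/n}\cos(\theta/n),\hat r^{1/n}\sin(\theta/n),\tilde x)$, which unfolds the half‐annulus into a thin sector whose lateral walls carry the partial data; then apply the Carleman inequality of Lemma~\ref{lem-main}, whose hypotheses are deliberately weaker than uniform ellipticity, and optimize in $\beta$ so that the exponent $\alpha^\ast\to1$ as $n\to\infty$. You also correctly identify the danger: the pushed‐forward matrices degenerate as $n\to\infty$, and the whole point is that the \emph{structural} quantities entering Lemma~\ref{lem-main} must be controlled uniformly in $n$.

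The gap is precisely at that point: the claim that the pure power map $\Phi_n=L_n$ alone yields $n$-uniform bounds on $|\dive(\widetilde{\M}^\ell x)|$, $|x|^{-2}|\nabla(x\cdot\widetilde{\M}^\ell x)\cdot\widetilde{\M}^\ell x|$, and the form $B$ is false when $\M^\ell$ is not isotropic. Write $K_n=\nabla\Phi_n/|\det\nabla\Phi_n|^{1/2}$ and $M_n=K_nA_nK_n^{\mathsf T}$ with $A_n=\M^\ell\circ\Phi_n^{-1}$. In two dimensions $K_n$ is the rotation $R((n-1)\theta)$, so $|K_n|\le C$ but $|\nabla K_n|\sim n^2$ on the image region (where $\hat r\sim1/n$). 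If, say, $\M^\ell\equiv\operatorname{diag}(1,\Lambda)$ is a constant anisotropic matrix, then
\begin{equation*}
M_n(x)=R(\psi)\operatorname{diag}(1,\Lambda)R(-\psi)=\tfrac{1+\Lambda}{2}I+\tfrac{1-\Lambda}{2}\begin{pmatrix}\cos2\psi&\sin2\psi\\\sin2\psi&-\cos2\psi\end{pmatrix},\qquad \psi=(n-1)\theta,
\end{equation*}
so $|\nabla M_n|\sim n^2|1-\Lambda|$, and since $|x|\sim1/n$ in the relevant $\Y_n$ one gets $|\dive(\hat M_nx)|\sim n|1-\Lambda|$, which blows up. (The radial ellipticity $\langle M_nx,x\rangle\gtrsim|x|^2$ does survive, as you say, because $K_n$ is an isometry on the $(x_1,x_2)$-plane — but the divergence and $B$-form bounds involve $\nabla M_n$ and are \emph{not} a ``similar computation''.) With those constants growing like $n$, the Carleman parameters $p_\Lambda,\beta_\Lambda$ of Lemma~\ref{lem-main} are forced to grow with $n$ as well, and then the ratio $\rho(n)$ in the optimization no longer tends to $1$: the exponent saturates and you cannot reach a prescribed $\alpha$ close to $1$, defeating the whole construction.

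What is missing is the linear preconditioning step the paper performs before applying $L_n$: localize near a base point $z_0=(0,0,\tilde z_0)$ (covering argument), diagonalize $\M^\ell(z_0)=Q\operatorname{diag}(\lambda_1,\dots,\lambda_d)Q^{\mathsf T}$, set $H=Q_1SQ^{\mathsf T}$ with $S=\operatorname{diag}(\lambda_j^{-1/2})$ and $Q_1$ a suitable rotation, and use $T_n=L_n\circ H$ instead of $L_n$. The effect is that $M_n(Z_0)=\gamma P$ has the fixed block form of \eqref{LF-CV}, so one can split $M_n(x)=K_n(A_n(x)-A_n(Z_0))K_n^{\mathsf T}+\gamma P$: the first term carries a small factor $|A_n(x)-A_n(Z_0)|\lesssim1/n$ which exactly compensates the $n^2$ growth of $|\nabla K_n|$, and the second has no angular variation. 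This is the decomposition \eqref{heu-key} and it is what gives the $n$-uniform bounds \eqref{LF-claim1-C}–\eqref{LF-claim2-C}. Your outline needs both this linear normalization $H$ and the associated covering over $\tilde z_0$; without them the delicate step you correctly flagged as the crux cannot be closed.
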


Recall that $Y_{\gamma_1, \gamma_2, R}$ is defined in \eqref{def-Y-M}.

The rest of this section consisting of two subsections. In the first one, we give the heuristic arguments when $d=2$, $\Lambda_1 = 0$, and $m=1$ (this means that one deals with an  elliptic equation instead of elliptic inequalities). The proof of \Cref{fund-pro} is given in the second subsection.

\subsubsection{Heuristic arguments in two dimensions for an elliptic equation}\label{sect-heu}
Before giving the proof of \Cref{fund-pro}, which is quite technical, let us describe here the heuristic arguments  in two dimensions when $m= 1$ and   $\Lambda_1= 0$. We first briefly reformulate heuristically the analysis  given in \cite{Ng-CALR-O} in the case  where $\M^1 = I$, i.e., $\Delta V = 0$. By scaling, one might  assume that $R = 1$, which is assumed from later on. 

Define, for $n > 1$ (large), 
\begin{equation}\label{heu-T}
T_n  (x) = \big(r^{1/n} \cos (\theta/n), r^{1/n} \sin (\theta/n) \big) \mbox{ for } x \in \mR^2_+, 
\end{equation}
and set 
\begin{equation}\label{heu-cT}
\T_n : = T_n (\mR^2_+) \cap \Big\{ x \in \mR^2;  1/ (4n) < |x| < 2/ n \Big\}. 
\end{equation}
Define 
\begin{equation}\label{heu-U}
U_n : = V  \circ T_n^{-1} \mbox{ for } x  \in \T_n. 
\end{equation}
Since $\Delta V = 0$ in $D_{\gamma_0}$, it follows that 
$$
\Delta U_n = 0 \mbox{ in } \T_n 
$$
provided that  $n$ is large but fixed and $\gamma_0$ is sufficiently small so that $T_n^{-1} (\T_n) \subset D_{\gamma_0}$. The assumption $d=2$ and $\Delta V = 0$ are essential here. Set 
$$
R_1 = R_1(n) = 1/n, \quad R_2  = R_2(n) = R_1(n) + 8/n^2, \quad R_3 = R_3 (n) = 5/(4n), 
$$
and denote 
$$
\hat Z_0 = \big( 1/n, 1/n  + \pi / (2n^2) \big)  \in \mR^2. 
$$
See \Cref{fig-heu} for a picture of $\T_n$, $\hat Z_0$, etc. 
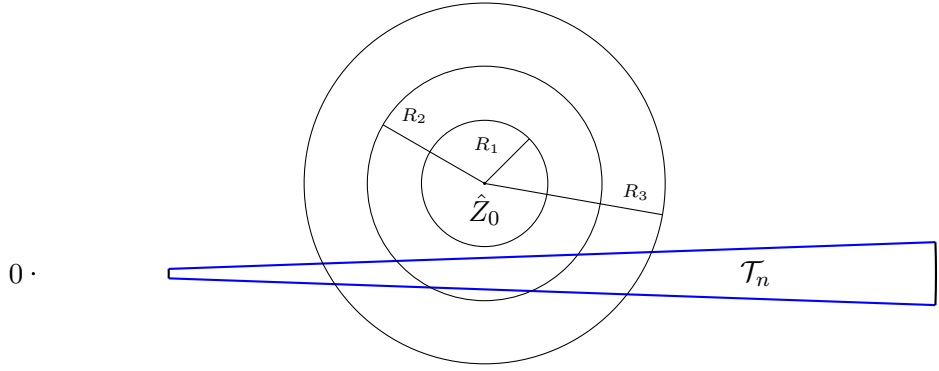
\begin{figure}
\centering
\begin{tikzpicture}[scale=1.2]

\fill[black](0,0)   circle(0.5pt);

\draw(0,0)   node[left]{$0$};

\draw[blue,thick] ({1.5*cos(2)}, {1.5*sin(2)}) -- ({10*cos(2)}, {10*sin(2)});
\draw[blue,thick] ({1.5*cos(2)}, {-1.5*sin(2)})  -- ({10*cos(2)}, {-10*sin(2)});
\draw [black,thick, domain =-2:2] plot ({10*cos(\x)}, {10*sin(\x)});
\draw [black,thick,domain =-2:2] plot ({1.5*cos(\x)}, {1.5*sin(\x)});

\draw [] (5,1) circle (1.3 cm);

\draw [] (5,1) circle (2. cm);

\draw [] (5,1) -- ({5+2*cos(-10)},{1+2*sin(-10)});

\draw  ({5+1.7*cos(-10)},{1+1.7*sin(-10)})  node[left, above]{{\tiny $R_3$}};

\draw[ -] (5,1) -- ({5+1.3*cos(150)},{1+1.3*sin(150)});
\draw[] ({5+1.2*cos(150)},{1.1+1.3*sin(150)}) node[right]{{\tiny $R_2$}};

\draw[black] (5,1) circle (0.7 cm);

\draw[-] (5,1) -- ({5 + 0.7*cos(45)}, {1+0.7*sin(45)});

\draw ({5 + 0.4*cos(45)}, {1+0.6*sin(45)})  node[left]{{\tiny $R_1$}};

\fill[black](5,1)   circle(0.5pt);

\draw (5, 1) node[right,below]{$\hat Z_0$};

\draw (8, 0) node[]{$\T_n$};

\end{tikzpicture}
\caption{Geometry of  $\T_n$ and $\hat Z_0$ in two dimensions.}\label{fig-heu}
\end{figure}

We can now derive the conclusion by  applying  the (Hadamard) there-sphere inequality for the functions $U_n \mathds{1}_{\T_n} - W_n$ for the spheres centered $\hat Z_0$ and of radius $R_1$, $R_2$, and $R_3$. Here $W_n$ is chosen such that  
$$
\Delta W_n = 0 \mbox{ in } B_1 \setminus \partial T_n, 
$$
$$
[\partial_\nu W_n] - [ \partial_\nu (U_n \mathds{1}_{\T_n}) ] = [W_n] - [ U_n \mathds{1}_{\T_n}]   = 0  \mbox{ on } \partial T_n \cap B_{4/(3n)}(\hat Z_0)
$$
(where $[\cdot]$ denotes the jump), 
and 
$$
\|W_n\|_{H^1(B_1 \setminus \partial \T_n)} \le C_n \| U_n \|_{\bH(\partial \T_n \cap B_{2/n}( \hat Z_0))}. 
$$ 
Since $\Delta (U_n \mathds{1}_{\T_n}- W_n) = 0$  in $B_{4/(3n)} (\hat Z_0)$, we have, by a variant of the Hadamard three-sphere inequality \footnote{The $L^2$-version for balls of Hadamard's three-sphere inequality is also valid.},  
\begin{equation}\label{heu-1}
\| U_n \mathds{1}_{\T_n} - W_n \|_{L^2\big(B_{R_2}(\hat Z_0) \big)} \le \| U_n \mathds{1}_{\T_n} - W_n \|_{L^2 \big(B_{R_1}(\hat Z_0) \big)}^{\alpha'} \| U_n \mathds{1}_{\T_n} - W_n \|_{L^2\big(B_{R_3}(\hat Z_0) \big)}^{1 - \alpha'},  
\end{equation}
with 
$$
\alpha' = \ln (R_3/ R_2) / \ln (R_3 / R_1) \mbox{ which is close to 1 for large $n$}.   
$$
Hence $\alpha' > \alpha$ for $n$ sufficiently large. Fix such an $n$.  Since $\Delta U_n = 0$ in $\T_n$, the conclusion follows from the fact that, since $\T_n \cap B_{R_1}(\hat Z_0) = \emptyset$,  
$$
U_n \mathds{1}_{\T_n} - W_n = - W_n \mbox{ in } B_{R_1}(\hat Z_0), 
$$ 
and 
$$
\big(B_{1/n + c/ n^2} \setminus  B_{1/n - c/ n^2}  \big) \cap \T_n  \subset B_{R_2} (\hat Z_0) \cap \T_n 
$$
for some positive constant $c$ independent of $n$. 

We now extend the previous ideas for the general case (still for $m = 1$ and $\Lambda_1 = 0$). We first note that the equation of $U_n$ given in \eqref{heu-U} can be obtained by a change of variables rule. Indeed, set, in $\T_n$,  
\begin{equation}\label{heu-K}
K_n (x)= \frac{\nabla T_n}{\big|\det \nabla T_n \big|^{1/2} } \circ T_n^{-1}  (x), 
\end{equation} 
and 
\begin{equation}\label{heu-A-M}
A_n(x) = \M \circ T_n^{-1} (x), \quad \mbox{ and } \quad 
M_n (x) = K_n A_n K_n\tr (x).
\end{equation}
We have, see e.g.  \cite[Lemma 6]{NgV-A} (see also \cite[Section 2.2]{KOVW10}, 
$$
\dive (M_n \nabla U_n) = 0 \mbox{ in } \T_n.   
$$
One can check that 
$$
K_n (x)  =   \left( \begin{array}{cc}
 \cos \big( (n-1)\theta \big)  &\sin \big( (n-1)\theta \big)  \\[6pt]
 - \sin \big( (n-1)\theta \big) &  \cos \big( (n-1)\theta \big)
\end{array}\right) 
$$ 
(see also Step 1 of the proof of \Cref{fund-pro}).  This yields, if $\M = I$, that $
M_n(x) = I$ as previous noted ($\Delta U_n = 0$). 

We next consider the case where $\M$ is not constant but still $\M(0) = I$. We thus still have $M_n(0) = I$. The idea now is to use Carleman's  estimates  in \Cref{sect-UL}  instead of the Hadamard three-sphere inequality.  To this end, 
set, $\lambda \in \big(5/(4 n),   4/(3 n)\big)$, 
 \begin{equation}\label{heu-def-O}
\hat \T_n = \T_n - \hat Z_0 \quad \mbox{ and } \quad  \quad  \Y_n   = B_{\lambda}  \cap \hat \T_n, 
\end{equation}
\begin{equation}\label{heu-LF-def-hM-0}
\hat M_n (\cdot) = M_n (\cdot + \hat Z_0) \mbox{ in }  \Y_n,  
\end{equation}
(see the geometry in \Cref{fig2}).  At this stage, one can assume that $\lambda$ is fixed. The role of $\lambda$ is to deal with the boundary terms which can be ignored here. 

One can then check, for large $n$ that, in $\Y_n$, 
\begin{equation}\label{heu-lem1-as00}
\langle \hat M_n x, x \rangle \ge \Lambda^{-1} |x|^2,
\end{equation}
\begin{equation}\label{heu-lem1-as01}
|\hat M_n| +  |\dive(\hat M_n x)| + |x|^{-2} |\nabla (x \cdot \hat M_n x) \cdot \hat M_n x | \le \Lambda, 
\end{equation}
\begin{equation}\label{heu-lem1-as1}
|\langle \hat B_n y, y \rangle| \le \Lambda \langle \hat M_n y, y \rangle \mbox{ for } y \in \mR^d, 
\end{equation}
where $\hat B_n$ is defined in \eqref{def-B} corresponding for $\hat M_n$. The assumptions made in \Cref{sect-UL} are motivated from the structure of $\hat M_n$ and the geometry of $\Y_n$.  Indeed, \eqref{heu-lem1-as00} holds since $\hat M_n (0) \sim I$ \footnote{When $d \ge 3$, one can check this inequality only for $x \in \Y_n$, see Step 1 in the proof of \Cref{fund-pro}.}. On the other hand, since $|K_n| \le C$,  $|\nabla K_n| \le C n^2$ (we emphasize here that we mean $Cn^2$ not $C n$) and $|x| \le C/n$ for $|x| \sim 1/n$, and 
\begin{align}\label{heu-key}
K_n A_n K_n \tr (x) = & K_n (x) (A_n(x) - A_n(0))  K_n (x) \tr + K_n (x) A_n(0)  K_n (x) \tr \nonumber \\[6pt]
= & K_n (x) (A_n (x) - A_n(0))  K_n (x) \tr + I, 
\end{align}
it follows that, since $\M$ is Lipschitz,  
$$
|\nabla (K_n A_n K_n \tr)| \le C/ n \mbox{ for } |x| \sim 1/n. 
$$
One then can derive   \eqref{heu-lem1-as01}. Similarly, one can check \eqref{heu-lem1-as1}. We can now apply the results in \Cref{sect-UL}, more precisely \Cref{lem-main}, to derive the conclusion. 

As seen in \eqref{heu-key}, one of the key points to verify \eqref{heu-lem1-as01} and \eqref{heu-lem1-as1} is to have $A_n(0)$ isotropic, i.e.,  $A_n(0)$ is a multiple of the identity (this condition will be relaxed in three and higher dimensions, see \eqref{LF-CV}).  
The previous definition of $T_n$ does not imply this property when $\M(0)$ is not isotropic. 
To overcome this issue, instead of \eqref{heu-T}, 
we decompose the map defined in \eqref{heu-T} with the linear application $H$ where $H x =  SQ^T x$. Here 
$$
S =  \mbox{diag} (\lambda_1^{-1/2},  \lambda_{2}^{-1/2}), 
$$
and $Q$ is a rotation such that 
$$
Q\tr \M(z_0) Q = \mbox{diag}(\lambda_1,  \lambda_2). 
$$
One can check in this case that $K_n A_n (0) K_n^T = |\det H|^{-1} I = \lambda_1^{1/2} \lambda_2^{1/2} I$ (where $A_n$ is now defined via the new map) and the analysis follows similarly. This point is simple but is one of the key observations to extend the analysis for the general case. 

In three and higher dimensions, the analysis is in the same spirit after somehow fixing the last $(d-2)$ variables. Nevertheless, the analysis is more involved and further decompositions are required to somehow fix the plane $(x_1, x_2)$ (for which another rotation $Q_1$ is introduced, see \eqref{def-Tn}). The details are given in \Cref{sect-fund-pro}.  

\medskip 

We are ready to give 

\subsubsection{Proof of \Cref{fund-pro}} \label{sect-fund-pro} By a scaling argument, one might assume that  $R = 1$.  For simplicity of presentation, we will assume that $m = 1$ and drop the corresponding indices (e.g. $\M^1$ becomes $\M$, etc). Using a covering argument, it suffices to prove that  there exists a constant $\gamma_2 \in (0, 1)$, depending only on $\alpha$, $\Lambda$, $R_*$, $R^*$,   and $d$ such that
for every $\gamma_1 \in (0,  \gamma_2)$, there exist  $\gamma_0 \in (0, \gamma_1)$ and $\widetilde \gamma_0  \in (0, \gamma_1) $ depending only on $\alpha$,  $\gamma_1$, $\Lambda$, $R_*$, $R^*$, and $d$ such that for all $\tz_0 \in \mR^{d-2}$ with $|\tz_0| \le 1/2$, we have 
\begin{multline}\label{LF-equiv}
\mathop{\mathop{\int_{D_{\gamma_0}}}_{\gamma_1 <  \rr < \gamma_2}}_{|\tx - \tz_0| < \widetilde \gamma_0} (|V|^2 + |\nabla V|^2 )  
\le C \Big(  \| (V, \nabla V)\|_{L^2(  \Sigma_{\gamma_0})} + \|g \|_{L^2(D_{\gamma_0})}\Big)^{2  \alpha } \times \\[6pt] \times  \Big(  \| V\|_{H^1(D_{\gamma_0})}^2 + \| (V, \nabla V)\|_{L^2(\Sigma_{\gamma_0})}^2 +  \|g\|_{L^2(D_{\gamma_0})} \Big)^{2(1- \alpha)}, 
\end{multline}
where $V \in H^2(D_{\gamma_0})$ satisfies \eqref{fund-pro-Ineq}. 

Our goal is now to establish \eqref{LF-equiv}.  Fix  $n \ge 10 \Lambda$ \footnote{$n$ is not assumed to be an integer to facilitate handling a technical points right after Step 5.}, define $L_n: \mR^d \cap \{x \in \mR^d; x_1 \ge 0\} \to \mR^d$ by  \footnote{When $(x_1, x_2) = (0, 0)$, we define $L_n(x) = (0, 0, \tx)$ as a  convention.}
\begin{equation}\label{LF-def-Ln}
L_n(x_1, x_2, \tx) = \big(\rr^{1/n} \cos (\theta/n), \rr^{1/n} \sin (\theta/n), \tx \big).  
\end{equation}
Recall that,  for $d \ge 2$ and $x = (x_1, x_2, \tx) \in \mR \times \mR \times \mR^{d-2}$, we use the polar coordinate $(\rr, \theta)$ for the pair $(x_1, x_2)$;  the variable $\tx$ is irrelevant for $d = 2$.

Fix $\tz_0 \in \mR^{d-2}$ with $|\tz_0| \le 1/2$. Denote 
$$
z_0 = (0, 0, \tz_0).
$$ 
Let $Q$ be a rotation, i.e.,  $Q\tr Q = I$,  and $\Lambda^{-1} \le \lambda_1, \cdots  , \lambda_d  \le \Lambda$ be such that
$$
Q\tr \M(z_0) Q = \mbox{diag}(\lambda_1, \cdots, \lambda_d). 
$$
Since $\M(z_0)$ is symmetric and uniformly elliptic, such $Q$ and $\lambda_j$ ($1 \le j \le d$) exist; in fact $\lambda_j$ ($1 \le j \le d$) are  eigenvalues of $\M(z_0)$ and $Q$ is formulated from a corresponding  orthogonal eigenvectors of $\M(z_0)$.  

Set  
$$
S =  \mbox{diag} (\lambda_1^{-1/2}, \cdots, \lambda_{d}^{-1/2}), 
$$

Let $Q_1$ be a rotation such that 
\begin{equation}\label{coucoucou1}
Q_1\tr e_1, Q_1\tr e_2  \perp SQ\tr \big( \{ (0, 0, \tx); \; \tx \in \mR^{d-2} \} \big). 
\end{equation}
Moreover, since $S^{-1} Q\tr e_1  \perp SQ\tr \big( \{ (0, 0, \tx); \; \tx \in \mR^{d-2} \} \big)$, one can assume in addition that 
\begin{equation}\label{coucoucou}
Q_1\tr e_1 = S^{-1} Q\tr e_1/ |S^{-1} Q\tr e_1|. 
\end{equation}
Since  $SQ\tr (\mR^d \cap \{x_1 > 0 \})  = \{x \in \mR^d; \langle x,  S^{-1} Q\tr e_1 \rangle > 0 \}$ and $Q_1 (S^{-1} Q\tr e_1/ |S^{-1} Q\tr e_1|) =  e_1$ by \eqref{coucoucou}, one derives that 
$$
Q_1 S Q\tr (\mR^d \cap \{x_1 > 0 \}) = \mR^d \cap \{x_1 > 0 \}.
$$ 
From \eqref{coucoucou1}, we have 
\begin{equation}\label{pro1-Ln}
\mbox{the first two components of  $Q_1S Q\tr (0, 0, \tx)$ are 0} \mbox{ for } \tx \in \mR^{d-2}. 
\end{equation}

Fix such a rotation $Q_1$. Define  
\begin{equation}\label{def-Tn}
T_n = L_n \circ H \mbox{ where } H (x) = H x \mbox{ with } H = Q_1 S Q\tr. 
\end{equation}
The definition  of $H$ and the choice  of $Q_1$, which are motivated from the heuristic arguments in \Cref{sect-heu},  will be clearer when establishing  \eqref{LF-CV}.  

Denote
\begin{equation}\label{LF-hZ}
Z_0 = T_n (z_0). 
\end{equation}
By the choices of $Q_1$ and $L_n$, the first two components of $Z_0$ are 0, which yields   
$$
Z_0 = (0, 0, \tZ_0), 
$$
for some $\tZ_0 \in \mR^{d-2}$.  Set 
\begin{equation}\label{LF-z0}
\hat Z_0 =  \big(1/n,  1/n +   \Lambda \pi /  (2n^2), \tZ_0 \big)  =  Z_0 + \big(1/n,  1/n +   \Lambda \pi/  (2n^2), 0\big),
\end{equation}
\begin{equation}\label{LF-T}
\T_n =   \Big\{x = T_n(y); \;  y = (y_1, y_2, \ty) \in  \mR^d_+, |\ty| < 1; 1/(4n) < \rr (x) < 2/n \Big\}, 
\end{equation}
\begin{equation}\label{LF-def-Yn}
Y_n = T_n^{-1} (\T_n), \quad \mbox{ and } \quad \Sigma_{Y_n} = \partial Y_n \cap \{x \in \mR^d; x_1 = 0\} 
\end{equation}
(the geometry of $\T_n - \hat Z_0$ is given in \Cref{fig2} when  $d=2$). 
Define, in $\T_n$,  
\begin{equation}\label{LF-K-C-0}
K_n (x)= \frac{\nabla T_n}{\big|\det \nabla T_n \big|^{1/2} } \circ T_n^{-1}  (x), 
\end{equation} 
\begin{equation}\label{LF-def-M-1-C}
A_n(x) = \M \circ T_n^{-1} (x), \quad \mbox{ and } \quad 
M_n (x) = K_n A_n K_n\tr (x).
\end{equation}
The quantity $M_n$ appears in a change of variables $U_n  = V \circ T_n^{-1}$ (see \eqref{LF-def-VM}, \eqref{def-f-F} and \eqref{form-CV}).

For 
\begin{equation}\label{LF-lambda}
\lambda \in \big(5/(4 n),   4/(3 n)\big),
\end{equation}
set (see \Cref{fig2})
\begin{equation}\label{def-O}
\hat \T_n = \T_n - \hat Z_0 \quad \mbox{ and } \quad  \quad  \Y_n   = B_{\lambda}  \cap \hat \T_n, 
\end{equation}
\begin{equation}\label{LF-def-hM-0}
\hat M_n (\cdot) = M_n (\cdot + \hat Z_0) \mbox{ in }  \Y_n,  
\end{equation}
and, for $x \in \Y_n$ and $y \in \mR^d$, 
\begin{multline}\label{hB-LF}
  \langle \hat B_n (x) y, y \rangle =   \langle [(\hM_n y) \cdot \nabla_x]  (\hM_n(x) x), y \rangle +  \frac{1}{2} \langle \dive_x(\hM_n x) \hM_n(x) y, y \rangle \\[6pt]
  + \frac{1}{2} \langle [(\hM_n(x) x) \cdot \nabla_x ] \hM_n(x)  y , y \rangle. 
\end{multline}
Note that $\Y_n$ also depends on $\lambda$; however, the dependence is not written explicitly for notational ease.  

\medskip 

We are planning to derive \eqref{LF-equiv}  by applying \Cref{lem-main} to $\hM_n$ and $\hU_n$ (defined in \eqref{LF-def-VM} below) in $\Y_n$ for appropriate $n$. 
\medskip 

We claim that 
\begin{equation}\label{LF-claim0-C}
\langle \hM_n x, x \rangle \ge \hLambda^{-1} |x|^2 \quad \mbox{ in } \Y_n, 
\end{equation}
\begin{equation}\label{LF-claim1-C}
|\dive(\hM_n x)| + |x|^{-2} |\nabla (x \cdot \hM_n x) \cdot \hM_n x | \le \hLambda 
 \quad \mbox{ in } \Y_n,  
\end{equation}
\begin{equation}\label{LF-claim2-C}
|\langle \hB_n y, y \rangle | \le \hLambda \langle \hM_n y, y \rangle \quad   \mbox{ in } \Y_n, 
\end{equation}
for some $\hLambda \ge 1$,  for all $\lambda \in \big(5/(4 n),   3/(2 n)\big)$. Here and in what follows, $\hLambda$ denotes a positive constant  depending only on $\Lambda$  and $d$;  it is thus independent of $\tz_0$ and $n$. The proof of this claim is given in Step 1 below.

Let $p = p_{\hLambda}$ where $p_{\hLambda}$ is  the constant in \Cref{lem-main} corresponding to $\hLambda$ and $\hM_n$.  Set
\begin{equation}\label{def-ts-n}
\tau_n = (1/n  - \Lambda/ n^2)^n \quad \mbox{ and }  \quad s_n = (1/n  + \Lambda/ n^2)^n.
\end{equation}
Denote 
\begin{equation}\label{LF-def-R13}
R_1(n)   = 1/ n, \quad R_2 (n)    =  R_{1}(n) + 8 \Lambda / n^2,  \quad \mbox{ and } \quad  R_3 (n)   = 5/(4n),
\end{equation}
and define
\begin{equation}\label{LF-def-ha}
\rho (n) = \frac{R_1(n)^{-p} -  R_3 (n)^{-p}}{R_{2}(n)^{-p} -  R_3(n)^{-p}}.
\end{equation}
Note that 
$$
\lim_{n \to + \infty} \rho(n) = 1. 
$$

Let 
$$
n_0  = \min \Big\{n \in \mN; n \ge 10 \Lambda \mbox{ and } \rho(n) \ge (1 + \alpha)/2 \Big\}. 
$$
Set 
$$
\mbox{$\gamma_2 = s_{n_0}/\Lambda$ where $s_n$ is defined in \eqref{def-ts-n}}.
$$
Given $\gamma_1 < \gamma_2$, fix $N \in \mN$ with $N > n_0$ and  $\Lambda \tau_N \le \gamma_1$ where $\tau_n$ is defined in \eqref{def-ts-n}. Set 
$$
\gamma_0 = \Lambda^{-1}/(4N)^N. 
$$ 
In what follows in this proof, we always assume that $n_0 \le n \le N$. 
Define
\begin{equation}\label{LF-def-VM}
U_n(x) = V \circ T_n^{-1} (x)  \mbox{ for } x \in \T_n,  \quad \hU_n (\cdot) = U_n(\cdot + \hat Z_0)  \mbox{ in } \Y_n,  
\end{equation}
\begin{equation}\label{LF-def-hM}
g_n(x) = f \circ T_n^{-1} (x)  \mbox{ for } x \in \T_n,  \quad \mbox{ and } \quad \hg_n (\cdot) = g_n(\cdot + \hat Z_0)  \mbox{ in } \Y_n. 
\end{equation}

\begin{figure}
\centering
\begin{tikzpicture}[scale=1.2]


\begin{scope}
\clip (2:1) arc (2:-2: 1.5) -- (-2:12) arc (-2:2:12) -- cycle;
\fill[black!20] (5,1) circle (2.4 cm);
\end{scope}


\fill(0,0)   circle(0.5pt);

\draw(0,0)   node[left]{$- \hat Z_0$};

\draw[blue,thick] ({1.5*cos(2)}, {1.0*sin(2)}) -- ({10*cos(2)}, {10*sin(2)});
\draw[blue,thick] ({1.5*cos(2)}, {-1.5*sin(2)})  -- ({10*cos(2)}, {-10*sin(2)});
\draw [black,thick, domain =-2:2] plot ({10*cos(\x)}, {10*sin(\x)});
\draw [black,thick,domain =-2:2] plot ({1.5*cos(\x)}, {1.5*sin(\x)});

\draw[->] (2,-1) --({3.5*cos(2)}, {-3.5*sin(2)});
\draw (2, -1) node[below]{$\partial \Y_n \cap \hat \Sigma_n$};

\draw[->] (9,-1) --({5+2.4*cos(25)}, {1-2.4*sin(25)});

\draw (9, -1) node[below]{$\partial \Y_n   \cap \partial B_\lambda$};

\draw [red,thick] (5,1) circle (2.4 cm);

\draw[blue, -] (5,1) -- ({5+2.4*cos(180)},{1+2.4*sin(180)});

\draw[blue] ({5+1.6*cos(180)},{1+1.6*sin(180)}) node[below]{{\tiny $\lambda$}};

\draw [orange] (5,1) circle (1.3 cm);

\draw[orange, -] (5,1) -- ({5+1.3*cos(150)},{1+1.3*sin(150)});
\draw[orange] ({5+1.2*cos(150)},{1.1+1.3*sin(150)}) node[right]{{\tiny $R_2(n)$}};

\draw[black] (5,1) circle (0.7 cm);

\draw[-] (5,1) -- ({5 + 0.7*cos(45)}, {1+0.7*sin(45)});

\draw ({5 + 0.4*cos(35)}, {1+0.6*sin(35)})  node[left]{{\tiny $R_1(n)$}};

\draw[->] ({5.5+3.5*cos(30)},{3.5*sin(30)}) --(5.5,0);

\draw({6+3*cos(30)},{4.5*sin(30)})   node[right]{$ B_{R_2(n)} \cap \hat \T_n $};

\draw(6.25,0)   node[right]{$\Y_n$};

\draw (5, 1) node[]{$.$};

\draw (5, 1) node[right,below]{$0$};

\draw (8.8, 0) node[]{$\hat \T_n = \T_n - \hat Z_0$};

\end{tikzpicture}
\caption{Geometry of  $\Y_n = \hat \T_n \cap B_\lambda$ and $\Y_n$ in two dimensions.}\label{fig2}
\end{figure}
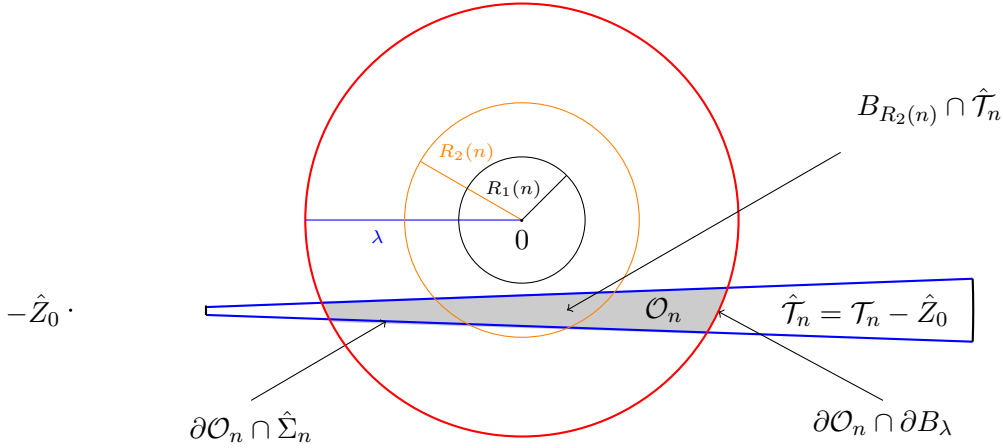

\medskip 

The proof of \eqref{LF-equiv} is now divided into the following five steps: 

$\bullet$ Step 1: We prove  \eqref{LF-claim0-C}, \eqref{LF-claim1-C}, and \eqref{LF-claim2-C}.

$\bullet$ Step 2: Using \eqref{LF-claim0-C}, \eqref{LF-claim1-C}, and \eqref{LF-claim2-C}, 
 we prove that there exists  a constant $\beta_{\Lambda} \ge 1$ depending only on $n_0$, $N$,  $\Lambda$, and $d$, such that, for $\beta \ge \beta_{\Lambda}$,  it holds
\begin{equation}\label{LF-step3.2}
	\int_{\Y_n} \testfnn \Big(\beta^2 e^{2 \beta r^{-p}} |\hU_n|^2 + |\nabla \hU_n|^2  \Big)
	\le  C \Big( \int_{\pY_n}   \beta   e^{2\beta r^{-p}}  \big(|\nabla \hU_n|^2 + \beta^2  |\hU_n|^2 \big) +  \int_{\Y_n} \testfnn |\hg_n|^2 \Big), 
\end{equation}
for all $\lambda \in \big(5/(4 n),   4/(3 n)\big)$.  Here and in what follows in this proof, $C$ denotes a positive constant depending only on $n_0$, $N$, $\Lambda$, and $d$. 

$\bullet$ Step 3: Set
\begin{equation*}
\wSigma_{n} = T_n (\Sigma_{Y_n}) \quad \mbox{ and } \quad \hat \Sigma_{n} = \wSigma_{n} - \hat Z_0
\end{equation*}
(see \eqref{LF-def-Yn} for the definition of $\Sigma_{Y_n}$). 
Using Step 2, we prove, for $\beta \ge \beta_{\Lambda}$, 
\begin{multline}\label{LF-step3.3}
	\int_{\Y_n}   (|\hU_n|^2 + |\nabla \hU_n|^2 ) e^{2 \beta r^{-p}} 	\\[6pt]\le C \beta^2  e^{2 \beta R_3^{-p} } \| \hU_n\|_{H^1(\hat \T_n)}^2  +  C \beta^2  e^{2 \beta R_1^{-p} }  \Big( \| (\hU_n, \nabla \hU_n)\|_{L^2( \hat \Sigma_n)}^2 + \| \hg_n\|_{L^2(\Y_n)}^2 \Big),  
\end{multline}
for some $\lambda \in \big(5/(4 n),   4/(3 n)\big)$.

$\bullet$ Step 4:  Using Step 3, we prove 
\begin{multline}\label{LF-step3.4}
\mathop{\int}_{B_{R_2(n)} \cap \hat \T_n}  (|\hU_n|^2 + |\nabla \hU_n|^2 ) \le C  \Big( \| (\hU_n, \nabla \hU_n)\|_{L^2( \hat \Sigma_n)}^2 + \| \hg_n\|_{L^2(\Y_n)}^2 \Big)^{2 \alpha } \times  \\[6pt] \times \Big(  \| \hU_n\|_{H^1(\hat \T_n)}^2 + \| (\hU_n, \nabla \hU_n)\|_{L^2( \hat \Sigma_n)}^2 + \| \hg_n\|_{L^2(\Y_n)}^2 \Big)^{2(1- \alpha)}.  
\end{multline} 

$\bullet$ Step 5:  Using Step 4, we prove
\begin{multline}\label{LF-step3.5}
\mathop{\mathop{\int}_{  \Lambda \tau_n  \le \rr  \le  \Lambda^{-1} s_n }}_{|\tx - \tz_0| < C/n^2} (|V|^2 + |\nabla V|^2 )  \le C  \Big( \| (V, \nabla V)\|_{L^2(  \Sigma_{\gamma_0})} + \|g \|_{L^2(D_{\gamma_0})}^2 \Big)^{2  \alpha }  \times \\[6pt] \times \Big(  \| V\|_{H^1(D_{\gamma_0})}^2 + \| (V, \nabla V)\|_{L^2(\Sigma_{\gamma_0})}^2 + \|g \|_{L^2(D_{\gamma_0})}^2 \Big)^{2(1- \alpha)}, 
\end{multline}
for some positive constant $C$ depending only on $\Lambda$ and  $d$.

Note that 
$$
\lim_{n \to + \infty} s_n / \tau_n = e^{2 \Lambda}. 
$$
Since \eqref{LF-step3.5} holds for $n_0 \le n \le N$ even for $n$ real, assertion~\eqref{LF-equiv} is now a consequence of the assertion in Step 5. 

\medskip

The proof of Steps 1 and 2 are new in comparison with the standard arguments in Carleman's estimates.  The proof of Steps 3, 4, and 5 are  quite standard after Step 2. We now proceed Steps 1-5.

\medskip 
\noindent $\bullet$ Step 1:  We have  
\begin{equation}\label{LF-T-C}
\nabla L_n (x)=  \left(\begin{array}{ccc}
L_{2, 2, n}(x) & 0_{2, d-2} \\[6pt]
0_{d-2, 2} & I_{d-2, d-2}
\end{array}\right), 
\end{equation}
where 
\begin{align*}
L_{2, 2, n} (x) = & \frac{1}{n \rr^{1 - 1/n}}   \left(
\begin{array}{cc}
 \cos \theta \cos (\theta/ n)  + \sin \theta  \sin (\theta/ n) & \sin \theta \cos (\theta/ n)  - \cos \theta  \sin (\theta/ n)  \\[6pt]
 \cos \theta \sin (\theta/ n)  - \sin \theta  \cos (\theta/ n) &  \sin \theta \sin (\theta/ n)  +  \cos \theta  \cos (\theta/ n)
\end{array}\right)\\
= & \frac{1}{n \rr^{1 - 1/n}}  \left(
\begin{array}{cc}
 \cos \big( (n-1)\theta/n \big)  &\sin \big( (n-1)\theta/n \big)  \\[6pt]
 - \sin \big( (n-1)\theta/n \big) &  \cos \big( (n-1)\theta/n \big)
\end{array}\right).
\end{align*}
Here and in what follows,  $0_{i, j}$ denotes the zero $(i \times j)$-matrix and $I_{k,k}$ denotes the identity matrix of size $(k \times k)$ for $i, j, k \ge 0$.

Set 
\begin{equation}\label{LF-def-tK}
\tK_n (x)=  \left(\begin{array}{ccc}
\tK_{2, 2, n}(x) & 0_{2, d-2} \\[6pt]
0_{d-2, 2} & n \rr^{n-1}I_{d-2, d-2} 
\end{array}\right),
\end{equation}
where 
\begin{equation*}  \tK_{2, 2, n} (x) =    \left(
\begin{array}{cc}
 \cos \big( (n-1)\theta \big)  &\sin \big( (n-1)\theta \big)  \\[6pt]
 - \sin \big( (n-1)\theta \big) &  \cos \big( (n-1)\theta \big)
\end{array}\right).
\end{equation*}
It is clear from the formula of $\nabla L_n$ that, in $\T_n$,  
\begin{equation}\label{LF-eq-nabla-Tn}
 \nabla T_n \circ T_n^{-1}(x) = \nabla L_n \big(L_n^{-1}(x) \big) H = \frac{1}{n \rr^{n-1}} \tK_n (x) H, 
\end{equation} 
\begin{equation}\label{LF-det-T}
|\det \nabla T_n| \circ T_n^{-1} (x) = \frac{1}{\gamma n^2 \rr^{2n - 2}} \mbox{ where } \gamma =  |\det H|^{-1}, 
\end{equation}
and, by  \eqref{LF-K-C-0}, 
\begin{equation}\label{LF-K-C}
K_n (x)=  \gamma^{1/2}  \tK_n H. 
\end{equation}

From \eqref{LF-K-C}, we have
\begin{equation}\label{LF-c4-C}
|K_n| \le \hLambda \mbox{ in } \T_n \quad \mbox{ and } \quad |\nabla K_n| \le C / n^2  \mbox{ in } \T_n. 
\end{equation}
Note that  $1/ n \le \rr$ and $|x| \le 4/(3n)$ for $x \in \Y_n$,  and $|K_{2, 2, n}(x) y'| = |y'|$ for $x \in \T_n$ and $y' \in \mR^2$.  We derive from \eqref{LF-K-C} that 
\begin{equation*}
|K_n\tr(\cdot + \hat Z_0) x| \ge \hLambda |x| \mbox{ for } x \in \Y_n. 
\end{equation*}
It follows from the ellipticity of $\M$, \eqref{LF-def-M-1-C}, and \eqref{LF-def-hM-0} that 
\begin{equation*}
\langle \hM_n x, x \rangle \ge \hLambda^{-1} |x|^2 \mbox{ in } \Y_n,  
\end{equation*}
which is \eqref{LF-claim0-C}. 

Since, by \eqref{LF-def-tK} and  \eqref{LF-eq-nabla-Tn}, 
\begin{equation}\label{LF-nablaTn-1}
\nabla T_n^{-1} (x) = \Big(\nabla T_n \circ T_n^{-1} \Big)^{-1} = H^{-1}  \left(\begin{array}{ccc}
n \rr^{n-1} \tK_{2, 2, n}^{-1} & 0_{2, d-2} \\[6pt]
0_{d-2, 2} & I_{d-2, d-2}
\end{array}\right), 
\end{equation}
we deduce from \eqref{LF-def-M-1-C} that
\begin{equation}\label{LF-c4-1-C}
|\nabla A_n (x)| \le \hLambda \mbox{ in } \T_n.
\end{equation}

From \eqref{LF-hZ}, \eqref{LF-def-M-1-C}, and \eqref{LF-K-C}, we obtain 
\begin{equation*}
M_n(Z_0) = K_n  A_n K_n\tr (Z_0) = \gamma \tK_n (Z_0) H \M(z_0) H\tr \tK_n\tr (Z_0)  = \gamma \tK_n (Z_0) \tK_n\tr (Z_0)
\end{equation*}
(thanks to the choice of $H$), which yields, by \eqref{LF-def-tK},  
\begin{equation}\label{LF-CV}
M_n(Z_0) =   \gamma P \mbox{ where } P: =  \left(\begin{array}{ccc}
I_{2, 2} & 0_{2, d-2} \\[6pt]
0_{d-2, 2} & n^2 \rr^{2(n-1)}I_{d-2, d-2} 
\end{array}\right). 
\end{equation}
This fact plays an important role in establishing 
\eqref{LF-claim1-C} and \eqref{LF-claim2-C} as mentioned in \Cref{sect-heu}. 

We have 
\begin{equation*}
\dive (\hM_n x) = \sum_{j = 1}^{d} \partial_{x_j} \langle \hM_n x, e_j \rangle =  \sum_{j = 1}^{d} \partial_{x_j}  \big( \langle \hM_n e_i, e_j \rangle x_i  \big)\mbox{ in } \Y_n, 
\end{equation*}
\begin{equation*}
\langle M_n e_i, e_j \rangle  
\mathop{=}^{\eqref{LF-CV}}  \gamma  P_{ij} + \big\langle K_n  \big(A_n-A_n(Z_0) \big) K_n\tr e_i, e_j \big\rangle, 
\end{equation*}
where $P_{ij}$ denotes the $(i, j)$ component of $P$, and 
\begin{equation*}
|x + \hat Z_0 - Z_0| \le 5/n \mbox{ for } x \in \Y_n,  \quad \mbox{ and } \quad |x| \le 4/(3n) \mbox{ for } x \in \Y_n. 
\end{equation*}
It follows from \eqref{LF-c4-C} and  \eqref{LF-c4-1-C} that
\begin{equation}\label{LF-M-e1-C}
|\dive (\hM_n x)| \le \hLambda.  
\end{equation}

We have 
$$
\langle M_n y, y \rangle \mathop{=}^{\eqref{LF-CV}} \gamma \langle Py, y \rangle  + \big\langle K_n \big(A_n - A_n(Z_0) \big) K_n\tr y, y \big\rangle \mbox{ for } y \in \mR^d,  
$$
$$
\nabla (x \cdot \hM_n x ) \cdot \hM_n x =\sum_{j=1}^{d} \partial_{x_j} (x \cdot \hM_n x) \langle \hM_n x, e_j \rangle \mbox{ in } \Y_n,  
$$
$$
|x + \hat Z_0 - Z_0| \le 5/n \mbox{ for } x \in \Y_n, \quad \mbox{ and } \quad |x| \le 4/(3n) \mbox{ for } x \in \Y_n. 
$$
Using these facts,  we derive from \eqref{LF-c4-C} and  \eqref{LF-c4-1-C} that  
\begin{equation}\label{LF-M-e2-C}
|\nabla (x \cdot \hM_n x ) \cdot \hM_n x| \le \hLambda |x|^2  \mbox{ in } \Y_n. 
\end{equation}
Combining \eqref{LF-M-e1-C} and \eqref{LF-M-e2-C} yields \eqref{LF-claim1-C}. 

\medskip 
Using the same arguments, one also obtains  \eqref{LF-claim2-C}. The proof of Step  1 is complete. 
\medskip

$\bullet$ Step 2.  Using \eqref{LF-claim0-C}, \eqref{LF-claim1-C},  and \eqref{LF-claim2-C}, we can  
apply Lemma~\ref{lem-main} with $v = \hU_n$ and $M = \hM_n$ in $\Y_n$.  We then obtain
\begin{multline}\label{LF-step3.2-1}
	\int_{\Y_n} \testfnn \Big(p^3 \beta^2 r^{-2p - 2} |\hU_n|^2 +  \langle \hM_n \nabla \hU_n, \nabla \hU_n \rangle  \Big) \\[6pt] \leq C \int_{\Y_n} \frac{1}{p |\beta|} r^{p+2} \testfnn|\Div(\hM_n \nabla \hU_n)|^2   \\[6pt] + C \int_{\pY_n}   |\beta| p r e^{2\beta r^{-p}}  \big(|\nabla \hU_n|^2 + p^2 \beta^2  r^{-2p -2}|\hU_n|^2 \big), 
\end{multline}
for  $|\beta| \ge \beta_{\Lambda}$ for some constant  $\beta_{\Lambda} \ge 1$ depending only on $\Lambda$  and $d$ since $\hat \Lambda$ depends only on $\Lambda$  and $d$. 

We claim that 
\begin{equation}\label{LF-step2}
|\dive (M_n \nabla U_n)| \le C \left(\frac{1}{n^3 \rr^{3n-3}}|K_n\tr \nabla U_n| + \frac{1}{n^2 \rr^{2n-2}}|U_n| + \frac{1}{n^2 \rr^{2n-2}} |g_n| \right)  \mbox{ in } \T_n. 
\end{equation}
Indeed, set
\begin{equation}\label{def-f-F}
f (x) = \dive \big(\M(x) \nabla V(x) \big) \mbox{ for  } x \in   \Omega
\quad \mbox{ and }  \quad F_n (x) = \frac{ f \circ T_n^{-1}}{|\det \nabla T_n| \circ T_n^{-1}} (x) \mbox{ for } x \in \T_n.
\end{equation}
Then, in $\T_n$,  
\begin{equation}\label{LF-Fn}
|F_n(x)| \mathop{\le}^{\eqref{LF-det-T}} \frac{1}{\gamma n^2 \rr^{2n-2}} |f \circ T_n^{-1}(x)|.
\end{equation}
By a change of variables, see, e.g. \cite[Lemma 6]{NgV-A} (see also \cite[Section 2.2]{KOVW10}), we have 
\begin{equation}\label{form-CV}
\dive (M_n \nabla U_n) = F_n \mbox{ in } \T_n. 
\end{equation}
Recall that 
$$
|f| \le \Lambda_1 (|\nabla V| + |V| + |g|) \mbox{ in } T_n^{-1} (\T_n). 
$$
We have, for $x \in \T_n$, 
$$
\nabla V \big(T_n^{-1} (x) \big) \mathop{=}^{\eqref{LF-def-VM}} \nabla T_n \tr  \big(T_n^{-1} (x)\big) \nabla U_n (x) \mathop{=}^{\eqref{LF-eq-nabla-Tn}, \eqref{LF-K-C}}  \frac{1}{\gamma^{1/2} n \rr^{n-1}} K_n\tr(x) \nabla U_n (x).
$$
It follows from \eqref{LF-Fn} that  
$$
|F_n| \le C \left(\frac{1}{n^3 \rr^{3n-3}}|K_n\tr \nabla U_n| + \frac{1}{n^2 \rr^{2n-2}}|U_n| +  \frac{1}{n^2 \rr^{2n-2}}|g_n| \right) \mbox{ in } \T_n, 
$$
which implies claim  \eqref{LF-step2}. 

We have, in $\T_n$,  
\begin{equation}\label{LF-lb-M-0}
\langle M_n y, y \rangle \mathop{\ge}^{\eqref{LF-def-M-1-C}} C |K_n\tr y|^2  \mbox{ for } y \in \mR^d. 
\end{equation}

Considering \eqref{LF-step3.2-1},  and using \eqref{LF-step2} and \eqref{LF-lb-M-0}, we deduce  that, where $\rr$ is considered at the point $x + \hat Z_0$ instead of the origin, i.e. $\rr = \rr(x + \hat Z_0)$,   
\begin{multline}\label{LF-step3.2-2}
	\int_{\Y_n} \testfnn \Big(p^3 \beta^2 r^{-2p - 2} |\hU_n|^2 +  |\hat K_n\tr \nabla \hU_n|^2  \Big) \\[6pt] 
	\leq C \left\{ \int_{\Y_n} \frac{1}{ p |\beta|} r^{p+2} e^{2\beta r^{-p}} \left(    \frac{1}{n^4 \rr^{4n -4}}| \hU_n|^2 + \frac{1}{n^6 \rr^{6n -6}}  |\hat K_n\tr \nabla \hU_n|^2 +  \frac{1}{n^4 \rr^{4n -4}}| \hg_n|^2  \right) \right.  \\[6pt] + \left. \int_{\pY_n}   |\beta| p r e^{2\beta r^{-p}}  \big(|\nabla \hU_n|^2 + p^2 \beta^2  r^{-2p -2}|\hU_n|^2 \big) \right\}. 
\end{multline} 

Fix $\beta_\Lambda$ large, the largeness depends only on $n_0$, $N$, $\Lambda$ and $d$, so that for $|\beta| \ge \beta_\Lambda$, $n_0 \le n \le N$, $1/(4n) < \rr < 2/n$, and $r> 1/n$,  we have 
$$
p^3 \beta^2 r^{-2p - 2} \ge   \frac{C}{2 p |\beta|} r^{p+2} \frac{1}{n^4 \rr^{4n -4}}
\quad \mbox{ and } \quad 
1 \ge   \frac{2C}{ p |\beta|} r^{p+2} \frac{2}{n^6 \rr^{6n -6}}, 
$$
where $C$ is the constant appearing in  \eqref{LF-step3.2-2}. We derive from the definition of $\Y_n$ and $\T_n$ that, for $|\beta| \ge \beta_{\Lambda}$,
\begin{equation*}
	\int_{\Y_n} \testfnn \Big(\beta^2 e^{2 \beta r^{-p}} |\hU_n|^2 + | \nabla \hU_n|^2  \Big)
	\le  C \int_{\pY_n}   |\beta|  e^{2\beta r^{-p}}  \big(|\nabla \hU_n|^2 + \beta^2  |\hU_n|^2 \big) + C \int_{\Y_n} \testfnn  |\hg_n|^2. 
\end{equation*}
The proof of Step 2 is complete. 

\medskip
$\bullet$ Step 3. In what follows, for notational ease, we denote $R_1(n)$, $R_2(n)$, and $R_3(n)$ by $R_1$, $R_2$, and $R_3$. We have 
$$
\partial \Y_n = (\partial \Y_n \cap \partial B_\lambda) \cup (\partial \Y_n \setminus \partial B_\lambda),  
$$
$$
|x| \ge  R_3 \mbox{ for } x \in  \partial \Y_n \cap \partial B_\lambda \mbox{ by \eqref{LF-lambda} and the definition of $R_3$, } 
$$
$$
|x|  \ge R_1 \mbox{ for } x \in (\partial \Y \setminus \partial B_\lambda) \mbox{ since } \dist (z_0, \T_n) \ge 1/ n.  
$$
We derive from \eqref{LF-def-R13} and \eqref{LF-step3.2} that,   for $\beta \ge \beta_\Lambda$,  
\begin{multline}\label{LF-p1}
	\int_{\Y_n}   (|\hU_n|^2 + |\nabla \hU_n|^2 ) e^{2 \beta r^{-p}} 	
	\le C  \beta^2  e^{2 \beta R_3^{-p} } \| (\hU_n, \nabla \hU_n)\|_{L^2(\pY_n \cap \partial B_{\lambda} )}^2 \\[6pt] +  C  \beta^2  e^{2 \beta R_1^{-p} }  \Big( \| (\hU_n, \nabla \hU_n)\|_{L^2(\pY_n \setminus \partial B_{\lambda})}^2 + \|\hg_n \|_{L^2(\Y_n)}^2 \Big). 
\end{multline}
Since $n \ge 10 \Lambda$, we have 
$$
\pY_n \cap \partial B_{\lambda}  \subset \hat \T_n \mbox{ for all } \lambda \in \big(5/(4n), 4/(3n)\big). 
$$
This implies that, for some $\lambda \in \big(5/(4n), 4/(3n)\big)$, 
\begin{equation}\label{LF-p2}
\| (\hU_n, \nabla \hU_n)\|_{L^2(\pY_n \cap \partial B_{\lambda} )} \le C \| \hU_n\|_{H^1(\hat \T_n)}. 
\end{equation}
It is clear that 
$$
\pY_n \setminus \partial B_{\lambda} \subset \hat \Sigma_n, 
$$
which yields 
\begin{equation}\label{LF-p3}
\| (\hU_n, \nabla \hU_n)\|_{L^2(\pY_n \setminus \partial B_{\lambda})} \le \| (\hU_n, \nabla \hU_n)\|_{L^2(\hat \Sigma_n)}. 
\end{equation}
Combining \eqref{LF-p1}, \eqref{LF-p2}, and \eqref{LF-p3} yields,  for $\beta \ge \beta_\Lambda$,  
\begin{multline*}
	\int_{\Y_n}   (|\hU_n|^2 + |\nabla \hU_n|^2 ) e^{2 \beta r^{-p}} \\[6pt]	\le C  \beta^2  e^{2 \beta R_3^{-p} } \| \hU_n\|_{H^1(\hat \T_n)}^2  +  C \beta^2  e^{2 \beta R_1^{-p} }\Big( \| (\hU_n, \nabla \hU_n)\|_{L^2(\hat \Sigma_n)}^2 + \|\hg_n \|_{L^2(\Y_n)}^2 \Big).
\end{multline*}
The proof of Step 3 is complete. 

\medskip 
$\bullet$ Step 4. Note that, for  $\lambda \in \big(5/(4n), 4/(3n) \big)$, 
$$
B_{R_2}  \cap \hat \T_n \subset B_{5/(4n)}  \cap \hat \T_n \subset \Y_n. 
$$
As a consequence of \eqref{LF-step3.3}, we have
\begin{multline*}
	\int_{B_{R_2}  \cap \hat \T_n}   (|\hU_n|^2 + |\nabla \hU_n|^2 ) e^{2 \beta r^{-p}}  \\[6pt]	\le C  \beta^2  e^{2 \beta R_3^{-p} } \| \hU_n\|_{H^1(\hat \T_n)}^2  +  C \beta^2  e^{2 \beta R_1^{-p} } \Big( \| (\hU_n, \nabla \hU_n)\|_{L^2(\hat \Sigma_n)}^2 + \|\hg_n \|_{L^2(\Y_n)}^2 \Big). 
\end{multline*}
This implies 
\begin{multline}\label{LF-p4}
	\int_{B_{R_2}  \cap \hat \T_n}   (|\hU_n|^2 + |\nabla \hU_n|^2 ) 	\le C  \beta^2  e^{2 \beta (R_3^{-p} - R_2^p) } \| \hU_n\|_{H^1(\hat \T_n)}^2  \\[6pt]+  C \beta^2  e^{2 \beta (R_1^{-p} - R_2^p) } \Big( \| (\hU_n, \nabla \hU_n)\|_{L^2(\hat \Sigma_n)}^2 + \|\hg_n \|_{L^2(\Y_n)}^2 \Big).
\end{multline}

Set
$$
a_n = \| (\hU_n, \nabla \hU_n)\|_{L^2(\hat \Sigma_n)}^2 + \|\hg_n \|_{L^2(\Y_n)}, \quad  b_n =   \| \hU_n\|_{H^1(\hat T)}^2 + \| (\hU_n, \nabla \hU_n)\|_{L^2(\hat \Sigma_n)}^2 + \|\hg_n \|_{L^2(\Y_n)}, 
$$
and 
$$
\hat \beta= \big(1 - \rho(n) \big) \ln (b_n / a_n), 
$$
where $\rho(n)$ is given in \eqref{LF-def-ha}.  A straightforward estimate gives, with $\beta = \hat  \beta$,  that 
\begin{multline}\label{LF-p5}
e^{2 \beta (R_3^{-p} - R_2^p) } \| \hU_n\|_{H^1(\hat \T_n)}^2  +    e^{2 \beta (R_1^{-p} - R_2^p) }\Big( \| (\hU_n, \nabla \hU_n)\|_{L^2(\hat \Sigma_n)}^2 + \|\hg_n \|_{L^2(\Y_n)}^2 \Big) \\[6pt]
\le a_n^{2 \rho(n) } b_n^{2(1- \rho(n))}. 
\end{multline}
We claim that 
\begin{equation}\label{LF-step3.4-1}
 \int_{B_{R_2}  \cap \hat \T_n }  (|\hU_n|^2 + |\nabla \hU_n|^2 )  \le C a_n^{2 \alpha }  b_n^{2(1- \alpha)},     
\end{equation}
which is \eqref{LF-step3.4}.  Indeed, if $\hat \beta  \ge \beta_{\Lambda}$,  then take $\beta = \hat \beta$ in \eqref{LF-p4}. We then obtain \eqref{LF-step3.4-1} using  \eqref{LF-p5} and the fact $\rho(n) \ge (1 + \alpha)/2 > \alpha$. If $\hat \beta  <  \beta_{\Lambda}$, inequality \eqref{LF-step3.4-1} also holds for a different constant $C$ by taking $\beta = \beta_\Lambda$  in \eqref{LF-p4}. The proof of Step 4 is complete.

\medskip 
$\bullet$ Step 5.  Let  $x \in \T_n$ be such that  $\rr \in (1/n - \Lambda /n^2, 1/n + \Lambda /n^2)$ and $|\tx - \tZ_0| \le 1/n^2$. We claim that 
\begin{equation}\label{LF-claim-2}
x - Z_0 \in B_{R_2} \cap \hat \T_n. 
\end{equation}

Indeed, for such an $x$,  we have 
\begin{equation}\label{LF-step3.5-p1}
|x_1 - 1/n| \le \Lambda /n^2 \quad \mbox{ and } \quad |x_2| <  (1/n+ \Lambda /n^2)  \pi/(4n).  
\end{equation}
Since
$$
x - Z_0  = (x_1 - 1/n, x_2 - 1/n - \pi \Lambda/n^2, \tx - \tZ_0), 
$$
it follows from \eqref{LF-step3.5-p1} that 
$$
|x - Z_0 | \le  \Lambda/n^2  + 1/n +  \pi \Lambda/n^2 +  \pi (1/n^2 + \Lambda/n^3)/4 + 1/n^2 \le  1/n + 8 \Lambda /n^2 = R_2, 
$$
since $n \ge 10 \Lambda$. Since $x - Z_0 \in \hat \T_n$, claim~\eqref{LF-claim-2} follows. 

\medskip 
Note that   $T_n = L_n \circ H$, $H(\{x \in \mR^d; \rr = 0) \} =  \{x \in \mR^d; \rr = 0) \}$, and $\Lambda^{-1} |x| \le |H(x)| \le \Lambda |x|$. We obtain 
\begin{equation} \label{Step5-p1}
\mbox{if  $\Lambda  \tau_n \le \rr(x) \le \Lambda^{-1} s_n$,  then $1/ n - \Lambda/ n^2 \le \rr (T_n(x))  \le 1/ n + \Lambda/ n^2$}. 
\end{equation}
Recall that 
$$
z_0 = (0, 0, \tz_0) \quad \mbox{ and } \quad Z_0 = T_n (z_0) = (0, 0, \tZ_0) \big( = H(z_0) \big). 
$$
If  $\Lambda  \tau_n \le \rr(x) \le \Lambda^{-1} s_n$  and $|\tx - \tz_0| \le C / n^2$, then $|x-z_0| \le 2C/ n^2$ (for large $n$). This implies, with $y = (y_1, y_2, \ty) = H(x)$, that   $|y-H(z_0)| \le 2C \Lambda/n^2$ and hence $|\ty - \tZ_0| \le 2 C \Lambda/n^2$. We thus obtain that, for $C$ sufficiently small (and large $n$),  
\begin{equation}\label{Step5-p2}
\mbox{if  $\Lambda  \tau_n \le \rr(x) \le \Lambda^{-1} s_n$  and $|\tx - \tz_0| \le C / n^2$, then $|\ty - \tZ_0| \le 1/ n^2$}. 
\end{equation}

Inequality \eqref{LF-step3.5} now follows from \eqref{LF-step3.4}, \eqref{Step5-p1}, and \eqref{Step5-p2}.  The proof of Step 5 is complete. \qed

\subsection{Proof of Theorem~\ref{fund-thm}}  \label{sect-fund-thm}
Extend $\M^\ell$ evenly for $x_1< 0$ and denote
\begin{equation*}
\widetilde Y_{\gamma_1, \gamma_2,  R} 
= \Big\{ x  \in \mR^d;  -3\pi/4 \le \theta \le 3 \pi/4, \,  \gamma_1 R < \rr < \gamma_2 R,  \mbox{ and } |\tx| < R \Big\}.
\end{equation*}
Note that $\hL_{3/2}$ is a diffeomorphism from $\widetilde Y_{\gamma_1, \gamma_2,  R} $ onto $Y_{\gamma_1, \gamma_2,  R}$,  where $\hL_{3/2}$ is defined by
\begin{equation}\label{LF-def-hLn}
\hL_n(x_1, x_2, \tx) = \big( \cos (\theta/n),  \sin (\theta/n), \tx \big) \mbox{ with } n = 3/2. 
\end{equation}
By \Cref{fund-pro}, there exists $\hgamma_2>0$ such that for $\hgamma_1 \in (0, \hgamma_2)$, there exists $\hgamma_0 \in (0, \hgamma_1/2)$ such that,  with $\wtD_{\hgamma_0} = \widetilde Y_{4\hgamma_0, 1, R/2}$,  for $h \in L^2(\wtD_{\hgamma_0})$, for $W \in [H^2(\wtD_{\hgamma_0})]^m$  satisfying 
$$
|\dive(\M^\ell \nabla W_\ell)| \le \Lambda_1(|W| + |\nabla W| + |h|) \mbox{ in } \wtD_{\hgamma_0}, 
$$
then, with $\wtSigma_{\hgamma_0} = \partial \wtD_{\hgamma_0} \cap \{\theta = \pm 3\pi/4\}$, it holds 
\begin{multline}\label{fund-thm-p1}
\| W \|_{H^1(\widetilde Y_{\hgamma_1, \hgamma_2, \frac{R}{4}})} 
 \le C \Big( \|(W, \nabla W)\|_{L^2(\wtSigma_{\hgamma_0})} + \|h\|_{L^2(\wtD_{\hgamma_0})} \Big)^{\alpha}   \times \\[6pt] \times 
\Big(\| W\|_{H^1(\wtD_{\hgamma_0})} + \|(W, \nabla W)\|_{L^2(\wtSigma_{\hgamma_0})} + \| h\|_{L^2(\wtD_{\hgamma_0})}  \Big)^{1-\alpha}.
\end{multline}

Set 
\begin{equation*}
\hat Y_{\gamma_1, \gamma_2,  R} 
= \Big\{ x  \in \mR^d;  \gamma_1 R < \rr < \gamma_2 R,  \mbox{ and } |\tx| < R \Big\}, 
\end{equation*}
and  fix $\varphi \in C^1_{c} (\hat Y_{\hgamma_0, 1,  R})$  such that $\varphi = 1$ for $x \in \hat Y_{2\hgamma_0, 1, R/2}$. 
Let  $U_\ell \in H^1(\hat Y_{\hgamma_0, 1,  R} \setminus \{x_1 = 0\})$ be such that 
$$
\dive (\M^\ell \nabla U_\ell) = 0  \mbox{ in } \hat Y_{\hgamma_0, 1,  R} \setminus \{x_1= 0\}, \quad U_\ell = 0 \mbox{ on } \partial \hat Y_{\hgamma_0, 1,  R}, 
$$
and, on $\hat Y_{\hgamma_0, 1,  R} \setminus \{x_1 = 0\}$, 
$$
[U_\ell] = [\varphi V_\ell \mathds{1}_{x_1 > 0} ]   \quad \mbox{ and } \quad  [\M^\ell \nabla U_\ell \cdot \nu] = [\varphi \M^\ell \nabla (V_\ell \mathds{1}_{x_1 > 0}) \cdot \nu].   
$$
We have
\begin{equation}\label{fund-thm-p2}
\| U \|_{H^1(\hat Y_{\hgamma_0, 1,  R} \setminus \{x_1 = 0\})} \le C \| V\|_{{\bf H} (\Sigma_{\hgamma_0})}, 
\end{equation}
and, by the regularity theory of elliptic equations,  
\begin{equation}\label{fund-thm-p3}
\| U, \nabla U \|_{L^2(\hat Y_{4\hgamma_0,  1, R/2} \cap  \{\theta  = \pm 3 \pi/4\})} \le C \| V\|_{{\bf H} (\Sigma_{\hgamma_0})}. 
\end{equation}

Set,  in $\wtD_{\hgamma_0}$, 
\begin{equation}\label{fund-thm-eq-W}
W_\ell = U_{\ell} \mathds{1}_{x_1>0} - V_{\ell} \quad  \mbox{ and } \quad h = (|g| + |\nabla U| + |U|) \mathds{1}_{x_1 > 0}. 
\end{equation}
Applying \eqref{fund-thm-p1} with $W$ and $h$ given by \eqref{fund-thm-eq-W}, noting that 
$$
\| h\|_{L^2(\wtD_{\hgamma_0})} \le C \Big( \| g\|_{L^2(\wtD_{\hgamma_0} \cap \{x_1>0\})} + \| U \|_{H^1(\wtD_{\hgamma_0} \cap \{x_1 > 0\})} \Big), 
$$
  and using \eqref{fund-thm-p2} and \eqref{fund-thm-p3}, we obtain 
\begin{multline}\label{fund-thm-p4}
\| W \|_{H^1(\widetilde Y_{\hgamma_1, \hgamma_2, \frac{R}{4}})} \\[6pt]
 \le C \Big( \| V\|_{{\bf H}(\Sigma_{\hgamma_0})} + \|g\|_{L^2(D_{\hgamma_0})} \Big)^{\alpha}  
\Big(\| V\|_{H^1(D_{\hgamma_0})} + \| V\|_{{\bf H}(\Sigma_{\hgamma_0})} + \|g\|_{L^2(D_{\hgamma_0})} \Big)^{1-\alpha}. 
\end{multline}
The conclusion now follows from \eqref{fund-thm-p2} and \eqref{fund-thm-p4}. \qed

\subsection{Proofs of \Cref{thm-3SP} and \Cref{cor-3SP}}\label{sect-thm-3SP}

We begin this section with a  variant of \Cref{fund-thm}: 

 \begin{proposition} \label{fund-thm-V}  Let $d \ge 2$, $m \ge 1$, $\Lambda \ge 1$,  and $R > R_* > 0$.  Then, for any  $\alpha \in (0, 1)$, there exists a  constant $r \in (0, R_*)$, depending only on $\alpha$, $\Lambda$, $R_*$,  $m$, and $d$ such that  for real, symmetric, uniformly elliptic, Lipschitz matrix-valued functions  $\M^\ell$ with $1 \le \ell \le m$  defined in $\Omega : = B_R \cap \{x_1 >0\}$  verifying, in $\Omega$, 
\begin{equation*}
\Lambda^{-1} |\xi|^2 \le \langle \M^{\ell} (x) \xi, \xi \rangle \le \Lambda |\xi|^2  \; \;  \forall \,  \xi \in \mR^d  \quad \mbox{ and } \quad   |\nabla \M^{\ell}(x) | \le \Lambda, 
\end{equation*}
for $g \in L^2(\Omega)$, and for  $V  \in [H^1(\Omega)]^m$ satisfying, for  $1 \le \ell \le m$,  
\begin{equation}\label{fund-thm-V-cd1}
|\dive (\M^\ell \nabla V_\ell)| \le  \Lambda_1 \big( |\nabla V| + |V| + |g| \big)  \mbox{ in } \Omega \mbox{ for some } \Lambda_1 \ge 0, 
\end{equation}
we have, with $\Sigma = \partial \Omega \cap \big\{ x_1= 0 \big\}$,  
\begin{equation*}
\| V \|_{H^1(B_r \cap \Omega)}  \le C  \Big( \| V\|_{{\bf H}(\Sigma)}  + 
\| g\|_{L^2(\Omega)}\Big)^{\alpha} \Big(\| V\|_{H^1(\Omega) } + 
\| g\|_{L^2(\Omega)} \Big)^{1-\alpha},  
\end{equation*}
for some positive constant $C$ depending only on $\alpha,  \Lambda, \, \Lambda_1, \, R_*,  \,  m$, and  $d$. 
\end{proposition}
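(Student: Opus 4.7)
I would derive \Cref{fund-thm-V} from \Cref{fund-thm} by a dyadic covering of $B_r \cap \Omega$ by annular regions to which \Cref{fund-thm} applies, at geometrically decreasing scales centered at the origin (and, for $d \ge 3$, at shifted centers $(0, 0, \tilde z_0)$ along the flat face $\Sigma$). The point is that each annular region sits inside $\Omega$ and meets $\Sigma$, so \Cref{fund-thm} translates directly into a three-sphere estimate on that piece.

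Fix $R^* = 2R$ so that $R \in (R_*, R^*)$, and invoke \Cref{fund-thm} with parameters $(\alpha, \Lambda, R_*, R^*, m, d)$ to obtain $\hgamma_2 \in (0,1)$, then select $\hgamma_1 \in (0, \hgamma_2/2)$ so that consecutive dyadic annuli overlap, and let $\hgamma_0 \in (0, \hgamma_1)$ be the corresponding constant. Consider the dyadic scales $R_k = 2^{-k} R/(2\sqrt{2})$ for $k = 0, 1, \ldots, K$ with $R_K \gtrsim R_*$, chosen so that each ``sector'' $D_k := Y_{\hgamma_0, 1, R_k}$ sits inside $\Omega$. At each such scale (and, for $d \ge 3$, each shift $\tilde z_0$ from a finite grid whose cardinality depends on $R, \hgamma_i, d$), \Cref{fund-thm} applied to the translated system yields
\begin{equation*}
\|V\|_{H^1(A_{k, \tilde z_0})} \le C \bigl( \|V\|_{\mathbf{H}(\Sigma_{k, \tilde z_0})} + \|g\|_{L^2(D_{k, \tilde z_0})} \bigr)^{\alpha} \bigl( \|V\|_{H^1(D_{k, \tilde z_0})} + \|g\|_{L^2(D_{k, \tilde z_0})} \bigr)^{1-\alpha},
\end{equation*}
where $A_{k, \tilde z_0}$, $D_{k, \tilde z_0}$, $\Sigma_{k, \tilde z_0}$ denote the shifted annular region, the larger sector, and the corresponding portion of $\Sigma$, and $C$ is uniform over the iteration since all parameters stay in a fixed range.

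Setting $X := \|V\|_{\mathbf{H}(\Sigma)} + \|g\|_{L^2(\Omega)}$ and $Y := \|V\|_{H^1(\Omega)} + \|g\|_{L^2(\Omega)}$, the inclusions $\Sigma_{k, \tilde z_0} \subset \Sigma$ and $D_{k, \tilde z_0} \subset \Omega$ give $\|V\|_{H^1(A_{k, \tilde z_0})} \le C\, X^\alpha Y^{1-\alpha}$ at each instance. Summing squared estimates over the finite cover and using bounded overlap yields
\begin{equation*}
\|V\|_{H^1(B_r \cap \Omega)}^2 \le \sum_{k, \tilde z_0} \|V\|_{H^1(A_{k, \tilde z_0})}^2 \le C' X^{2\alpha} Y^{2(1-\alpha)},
\end{equation*}
with $C'$ absorbing the number of pieces (bounded in terms of $R_*, R, \hgamma_i, d$).

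\textbf{Main obstacle.} The hard part is the geometric covering: each annular region has strictly positive inner radius $\hgamma_1 R_k/4$, so finitely many scales $R_k \gtrsim R_*$ always leave a neighborhood of the origin uncovered. To close this gap for some $r \in (0, R_*)$, one iterates to scales below $R_*$ via rescaling $V_\sigma(y) := V(\sigma y)$ for $\sigma \le 1$; the rescaled coefficients satisfy $\Lambda' \le \Lambda$ and the effective lower-order constant satisfies $\Lambda_1' \le \Lambda_1$, so \Cref{fund-thm} applies to the rescaled problem at scale $R$ and can be unrescaled. Tracking how the fractional norms $\|V\|_{\mathbf{H}(\Sigma_k)}$ transform under this rescaling, via the Slobodeckij characterization of $H^{1/2}$ and duality for $H^{-1/2}$, and verifying that the resulting geometric sum converges, is the delicate technical step. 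Because the proposition permits $C$ to depend on $R_*$, the iteration count is allowed to grow with $R_*^{-1}$.
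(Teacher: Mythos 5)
Your approach runs the deduction in the opposite direction from the paper and it does not close. The paper treats \Cref{fund-thm-V} as the \emph{easier} statement: with data on the entire flat face $\Sigma$ one can apply the Carleman machinery of \Cref{sect-UL} directly in the half-ball $B_R \cap \{x_1 > 0\}$, with no conformal-type change of variables at all, and the paper explicitly says so (``much simpler; one does not need to make any change of variables in the spirit of conformal maps \ldots Similar arguments are also used in \cite{MinhLoc2}''). You instead try to recover it from the harder \Cref{fund-thm}, and the covering argument has a genuine gap.

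The obstruction is structural, not merely ``delicate.'' Every region $Y_{\hgamma_1,\hgamma_2,R_k/4}$ produced by \Cref{fund-thm}, even after shifting the $\tilde x$-center and rescaling, excludes a full neighborhood of the codimension-two axis $\{x_1 = x_2 = 0\}$, since $\hat r \geq \hgamma_1 R_k / 4 > 0$ there. But $B_r \cap \Omega$ contains a whole $(d-2)$-dimensional piece of that axis, so any cover by such regions must use infinitely many dyadic scales $R_k \to 0$. With the global bounds you invoke, $\|V\|_{\bH(\Sigma_{k,\tilde z_0})} \le X$ and $\|V\|_{H^1(D_{k,\tilde z_0})} \le Y$, each scale contributes the same quantity $C X^{2\alpha}Y^{2(1-\alpha)}$, and the sum over infinitely many scales diverges. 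The remark that ``$C$ may depend on $R_*$, so the iteration count may grow with $R_*^{-1}$'' does not help: $R_*$ is a lower bound on the outer radius $R$, while the issue is reaching $\hat r = 0$, which has nothing to do with $R_*$.

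A possible repair would be to use \emph{local} quantities $X_{k,\tilde z_0}$, $Y_{k,\tilde z_0}$ (norms over $\Sigma_{k,\tilde z_0}$ and $D_{k,\tilde z_0}$ only), establish bounded overlap, and then sum via H\"older to recover the exponent $\alpha$; this is what a covering argument of this type actually requires, and you do not carry it out. It is also not automatic: the $\bH$-norm on $\Sigma$ combines $H^{1/2}$ and $H^{-1/2}$, which are not additive over a decomposition of $\Sigma$, so the inequality $\sum_{k,\tilde z_0} X_{k,\tilde z_0}^2 \lesssim X^2$ needs an argument. Likewise, the rescaling $V_\sigma(y)=V(\sigma y)$ scales $\|V\|_{\bH(\Sigma)}$ inhomogeneously (the two terms in \eqref{def-Hnorm} scale differently and both blow up as $\sigma \to 0$), so the claim that the unrescaled constants stay uniform is not justified. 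None of these points is addressed; you flag them as ``the delicate technical step'' without resolving them. The paper's direct route, applying \Cref{lem-main}-type estimates with a weight singular at a point outside $\bar\Omega$ on the $x_1<0$ side so that $\Sigma$ is the heaviest-weighted boundary component, avoids all of this.
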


The proof of \Cref{fund-thm-V} is in the same spirit as the one of \Cref{fund-thm} but much simpler; one does not need to make any change of variables in the spirit of conformal maps as in the proof of \Cref{fund-pro}. Similar arguments are also used in  \cite{MinhLoc2}. The proof is omitted. 

\medskip 
We are ready to give

\begin{proof}[Proof of \Cref{thm-3SP}] The proof is based on \Cref{cor-3SP-*}, \Cref{fund-thm-V}, and a covering argument.  By \Cref{cor-3SP-*},  there exists $\gamma_2 > 0$ depending only on $\alpha$, $\Gamma$, $\Lambda$, $R_1$ and $R_3$ such that for every $\gamma_1 \in (0, \gamma_2)$ there exists $\gamma_0 \in (0, \gamma_1)$, 
depending only on $\gamma_1$, $\alpha$, $\Gamma$, $\Lambda$, $R_1$ and $R_3$, such that  
\begin{equation}\label{thm-3SP-p1}
\| V\|_{H^1 \big((O_{\gamma_2} \setminus O_{\gamma_1} ) \setminus B_{R_1} \big)} \le C_{\gamma_1} \Big( \| V \|_{{\bf H}(\Sigma_{\gamma_0})} + \| g\|_{L^2(D_{\gamma_0})} \Big)^\alpha \Big( \| V\|_{H^1(D_{\gamma_0})} + \| g\|_{L^2(D_{\gamma_0})} \Big)^{1 -\alpha}, 
\end{equation}
for some positive constant $C_{\gamma_1}$ depending only on  $\gamma_1$, $\alpha$, $\Gamma$, $\Lambda$, $\Lambda_1$, $R_1$ and $R_3$.

Fix such a $\gamma_2$. By \Cref{fund-thm-V}, for $x \in \partial B_{R_1} \setminus O_{\gamma_2/3}$, there exists $\rho(x) \in (0,  \gamma_2/12)$ such that 
\begin{equation}\label{thm-3SP-p2}
\|V \|_{H^1(B_{\rho(x)} (x))} \le C \Big(\| V \|_{{\bf H}(\Sigma_{\gamma_2/4})} + \| g\|_{L^2(D_{\gamma_2/4})} \Big)^\alpha  \Big(\| V\|_{H^1(D_{\gamma_2/4})} + \| g\|_{L^2(D_{\gamma_2/4})} \Big)^{1 -\alpha}. 
\end{equation}
One can  also choose $\rho(x)$ such that it depends only on $\alpha$, $\Lambda$, $\gamma_2$, $\Gamma$, $d$, and $m$ (so it is independent of $x$). This will be assumed from now on and we will simply denote it by $\rho$ for notational ease.  
Since 
$$
\partial B_{R_1} \setminus O_{\gamma_2/2} \subset \mathop{\bigcup}_{x \in \partial B_{R_1} \setminus O_{\gamma_2/3}} B_{\rho/2}(x),
$$ 
it follows that there exists a finite set $\Big\{x_i \in \partial B_{R_1} \setminus O_{\gamma_2/3}; i \in I \Big\}$ such that 
\begin{equation*}
\partial B_{R_1} \setminus O_{\gamma_2/2} \subset \mathop{\bigcup}_{i \in I} B_{\rho /2}(x_i). 
\end{equation*}
Then 
\begin{equation}\label{thm-3SP-p3}
(B_{R_1 + \rho/2} \setminus B_{R_1}) \setminus O_{\gamma_2/2} \subset \mathop{\bigcup}_{i \in I} B_{\rho }(x_i). 
\end{equation}
We derive from \eqref{thm-3SP-p2} and \eqref{thm-3SP-p3} that 
\begin{multline}\label{thm-3SP-p4}
\| V\|_{H^1\big( (B_{R_1 + \rho/2} \setminus B_{R_1}) \setminus O_{\gamma_2/2} \big)}\\[6pt] \le C \Big( \| V \|_{{\bf H}(\Sigma_{\gamma_2/4})} + \| g\|_{L^2(D_{\gamma_2/4})} \Big)^\alpha \Big( \| V\|_{H^1(D_{\gamma_2/4})} + \| g\|_{L^2(D_{\gamma_2/4})} \Big)^{1 -\alpha}. 
\end{multline}

Set 
$$
r_2 = \rho/2.  
$$
For $r_1 \in (0, r_2)$,  let $r_0 = \gamma_0$ where $\gamma_0$ is the constant corresponding to $\gamma_1 = r_1$  in \eqref{thm-3SP-p1}. Note that $r_1 < \gamma_2/24$ since $\rho \in (0,  \gamma_2/12)$. We have, by the definition of $r_2$,  
\begin{multline*}
B_{R_1 + r_2} \setminus B_{R_1 +  r_1} \subset
 \Big( (B_{R_1 + \rho/2} \setminus B_{R_1}) \setminus O_{\gamma_2/2} \Big) \cup \Big( O_{\gamma_2/2} \setminus B_{R_1 +  r_1}\Big) \\[6pt]
\subset \Big( (B_{R_1 + \rho/2} \setminus B_{R_1}) \setminus O_{\gamma_2/2} \Big) \cup \Big( \big(O_{\gamma_2} \setminus O_{r_1}\big) \setminus B_{R_1} \Big).
\end{multline*}
We derive from \eqref{thm-3SP-p1} and \eqref{thm-3SP-p4} that 
\begin{equation*}
\| V\|_{H^1(B_{R_1 +  r_2} \setminus B_{R_1 +  r_1})} \le C \Big(\| V \|_{{\bf H}(\Sigma_{r_0})} + \| g\|_{L^2(D_{r_0})} \Big)^\alpha  \Big(\| V\|_{H^1(D_{r_0})} + \| g\|_{L^2(D_{r_0})} \Big)^{1 -\alpha}. 
\end{equation*}
 The proof is complete. 
\end{proof}

We next give the 

\begin{proof}[Proof of  \Cref{cor-3SP}]
Fix  $s \in (0, 1 )$ and $\hat R_1 \in (R_1, R_2)$ be such that 
\begin{equation}\label{cor-3SP-p1}
s  \ln (R_3/ R_2) \Big/ \ln (R_3/ \hat R_1) >  \alpha.  
\end{equation}
Such $s$ and  $\hat R_1$ exist since 
$$
\alpha < \alpha_0 = \ln (R_3/ R_2) \Big/ \ln (R_3/ R_1) 
$$
(e.g. one can take $\hat R_1$ close to $R_1$ and $s$ close to 1). 

By  Theorem~\ref{thm-3SP}, there exist 
$r_* \in (R_1, \hat R_1)$ and $r_0 \in (0, r_* - R_1)$ such that if $\Delta v + \omega^2 v = 0$ in $D_{r_0}$, then  
\begin{equation}\label{pro-E-p1}  
\| V \|_{\bH(\partial B_{r_*})} \le C \| V\|_{\bH(\Sigma_{r_0})}^s  \|V\|_{H^1(D_{r_0})}^{1 - s}. 
\end{equation}
On the other hand, we have, by \eqref{3SP-H},  
\begin{equation}\label{pro-E-p2}
\| V \|_{\bH(\partial B_{R_2})} \le C \| V \|_{\bH(\partial B_{r_*})}^{\beta} \| V \|_{\bH(\partial B_{R_3})}^{1 - \beta}
\end{equation}
with $\beta = \ln (R_3/ R_2) \Big/ \ln (R_3/ r_*)$.  Combining \eqref{pro-E-p1} and \eqref{pro-E-p2} yields 
\begin{align*}
\| V \|_{\bH(\partial B_{R_2})} \le  C  \| V\|_{\bH(\Sigma_{r_0})}^{\beta s} \|V\|_{H^1(D_{r_0})}^{(1 - s) \beta}  \| V \|_{\bH(\partial B_{R_3})}^{1 - \beta}  
\le C   \| V\|_{\bH(\Sigma_{r_0})}^{\beta s}  \|V\|_{H^1(D_{r_0})}^{1 -  \beta s}. 
\end{align*}
In the last inequality, we used the fact $\| V\|_{{\bf H}(\partial B_{R_3})} \le C \| V\|_{H^1(D_{r_0})}$ by the trace theory.  
The conclusion  now follows  since  $\beta s >  \alpha$  by \eqref{cor-3SP-p1} and $\| V\|_{\bH(\Sigma_{r_0})} \le  C \|V\|_{H^1(D_{r_0})}$ by the trace theory. 
\end{proof}

\section{Three-sphere inequalities for Maxwell equations}\label{sect-3SP-M}


In this section, we establish three-sphere inequalities for Maxwell equations. As usual, see e.g.  in \cite{Leis, Ng-CALR-M, Tu} and the references therein, we also derive \Cref{thm-3SP-M} from three-sphere inequalities for second-order elliptic equations with partial data. 
In order to be able to apply the results established in \Cref{sect-3SP}, we will use the fact that Maxwell equations can be reduced to weakly coupled second order elliptic equations. More precisely, let $\Omega$ be an open subset of $\mR^3$. If $(E, H) \in [H^1(\Omega)]^6$ satisfies 
\begin{equation*}
\left\{\begin{array}{cl}
\nabla \times E = i  \omega \mu H & \mbox{ in } \Omega, \\[6pt]
\nabla \times H = - i \omega \eps E & \mbox{ in } \Omega, 
\end{array}\right.
\end{equation*}
then, for $1 \le a \le 3$,  
\begin{equation}\label{eq-H}
\dive (\mu \nabla \cH_a)  + \dive (\partial_a \mu \cH - i k \mu \epsilon^a \eps \cE) = 0 \mbox{ in } \Omega, 
\end{equation}
\begin{equation}\label{eq-E}
\dive(\eps \nabla \cE_a) + \dive (\partial_a \eps \cE + i k \eps \epsilon^a \mu \cH) = 0 \mbox{ in } \Omega. 
\end{equation}
Here $\cE_a$ and $\cH_a$  denote the $a$ component of $\cE$ and $\cH$,  respectively, and the $bc$ component $\epsilon^a_{bc}$ $(1 \le b, c \le 3)$  of $\epsilon^a$ $(1 \le a \le 3)$ denotes the usual Levi Civita permutation, i.e., 
\begin{equation*}
\epsilon^a_{bc} = \left\{\begin{array}{cl} \mbox{sign } (abc) & \mbox{ if  $abc$  is a permuation}, \\[6pt]
0 & \mbox{otherwise}. 
\end{array}\right.
\end{equation*}

We now present a variant of \Cref{fund-thm} for the Maxwell equations.

\begin{theorem} \label{fund-thm-M} Let $d=3$, $\Lambda \ge 1$,  and   $0< R_*< R < R^*$. Then, for any  $ \alpha \in (0,  1)$, there exists a positive constant $ \hgamma_2 \in (0, 1)$, depending only on $\Lambda$ and $R$ such that for every $\hgamma_1 \in (0, \hgamma_2)$, there exists  $\hgamma_0 \in (0,  \hgamma_1)$ depending only on  $\hgamma_1$, $\alpha$,  and  $\Lambda$,  such that, for 
a pair of symmetric, uniformly elliptic, matrix-valued functions  $(\eps, \mu)$ of class $C^2$ defined in $D_{\hgamma_0} : = Y_{\hgamma_0, 1,  R}$ verifying, in $D_{\hgamma_0} $, with $M = \eps$ and  $M =  \mu$, 
\begin{equation}
\Lambda^{-1} |\xi|^2 \le \langle M(x) \xi, \xi \rangle \le \Lambda |\xi|^2  \; \;  \forall \, \xi \in \mR^d  \quad \mbox{ and } \quad   |\nabla M(x) |  \le \Lambda, \end{equation}
then, for $\omega > 0$, for $J_e, \, J_m \in [L^2(D_{\hgamma_0} )]^3$,  and  for  $(E, H) \in [H(\curl, D_{\hgamma_0} )]^2$ satisfying 
\begin{equation*} 
\left\{\begin{array}{cl}
\nabla \times E = i \omega \mu H + J_e & \mbox{ in } D_{\hgamma_0} , \\[6pt]
\nabla \times H = - i \omega \eps H + J_m & \mbox{ in } D_{\hgamma_0} , 
\end{array}\right. 
\end{equation*}
we have, with $\Sigma_{\hgamma_0} = \partial D_{\hgamma_0}   \cap \{x_1 = 0 \}
$, 
\begin{multline}
\| (E, H) \|_{L^2(Y_{\hgamma_1, \hgamma_2,  \frac{R}{4}})} 
\le C \Big( \| (E \times \nu, H \times \nu)\|_{H^{-1/2}(\dive_\Sigma, \Sigma_{\hgamma_0})} +  \| (J_e, J_m) \|_{L^2(D_{\hgamma_0} )} \Big) ^{\alpha}   \times \\[6pt]
\times \Big(\| (E, H) \|_{L^2(D_{\hgamma_0} ) }+  \| (J_e, J_m) \|_{L^2(D_{\hgamma_0} )} \Big)^{1 - \alpha},  
\end{multline}
for some positive constant $C$ depending only on $\alpha, \,  \gamma_1, \,   \omega$, $R_*$, $R^*$, and the upper bound of \\
$\| (\eps, \mu) \|_{C^2(\bar D_{\hgamma_0})}$. 
\end{theorem}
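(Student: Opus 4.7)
The strategy is to reduce to the scalar elliptic three-sphere inequality \Cref{fund-thm} by exploiting the weakly coupled second-order elliptic structure of Maxwell's system recorded in \eqref{eq-H}--\eqref{eq-E}. First, I would construct a particular solution $(E_*, H_*)$ of the inhomogeneous Maxwell system with source $(J_e, J_m)$ satisfying
$$
\|(E_*, H_*)\|_{H^1(D_{\hgamma_0})} \le C \|(J_e, J_m)\|_{L^2(D_{\hgamma_0})},
$$
for instance by extending $(\eps, \mu)$ to $C^2$ matrix-valued functions on $\mR^3$ that coincide with the identity outside a bounded ball, extending $(J_e, J_m)$ by zero, and solving the resulting problem on $\mR^3$ under the Silver-M\"uller radiation condition, with an approximation of $(J_e, J_m)$ by smooth functions if needed to secure the $H^1$ bound in spite of the divergence constraint. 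Setting $(E^\sharp, H^\sharp) := (E - E_*, H - H_*)$ yields a solution of the source-free Maxwell system in $D_{\hgamma_0}$; interior regularity (using that the free-space divergence identities $\dive(\eps E^\sharp) = \dive(\mu H^\sharp) = 0$ close the first-order system into a genuinely elliptic one with $C^2$ coefficients) promotes $(E^\sharp, H^\sharp)$ to $[H^2_{\loc}(D_{\hgamma_0})]^6$.

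Second, the identities \eqref{eq-H}--\eqref{eq-E} apply to $(E^\sharp, H^\sharp)$. Viewing $V$ as the twelve-dimensional real vector composed of the real and imaginary parts of the six components of $(E^\sharp, H^\sharp)$, each entry satisfies pointwise
$$
|\dive(M^\ell \nabla V_\ell)| \le \Lambda_1(|V| + |\nabla V|) \quad \text{in } D_{\hgamma_0},
$$
with $M^\ell \in \{\eps, \mu\}$ and $\Lambda_1$ controlled by $\|(\eps, \mu)\|_{C^2(\overline{D_{\hgamma_0}})}$. Given $\alpha \in (0, 1)$, \Cref{fund-thm} with $m = 12$ and $g = 0$ then furnishes constants $\hgamma_2, \hgamma_1, \hgamma_0$ (depending on $\alpha, \Lambda, R_*, R^*$ and, crucially, independent of $\Lambda_1$) such that
$$
\|V\|_{H^1(Y_{\hgamma_1, \hgamma_2, R/4})} \le C \|V\|_{{\bf H}(\Sigma_{\hgamma_0})}^{\alpha} \|V\|_{H^1(D_{\hgamma_0})}^{1 - \alpha},
$$
which combines with the bound on $(E_*, H_*)$ from the first step to yield an analogous estimate for $(E, H)$ modulo an additive $\|(J_e, J_m)\|_{L^2}$-term.

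The final step is to replace $\|V\|_{{\bf H}(\Sigma_{\hgamma_0})}$ by the Maxwell tangential trace norm $\|(E \times \nu, H \times \nu)\|_{H^{-1/2}(\dive_\Sigma, \Sigma_{\hgamma_0})}$. My plan is to construct an auxiliary Maxwell field $(\wtE, \wtH) \in [H^1]^6$ supported in a thin one-sided neighborhood of $\Sigma_{\hgamma_0}$, whose tangential trace on $\Sigma_{\hgamma_0}$ matches that of $(E, H)$ and whose $H^1$-norm is controlled by $\|(E \times \nu, H \times \nu)\|_{H^{-1/2}(\dive_\Sigma, \Sigma_{\hgamma_0})}$, via a Maxwell-trace lifting on the open piece $\Sigma_{\hgamma_0}$ of the smooth surface $\{x_1 = 0\}$. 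Subtracting $(\wtE, \wtH)$ from $(E^\sharp, H^\sharp)$ produces a field with vanishing tangential trace on $\Sigma_{\hgamma_0}$, for which the full ${\bf H}(\Sigma_{\hgamma_0})$-norm is controlled: the Dirichlet traces of the twelve components and the conormal fluxes $M^\ell \nabla V_\ell \cdot \nu$ can be recovered from the vanishing tangential Maxwell trace via tangential surface derivatives, using the Maxwell equations evaluated on the boundary.

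The principal obstacle is this last step. The ${\bf H}(\Sigma_{\hgamma_0})$-norm demands pointwise trace control of all twelve real components of $V$ together with the twelve conormal fluxes $M^\ell \nabla V_\ell \cdot \nu$, whereas the natural Maxwell tangential trace encodes only the two tangential components of $E$ and the two tangential components of $H$ on $\Sigma_{\hgamma_0}$. Bridging this gap requires a Maxwell-trace lifting on the open subsurface $\Sigma_{\hgamma_0}$ with sharp regularity adapted to the $H^{-1/2}(\dive_\Sigma, \Sigma_{\hgamma_0})$ space, combined with a careful use of the boundary-restricted Maxwell equations to express the missing normal components and conormal fluxes in terms of tangential surface derivatives of the prescribed or vanishing tangential data.
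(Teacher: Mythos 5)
There is a genuine gap, and you have located it yourself: the final step, converting the full $\bf H(\Sigma_{\hgamma_0})$-norm (all twelve Dirichlet traces plus all twelve conormal fluxes $\M^\ell \nabla V_\ell\cdot\nu$) into the Maxwell tangential trace norm $\|(E\times\nu, H\times\nu)\|_{H^{-1/2}(\dive_\Sigma,\Sigma_{\hgamma_0})}$, is not carried out and is not an easy fix. Although one can, for source-free Maxwell with elliptic $(\eps,\mu)$, argue formally that vanishing tangential traces of $E$ and $H$ force the normal components to vanish too (via $(\nabla\times E)\cdot\nu = \dive_\Sigma(E\times\nu)$) and that the Maxwell equations plus $\dive(\eps E) = \dive(\mu H) = 0$ determine the normal derivatives on the boundary, making this rigorous in the sharp $H^{-1/2}$-scale, with the lifting issue on the open subsurface $\Sigma_{\hgamma_0}$, is precisely the kind of delicate boundary bookkeeping your sketch leaves open. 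As written, the proposal does not prove the theorem.

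The paper's proof sidesteps the obstacle entirely by working at the Maxwell level rather than the scalar level. It extends $(\eps,\mu)$ evenly across $\{x_1=0\}$ to the doubled region $\widetilde Y_{\hgamma_0,1,R}$, and instead of subtracting a particular solution and a trace lifting, it constructs \emph{one} auxiliary Maxwell field $(\hE,\hH)$ via \Cref{lem-EH}: this field solves the Maxwell system (carrying the sources $J_e\mathds{1}_{x_1>0}, J_m\mathds{1}_{x_1>0}$) across $\{x_1=0\}$ with prescribed jump conditions matching the cutoff tangential trace of $(E,H)\mathds{1}_{x_1>0}$, and has an $L^2$-bound controlled by $\|(E\times\nu, H\times\nu)\|_{H^{-1/2}(\dive_\Sigma,\Sigma_{\hgamma_0})} + \|(J_e,J_m)\|_{L^2}$. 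The difference $(\wtE,\wtH) = (E,H)\mathds{1}_{x_1>0} - (\hE,\hH)$ then solves the source-free Maxwell system on the whole doubled region (the jumps cancel). At this point the paper applies \Cref{fund-pro} (not \Cref{fund-thm}) combined with \eqref{eq-H}--\eqref{eq-E}: the relevant boundary for \Cref{fund-pro} is $\{\theta=\pm 3\pi/4\}$, which is interior to the doubled region and away from the jump surface, so interior elliptic regularity of $(\hE,\hH)$ yields $L^2$-control of the field and its gradient there. This replaces the problematic $\bf H$-norm conversion by the Maxwell trace lifting of \cite{AV96} built into \Cref{lem-EH}, where the $H^{-1/2}(\dive_\Sigma,\cdot)$-norm is exactly the natural one. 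Your idea of using the weakly coupled elliptic structure is correct and is indeed what the paper does; what is missing is this reflection/gluing mechanism, which is also the mechanism used in the proof of \Cref{fund-thm} itself, and it is the reason the Maxwell theorem can be stated directly with the Maxwell trace norm.
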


\begin{remark} \rm The constant $\gamma_2$ depends on $\Lambda$ but is independent of the upper bound of \\ $\| (\eps, \mu) \|_{C^2(\bar D_{\hgamma_0})}$. 
\end{remark}

The proof of \Cref{fund-thm-M} is given in \Cref{sect-fund-thm-M} below and is the key part  of the proof of Theorem~\ref{thm-3SP-M}.  As a consequence of \Cref{fund-thm-M}, we obtain the following variant of \Cref{cor-3SP-*} for the Maxwell equations whose proof is omitted.  

\begin{corollary}\label{cor-Maxwell-Cl} Let $d = 3$, $\Lambda \ge  1$, let $\Omega$ be an open subset of $\mR^d$ of class $C^1$, and let 
$\Gamma$ be a  compact smooth curve of  $\partial \Omega$ which belongs to a connected component $\Sigma$ of $\partial \Omega$.  Denote  $O_r = \Big\{x \in \mR^d; \dist(x, \Gamma) < r \Big\}$, $D_r = \Omega \setminus \bar O_r$, and $\Sigma_r = \Sigma \setminus \bar O_r$ for $r>0$.
For any $\alpha \in (0,  \alpha_0)$, there exists $r_2>0$ depending only on $\alpha$, $\Gamma$,  and $\Omega$ such that for every $r_1 \in (0, r_2)$, there exists $r_0 \in (0, r_1)$, depending only on $r_1$, $\alpha$, $\Gamma$,  and $\Omega$,  such that  for 
a pair $(\eps, \mu)$ of symmetric, uniformly elliptic, Lipschitz matrix-valued functions   defined in $D_{r_0}$ verifying, with $M = \eps$ and  $M =  \mu$, 
\begin{equation}
\Lambda^{-1} |\xi|^2 \le \langle M(x) \xi, \xi \rangle \le \Lambda |\xi|^2  \; \;  \forall \, \xi \in \mR^d  \quad \mbox{ and } \quad   |\nabla M(x) |  \le \Lambda, 
\end{equation}
for $\omega > 0$, for $J_e, \, J_m \in [L^2(D_{r_0})]^3$,  and  for  $(E, H) \in [H(\curl, D_{r_0})]^2$ satisfying
\begin{equation*} 
\left\{\begin{array}{cl}
\nabla \times E = i \omega \mu H + J_e & \mbox{ in } D_{r_0}, \\[6pt]
\nabla \times H = - i \omega \eps H + J_m & \mbox{ in } D_{r_0}, 
\end{array}\right. 
\end{equation*}
we have
\begin{multline}
\| (E,  H) \|_{L^2(O_{r_2} \setminus O_{r_1})}  
\le C \Big( \| (E \times \nu, H \times \nu)\|_{H^{-1/2}(\dive_\Sigma, \Sigma_0)} +  \|(J_e, J_m) \|_{L^2(D_{r_0})} \Big)^{\alpha} \times \\[6pt] \times \Big( \| (E, H) \|_{L^2(D_{r_0}) } + \| (J_e, J_m) \|_{L^2(D_{r_0})} \Big)^{1-\alpha},  
\end{multline}
for some positive constant $C$ depending only on $\alpha, \,    \omega, \,  \Lambda$, $\Gamma$, $\Omega$, and  the upper bound of $\| (\eps, \mu) \|_{C^2(\bar D_{r_0})}$.  
\end{corollary}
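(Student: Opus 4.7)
My plan is to reduce the Maxwell system to a weakly coupled system of second-order elliptic inequalities on $V=(E_1,E_2,E_3,H_1,H_2,H_3)$ (so $m=6$ with $\M^1=\M^2=\M^3=\eps$ and $\M^4=\M^5=\M^6=\mu$), and then invoke \Cref{fund-thm}. The substantive work will be in converting the Maxwell tangential Cauchy data $\|(E\times\nu,H\times\nu)\|_{H^{-1/2}(\dive_\Sigma,\Sigma_{\hgamma_0})}$ into the elliptic Cauchy data $\|V\|_{\bH(\Sigma_{\hgamma_0})}$ that \Cref{fund-thm} requires; this will be the main obstacle.

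The first step is elliptic reduction. Starting from $\nabla\times E=i\omega\mu H+J_e$ and $\nabla\times H=-i\omega\eps E+J_m$, the identities \eqref{eq-H} and \eqref{eq-E} (carried out distributionally so that only $L^2$ norms of $J_e,J_m$ appear, using the $C^2$ regularity of $(\eps,\mu)$) yield, for each $a\in\{1,2,3\}$,
\begin{equation*}
|\dive(\eps\nabla E_a)|+|\dive(\mu\nabla H_a)|\le\Lambda_1\bigl(|\nabla(E,H)|+|(E,H)|+|g|\bigr)\quad\mbox{in } D_{\hgamma_0},
\end{equation*}
with $\Lambda_1$ depending on $\omega$ and $\|(\eps,\mu)\|_{C^2}$, and $g$ controlled by $\|(J_e,J_m)\|_{L^2}$. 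The weakly coupled elliptic system of the hypothesis \eqref{fund-thm-Ineq} is thereby in place, and the ellipticity constants of $\eps$ and $\mu$ are bounded by $\Lambda$ as required.

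The second step, which is the heart of the proof, is the boundary-data conversion. Given $(E\times\nu,H\times\nu)\in H^{-1/2}(\dive_\Sigma,\Sigma_{\hgamma_0})$, extend $(\eps,\mu)$ by even reflection across $\{x_1=0\}$ and, for $\delta>0$ small, solve an auxiliary Maxwell problem in a one-sided thin neighborhood $\Y^*=\Sigma_{\hgamma_0/2}\times(0,\delta)$ producing $(\tE,\tH)\in[H^1(\Y^*)]^6$ with tangential trace matching $(E\times\nu,H\times\nu)$ on $\Sigma_{\hgamma_0}$ and satisfying
\begin{equation*}
\|(\tE,\tH)\|_{H^1(\Y^*)}\le C\Bigl(\|(E\times\nu,H\times\nu)\|_{H^{-1/2}(\dive_\Sigma,\Sigma_{\hgamma_0})}+\|(J_e,J_m)\|_{L^2(D_{\hgamma_0})}\Bigr).
\end{equation*}
This is done, as in the passage from Proposition~\ref{fund-pro} to \Cref{fund-thm} in \Cref{sect-fund-thm}, by an $H(\curl)$-conforming lifting of the tangential trace combined with the $H^1$-regularity estimate for the second-order Maxwell system from Step~1 (whose coefficients are $C^2$, hence legitimate). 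Set $W=(E,H)-(\tE,\tH)$ in $\Y^*$ and extend by $-(\tE,\tH)$ reflected to the other side of $\{x_1=0\}$, so that $W\in[H^1]^6$ on the extended slab, has zero tangential trace across $\Sigma_{\hgamma_0}$, and satisfies a weakly coupled elliptic inequality as in Step~1 with inhomogeneity $\tilde g$ bounded in $L^2$ by the RHS above. In particular $\|W\|_{\bH(\Sigma_{\hgamma_0})}$ is controlled by the same quantity.

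The third step applies \Cref{fund-thm} to $W$ on the extended slab (thought of as a $Y_{\hgamma_0,1,R}$ after accommodating the reflection), with $m=6$ and $\Lambda_1$ from Step~1, to obtain
\begin{equation*}
\|W\|_{H^1(Y_{\hgamma_1,\hgamma_2,R/4})}\le C\bigl(\mbox{Maxwell data}+\mbox{sources}\bigr)^{\alpha}\bigl(\|(E,H)\|_{L^2(D_{\hgamma_0})}+\mbox{sources}\bigr)^{1-\alpha}.
\end{equation*}
The triangle inequality $\|(E,H)\|_{L^2}\le\|W\|_{H^1}+\|(\tE,\tH)\|_{H^1}$ and the bound on $(\tE,\tH)$ then give the claimed interpolation for $(E,H)$. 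The hard part is Step~2: producing an $H^1$ extension that matches the $H^{-1/2}(\dive_\Sigma)$ tangential data exactly, under only $C^2$ regularity of $(\eps,\mu)$. A careful regularization of the tangential data, followed by the $H^1$-regularity estimate for the coupled elliptic system of Step~1 on the extended domain, should accomplish this and also explains why the threshold $\hgamma_2$ depends only on $\Lambda$, while the final constant $C$ depends on $\|(\eps,\mu)\|_{C^2}$.
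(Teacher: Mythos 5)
Your outline has the right ingredients (elliptic reduction via \eqref{eq-H}--\eqref{eq-E}, an auxiliary Maxwell solve to absorb the tangential Cauchy data, a reflection, then the partial-data three-sphere inequality), but Step~2 contains a genuine gap that your last sentence half-acknowledges but does not resolve. You propose to produce $(\tE,\tH)\in[H^1(\Y^*)]^6$ whose tangential trace on $\Sigma_{\hgamma_0}$ matches $(E\times\nu,H\times\nu)$, with an $H^1(\Y^*)$ bound by the $H^{-1/2}(\dive_\Sigma)$-norm of that trace, and to deduce that $\|W\|_{\bH(\Sigma_{\hgamma_0})}$ is controlled. This cannot work up to the interface: the trace of an $H^1$ vector field on $\Sigma$ lies in $H^{1/2}$, whereas the prescribed tangential data lives only in $H^{-1/2}(\dive_\Sigma)$ -- a full order weaker -- so there is no bounded lifting from $H^{-1/2}(\dive_\Sigma)$ into $H^1$, and no boundary regularity estimate for the Maxwell system (even with $C^2$ coefficients) will upgrade an $H(\curl)$-conforming lifting to $H^1$ at $\Sigma$. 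Consequently the quantity $\|W\|_{\bH(\Sigma_{\hgamma_0})}$, which in particular contains the $H^{1/2}(\Sigma_{\hgamma_0})$-norm of $W$, is not bounded by the given data, and \Cref{fund-thm} cannot be invoked with $\Sigma_{\hgamma_0}$ at $\{x_1=0\}$. Regularizing the tangential data, as you suggest, loses control of the error and does not close this gap.

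The paper avoids this by moving the Cauchy slice strictly into the interior. After extending $(\eps,\mu)$ evenly across $\{x_1=0\}$, the auxiliary pair $(\hE,\hH)$ is produced by \Cref{lem-EH} as a Maxwell solution on the \emph{whole} annular region $\hat Y_{\hgamma_0,1,R}$ (both sides of $\Sigma$), with a jump across $\{x_1=0\}$ prescribed by the cutoff tangential traces of $(E,H)\mathds{1}_{x_1>0}$, and an $L^2$ bound depending only on $\Lambda$ -- this lemma is the quantitatively correct substitute for your thin-slab solve and is what makes $\gamma_2$ depend only on $\Lambda$. Then $(\wtE,\wtH)=(E,H)\mathds{1}_{x_1>0}-(\hE,\hH)$ has no tangential jump across $\Sigma$ and is therefore $H(\curl)$ on the reflected wedge $\wtD_{\hgamma_0}$; by \emph{interior} elliptic regularity (using \eqref{eq-H}, \eqref{eq-E}), $(\wtE,\wtH)$ together with its first derivatives is controlled in $L^2$ on the interior slices $\{\theta=\pm3\pi/4\}$, where no boundary-trace obstruction arises. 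One then applies \Cref{fund-pro} -- not \Cref{fund-thm} -- with $\wtSigma_{\hgamma_0}=\{\theta=\pm3\pi/4\}$, which only requires $\|(\wtE,\wtH,\nabla\wtE,\nabla\wtH)\|_{L^2(\wtSigma_{\hgamma_0})}$ on these interior slices. Finally, the passage from the local statement \Cref{fund-thm-M} to the corollary still requires a covering argument combining \Cref{fund-thm-M} near $\Gamma$ with the flat variant \Cref{fund-thm-M-V} away from $\Gamma$, exactly as in the proof of \Cref{thm-3SP}; your outline omits this step.
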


Applying Theorem~\ref{thm-3SP-M}, one can derive various  three-sphere inequalities with partial data for $R_1 < R_2 < R_3$. Here is an example in the spirit of Hadamard. 
\begin{corollary}\label{cor-Maxwell} Let $d = 3$, and $R_1< R_2 < R_3$, and let 
$\Gamma$ be a  a compact smooth curve of  $\partial B_{R_1}$.  Denote  $O_r = \Big\{x \in \mR^d; \dist(x, \Gamma) < r \Big\}$, $D_r = B_{R_3} \setminus (\overline{B_{R_1} \cup O_r})$, and $\Sigma_r = \partial B_{R_1} \setminus \bar O_r$ for $r>0$.
Set $\alpha_0 = \ln(R_3/ R_2) \Big/ \ln (R_3/ R_1)$. Then, for any $ \alpha \in (0,  \alpha_0)$, there exists $r_0  \in (0, R_2 - R_1)$, depending only on $R_1$, $R_2$,  $R_3$, $\Gamma$, and $\alpha$ such that for $\omega>0$ and 
for $(E, H) \in [H(\curl, D_{r_0})]^2$ satisfying   
\begin{equation*} 
\left\{\begin{array}{cl}
\nabla \times E = i \omega H & \mbox{ in } D_{r_0}, \\[6pt]
\nabla \times H = - i \omega H & \mbox{ in } D_{r_0}, 
\end{array}\right. 
\end{equation*}
we have
\begin{equation*}  
\| (E \times \nu, H \times \nu) \|_{H^{-1/2}(\dive_\Sigma, \partial B_{R_2})}
\le C \| (E \times \nu, H \times \nu)\|_{H^{-1/2}(\dive, \Sigma_{r_0})}^\alpha   \|(E, H)\|_{L^2(D_{r_0})}^{1 - \alpha}, 
\end{equation*}
for some positive constant $C$ depending on  $\alpha$, $\omega$, $\Gamma$, $R_1$, $R_2$,  and $R_3$. 
\end{corollary}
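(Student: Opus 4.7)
\textbf{Proof plan for Corollary \ref{cor-Maxwell}.} The strategy mirrors the proof of Corollary \ref{cor-3SP}: chain the new partial-data three-sphere inequality (Theorem \ref{thm-3SP-M}) with a classical Hadamard-type three-sphere inequality for the Maxwell system at three concentric spheres. Since $\alpha < \alpha_0 = \ln(R_3/R_2)/\ln(R_3/R_1)$, first fix $s \in (0,1)$ and $\hat R_1 \in (R_1, R_2)$ close enough to $R_1$ that
\[
\beta_* \, s > \alpha, \qquad \beta_* := \ln(R_3/R_2)\big/\ln(R_3/\hat R_1).
\]

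Apply Theorem \ref{thm-3SP-M} with $s$ in place of $\alpha$ and with $\eps=\mu=I$ (so $\Lambda=1$, $J_e=J_m=0$). This yields $r_2 \in (0,\hat R_1 - R_1)$ and, for some $r_1\in(0,r_2)$, an $r_0 \in (0,r_1)$ such that
\[
\|(E,H)\|_{L^2(\mathcal{A})} \le C \, \|(E\times\nu,H\times\nu)\|_{H^{-1/2}(\dive_\Sigma,\Sigma_{r_0})}^{s}\, \|(E,H)\|_{L^2(D_{r_0})}^{1-s},
\]
with $\mathcal{A} := B_{R_1+r_2}\setminus \overline{B_{R_1+r_1}}$. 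Choose $r_* \in (R_1+r_1, R_1+r_2)$. Since $\curl E = i\omega H$ and $\curl H = -i\omega E$ lie in $L^2(\mathcal{A})$, we have $(E,H)\in [H(\curl,\mathcal{A})]^2$ with $H(\curl)$-norm equivalent to the $L^2$-norm on $\mathcal{A}$. Splitting $\mathcal{A}$ along $\partial B_{r_*}$ and applying the standard trace theorem for $H(\curl)$ gives
\[
\|(E\times\nu,H\times\nu)\|_{H^{-1/2}(\dive_\Sigma,\partial B_{r_*})} \le C\, \|(E,H)\|_{L^2(\mathcal{A})}.
\]

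Next, since $\eps=\mu=I$, taking divergences of Maxwell shows $\dive E = \dive H = 0$, and the curl-curl identity yields $\Delta u + \omega^2 u = 0$ on $D_{r_0}$ for each scalar component $u$ of $E$ and of $H$. Applying the scalar Hadamard-type inequality \eqref{3SP-H} componentwise on the three spheres $\partial B_{r_*}$, $\partial B_{R_2}$, $\partial B_{R_3}$ and then passing from the scalar $\bH$-norms of components to the tangential-trace norm $H^{-1/2}(\dive_\Sigma,\cdot)$ of $(E\times\nu,H\times\nu)$ (using $\dive_\Sigma(E\times\nu) = -i\omega\, H\cdot\nu$ and the analogous relation for $H\times\nu$, together with standard trace estimates on the smooth sphere), one obtains
\[
\|(E\times\nu,H\times\nu)\|_{H^{-1/2}(\dive_\Sigma,\partial B_{R_2})}  \le C\, \|(E\times\nu,H\times\nu)\|_{H^{-1/2}(\dive_\Sigma,\partial B_{r_*})}^{\beta}\, \|(E,H)\|_{L^2(D_{r_0})}^{1-\beta},
\]
where $\beta = \ln(R_3/R_2)/\ln(R_3/r_*)$ and the outer sphere norm has been absorbed into $\|(E,H)\|_{L^2(D_{r_0})}$ via Caccioppoli plus trace (possible since $\partial B_{R_3}$ sits strictly in the interior of $D_{r_0}$). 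Chaining this with the previous two displays, and noting that $r_* < \hat R_1$ forces $\beta \ge \beta_*$ so that $\beta s > \alpha$, the conclusion follows after replacing the exponent $\beta s$ with $\alpha$ using the trivial bound $\|\cdot\|_{H^{-1/2}(\dive_\Sigma,\Sigma_{r_0})} \le C\|(E,H)\|_{L^2(D_{r_0})}$.

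The main obstacle is the careful bookkeeping in Step 4: converting between the scalar $\bH$-norms of the six components of $(E,H)$ on a sphere and the vectorial tangential-trace norm $H^{-1/2}(\dive_\Sigma,\cdot)$ of $(E\times\nu, H\times\nu)$, in a way compatible with interpolation. Both inequalities are standard on smooth hypersurfaces and follow from Maxwell's structure ($\dive_\Sigma(E\times\nu) = -\nu\cdot\curl E$), but one must verify that the direction $\bH \Rightarrow H^{-1/2}(\dive_\Sigma)$ uses only information controlled by the $\bH$-norms, which is where the identity $\curl E = i\omega H$ and $\curl H = -i\omega E$ becomes essential.
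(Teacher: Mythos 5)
Your overall strategy --- apply Theorem~\ref{thm-3SP-M} on $\Sigma_{r_0}$ to reach an intermediate sphere, then interpolate out to $\partial B_{R_2}$ with a Hadamard-type three-sphere inequality, tuning $s$ and the intermediate radius so that the product exponent beats $\alpha$ --- is the same as the paper's. The implementation differs in the second step: the paper invokes Lemma~\ref{lem-M}, a Maxwell-specific three-sphere inequality stated and proved directly in the tangential trace norms $H^{-1/2}(\dive_\Gamma, \partial B_r)$ via the spherical wave function expansion of $(E,H)$ (Appendix~F), whereas you propose to run the scalar Hadamard inequality \eqref{3SP-H} componentwise and then convert between the scalar $\bH$-norms and the vectorial $H^{-1/2}(\dive_\Gamma)$-norms. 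That conversion is where your plan has a genuine gap.

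The passage $\bH \Rightarrow H^{-1/2}(\dive_\Gamma)$ on $\partial B_{R_2}$ is fine (using $\dive_\Gamma(E\times\nu) = -i\omega\, H\cdot\nu$ and its analogue), but the inequality you state in Step~4 also requires the reverse passage $\bH \Leftarrow H^{-1/2}(\dive_\Gamma)$ on the inner sphere $\partial B_{r_*}$, and this fails on derivative-count grounds: $\bH(\partial B_{r_*})$ involves the $H^{1/2}$-norm of each Cartesian component, a full Sobolev order above the $H^{-1/2}$ regularity of the tangential trace, and the $H(\curl)$-trace theorem in your Step~3 only delivers $H^{-1/2}(\dive_\Gamma)$-control. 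The fix is to skip the tangential-trace norm on $\partial B_{r_*}$ entirely: since $\partial B_{r_*}$ lies strictly inside the annulus $\mathcal{A}$ and each Cartesian component of $E,H$ solves $\Delta u + \omega^2 u = 0$ there, interior elliptic regularity bounds the $\bH(\partial B_{r_*})$-norms directly by $\|(E,H)\|_{L^2(\mathcal{A})}$. A related issue appears on $\partial B_{R_3}$, which lies on $\partial D_{r_0}$: the $\bH$-norms there are not controlled by $\|(E,H)\|_{L^2(D_{r_0})}$ without an $L^2\!\to\!H^1$ argument for Maxwell solutions; this is exactly why the scalar Corollary~\ref{cor-3SP} has $\|V\|_{H^1(D_{r_0})}$, not $\|V\|_{L^2}$, on the right, and why Lemma~\ref{lem-M}, which stays in $H^{-1/2}(\dive_\Gamma)$ throughout, is the cleaner vehicle. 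Lastly, a sign slip: $r_* < \hat R_1$ gives $\ln(R_3/r_*) > \ln(R_3/\hat R_1)$ and hence $\beta < \beta_*$, not $\beta\ge\beta_*$; the conclusion $\beta s > \alpha$ nevertheless survives because $r_*>R_1$ forces $\beta>\alpha_0$, so it suffices to have chosen $s > \alpha/\alpha_0$ at the outset.
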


The rest of this section is organized as follows. We first present two lemmas used in  the proofs of \Cref{fund-thm-M}, \Cref{thm-3SP-M}, and \Cref{cor-Maxwell}. The proofs of \Cref{fund-thm-M}, \Cref{thm-3SP-M}, and \Cref{cor-Maxwell} are then given in \Cref{sect-fund-thm-M}, \Cref{sect-thm-3SP-M}, and \Cref{sect-cor-Maxwell}, respectively.

\subsection{Two useful lemmas} We begin with 

\begin{lemma} \label{lem-EH} Let $\Lambda \ge 1$, $D \Subset \Omega \subset \mR^3$ be two connected, open, bounded subsets of $\mR^3$, and let $(\eps, \mu)$ be a pair of real, symmetric, Lipschitz, uniformly elliptic matrix-valued functions defined in $\Omega$ such that, with $M = \eps$ and  $M =  \mu$, in $\Omega$, 
\begin{equation}\label{lem-EH-proM}
\Lambda^{-1} |\xi|^2 \le \langle M(x) \xi, \xi \rangle \le \Lambda |\xi|^2  \; \;  \forall \, \xi \in \mR^d  \quad \mbox{ and } \quad   |\nabla M(x) |  \le \Lambda. 
\end{equation}
Let  $J_e, J_m \in [L^2(\Omega)]^3$,  and let  $f, g \in H^{-1/2} (\dive_\Gamma, \partial D)$. There exists a unique solution 
$(E, H) \in [H(\curl, \Omega \setminus \partial D)]^2$  of the system  
\begin{equation*}
\left\{\begin{array}{cl}
\nabla \times E = i \omega \mu H + J_e & \mbox{ in } \Omega, \\[6pt]
\nabla \times H = - i \omega \eps E + J_m & \mbox{ in } \Omega, \\[6pt]
[E \times \nu] = f, \quad [H \times \nu] = g &  \mbox{ on } \partial D, \\[6pt]
(H \times \nu) \times \nu - E \times \nu = 0 & \mbox{ on } \partial \Omega, 
\end{array}\right. 
\end{equation*}
Moreover, 
\begin{equation}\label{lem-EH-cl1}
\| (E, H) \|_{L^2(\Omega)} \le C  \Big( \| (f, g) \|_{H^{-1/2}(\dive_\Gamma, \partial D)} + \| (J_e, J_m)\|_{L^2(\Omega)} \Big), 
\end{equation}
for some positive constant $C$ depending only on $\Lambda$, $\Omega$, and $D$. 
\end{lemma}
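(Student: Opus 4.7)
The plan follows the classical two-step approach for Maxwell transmission problems: first lift the prescribed jumps on $\partial D$ to reduce the problem to an interface-free Maxwell system with $L^2$ sources and an impedance boundary condition on $\partial \Omega$; then combine an energy identity (for uniqueness) with the Fredholm alternative (for existence) to solve the reduced system.

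For the lifting step I would use that the tangential-trace map is surjective from $H(\curl, \Omega \setminus \overline D)$ onto $H^{-1/2}(\dive_\Gamma, \partial(\Omega \setminus \overline D))$ with a continuous right inverse. This produces $(E_0, H_0) \in [H(\curl, \Omega \setminus \overline D)]^2$, extended by zero into $D$, with tangential traces on the exterior side of $\partial D$ equal to $f$ and $g$, vanishing tangential traces on $\partial \Omega$, and $\|(E_0, H_0)\|_{H(\curl, \Omega \setminus \partial D)} \le C \|(f, g)\|_{H^{-1/2}(\dive_\Gamma, \partial D)}$. Setting $(\widetilde E, \widetilde H) := (E, H) - (E_0, H_0) \in [H(\curl, \Omega)]^2$ then reduces the problem to a Maxwell system on $\Omega$ with $L^2$-sources $\widetilde J_e := J_e + \nabla \times E_0 - i\omega \mu H_0$ and $\widetilde J_m := J_m - \nabla \times H_0 - i\omega \eps E_0$, together with the same impedance boundary condition on $\partial \Omega$.

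For uniqueness, setting $\widetilde J_e = \widetilde J_m = 0$, I would test the first equation against $\overline{\widetilde H}$, integrate by parts, and substitute $\nabla \times \overline{\widetilde H} = i\omega \eps \overline{\widetilde E}$ (which uses the reality of $\eps$) to obtain
\[
\int_{\partial \Omega} (\widetilde E \times \nu) \cdot \overline{\widetilde H} \, dS = i\omega \int_\Omega \bigl( \langle \eps \widetilde E, \widetilde E \rangle - \langle \mu \widetilde H, \widetilde H \rangle \bigr) \, dx.
\]
The right-hand side is purely imaginary; the impedance condition gives $\widetilde E \times \nu = (\widetilde H \times \nu) \times \nu = -\widetilde H_T$, so the left-hand side equals $-\int_{\partial \Omega} |\widetilde H_T|^2 \, dS$. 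Taking real parts forces $\widetilde H_T = 0$, and then $\widetilde E_T = 0$, on $\partial \Omega$. Extending $(\widetilde E, \widetilde H)$ by zero across $\partial \Omega$ produces a distributional solution of a Maxwell system with Lipschitz coefficients on a larger ball; the weakly coupled second-order elliptic reduction \eqref{eq-E}--\eqref{eq-H}, combined with Aronszajn--Cordes-type unique continuation for elliptic equations with $W^{1,\infty}$ principal part, then forces $(\widetilde E, \widetilde H) \equiv 0$ in $\Omega$.

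Existence and the bound \eqref{lem-EH-cl1} would follow by eliminating $\widetilde H = (i\omega \mu)^{-1}(\nabla \times \widetilde E - \widetilde J_e)$ and studying the resulting sesquilinear form on $X := \{u \in H(\curl, \Omega);\ u \times \nu \in L^2(\partial \Omega)\}$: the principal part is coercive modulo the Weber--Picard compact embedding of $X \cap H(\dive_\eps, \Omega)$ into $[L^2(\Omega)]^3$, giving a Fredholm operator of index zero. The uniqueness established above promotes Fredholmness to existence with an $L^2$ bound on $(\widetilde E, \widetilde H)$ in terms of $(\widetilde J_e, \widetilde J_m)$, which combined with the lifting bound yields \eqref{lem-EH-cl1}. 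The hard part of the argument is the unique continuation input needed in the uniqueness step: the energy identity is routine, but transferring vanishing tangential traces on $\partial \Omega$ to global vanishing of $(\widetilde E, \widetilde H)$ sits at the regularity threshold where Aronszajn--Cordes-type Carleman arguments for elliptic systems with $W^{1,\infty}$ coefficients still apply, and invoking such a result through the weakly coupled reduction \eqref{eq-E}--\eqref{eq-H} is the delicate ingredient.
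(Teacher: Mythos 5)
Your proposal builds a full existence--uniqueness--estimate package from scratch (lifting of the jump, energy identity plus unique continuation for uniqueness, Fredholm alternative via the Weber--Picard compact embedding for existence), but it misses what the paper identifies as the actual content of the lemma: the constant $C$ in \eqref{lem-EH-cl1} must depend on $(\eps,\mu)$ only through the constant $\Lambda$ of \eqref{lem-EH-proM}, not on the particular pair. Your Fredholm argument produces an a priori estimate whose constant is tied to the specific operator (i.e.\ the specific $\eps,\mu$): Fredholm alternatives and the Banach closed-graph/open-mapping route give a bound, but nothing in that machinery says the bound is stable as you range over all symmetric Lipschitz matrices with a fixed ellipticity and Lipschitz constant. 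Your direct uniqueness step is also not used anywhere to extract uniformity -- it only removes the finite-dimensional obstruction.

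The paper takes a different route precisely to get this uniformity. After the same lifting step, it takes existence and uniqueness as already known (they indeed follow by the classical arguments you outline) and proves \eqref{lem-EH-cl1} by a compactness--contradiction argument: suppose there are sequences $(\eps^{(n)},\mu^{(n)})$, $(J_e^{(n)},J_m^{(n)})$, $(E^{(n)},H^{(n)})$ with $\|(E^{(n)},H^{(n)})\|_{L^2}=1$ and $\|(J_e^{(n)},J_m^{(n)})\|_{L^2}\to 0$. The uniform bound \eqref{lem-EH-proM} makes $\{(\eps^{(n)},\mu^{(n)})\}$ equi-Lipschitz, so Ascoli gives a subsequence converging in $L^\infty$ to some $(\eps,\mu)$ satisfying the same bounds; the energy identity controls $E^{(n)}\times\nu$ on $\partial\Omega$; the $\mathrm{div}$-$\mathrm{curl}$ compactness result in \cite[Lemma 1]{Ng-Superlensing-Maxwell} then gives strong $L^2$-convergence of $(E^{(n)},H^{(n)})$; and the limit is a nontrivial solution of the homogeneous problem for the limiting $(\eps,\mu)$, contradicting uniqueness. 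That Ascoli step on the coefficients is exactly what turns a pointwise estimate (constant depending on $\eps,\mu$) into a uniform one (constant depending only on $\Lambda$), and it has no analogue in your proposal. To close the gap you would either need to insert a comparable compactness argument or track the dependence of all the Fredholm/coercivity constants on $\Lambda$ explicitly, neither of which is in your outline.
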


The novelty of    \Cref{lem-EH}  lies in the fact that the constant $C$ does not depend on $(\eps, \mu)$ but on $\Lambda$. Nevertheless, one can derive the results by a quite standard contradiction argument as in \cite{Ng-Superlensing-Maxwell, Ng-Negative-Cloaking-M}. For the completeness, the proof is given in \Cref{ap-lem-EH}.

\medskip

The second lemma,  whose proof is given in \Cref{ap-lem-M},  is 

\begin{lemma}\label{lem-M} Let $\omega > 0$, $0< R_* < R_1< R_2 < R_3< R^*$, and let  $(E, H) \in [H(\curl, B_{R_3} \setminus B_{R_1})]^2$ be a solution of the system 
\begin{equation*} 
\left\{\begin{array}{cl}
\nabla \times E = i \omega H & \mbox{ in } B_{R_3} \setminus B_{R_1}, \\[6pt]
\nabla \times H = - i \omega H & \mbox{ in } B_{R_3} \setminus B_{R_1}. 
\end{array}\right. 
\end{equation*}
Then 
\begin{multline*}
\|(E \times \nu, H \times \nu) \|_{H^{-1/2}(\dive, \partial B_{R_2})} \\[6pt]
\le C \| (E \times \nu, H \times \nu)\|_{H^{-1/2}(\dive, \partial B_{R_1})}^{\alpha} \| (E \times \nu, H \times \nu) \|_{H^{-1/2}(\dive, \partial B_{R_3})}^{1 - \alpha},
\end{multline*}
with $\alpha = \ln (R_3/ R_2) \Big/ \ln (R_3/ R_1)$ for some positive constant $C$ depending only on $R_*, \, R^*$, and $\omega$.  
\end{lemma}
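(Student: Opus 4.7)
The plan is to reduce the Maxwell three-sphere inequality to the scalar Helmholtz three-sphere inequality \eqref{3SP-H} applied componentwise, and then to translate between the $\bH$-norms of Cartesian components and the $H^{-1/2}(\dive, \partial B_r)$-norms of the tangential traces.

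First I would take the divergence of each Maxwell equation: since $\dive \curl \equiv 0$ and $\eps = \mu = I$, this forces $\dive E = \dive H = 0$ in $B_{R_3} \setminus B_{R_1}$. Combining this with the identity $\nabla \times (\nabla \times F) = \nabla(\dive F) - \Delta F$ and with the two curl equations gives, componentwise,
$$\Delta E + \omega^2 E = 0, \qquad \Delta H + \omega^2 H = 0 \quad \text{in } B_{R_3} \setminus B_{R_1}.$$
Hence each Cartesian component of $E$ and $H$ solves the scalar Helmholtz equation, and applying \eqref{3SP-H} to each of the six components and summing via H\"older yields
$$\|(E, H)\|_{\bH(\partial B_{R_2})} \le C\, \|(E, H)\|_{\bH(\partial B_{R_1})}^{\alpha}\, \|(E, H)\|_{\bH(\partial B_{R_3})}^{1-\alpha},$$
with the stated exponent $\alpha = \ln(R_3/R_2)/\ln(R_3/R_1)$. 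Here $\|(E,H)\|_{\bH(\partial B_r)}$ denotes the sum of the scalar $\bH$-norms of the six Cartesian components.

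Next, on any sphere $\partial B_r$ across which $(E, H)$ is smooth enough (which holds in the interior by Helmholtz regularity), I would establish the two-sided equivalence
$$\|(E, H)\|_{\bH(\partial B_r)} \,\sim\, \|(E \times \nu, H \times \nu)\|_{H^{-1/2}(\dive, \partial B_r)}.$$
The direction ``$\gtrsim$'', needed at $r = R_2$, is a routine trace bound: $E \times \nu$ has components dominated by traces of Cartesian components of $E$, and the identity $\dive_\Gamma(E \times \nu) = -\nu \cdot (\nabla \times E) = -i\omega\, \nu \cdot H$ (and its analogue for $H \times \nu$) controls the surface divergence in $H^{-1/2}$ by the $H^{1/2}$-trace of $H$, hence by $\bH$-norms. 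The direction ``$\lesssim$'', needed at $r = R_1, R_3$, is more delicate: starting from tangential traces in $H^{-1/2}(\dive)$, the formulas $\nu \cdot E = -(i\omega)^{-1}\dive_\Gamma(H \times \nu)$ and $\nu \cdot H = (i\omega)^{-1}\dive_\Gamma(E \times \nu)$ reconstruct the full Dirichlet trace, while the relation $(\nabla \times E)\times \nu = i\omega(H \times \nu)$, combined with the scalar Helmholtz equation on each component, yields the normal derivatives.

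The main obstacle is the ``$\lesssim$'' bound at the boundary spheres $r = R_1, R_3$, where only a one-sided neighborhood is available. The cleanest strategy I see is to first establish the three-sphere inequality on slightly shrunken spheres $\partial B_{R_1 + \eta}, \partial B_{R_3 - \eta}$, where interior regularity for Helmholtz supplies both directions of the equivalence for free, and then to pass to the limit $\eta \to 0^+$, using Lemma~\ref{lem-EH} specialized to $\eps = \mu = I$ on the thin shells $B_{R_1 + \eta} \setminus B_{R_1}$ and $B_{R_3} \setminus B_{R_3 - \eta}$ to bound the resulting $L^2$-norms by the boundary tangential-trace norms. Since the shifted exponent $\alpha(\eta) := \ln((R_3 - \eta)/R_2)/\ln((R_3 - \eta)/(R_1 + \eta))$ depends continuously on $\eta$ and tends to $\alpha$, the stated exponent is recovered in the limit.
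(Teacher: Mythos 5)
Your approach is genuinely different from the paper's. You reduce to the componentwise scalar Helmholtz three-sphere inequality \eqref{3SP-H} in the $\bH$-norm and then claim a two-sided equivalence between the $\bH$-trace norm of $(E,H)$ and the $H^{-1/2}(\dive_\Gamma,\partial B_r)$-norm of the tangential traces; the paper instead expands $(E,H)$ in vector spherical harmonics and spherical Bessel/Neumann functions (the Mie expansion), identifies $\|(E\times\nu,H\times\nu)\|_{H^{-1/2}(\dive_\Gamma,\partial B_r)}^2$ as a weighted $\ell^2$-sum of modal coefficients with explicit $r$-dependence, and obtains the interpolation mode by mode with the sharp Hadamard exponent.

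There is a genuine gap in the step you flag as ``delicate,'' and the shrinking device does not repair it. The ``$\lesssim$'' direction of your claimed equivalence is false: for a mode-$n$ Maxwell field, the full trace carries a radial component with an extra factor $\sqrt{n(n+1)}$ relative to the tangential components (in the expansion $E=\sqrt{n(n+1)}\,r^{-1}u_n\, Y_n^m\hat x+\cdots$), so that $\|(E,H)\|_{\bH(\partial B_r)}$ scales like $n^{3/2}$ in the modal amplitude while $\|(E\times\nu,H\times\nu)\|_{H^{-1/2}(\dive_\Gamma,\partial B_r)}$ scales like $n^{1/2}$; the ratio diverges with $n$. Concretely, your formulas $\nu\cdot E=-(i\omega)^{-1}\dive_\Gamma(H\times\nu)$ and $\nu\cdot H=(i\omega)^{-1}\dive_\Gamma(E\times\nu)$ recover the normal components only in $H^{-1/2}$, a full derivative short of the $H^{1/2}$ regularity the $\bH$-norm requires, and no amount of algebraic manipulation of the tangential data restores the missing regularity on a fixed sphere. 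The proposed remedy via shrunken spheres does not close the gap either: bounding $\|(E,H)\|_{\bH(\partial B_{R_1+\eta})}$ by the tangential-trace data on $\partial B_{R_1}$ alone is a one-sided Cauchy continuation, precisely the unstable direction for the Maxwell Cauchy problem; and \Cref{lem-EH} is an a priori bound for a well-posed transmission problem with impedance boundary conditions, not a Cauchy stability estimate, so it does not furnish the needed control. (On the boundary spheres themselves the solution is only $H(\curl)$, so the endpoint $\bH$-norms in your interpolation chain are not even finite a priori, which is a further obstruction.) What is true is that the factor $n$ separating the two norms is independent of $r$, so the scalar and Maxwell three-sphere inequalities are equivalent \emph{mode by mode} and the interpolation exponent is unchanged; but exploiting this requires carrying out the spherical-harmonic expansion explicitly, and at that point one has reproduced the paper's proof rather than bypassed it.
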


\subsection{Proof of \Cref{fund-thm-M}} \label{sect-fund-thm-M} The proof of \Cref{fund-thm-M}, which involves \Cref{lem-EH},  is in the same spirit as the one of \Cref{fund-thm}. Extend $\eps$ and $\mu$ evenly for $x_1< 0$ and still denote the extensions by $\eps$ and $\mu$. Set 
\begin{equation*}
\widetilde Y_{\gamma_1, \gamma_2,  R} 
= \Big\{ x  \in \mR^d;   \theta \in (-3\pi/4, 3 \pi/4),  \,  \gamma_1 R < \rr < \gamma_2 R,  \mbox{ and } |\tx| < R \Big\}. 
\end{equation*}
Note that $\hL_{3/2}$ is a diffeomorphism from  $\widetilde Y_{\gamma_1, \gamma_2,  R} $ onto $Y_{\gamma_1, \gamma_2,  R}$,  where $L_{3/2}$ is given by \eqref{LF-def-hLn}.  By \Cref{fund-pro}, and \eqref{eq-H}  and \eqref{eq-E}, there exists $\hgamma_2>0$ depending only on $\alpha$, $\Lambda$, $\Gamma$, $R_1$, and $R_3$,  such that for every $\hgamma_1 \in (0, \hgamma_2)$, there exists $\hgamma_0 \in (0, \gamma_1/2)$, depending only on $\gamma_1$, $\alpha$, $\Lambda$, $\Gamma$, $R_1$, and $R_3$,  such that,  with $\wtD_{\hgamma_0} = \widetilde Y_{4\hgamma_0, 1, R/2}$,  for $(\wtE, \wtH) \in [H^1(\wtD_{\hgamma_0})]^6$  satisfying  
\begin{equation*}
\left\{\begin{array}{cl}
\nabla \times \wtE = i \omega \mu \wtH  & \mbox{ in } \wtD_{\hgamma_0}, \\[6pt]
\nabla \times \wtH = - i \omega \eps \wtE & \mbox{ in } \wtD_{\hgamma_0},
\end{array}\right. 
\end{equation*}
we have,  with $\wtSigma_{\hgamma_0} = \partial \wtD_{\hgamma_0} \cap \{\theta = \pm 3\pi/4\}$,  
\begin{multline}\label{fund-thm-M-p1}
\|(\wtE, \wtH) \|_{H^1(\widetilde Y_{\hgamma_1, \hgamma_2, \frac{R}{4}})} \\[6pt]
 \le C \|(\wtE, \wtH,  \nabla \wtE, \nabla \wtH)\|_{L^2(\wtSigma_{\hgamma_0})}  \Big(\| (\wtE, \wtH)\|_{H^1(\wtD_{\hgamma_0})} + \|(\wtE, \wtH,  \nabla \wtE, \nabla \wtH)\|_{L^2(\wtSigma_{\hgamma_0})}  \Big)^{1-\alpha}.
\end{multline}

Set 
\begin{equation*}
\hat Y_{\gamma_1, \gamma_2,  R} 
= \Big\{ x  \in \mR^d;  \gamma_1 R < \rr < \gamma_2 R,  \mbox{ and } |\tx| < R \Big\}, 
\end{equation*}
and let  $(\hE, \hH) \in [H(\curl, \hat Y_{\hgamma_0, 1,  R} \setminus \{x_1 = 0\})]^2$ be such that 
\begin{equation*}
\left\{\begin{array}{cl}
\nabla \times \hE = i \omega \mu H + J_e \mathds{1}_{x_1 > 0} & \mbox{ in } \hat Y_{\hgamma_0, 1,  R} \setminus \{x_1 = 0\}, \\[6pt]
\nabla \times \hH = - i \omega \eps \hE + J_m \mathds{1}_{x_1 > 0} & \mbox{ in } \hat Y_{\hgamma_0, 1,  R} \setminus \{x_1 = 0\}, \\[6pt]
[\hE \times \nu] = [\varphi H \mathds{1}_{x_1 >0} \times \nu], \quad [\hH \times \nu] = [\varphi H \mathds{1}_{x_1 > 0} \times \nu] &  \mbox{ on } \hat Y_{\gamma_0, 1,  R} \setminus \{x_1 = 0\}, \\[6pt]
(\hH \times \nu) \times \nu - \hE \times \nu = 0 & \mbox{ on } \partial \hat Y_{\gamma_0, 1,  R}, 
\end{array}\right. 
\end{equation*}
where $\varphi \in C^1_{c} (\hat Y_{\hgamma_0, 1,  R})$ is fixed  such that $\varphi = 1$ for $x \in \hat Y_{4\hgamma_0, 1, R/2}$. By \Cref{lem-EH}, we have
\begin{equation}\label{fund-thm-M-p2}
\| (\hE, \hH) \|_{L^2(\hat Y_{\hgamma_0, 1,  R})} \le C \Big( \| (E \times \nu, H \times \nu)\|_{H^{-1/2} (\dive_\Sigma, \Sigma_{\hgamma_0})} + \| (J_e, J_m)\|_{L^2(D_{\hgamma_0})} \Big).
\end{equation}
This in turn implies,  by the regularity theory of elliptic equations, and \eqref{eq-H} and \eqref{eq-E},  that 
\begin{multline}\label{fund-thm-M-p3}
\| (\hE, \hH, \nabla \hE, \nabla \hH) \|_{L^2(\hat Y_{4\hgamma_0, 1,  R/2} \cap  \{\theta  = \pm 3 \pi/4\})} \\[6pt]
 \le C \Big( \| (E \times \nu, H \times \nu)\|_{H^{-1/2} (\dive_\Sigma, \Sigma_{\hgamma_0})} + \| (J_e, J_m)\|_{L^2(D_{\hgamma_0})} \Big). 
\end{multline}

Set 
\begin{equation}\label{def-wtEH}
(\wtE, \wtH) = (E, H) \mathds{1}_{x_1>0} - (\hE, \hH) \quad  \mbox{ in } \wtD_{\hgamma_0}.  
\end{equation}
Applying \eqref{fund-thm-M-p1} to $(\wtE, \wtH)$ given in \eqref{def-wtEH},  and using \eqref{fund-thm-M-p2} and \eqref{fund-thm-M-p3}, one reaches  the conclusion.  \qed

\subsection{Proof of \Cref{thm-3SP-M}} \label{sect-thm-3SP-M} The proof of \Cref{thm-3SP-M} is similar to the one of \Cref{thm-3SP}. However, instead of using \Cref{fund-thm} and \Cref{fund-thm-V}, one applies \Cref{fund-thm-M} and \Cref{fund-thm-M-V} below. The details are left to the reader. \qed

\medskip 
In the proof of \Cref{thm-3SP-M}, we also use  the following variant of \Cref{fund-thm-M}:
 
\begin{proposition} \label{fund-thm-M-V} Let $R> R_*>0$ and $\Lambda \ge 1$, and   set $\Omega = B_R \cap \{x_1 >0 \}$. 
For any  $0 < \alpha < 1$, there exists a positive constant $ r \in (0,  R)$, depending only on $\Lambda$ and $R_*$ such that for  a pair $(\eps, \mu)$ of symmetric, uniformly elliptic, Lipschitz matrix-valued functions   defined in $\Omega$ verifying, for $M = \eps$ and $M =  \mu$, 
\begin{equation*}
\Lambda^{-1} |\xi|^2 \le \langle M(x) \xi, \xi \rangle \le \Lambda |\xi|^2 \mbox{ for all } \xi \in \mR^d  \quad \mbox{ and } \quad   |\nabla M(x) | |x| \le \Lambda,  
\end{equation*}
for $\omega >0$, and  for  $(E, H) \in [H(\curl, \Omega)]^2$ satisfying
\begin{equation*} 
\left\{\begin{array}{cl}
\nabla \times E = i \omega \mu H + J_e& \mbox{ in } \Omega, \\[6pt]
\nabla \times H = - i \omega \eps E + J_m & \mbox{ in } \Omega, 
\end{array}\right. 
\end{equation*}
we have, with $\Sigma = \partial \Omega \cap \{x_1 = 0 \}$, 
\begin{multline*}
\| (E, H) \|_{L^2(B_r \cap \Omega)}  
\le C \Big(\| (E \times \nu, H \times \nu)\|_{H^{-1/2}(\dive_\Sigma, \Sigma)} +  \| (J_e, J_m)\|_{L^2(\Omega)} \Big)^{\alpha} \times 
\\[6pt] \times \Big(\| (E, H) \|_{L^2(\Omega) } +   \| (J_e, J_m)\|_{L^2(\Omega)} \Big)^{1-\alpha},  
\end{multline*}
for some positive constant $C$ depending only on $\alpha,  \,   \omega, \,  \Lambda$, $R_*$,  and the upper bound of $\| (\eps, \mu) \|_{C^2(\bar \Omega)}$. 
\end{proposition}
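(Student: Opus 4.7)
The plan is to mirror the proof of \Cref{fund-thm-M}, replacing the role of \Cref{fund-pro} by \Cref{fund-thm-V}. Since the domain $\Omega = B_R \cap \{x_1 > 0\}$ is already a half-ball with flat face $\Sigma$, the conformal-type change of variables from the proof of \Cref{fund-pro} is unnecessary here; the geometry is already adapted to the partial-data face. First I would extend $(\eps, \mu)$ evenly across $\Sigma$ to symmetric, uniformly elliptic matrix-valued functions on the whole ball $B_R$, preserving the ellipticity and (local) Lipschitz constants. Fix a cutoff $\varphi \in C^1_c(B_R)$ with $\varphi \equiv 1$ on $B_{R_*/2}$ and $\supp \varphi \subset B_{R_*}$.

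Next, I would invoke \Cref{lem-EH} on the extended medium, with $\Omega' = B_R$ and $D = B_R \cap \{x_1 < 0\}$, to construct $(\hE, \hH) \in [H(\curl, B_R \setminus \Sigma)]^2$ solving the extended Maxwell system with sources $J_e \mathds{1}_{x_1 > 0}, J_m \mathds{1}_{x_1 > 0}$, prescribed tangential jumps $[\hE \times \nu] = \varphi\, E \times \nu$ and $[\hH \times \nu] = \varphi\, H \times \nu$ on $\Sigma$, and the impedance condition on $\partial B_R$. Estimate~\eqref{lem-EH-cl1} yields
\[
\|(\hE, \hH)\|_{L^2(B_R)} \le C \Big( \|(E \times \nu, H \times \nu)\|_{H^{-1/2}(\dive_\Sigma, \Sigma)} + \|(J_e, J_m)\|_{L^2(\Omega)} \Big),
\]
and the second-order reduction \eqref{eq-H}--\eqref{eq-E} together with interior elliptic regularity upgrades this to an $H^1$-bound (and hence a $\bH$-trace bound on $\Sigma$ from each side) by the same data, on a compact subset of $B_R$ avoiding $\partial B_R$. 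Now set $(\wtE, \wtH) := (E, H) \mathds{1}_{x_1 > 0} - (\hE, \hH)$. Because $\varphi \equiv 1$ on $B_{R_*/2}$, the tangential jumps of $(E, H)\mathds{1}_{x_1 > 0}$ and $(\hE, \hH)$ across $\Sigma \cap B_{R_*/2}$ cancel, so $(\wtE, \wtH) \in [H(\curl, B_{R_*/2})]^2$ solves the homogeneous Maxwell system there with the extended coefficients.

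Applying \eqref{eq-H}--\eqref{eq-E} to each component of $(\wtE, \wtH)$ gives a weakly coupled second-order elliptic system of the form $|\dive(\M^\ell \nabla V_\ell)| \le \Lambda_1(|\nabla V| + |V|)$ with $\Lambda_1$ controlled by $\omega$ and $\|(\eps, \mu)\|_{C^2}$. I would then apply \Cref{fund-thm-V} to $V = (\wtE, \wtH)$ on the half-ball $B_{R_*/2} \cap \{x_1 > 0\}$ with $g = 0$, obtaining, for some $r \in (0, R_*/2)$,
\[
\|(\wtE, \wtH)\|_{H^1(B_r \cap \Omega)} \le C\, \|(\wtE, \wtH)\|_{\bH(\Sigma \cap B_{R_*/2})}^{\alpha}\, \|(\wtE, \wtH)\|_{H^1(B_{R_*/2} \cap \Omega)}^{1 - \alpha}.
\]
From the $x_1 < 0$ side one has $(\wtE, \wtH)|_\Sigma = -(\hE, \hH)|_{\Sigma^-}$, so by the $\bH$-bound on $(\hE, \hH)$ the first interpolation factor is dominated by the proposition's data. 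The identity $(E, H) = (\wtE, \wtH) + (\hE, \hH)$ on $B_r \cap \Omega$ combined with the $L^2$-bound on $(\hE, \hH)$ then yields the desired three-ball inequality. The main obstacle is this translation of the $H(\curl)$-control on $(\hE, \hH)$ provided by \Cref{lem-EH} into the $\bH(\Sigma)$-control required by \Cref{fund-thm-V}; this is carried out exactly as in the corresponding step of the proof of \Cref{fund-thm-M}, by applying interior regularity to the second-order system \eqref{eq-H}--\eqref{eq-E} satisfied by $\hE, \hH$ on $\{x_1 < 0\}$ up to $\Sigma \cap B_{R_*/2}$.
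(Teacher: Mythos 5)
Your overall architecture matches what the paper intends: extend the coefficients evenly, use \Cref{lem-EH} to build $(\hE,\hH)$ with the prescribed tangential jump, glue to form $(\wtE,\wtH)$ with no jump across $\Sigma$, pass to the weakly coupled second-order system via \eqref{eq-H}--\eqref{eq-E}, and invoke the elliptic partial-data three-sphere inequality. The gap is in the step that controls the first interpolation factor.

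You assert a $\bH(\Sigma)$-bound on $(\hE,\hH)$ (equivalently on $(\wtE,\wtH)$ at $\Sigma$) "by applying interior regularity to the second-order system \eqref{eq-H}--\eqref{eq-E} satisfied by $\hE,\hH$ on $\{x_1<0\}$ up to $\Sigma$". This does not hold: $\Sigma$ is precisely the interface where the jump $[\hE\times\nu]=\varphi\,E\times\nu$, $[\hH\times\nu]=\varphi\,H\times\nu$ is prescribed, and interior regularity in $\{x_1<0\}$ reaches only compact subsets of $\{x_1<0\}$, never $\Sigma$. Transmission regularity at $\Sigma$ is not available either, because the jump data lives only in $H^{-1/2}(\dive_\Sigma,\Sigma)$; this yields $H(\curl)$ control of $(\hE,\hH)$ near $\Sigma$, which does not give the $H^1$-near-$\Sigma$ control required to bound the $\bH(\Sigma)$ norm (the $H^{1/2}$ trace plus $H^{-1/2}$ conormal derivative of each component). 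If you instead use full-ball interior regularity of $(\wtE,\wtH)$ in $B_{R_*/2}$ (legitimate, since $(\wtE,\wtH)$ solves a homogeneous Maxwell system there), the resulting bound inherits $\|(E,H)\|_{L^2(\Omega)}$ from the $x_1>0$ part of $(\wtE,\wtH)$, and the interpolation in \Cref{fund-thm-V} degenerates to a trivial inequality.

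The paper's proofs of \Cref{fund-thm} and \Cref{fund-thm-M} circumvent exactly this by placing the data-slice of the elliptic three-sphere inequality strictly inside the reflected region $\{x_1<0\}$ (the slice $\{\theta=\pm 3\pi/4\}$ there), where $(\wtE,\wtH)=-(\hE,\hH)$ solves a homogeneous system and genuine interior regularity converts the $L^2$-bound from \Cref{lem-EH} into the needed trace (and gradient-trace) bounds. The corresponding fix for your argument is either to mirror that enlargement after even extension, or, keeping the half-ball geometry, to translate the flat face in \Cref{fund-thm-V} to $\{x_1=-c\}$ for a small $c>0$, so the boundary $\Sigma$ you feed to the elliptic estimate lies in $\{x_1<0\}$; the resulting estimate on $B_r\cap\{x_1>-c\}$ still contains $B_r\cap\Omega$, and there the trace factor is controlled purely by the data. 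As a minor further point, $D=B_R\cap\{x_1<0\}$ is not compactly contained in $B_R$, so \Cref{lem-EH} cannot be cited verbatim; one needs its variant with the jump on a compactly contained relatively open piece of $\{x_1=0\}$, which is the same adaptation the paper makes implicitly in the proof of \Cref{fund-thm-M}.
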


The proof of \Cref{fund-thm-M-V} is in the same spirit as the one of  \Cref{fund-thm-V} (see also the proof of of \Cref{fund-thm-M}) and is  omitted. 

\subsection{Proof of \Cref{cor-Maxwell}} \label{sect-cor-Maxwell}
Let $\hat R_1 \in (R_1,  R_3)$ and  $s \in (0, 1)$ be  such that 
$$
\alpha < \beta s < \alpha_0,  
$$
where $\beta = \ln (R_3/ R_2) \Big/ \ln (R_3/ \hat R_1)$. By Theorem~\ref{thm-3SP-M}, there exist $r_* \in (R_1, \hat R_1)$ and   $r_0 \in (0, r_* - R_1)$ such that
\begin{equation}\label{pro-M-p1}  
\| (E \times \nu, H \times \nu) \|_{H^{-1/2}(\dive_\Gamma, \partial B_{\hat R_1})} 
\le C \| (E \times \nu, H \times \nu)\|_{H^{-1/2}(\dive_\Gamma, \Sigma_{r_0})}^s  \|(E, H)\|_{L^2(D_{r_0})}^{1 - s}. 
\end{equation}
By \Cref{lem-M}, we have 
\begin{multline}\label{pro-M-p2}
\| (E \times \nu, H \times \nu) \|_{H^{-1/2}(\dive_\Gamma, \partial B_{R_2})} \\[6pt]
\le C \| (E \times \nu, H \times \nu)\|_{H^{-1/2}(\dive_\Gamma, \partial B_{\hat R_1})}^{\beta} \| (E \times \nu, H \times \nu) \|_{H^{-1/2}(\dive_\Gamma, \partial B_{R_3})}^{1 - \beta}. 
\end{multline}
Combining \eqref{pro-M-p1} and \eqref{pro-M-p2} yields 
\begin{equation*}
\|   (E \times \nu, H \times \nu) \|_{H^{-1/2}(\dive_\Gamma, \partial B_{R_2})} \le  C   \| V\|_{H^{-1/2}(\dive_\Gamma, \Sigma_{r_0})}^{\beta s} \|(E, H)\|_{L^2(D_{r_0})}^{1 -  \beta s}. 
\end{equation*}
The conclusion follows since $\beta s > \alpha$. \qed

\section{Cloaking property of plasmonic structures in  doubly complementary media} \label{sect-CALR}

This section is devoted to the cloaking property of plasmonic structures in  doubly complementary media. This cloaking phenomenon is also known as cloaking an object via anomalous localized resonance.  Let $\omega > 0$, and let $\Omega_1 \Subset  \Omega_2 \Subset  \mR^3$ be smooth,  bounded,  simply connected,  open subsets of $\mR^3$ \footnote{In this paper, the notation $D \Subset \Omega$ means $\bar D \subset \Omega$ for two subsets $D$ and $\Omega$ of $\mR^d$ ($d \ge 2$).}.  
Let $\ep, \mup$ be defined in $\mR^3 \setminus (\Omega_2 \setminus \Omega_1)$ and $\en, \mun$ be defined in $\Omega_2 \setminus \Omega_1$ such that 
$\ep, \mup$, $- \en$, and $- \mun$  are real,  symmetric,  {\it uniformly elliptic},  matrix-valued functions in their domains of definition.  Set, for $\delta \ge 0$,  
\begin{equation}\label{def-eDelta}
(\eps_\delta, \mu_\delta)  = \left\{\begin{array}{cl}
\en + i \delta I,  \mun + i \delta I  & \mbox{ in } \Omega_2 \setminus \Omega_1, \\[6pt]
\ep, \mup & \mbox{ in } \mR^3 \setminus (\Omega_2 \setminus \Omega_1). 
\end{array} \right. 
\end{equation}
As usual, we assume  that for some $R_0 > 0$,  $\Omega_2 \subset B_{R_0}$, $(\ep, \mup) = (I, I)$ in $\mR^3 \setminus B_{R_0}$. Here and in what follows, all matrix-valued functions are assumed to be  piecewise  $C^1$ in their domain of definition.  Given $\delta > 0$ and $J \in [L^2(\mR^3)]^3$ with compact support, let $(E_\delta, H_\delta) \in [H_{\loc}(\curl, \mR^d)]^2$ be  the unique radiating solution of the Maxwell equations 
\begin{equation}\label{Main-eq-delta}
\left\{\begin{array}{cl}
\nabla \times E_\delta = i \omega \mu_\delta H_\delta &  \mbox{ in } \mR^3, \\[6pt]
\nabla \times H_\delta = - i \omega \eps_\delta E_\delta + J &  \mbox{ in } \mR^3.  
\end{array} \right.
\end{equation}

Physically, $\eps_\delta$ and $\mu_\delta$ describe the permittivity and the permeability of the considered medium, $\Omega_2 \setminus \Omega_1$ is   a (shell) plasmonic structure  in which the permittivity and the permeability are negative and $i \delta I$ describes its loss, $\omega$ is the frequency, $J$ is a density of charge, and  $(E_\delta, H_\delta)$ is the electromagnetic field generated by $J$ in the medium $(\eps_\delta, \mu_\delta)$.  We assume here that the loss is $i \delta I$ for the simplicity of notation;  any quantity of the form $i \delta M$,  where $M$ is a real, symmetric, uniformly elliptic,  matrix-valued function defined in $\Omega_2 \setminus \Omega_1$,  is admissible. 

Recall that a solution $(E, H) \in [H_{\loc}(\curl, \R^3\setminus B_R)]^2$, for some $R> 0$, of the Maxwell equations 
\[
\begin{cases}
\nabla \times E = i \omega H  &\text{ in } \mathbb{R}^3\setminus B_R,\\[6pt]
\nabla \times H = -i \omega E  &\text{ in } \mathbb{R}^3\setminus B_R,
\end{cases}
\]
is called radiating if it satisfies one of the (Silver-M\"{u}ller) radiation conditions
\begin{equation*}
H \times x - |x| E = O(1/|x|) \quad   \mbox{ or } \quad  E\times x + |x| H = O(1/|x|) \qquad \mbox{ as } |x| \to + \infty. 
\end{equation*}
For a matrix-valued function  $A$  defined in $\Omega$, for a bi-Lipschitz homeomorphism ${\mathcal T}: \Omega \to \Omega'$,  and for a vector field $j$ defined in $\Omega$,  the following standard notations are used, for $y \in \Omega'$: 
\begin{equation*} 
{\mathcal T}_* A(y) = \frac{\nabla {\mathcal T}  (x)  A (x) \nabla  {\mathcal T} ^{T}(x)}{\det \nabla  {\mathcal T}(x)} \quad \mbox{ and } \quad {\mathcal T}_*  j (y) = \frac{j(x)}{\det \nabla  {\mathcal T}(x)},
\end{equation*}
with $x ={\mathcal T}^{-1}(y)$.

We next recall the definition of complementary media and doubly complementary media \cite{Ng-Superlensing-Maxwell, Ng-CALR-M}. We begin with

\begin{definition}[Complementary media]   \fontfamily{m} \selectfont
 \label{def-Geo} Let $\Omega_1 \Subset  \Omega_2 \Subset  \Omega_3 \Subset  \mR^3$ be smooth,  bounded,   simply connected,  open subsets of $\mR^3$. The medium in $\Omega_2 \setminus \Omega_1$ characterized by a pair of two symmetric matrix-valued functions $(\eps_1, \mu_1)$  and the medium in $\Omega_3 \setminus \Omega_2$ characterized by a pair of two symmetric, uniformly elliptic, matrix-valued functions  $(\eps_2, \mu_2)$  are said to be  {\it  complementary} if 
there exists a diffeomorphism $\cF: \Omega_2 \setminus \bar \Omega_1 \to \Omega_3 \setminus \bar \Omega_2$ such that $\cF \in C^1(\bar \Omega_2 \setminus \Omega_1)$, 
\begin{equation}\label{cond-ASigma}
(\cF_*\eps_1, \cF_*\mu_1)   = (\eps_2, \mu_2)   \mbox{ for  } x \in  \Omega_3 \setminus \Omega_2, 
\end{equation}
\begin{equation}\label{cond-F-boundary}
\cF(x) = x \mbox{ on } \partial \Omega_2, 
\end{equation}
and the following two conditions hold: 1) There exists an diffeomorphism extension of $\cF$, which is still denoted by  $\cF$, from $\Omega_2 \setminus \{x_1\} \to \mR^3 \setminus \bar \Omega_2$ for some $x_1 \in \Omega_1$; and 2) there exists a diffeomorphism $\cG: \mR^3 \setminus \bar \Omega_3 \to \Omega_3 \setminus \{x_1\}$ such that $\cG \in C^1(\mR^3 \setminus \Omega_3)$, 
$\cG(x) = x \mbox{ on } \partial \Omega_3$,
and $
\cG \circ \cF : \Omega_1  \to \Omega_3 \mbox{ is a diffeomorphism if one sets } \cG \circ \cF(x_1) = x_1.
$
\end{definition}

\begin{definition} \label{def-DCM} \fontfamily{m} \selectfont The medium $(\eps_0, \mu_0)$ given in \eqref{def-eDelta} with $\delta  = 0$ is said to be {\it doubly complementary} if  for some $\Omega_2 \Subset  \Omega_3$, $(\ep, \mup)$ in $\Omega_3 \setminus \Omega_2$ and $(\en, \mun)$ in $\Omega_2 \setminus \Omega_1 $ are  complementary, and 
\begin{equation}\label{DCM}
(\cG_* \cF_*\ep, \cG_* \cF_*\mup) = (\ep,  \mup)  \mbox{ in } \Omega_3 \setminus \Omega_2 
\end{equation}
for some $\cF$ and $\cG$ from Definition~\ref{def-Geo}. 
\end{definition}

We now address the point that  makes the doubly complementary media special.  Let $(\eps_\delta, \mu_\delta)$ be defined by \eqref{def-eDelta} such that $(\eps_0, \mu_0)$ is doubly complementary. Assume that  $(E_\delta, H_\delta)$ is a solution of \eqref{Main-eq-delta} with $J = 0$ in $\Omega_3$.  Set, for $x' \in \mR^3 \setminus \Omega_2$,  
\begin{equation}\label{EH1d}
E_{1, \delta}(x') = \nabla {\mathcal F}^{-T} (x) E_{\delta}(x) \mbox{ and  } 
H_{1, \delta}(x') = \nabla {\mathcal F}^{-T} (x) H_{\delta}(x)\mbox{ with } x  ={\mathcal F}^{-1}(x'), 
\end{equation}
and,  for $y' \in  \Omega_3$, 
\begin{equation}\label{EH2d}
E_{2, \delta}(y') = \nabla {\mathcal G}^{-T} (y) E_{1, \delta}(y)  \mbox{ and  }  
E_{2, \delta}(y') = \nabla {\mathcal G}^{-T} (y) H_{1, \delta}(y) \mbox{ with } y  ={\mathcal G}^{-1}(y'). 
\end{equation}
Here $\cF, \cG$, and $\Omega_3$ are from the definition of doubly complementary media. 
Here and in what follows, we use the notation, for a diffeomorphism $\T$ and a vector field $v$,  
$$
\T*v (x') = \nabla \T^{-\mathsf{T}}(x) v(x) \mbox{ with } x' = \T(x). 
$$
By a change of variables, see e.g. \cite[Lemma 7]{Ng-Superlensing-Maxwell}, up to a (small) perturbation, one can check that $(E_{1, \delta}, H_{1, \delta})$ and $(E_{2, \delta}, H_{2, \delta})$ satisfy the {\it same}  Maxwell equations in $\Omega_3 \setminus \Omega_2$ as the one of $(E_\delta, H_\delta)$. It is clear that 
$$
E_{1, \delta} \times \nu - E_\delta \times \nu = H_{1, \delta} \times \nu - H_\delta \times \nu = 0 \mbox{ on } \partial \Omega_2 
$$
and 
$$
E_{2, \delta} \times \nu - E_{1, \delta} \times \nu = H_{2, \delta} \times \nu - H_{1, \delta} \times \nu = 0 \mbox{ on } \partial \Omega_3.
$$
Here on a boundary of a bounded subset of $\mR^3$, $\nu$ denotes
its normal unit vector directed to the exterior.  Hence, one has two Cauchy's problems with the {\it same} equations, one  for $(E_\delta, H_\delta)$ and $(E_{1, \delta}, H_{1, \delta})$ in $\Omega_3 \setminus \Omega_2$ with the boundary data given on $\partial \Omega_2$, and one for $(E_{1, \delta}, H_{1, \delta})$ and $(E_{2, \delta}, H_{2, \delta})$  in $\Omega_3 \setminus \Omega_2$ with the boundary data given on $\partial \Omega_3$.  This is the essential  property of $(\eps_\delta, \mu_{\delta})$ for the cloaking purpose and  the root of the definition of doubly complementary media.  

We list here some examples of doubly complementary media for which the formulas of $(\eps_0, \mu_0)$ are explicit; a general way to obtain doubly complementary media is presented in \cite{Ng-CALR-M}. Fix $p > 1$ and $r_2 > r_1 > 0$, and define $r_3 = r_2^{p}/ r_1^{p -1}$ and $m = r_2^{p} / r_1^p$. Set,  with the standard notations of  polar coordinates, 
\begin{equation}\label{def-M}
M = -  \frac{r_2^p}{ r^{p}}  \left[ \frac{1}{p - 1} e_{r} \otimes e_{r } + (p -1) \Big( e_{\theta} \otimes e_{\theta}  + e_{\theta} \otimes e_{\varphi}  \Big) \right] \mbox{ in } B_{r_2} \setminus B_{r_1}, 
\end{equation}
and define, for $\delta \ge 0$,  
\begin{equation}\label{def-epsmu}
(\eps_\delta, \mu_\delta) = \left\{ \begin{array}{cl} \big(M+ i \delta I, M+ i \delta I\big) & \mbox{ in } B_{r_2} \setminus B_{r_1}, \\[6pt]
\big(m I, m I \big) & \mbox{ in } B_{r_1}, \\[6pt]
(I, I) & \mbox{ otherwise}. 
\end{array} \right. 
\end{equation}
One can check that $(\eps_0, \mu_0)$ is doubly complementary with $\Omega_j = B_{r_j}$ for $j = 1, 2, 3$,  $\cF(x) = r_2^p x / |x|^p $ and $\cG = r_3^q x  / |x|^{q}$ with $q = p  / (p -1)$, see \cite[(1.6)]{Ng-Superlensing-Maxwell}. In the case $p = 2$, it is easy to see that  $M = -r_2^2 I / |x|^2$ in $B_{r_2} \setminus B_{r_1}$.
\medskip

For a doubly complementary medium $(\eps_0, \mu_0)$ and $J \in [L^2(\mR^3)]^3$ with $\supp J \cap \Omega_2 = \emptyset$ , set
\begin{equation}\label{def-tepsmu}
(\teps, \tmu) : = \left\{\begin{array}{cl} (\ep, \mup) & \mbox{ in } \mR^3 \setminus \Omega_3, \\[6pt]
(\cG_*\cF_*\ep, \cG_*\cF_*\mup) & \mbox{ in } \Omega_3, 
\end{array}\right.
\end{equation}
and let $(\tE, \tH) \in [H_{\loc}(\curl, \mR^3)]^2$ be the unique radiating solution of 
\begin{equation}\label{sys-tEH} \left\{
\begin{array}{cl}
\nabla \times \tE = i \omega \tmu \tH, & \mbox{ in } \mR^3, \\[6pt]
\nabla \times \tH = - i \omega \teps \tE + J, & \mbox{ in } \mR^3. 
\end{array}\right. 
\end{equation}
Note that if $(\eps_0, \mu_0)$ is doubly complementary, then  $\teps$ and $\tmu$ are uniformly elliptic in $\mR^3$ since $\det \cF < 0$ and $\det \cG < 0$.

\medskip

The following result reveals an interesting property of doubly complementary media \cite[Theorem 2.1 and Proposition 2.1]{Ng-CALR-M}: 

\begin{proposition}\label{pro-DCM}
Let  $0< \delta < 1$, $J \in [L^2(\mR^3)]^3$, and let $(E_\delta, H_\delta) \in [H_{\loc}(\curl, \mR^3)]^2$ be the unique radiating  solution of \eqref{Main-eq-delta}.  Assume that $(\eps_0, \mu_0)$ is doubly complementary and  $\supp J \subset B_{R_0} \setminus \Omega_2$ for some $R_0 >0$.  
Then, for $R> 0$,   
\begin{equation}\label{part1}
\| (E_\delta, H_\delta)  \|_{L^2(B_R \setminus \Omega_3)} \le C_R \| J \|_{L^2(\mR^3)}
\end{equation}
for some positive constant $C_R$ that  depends on $R$ but is independent of $J$ and $\delta$.  
Moreover, 
\begin{equation}\label{part2}
(E_\delta, H_\delta)  \mbox{ converges to }  (\tE, \tH)  \mbox{ in } [L^2_{\loc}(\mR^3 \setminus \Omega_3)]^6 \mbox{ as } \delta \to 0, 
\end{equation}
where $(\tE, \tH) \in [H_{\loc}(\curl, \mR^3)]^2$ is the unique radiating solution of \eqref{sys-tEH}.  Assume in addition that  $\Omega_3 \Subset B_{R_0}$ and  $\supp J \cap \Omega_3 = \emptyset$. We have, for $R> R_0$, 
\begin{equation}\label{pro2-p2}
\| (E_\delta, H_\delta) - (\tE, \tH) \|_{L^2 \big(B_R \setminus \Omega_3\big)} \le C_R \delta \| J\|_{L^2(\mR^3)}.   
\end{equation}
Here  $C_R$ denotes a positive constant that depends on $R$ but is independent of $J$ and $\delta$. 
\end{proposition}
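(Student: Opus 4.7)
\medskip\noindent\textbf{Proof proposal.} The plan is to exploit the two reflecting diffeomorphisms $\cF$ and $\cG$ supplied by the doubly complementary property in order to reduce the sign-changing Maxwell system to a standard problem with the everywhere uniformly elliptic coefficients $(\teps,\tmu)$ of \eqref{def-tepsmu}, from which both the uniform bound \eqref{part1} and the limit \eqref{part2} can be read off.

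First I would introduce the pushed-forward fields $(E_{1,\delta},H_{1,\delta})$ on $\mR^3\setminus\bar\Omega_2$ and $(E_{2,\delta},H_{2,\delta})$ on $\Omega_3$ through the formulas \eqref{EH1d}--\eqref{EH2d}. By the change-of-variables rule for Maxwell's equations (cf.\ \cite[Lemma~7]{Ng-Superlensing-Maxwell}), together with the complementarity \eqref{cond-ASigma} and the doubly complementary condition \eqref{DCM}, both reflected pairs satisfy Maxwell's equations with coefficients $(\ep,\mup)$ in $\Omega_3\setminus\Omega_2$, i.e.\ the same coefficients as the original field on that shell. Because $\cF=\mathrm{Id}$ on $\partial\Omega_2$, $\cG=\mathrm{Id}$ on $\partial\Omega_3$, and $(E_\delta,H_\delta)$ enjoys tangential continuity across those interfaces, the tangential traces of $(E_{1,\delta},H_{1,\delta})$ match those of $(E_\delta,H_\delta)$ on $\partial\Omega_2$, and likewise for $(E_{2,\delta},H_{2,\delta})$ and $(E_{1,\delta},H_{1,\delta})$ on $\partial\Omega_3$, with a $\delta$-dependent mismatch that should vanish in the limit. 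Next, I would glue these reflections by setting $(E^\sharp_\delta,H^\sharp_\delta)=(E_\delta,H_\delta)$ on $\mR^3\setminus\bar\Omega_3$ and $(E^\sharp_\delta,H^\sharp_\delta)=(E_{2,\delta},H_{2,\delta})$ on $\Omega_3$; this auxiliary field satisfies a Maxwell system with the positive coefficients $(\teps,\tmu)$, driven by $J$ together with an effective source supported on $\partial\Omega_3$ whose $H^{-1/2}(\dive_\Gamma)$-norm encodes the reflection mismatch.

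To control that mismatch uniformly in $\delta$, I would combine the basic energy identity obtained by testing \eqref{Main-eq-delta} against $\overline{E_\delta}$,
\begin{equation*}
\omega\delta\int_{\Omega_2\setminus\Omega_1}\bigl(|E_\delta|^2+|H_\delta|^2\bigr)\le C\,\|J\|_{L^2(\mR^3)}\,\|E_\delta\|_{L^2(\supp J)},
\end{equation*}
with the removing-localized-singularity technique of \cite{Ng-Negative-Cloaking,Ng-Superlensing}: the possibly resonant part of $(E_\delta,H_\delta)$ in the shell $\Omega_2\setminus\Omega_1$ is subtracted off by an explicit corrector, and what remains is estimated by the well-posedness of the transformed Maxwell system with coefficients $(\teps,\tmu)$, giving \eqref{part1}. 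Once the uniform bound is in hand, weak compactness extracts a subsequential $L^2_{\mathrm{loc}}$-limit of $(E_\delta,H_\delta)$ outside $\Omega_3$; the Cauchy-data matching on $\partial\Omega_3$, now with vanishing mismatch, identifies this limit as the unique radiating solution $(\tE,\tH)$ of \eqref{sys-tEH}, so the whole family converges and \eqref{part2} follows. For the rate \eqref{pro2-p2} under the extra hypothesis $\supp J\cap\Omega_3=\emptyset$, the difference $(E_\delta-\tE,H_\delta-\tH)$ solves a Maxwell system whose only inhomogeneity is the explicit loss term of size $O(\delta)$ in $L^2$ supported in the shell, and a direct energy estimate produces the linear-in-$\delta$ rate. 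The main obstacle throughout is controlling the localized resonance at the sign-change interface $\partial\Omega_2$: this is precisely the reason why three-sphere inequalities with partial data (Theorem~\ref{thm-3SP-M}), applied to the Cauchy problems generated by the reflections $\cF$ and $\cG$, are the crucial ingredient.
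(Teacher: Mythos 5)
Your outline correctly identifies the reflection and gluing machinery that underlies this result, but it is worth pointing out first that the paper does not actually prove Proposition~\ref{pro-DCM}: it is quoted verbatim from \cite{Ng-CALR-M} (Theorem~2.1 and Proposition~2.1 there), so the ``paper's own proof'' is a citation. Against that background, two substantive corrections to your plan are needed.

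The most important one concerns your closing sentence, where you assert that the three-sphere inequalities with partial data (Theorem~\ref{thm-3SP-M}) are ``the crucial ingredient'' for this proposition. This is a misattribution. In the exact doubly complementary setting there is no perturbation of the medium near the interfaces, and the tangential traces of $(E_{1,\delta},H_{1,\delta})$ match those of $(E_\delta,H_\delta)$ on the \emph{whole} of $\partial\Omega_2$, and similarly on the whole of $\partial\Omega_3$; what one needs is therefore a stability estimate for the Cauchy problem for Maxwell systems with \emph{full} boundary data, which is the tool of \cite{Ng-CALR-M}. The partial-data three-sphere inequalities are precisely the new contribution of the present paper, and they become necessary only in Theorem~\ref{thm-cloaking}, where the inserted object $(\eps_c,\mu_c)$ destroys the Cauchy matching in a neighborhood of the curves $\Gamma_1\subset\partial\Omega_1$ and $\Gamma_2\subset\partial\Omega_2$, so that one is forced to work with data restricted to $\Sigma_{r_0}$. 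The linear rate $\delta$ in \eqref{pro2-p2}, as opposed to the H\"older rate $\delta^\alpha$ with $\alpha<1$ obtained in \eqref{conclusion} after a bootstrap, is a symptom of this difference.

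The second issue is a circularity in the way you control the reflection mismatch. You propose to bound the effective surface source on $\partial\Omega_3$ by the shell energy, the shell energy via the imaginary part of the energy identity in terms of $\|E_\delta\|_{L^2(\supp J)}$, and $\|E_\delta\|_{L^2(\supp J)}$ by the well-posedness of the transformed system with coefficients $(\teps,\tmu)$ --- but that last step presupposes the a priori control you are trying to establish. The way to break the circle, both in the proof of Theorem~\ref{thm-cloaking} and in \cite{Ng-CALR-M}, is to introduce the quantity $\D(J,\delta)$ (or an analogue), estimate everything in terms of it, and then bootstrap the resulting self-improving inequality; your outline should make this iteration explicit. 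Modulo these two points, the route you describe --- reflect by $\cF$ and $\cG$, use a change-of-variables rule for Maxwell's equations, glue via the removing localized singularity construction, and invoke well-posedness of the elliptic system $(\teps,\tmu)$ --- is indeed in the spirit of \cite{Ng-CALR-M}.
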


We now present our main result on the cloaking property of  doubly complementary media. 

\begin{theorem} \label{thm-cloaking} Assume that $(\eps_0, \mu_0)$ is doubly complementary and of class $C^2$ in $\Omega_3 \setminus \Omega_2$. Let $\Gamma_1$ be a compact smooth curve on $\partial \Omega_1$ and 
$\Gamma_2$ be a compact smooth curve on $\partial \Omega_2$. Set, for $\gamma > 0$,  
\begin{equation*}
O_{j, \gamma} = \Big\{x \in \mR^3; \dist(x, \Gamma_j) < \gamma \Big\} \mbox{ for } j =1, 2. 
\end{equation*}
For $\gamma > 0$, let $(\eps_c, \mu_c)$ be a pair of symmetric, uniformly elliptic, matrix-valued functions defined in $O_\gamma: = (O_{1, \gamma} \cup O_{2, \gamma}) \setminus  (\Omega_2 \setminus \Omega_1)$ and define
\begin{equation*}
(\eps_{c, \delta}, \mu_{c, \delta}) = \left\{ \begin{array}{cl} (\eps_c, \mu_c) & \mbox{ in } O_{\gamma}, \\[6pt]
(\eps_\delta, \mu_\delta) & \mbox{ otherwise}. 
\end{array}\right. 
\end{equation*} 
For all $0< \alpha < 1$, there exists $\gamma_0 > 0$ depending only  on $\alpha$, $\Gamma_1$, $\Gamma_2$, $\Omega_j$ for $j = 1, 2, 3$,  and $(\eps_0, \mu_0)$ such that for $0< \gamma \le \gamma_0$, and for $J \in [L^2(\mR^3)]^3$ with $\supp J \subset B_{R_0} \setminus \Omega_3$ for some $R_0>0$, 
we have, for $0 < \delta < 1$,  
\begin{equation}\label{conclusion}
\|(E_{c, \delta}, H_{c, \delta}) - (\tE, \tH) \|_{L^2(B_{R} \setminus \Omega_3)} \le C_R \delta^\alpha \| J \|_{L^2},  
\end{equation}
for some $C_R$ depending only  on $\alpha$, $\Gamma_1$, $\Gamma_2$, $\Omega_j$ for $j = 1, 2, 3$,  $(\eps_0, \mu_0)$,  $\omega$, $R_0$,  and $R$. Here $(\tE, \tH), \, (E_{c, \delta}, H_{c, \delta}) \in [H_{\loc}(\curl, \mR^3)]^2$  are respectively the unique radiating solutions of \eqref{sys-tEH}  and of the following system
\begin{equation*}
\left\{\begin{array}{cl}
\nabla \times E_{c, \delta} = i \omega \mu_{c, \delta} H_{c, \delta} &  \mbox{ in } \mR^3, \\[6pt]
\nabla \times H_{c, \delta} = - i \omega \eps_{c, \delta}  E_{c, \delta} + J &  \mbox{ in } \mR^3.  
\end{array} \right.
\end{equation*}
\end{theorem}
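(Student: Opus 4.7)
\smallskip
\noindent\textbf{Proof proposal.} The plan is to combine the reflection method for doubly complementary media (Proposition~\ref{pro-DCM}) with the three-sphere inequality with partial data for Maxwell's equations (Theorem~\ref{thm-3SP-M}) so as to propagate smallness across the two small perturbation regions $O_{1,\gamma}$ and $O_{2,\gamma}$. The curves $\Gamma_1, \Gamma_2$ are precisely the places where Cauchy data for the reflected fields fails to be controlled, and Theorem~\ref{thm-3SP-M} is designed to produce interpolation constants arbitrarily close to~$1$; this is what will give rate $\delta^\alpha$ for any preassigned $\alpha<1$, with $\gamma_0$ depending on $\alpha$ but not on $(\eps_c,\mu_c)$.

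As a first step I would establish a uniform a priori estimate
\[
\|(E_{c,\delta},H_{c,\delta})\|_{L^2(B_R\setminus\Omega_3)}\le C_R\|J\|_{L^2(\mR^3)},\qquad 0<\delta<1,\ \gamma\le\gamma_0,
\]
by a contradiction/compactness argument in the spirit of Proposition~\ref{pro-DCM}: if the bound fails along some sequence, normalize, extract a weak limit, and use the reflections described below together with Theorem~\ref{thm-3SP-M} to force this limit to vanish in an intermediate shell outside $\Omega_2$ and inside $\Omega_3$; unique continuation and the radiation condition then give the contradiction. The removing localized singularity technique of \cite{Ng-Negative-Cloaking,Ng-Superlensing} handles the localized resonance on the perturbation region itself, absorbing the uncontrolled Cauchy data into a ``singular'' piece supported in a neighborhood of $\Gamma_1\cup\Gamma_2$.

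Now define, following \eqref{EH1d}--\eqref{EH2d},
\[
(E_{1,\delta},H_{1,\delta})=(\cF{*}E_{c,\delta},\cF{*}H_{c,\delta})\ \text{in}\ \Omega_3\setminus\overline{\Omega_2},\quad (E_{2,\delta},H_{2,\delta})=(\cG{*}E_{1,\delta},\cG{*}H_{1,\delta})\ \text{in}\ \Omega_3.
\]
By a change of variables, $(E_{1,\delta},H_{1,\delta})$ solves a Maxwell system in $\Omega_3\setminus\overline{\Omega_2}$ with coefficients $(\ep+i\delta\cF_{*}I,\mup+i\delta\cF_{*}I)$, and $(E_{2,\delta},H_{2,\delta})$ solves one in $\Omega_3$ with coefficients $(\teps+O(\delta),\tmu+O(\delta))$. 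Since $\cF=\mathrm{id}$ on $\partial\Omega_2$, $\cG=\mathrm{id}$ on $\partial\Omega_3$, and since $(\eps_{c,\delta},\mu_{c,\delta})=(\eps_\delta,\mu_\delta)$ on $\partial\Omega_2\setminus O_{2,\gamma}$, the differences $(E_{c,\delta}-E_{1,\delta},H_{c,\delta}-H_{1,\delta})$ and $(E_{1,\delta}-E_{2,\delta},H_{1,\delta}-H_{2,\delta})$ satisfy Maxwell systems with $L^2$-source of order $\delta$ and with tangential jumps on $\partial\Omega_2\setminus O_{2,\gamma}$, resp.\ on $\partial\Omega_3\setminus\cG(O_{1,\gamma})$, of order $\delta$ in the appropriate trace norm. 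Applying Theorem~\ref{thm-3SP-M} with $\Gamma=\Gamma_2$ in a neighborhood of $\partial\Omega_2$ inside $\Omega_3\setminus\overline{\Omega_2}$, and then with $\Gamma=\cG(\Gamma_1)$ in a neighborhood of $\partial\Omega_3$ inside $\Omega_3$, and choosing $\gamma_0=\gamma_0(\alpha)$ so that the interpolation exponent exceeds $\alpha$, gives
\[
\|E_{c,\delta}-E_{2,\delta},H_{c,\delta}-H_{2,\delta}\|_{L^2(\text{fixed shell in }\Omega_3)}\le C\,\delta^\alpha\|J\|_{L^2},
\]
where the ``bad'' Cauchy data on $O_{j,\gamma}$ is absorbed into the a priori bound by the removing-singularity step. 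Since $(E_{2,\delta},H_{2,\delta})$ solves the system \eqref{sys-tEH} up to $O(\delta)$ and $(\teps,\tmu)$ is well posed, the tangential trace of $(E_{c,\delta}-\tE,H_{c,\delta}-\tH)$ on $\partial\Omega_3$ is then $O(\delta^\alpha)$, and well-posedness of the exterior Maxwell problem with coefficients $(\teps,\tmu)=(I,I)$ on $\mR^3\setminus B_{R_0}$ propagates this to the $L^2$ bound \eqref{conclusion}.

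The main obstacle is \emph{quantitative} in nature: one must track the constants in Theorem~\ref{thm-3SP-M} together with those arising in the removing-singularity construction so that $\gamma_0$ depends only on $\alpha$, on the curves $\Gamma_j$, on $\Omega_j$, and on the background doubly complementary data $(\eps_0,\mu_0)$, but not on $(\eps_c,\mu_c)$ or on $\delta$. This independence of the perturbation is exactly the reason the partial-data version of the three-sphere inequality (with interpolation exponent approaching~$1$) is required, rather than any version based on propagation of smallness from classical three-sphere inequalities.
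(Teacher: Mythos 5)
Your proposal correctly identifies all the major ingredients --- the reflections $\cF$ and $\cG$ from the doubly complementary structure, the partial-data three-sphere inequality for Maxwell (Corollary~\ref{cor-Maxwell-Cl}, a variant of Theorem~\ref{thm-3SP-M}) with exponent chosen close to $1$, and the removing localized singularity technique --- and you are right that the partial-data interpolation with exponent approaching $1$ is exactly what makes the rate $\delta^\alpha$ available with $\gamma_0$ independent of $(\eps_c,\mu_c)$. But there is a genuine gap in your first step, and a missing ingredient in the final one.

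The gap: you propose to establish the uniform a priori bound $\|(E_{c,\delta},H_{c,\delta})\|_{L^2(B_R\setminus\Omega_3)}\le C_R\|J\|_{L^2}$ at the outset, by a compactness/contradiction argument. This cannot work as a first step. What the standard energy estimate and compactness yield (this is \Cref{lem-stability}) is a bound in terms of the quantity
\[
\D(J,\delta)=\tfrac1\delta\Bigl|\Im\!\int_{\mR^3}i\omega J\,\bar E_{c,\delta}+\Im\I(H_{c,\delta})\Bigr|+\|J\|_{L^2}^2,
\]
namely $\|(E_{c,\delta},H_{c,\delta})\|_{L^2(B_R\setminus O_{\gamma_0})}\le C_R\,\D(J,\delta)^{1/2}$, and a priori one only knows $\D(J,\delta)\le C\delta^{-1}\|J\|^2$ since $|\Im(\dots)|\le C\|J\|\,\|E_{c,\delta}\|$. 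The uniform bound $\le C\|J\|$ outside $\Omega_3$ is in fact equivalent to the cloaking result you are trying to prove: it fails on $\Omega_3\setminus\Omega_2$ in general because of anomalous localized resonance, and proving it outside $\Omega_3$ requires going through the very reflection/removing-singularity machinery you place after it. A compactness argument would also have to extract subsequential limits in $\delta$, $\gamma$, and the arbitrary perturbation $(\eps_c,\mu_c)$ simultaneously, which is not attainable by the soft argument you sketch.

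The missing ingredient: even with the corrected a priori estimate in terms of $\D(J,\delta)^{1/2}$, the three-sphere inequality and removing-singularity construction deliver $\|(\tE_\delta-\tE,\tH_\delta-\tH)\|_{L^2(B_R)}\le C\delta^{\beta}\,\D(J,\delta)^{1/2}$ with $\beta=(\alpha+2)/3>1/2$, which, using the crude bound $\D\le C\delta^{-1}\|J\|^2$, only gives rate $\delta^{\beta-1/2}$. The paper's proof then closes a bootstrap loop: it shows $\D(J,\delta)\le C\|J\|\,\|(E_{c,\delta}-\tE,H_{c,\delta}-\tH)\|_{L^2(B_{R_0}\setminus\Omega_3)}\,\delta^{-1}+\|J\|^2$ (via radiation-condition energy identities), feeds this back into the estimate, and iterates; each pass improves the exponent, converging to $2\beta-1=(2\alpha+1)/3>\alpha$. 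Without this iteration your scheme cannot reach the stated rate $\delta^\alpha$ for $\alpha$ arbitrary in $(0,1)$.

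Finally, a small but conceptually important point on the removing-singularity step: the singular piece is not absorbed ``near $\Gamma_1\cup\Gamma_2$''; rather, $(\tE_\delta,\tH_\delta)$ is built by subtracting the reflected-field difference on the entire shell $D=(\Omega_3\setminus\Omega_2)\setminus(O_{2,\hhgamma}\cup O_{3,\hhgamma})$ and replacing the field in $\Omega_2\cup O_{2,\hhgamma}$ by the doubly-reflected field $(E_{c,2,\delta},H_{c,2,\delta})$, so that the glued field solves the nice system for $(\teps,\tmu)$ globally up to $O(\delta)$ sources, with jumps across $\partial D$ controlled by the three-sphere inequality. This gluing across the whole shell, not just near the curves, is what lets well-posedness for $(\teps,\tmu)$ deliver the $L^2$ estimate.
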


As a consequence of  Theorem~\ref{thm-cloaking}, $\lim_{\delta \to 0} (E_{c, \delta}, H_{c, \delta}) = (\tE, \tH)$ in $\mR^3 \setminus \Omega_3$ for all $J$ with compact support outside $\Omega_3$ if $\gamma$ is sufficiently small.  One,  therefore,  cannot detect the difference between two media $(\eps_{c, \delta}, \mu_{c, \delta})$ and $(I, I)$ as $\delta \to 0$ by observing  of $(E_{c, \delta}, H_{c,\delta})$ outside $\Omega_3$ using the excitation $J$: cloaking is achieved for observers outside $\Omega_3$ in the limit as $\delta \to 0$. It is worth noting that the constant $\alpha$ and $C_R$ in \eqref{conclusion} does not depend on the ellipticity and the Lipschitz  constant of $(\eps_c, \mu_c)$.

\begin{remark}\label{rem-5.2} \rm 
Given $0 < r_1 < r_2$, set $r_3  = r_2^2/ r_1$, $\cF (x) = r_2^2 x /|x|^2$, $\cG(x) = r_3^2 x/ |x|^2$, 
$\Omega_j = B_{r_j}$ for $j=1, \, 2, \, 3$, and 
$$
\Gamma_{j} = \Big\{x \in \mR^3; |x| = r_j \mbox{ and } x_3 = 0  \Big\} \mbox{ for } j=1, \, 2. 
$$
Consider $(\eps_0, \mu_0)$ given by \eqref{def-epsmu} where $M$ is from \eqref{def-M} with $p = 2$. 
Take $\alpha = 2/3$. Applying \Cref{thm-cloaking}, one derives the statement on cloaking associated with doubly complementary media mentioned in the introduction. 
\end{remark}

\begin{remark} \rm It would be very interesting to understand the cloaking property considered in this paper in the time domain for dispersive materials, whose material constants are frequency dependent., see e.g. \cite{Ng-Vinoles} for a discussion on these materials and their basis properties in the time domain. 
\end{remark}

\begin{proof}[Proof of \Cref{thm-cloaking}] The proof is inspired by  \cite{Ng-CALR-O}.  Set 
\begin{equation}\label{TC-data}
\D(J, \delta) = \frac{1}{\delta} \left| \Im \int_{\mR^3} i \omega J \bar E_{c, \delta} + \Im \I(H_{c, \delta}) \right| +  \| J\|_{L^2(\mR^3)}^2, 
\end{equation}
where $\Im$ denotes the imaginary part, and 
\begin{equation}\label{TC-def-IH}
\I(H_{c, \delta})  = \lim_{R \to + \infty} \int_{\partial B_R} i \omega |H_{c, \delta}|^2. 
\end{equation}
By \Cref{lem-stability} below, we have, for $\gamma < \gamma_0$ where $\gamma_0$ is a (fixed) positive constant determined later, 
\begin{equation}\label{thm-cloaking-p1}
\|(E_{c, \delta}, H_{c, \delta}) \|_{L^2(B_R \setminus O_{\gamma_0})} \le C_R \D(J, \delta)^{1/2}. 
\end{equation}

The starting point of the proof is the use of reflections $\cF$ and $\cG$ from the definition of doubly complementary media.  Set 
\begin{equation} \label{TC-def-EH1delta}
(E_{c, 1, \delta}, H_{c, 1, \delta}) = (\cF*E_{c, \delta}, \cF*H_{c, \delta}) \mbox{ in } \mR^3 \setminus \Omega_2
\end{equation}
and 
\begin{equation} \label{TC-def-EH2delta}
(E_{c, 2, \delta}, H_{c, 2, \delta}) =  (\cG*E_{c, 1, \delta}, \cG*H_{c,1,  \delta}) \mbox{ in } \Omega_3. 
\end{equation}

Set  
$$
\Gamma_3 = \cF(\Gamma_1) \quad \mbox{ and } \quad O_{3, \gamma} = \Big\{x \in \mR^3; \dist(x, \Gamma_3 ) < \gamma \Big\}. 
$$
It is clear that  $\cG \circ \cF (O_{1, \gamma} \cap \Omega_1) \subset \Omega_3 \cap  O_{3, \lambda \gamma}$ for some positive constant $\lambda$. For notational ease, we will assume that $\lambda = 1$ (see \Cref{fig-cloaking} for the geometry of the setting).  

By a change of variables, see e.g. \cite[Lemma 7]{Ng-Superlensing-Maxwell}, we have 
\begin{equation}\label{TC-sys-EH1}
\left\{\begin{array}{cl}
\nabla \times E_{c. 1, \delta} = i \omega \mu^+ H_{c, 1, \delta} - \omega \delta \cF_*I H_{c, 1, \delta} & \mbox{ in } \Omega_3 \setminus (\bar \Omega_2 \cup \bar O_{2, \gamma}), \\[6pt]
\nabla \times H_{c, 1, \delta} = -  i \omega \eps^+ E_{c, 1, \delta} + \omega \delta \cF_*I E_{c, 1, \delta} & \mbox{ in } \Omega_3 \setminus (\bar \Omega_2 \cup \bar O_{2, \gamma}), 
\end{array}\right.
\end{equation}
\begin{equation}\label{TC-sys-EH2}
\left\{\begin{array}{cl}
\nabla \times E_{c, 2, \delta} = i \omega \tmu H_{c, 2, \delta}  & \mbox{ in } \Omega_3 \setminus \bar O_{3, \gamma}, \\[6pt]
\nabla \times H_{c, 2, \delta} = -  i \omega \teps H_{c, 2, \delta} & \mbox{ in } \Omega_3 \setminus \bar O_{3, \gamma}, 
\end{array}\right.
\end{equation} 
\begin{equation}\label{TC-bdry-EH1}
E_{c, 1, \delta} \times \nu - E_{c, \delta} \times \nu  = H_{c, 1, \delta} \times \nu - H_{c, \delta} \times \nu   =  0 \mbox{ on } \partial \Omega_2, 
\end{equation}
\begin{equation}\label{TC-bdry-EH2}
E_{c, 2, \delta} \times \nu - E_{c, 1, \delta} \times \nu = H_{c, 2, \delta} \times \nu - H_{c, 1, \delta} \times \nu  = 0  \mbox{ on } \partial \Omega_3.  
\end{equation}

Set $\beta = (\alpha + 2)/3$. Note that $(\teps, \tmu) = (\eps^+, \mu^+)$ in $\Omega_3 \setminus \Omega_2$ thanks to the property of doubly complementary media. Applying \Cref{cor-Maxwell-Cl} to $(E_{c, 1, \delta} - E_{c, \delta}, H_{c, 1, \delta} - H_{c, \delta})$ in $\Omega_3 \setminus \Omega_2$ with $\Sigma = \partial \Omega_2$, and to $(E_{c, 2, \delta} - E_{c, 1, \delta}, H_{c, 2, \delta} - H_{c, 1, \delta})$ in $\Omega_3 \setminus \Omega_2$ with $\Sigma = \partial \Omega_3$,
 there exist $\gamma_0, \gamma_2$ with $0< \gamma_0<  \gamma_2/2 < \gamma_2$, depending only on $\alpha$, $(\eps_0, \mu_0)$, $\Omega_1$, $\Omega_2$, $\Gamma_1$, $\Gamma_2$, and $\omega$ such that for 
$$
 \hhgamma = (\gamma_1 + \gamma_2)/2  \quad \mbox{ with }  \gamma_1 = \gamma_2/2, 
$$ 
we have,   with $\cO_{2, \hhgamma} = \Omega_2 \cup O_{2, \hhgamma}$,  by \eqref{TC-sys-EH1} and \eqref{TC-bdry-EH1}, 
\begin{multline*} 
\| (E_{c,1, \delta} \times \nu, H_{c, 1, \delta} \times \nu)  - (E_{c, \delta} \times \nu, H_{c, \delta} \times \nu)  \|_{H^{-1/2}(\dive_\Gamma, \partial \cO_{2, \hhgamma})} \\[6pt]
\le  C \| \big(\delta \cF_*I H_{c, 1, \delta},  \delta \cF_*I E_{c, 1, \delta} \big)  \|_{L^2 \big(\Omega_3 \setminus (\bar \Omega_2 \cup \bar O_{2, \gamma_0}) \big)}^{\beta} \times \\[6pt]
\times  \| (E_{c,1, \delta}, H_{c, 1, \delta} )  - (E_{c, \delta}, H_{c, \delta} )  \|_{L^2 \big(\Omega_3 \setminus (\bar \Omega_2 \cup \bar O_{2, \gamma_0}) \big)}^{1 - \beta}, 
\end{multline*}
and, with  $\cO_{3, \hhgamma} = \Omega_{3} \setminus  \bar O_{3, \hhgamma}$, by \eqref{TC-sys-EH2} and \eqref{TC-bdry-EH2}, 
\begin{multline*}
\| (E_{c, 2, \delta} \times \nu, H_{c, 2, \delta} \times \nu) - (E_{c, 1, \delta} \times \nu, H_{c, 1, \delta} \times \nu)  \|_{H^{-1/2}(\dive_\Gamma, \partial O_{3, \hhgamma} )} \\[6pt]
\le  C  \| \big( \delta \cF_*I H_{c, 1, \delta},  \delta \cF_*I E_{c, 1, \delta} \big)  \|_{L^2\big(\Omega_3 \setminus \bar O_{3, \gamma_0} \big)}^{\beta}\| (E_{c, 2, \delta} , H_{c, 2, \delta}) - (E_{c, 1, \delta}, H_{c, 1, \delta})  \|_{L^2\big(\Omega_3 \setminus \bar O_{3, \gamma_0} \big)}^{1-\beta}. 
\end{multline*}
It follows from \eqref{thm-cloaking-p1} that 
\begin{equation}\label{TC-claim1-cloaking}
\| (E_{c,1, \delta} \times \nu, H_{c, 1, \delta} \times \nu)  - (E_{c, \delta} \times \nu, H_{c, \delta} \times \nu)  \|_{H^{-1/2}(\dive_\Gamma, \partial \cO_{2, \hhgamma})} \le 
C \delta^{\beta} \D (J, \delta)^{1/2}
\end{equation}
and
\begin{equation}\label{TC-claim2-cloaking}
\| (E_{c, 2, \delta} \times \nu, H_{c, 2, \delta} \times \nu) - (E_{c, 1, \delta} \times \nu, H_{c, 1, \delta} \times \nu)  \|_{H^{-1/2}(\dive_\Gamma, \partial O_{3, \hhgamma} )} \le 
C \delta^{\beta} \D (J, \delta)^{1/2}. 
\end{equation}

Set 
\begin{equation*}
D= (\Omega_3 \setminus \Omega_2) \setminus (O_{3, \hhgamma} \cup O_{2, \hhgamma}). 
\end{equation*}
A simple but important part of the proof is the introduction of  $(\tE_{\delta}, \tH_{\delta}) \in [H_{\loc}(\curl, \mR^3 \setminus \partial D)]^2$, known as the removing localized singularity,  as follows:
\begin{equation}\label{TC-main-tEH}
(\tE_{\delta}, \tH_{\delta}) = \left\{\begin{array}{cl}
(E_{c, \delta}, H_{c, \delta})  - \Big[  (E_{c, 1, \delta}, H_{c, 1, \delta})  - (E_{c, 2, \delta}, H_{c, 2, \delta})  \Big] & \mbox{ in } D, \\[6pt]
(E_{c, 2, \delta}, H_{c, 2, \delta}) & \mbox{ in } \Omega_2 \cup O_{2, \hhgamma}, \\[6pt] 
(E_{c, \delta}, H_{c, \delta}) & \mbox{ otherwise}
\end{array}\right.
\end{equation}
(see \Cref{fig-cloaking}). 

It follows from \eqref{TC-sys-EH1}, \eqref{TC-sys-EH2},   the definition of $(\teps, \tmu)$ in \eqref{def-tepsmu}, and the property of doubly complementary media  that  $(\tE_\delta, \tH_\delta) \in [H_{\loc}(\curl, \mR^3)]^2$ is a radiating solution of the system  
\begin{equation*} \left\{
\begin{array}{cl}
\nabla \times \tE_\delta = i \omega \tmu \tH_\delta + \delta  \omega \mathds{1}_{D} \cF_*I H_{c, 1, \delta}  & \mbox{ in } \mR^3 \setminus \partial D, \\[6pt]
\nabla \times \tH_\delta = -  i \omega \teps \tE_\delta  -  \delta \omega  \mathds{1}_{D} \cF_*I E_{c, 1, \delta}  + J & \mbox{ in } \mR^3 \setminus \partial D. 
\end{array}\right. 
\end{equation*}
We derive from the definition of $(\tE, \tH)$ in \eqref{sys-tEH}  that  $(\tE_\delta - \tE, \tH_\delta - \tH) \in [H_{\loc}(\curl, \mR^3 \setminus \partial D)]^2$ is  radiating  and satisfies  
\begin{equation*} \left\{
\begin{array}{cl}
\nabla \times (\tE_\delta - \tE) = i \omega \tmu  (\tH_\delta - \tH) +  \delta \omega \mathds{1}_{D} \cF_*I H_{c, 1, \delta} & \mbox{ in } \mR^3 \setminus \partial D, \\[6pt]
\nabla \times (\tH_\delta - \tH) = -  i \omega \teps (\tE_\delta - \tE)  -  \delta \omega  \mathds{1}_{D} \cF_*I E_{c, 1, \delta} & \mbox{ in } \mR^3 \setminus \partial D.
\end{array}\right. 
\end{equation*}
Since $\teps$ and $\tmu$ are uniformly elliptic, we deduce from \eqref{TC-claim1-cloaking} and \eqref{TC-claim2-cloaking} that 
\begin{equation}\label{main-p1}
\| (\tE_\delta - \tE, \tH_\delta - \tH) \|_{L^2(B_R)} \le C_R  \delta^{\beta} \D(J, \delta)^{1/2}. 
\end{equation}
Since $\beta > 1/2$ and $\D(J, \delta)  \le C \delta^{-1} \| J\|_{L^2(\mR^3)}^2
$,  it follows that  
\begin{equation}\label{main-p2}
 \| (E_{c, \delta}, H_{c, \delta}) \|_{L^2(B_{R_0} \setminus \Omega_3)} \le C \| J\|_{L^2(\mR^3)},  
\end{equation}
and
\begin{equation}\label{main-p3}
\| (\tE_\delta - \tE, \tH_\delta - \tH) \|_{L^2(B_R)}  \le C_R \delta^{\beta - 1/ 2} \| J\|_{L^2(\mR^3)}.  
\end{equation}

Since $\beta > 1/2$, one derives from \eqref{main-p3} that 
$$
\lim_{\delta \to 0} \| (\tE_\delta - \tE, \tH_\delta - \tH) \|_{L^2(B_R)} =0, 
$$
which already yields the cloaking phenomenon since $(E_{c, \delta}, H_{c, \delta}) = (\tE_\delta, \tH_\delta)$ outside $\Omega_3$.  

To reach the convergent rate, we process as in  \cite{Ng-Survey}.  Considering the system $(\tE, \tH)$, multiplying the equation of $\nabla \times \tE = i \omega \mu \tH $ by $\nabla \times \overline{\tE}$,  integrating by parts in $B_R$, letting $R \to + \infty$, and using the radiation condition, one has
\begin{equation*}
\Im \int_{\mR^3} i \omega J \overline{\tE} + \Im I (\tH) = 0. 
\end{equation*}
It follows that 
\begin{multline}\label{St-p1}
\Big|\Im \int_{\mR^3} i \omega J \bar E_{c, \delta} + \Im I (H_{c, \delta}) \Big| = \Big|\Im \int_{\mR^3} i \omega J \bar E_{c, \delta} + \Im I (H_{c, \delta})   - \Im \int_{\mR^3} i \omega J \overline{\tE} - \Im I (\tH)  \Big| \\[6pt]
\le  \Big|\Im \int_{\mR^3} i \omega J \bar E_{c, \delta}   - \Im \int_{\mR^3} i \omega J \overline{\tE}  \Big| +  \Big| \Im I (H_{c, \delta})   - \Im I (\tH)  \Big|. 
\end{multline}
We have 
\begin{equation}\label{St-p2}
\Big|\Im \int_{\mR^3} i \omega J \bar E_{c, \delta}   - \Im \int_{\mR^3} i \omega J \overline{\tE}  \Big| \le C \| J \|_{L^2} \| (E_{c, \delta} - \tE) \|_{L^2(B_{R_0} \setminus \Omega_3)}. 
\end{equation}
Multiplying the equation of $\nabla \times \tE = i \omega \mu \tH $ by  $\nabla \times \overline{\tE}$ and the equation of $\nabla \times E_{c, \delta} = i \omega \mu H_{c, \delta} $ by  $\nabla \times \overline{E_{c, \delta}}$,  integrating by parts in $B_R \setminus B_{R_0}$, letting $R \to + \infty$, and using the radiation condition, we obtain 
$$
 \Big| \Im I (H_{c, \delta})   - \Im I (\tH)  \Big|  = \left| \int_{\partial B_{R_0}} i \omega (\overline{E_{c, \delta}} \times \nu) \cdot H_{c, \delta} -  \int_{\partial B_{R_0}} i \omega (\overline{\tE} \times \nu) \cdot \tH
\right|.
$$
We derive that 
\begin{equation}\label{St-p3}
 \Big| \Im I (H_{c, \delta})   - \Im I (\tH)  \Big| \le C  \| J \|_{L^2} \| (E_{c, \delta} - \tE, H_{c, \delta} - \tH) \|_{L^2(B_{R_0} \setminus \Omega_3)}. 
\end{equation}
Combining \eqref{St-p1}, \eqref{St-p2}, and \eqref{St-p3} yields 
\begin{equation}\label{St-p4}
\Big|\Im \int_{\mR^3} i \omega J \bar E_{c, \delta} + \Im I (H_{c, \delta}) \Big| \le C  \| J \|_{L^2} \| (E_{c, \delta} - \tE, H_{c, \delta} - \tH) \|_{L^2(B_{R_0} \setminus \Omega_3)}. 
\end{equation}

From \eqref{TC-data}, \eqref{main-p3},  and \eqref{St-p4}, we obtain 
\begin{equation*}
\D(J, \delta) \le C \delta^{\beta - 1/2 - 1 }\| J \|_{L^2(\mR^3)}^2. 
\end{equation*}

Repeating this process, one reaches (see \cite[Proof of Theorem 3.1]{Ng-Survey} for related arguments), for $\ell \ge 1$, 
\begin{equation*}
\D(J, \delta) \le C_\ell \delta^{\beta (1 + \cdots + 1/ 2^{\ell-1}) - (1/2 + \cdots + 1/2^\ell) - 1}  \| J \|_{L^2}^2
\end{equation*}
and 
\begin{equation*}
\| (\tE_\delta - \tE, \tH_\delta - \tH) \|_{L^2(B_R)} \le C_\ell \delta^{\beta(1 + \cdots 1/ 2^\ell) - (1/2 + \cdots 1 /2^{\ell+1})} \| J \|_{L^2}. 
\end{equation*}
The conclusion follows by taking $\ell$ sufficiently large. 
 \end{proof}

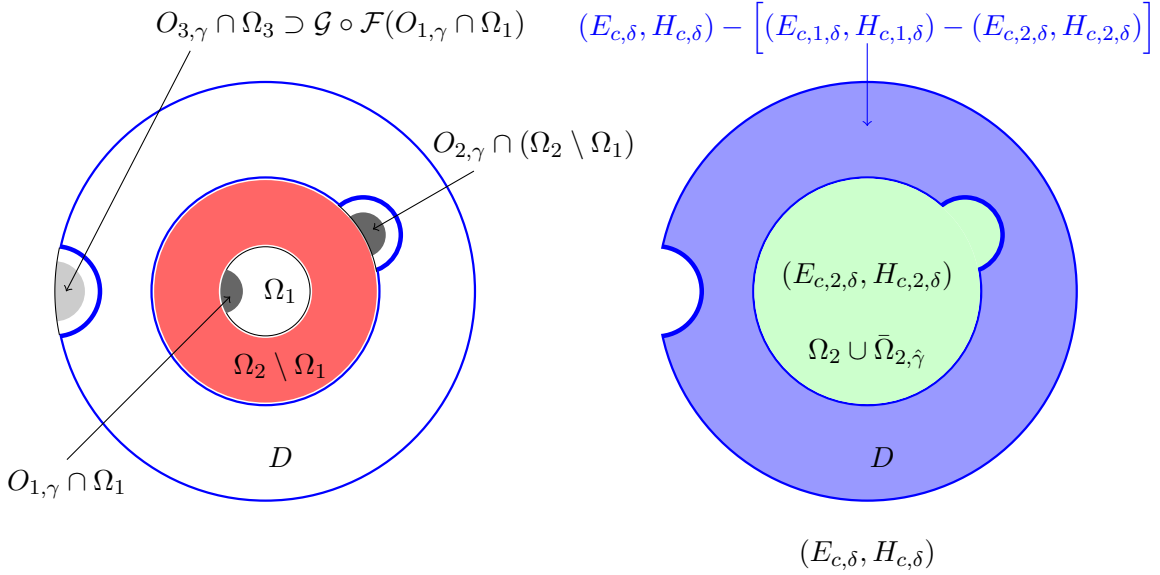
\begin{figure}
\centering
\begin{tikzpicture}[scale=1]

\draw[blue, line width=0.6mm]  (0, 0) circle(2.8);

\fill[white] (-2.8,0) circle (0.6);

\draw[blue, line width=0.6mm] (-2.8,0) circle (0.6);

\fill [white,even odd rule] (0,0) circle[radius=2.8cm] circle[radius=3.5cm];

\begin{scope}
\clip (-2.8, 0) circle(0.4);
\fill[black!20]  (0, 0) circle(2.8);
\end{scope}

\draw [black, domain =-168:-192] plot ({2.8*cos(\x)}, {2.8*sin(\x)});

\draw[blue, line width=0.6mm]  (0, 0) circle(1.5);

\fill[white] ({1.5*cos(30)},{1.5*sin(30)}) circle (0.5);

\draw[blue, line width=0.6mm] ({1.5*cos(30)},{1.5*sin(30)}) circle (0.5);

\fill[white] (0, 0) circle(1.5);

\fill[black!60] ({1.5*cos(30)},{1.5*sin(30)}) circle(0.3);

\fill[white] (0,0) circle (1.4);

\begin{scope}
\clip ({1.5*cos(30)},{1.5*sin(30)}) circle(0.3);
\fill[black!20]  (0, 0) circle(1.5);
\end{scope}

\draw [black, domain =10:50] plot ({1.5*cos(\x)}, {1.5*sin(\x)});

\draw[thick] (0, 0) circle(0.6);

\fill[black!60] (-0.6,0) circle (0.3);

\fill [white,even odd rule] (0,0) circle[radius=0.6cm] circle[radius=0.9cm];

\fill [red!60,even odd rule] (0,0) circle[radius=0.62cm] circle[radius=1.48cm];

\draw (0.2, 0) node{$\Omega_1$};

\draw (0.2, -1.) node{$\Omega_2 \setminus \Omega_1$};

\draw (0.2, -2.2) node{$D$};

\draw[->] (-1, 3.2) -- (-2.65, 0); 

\draw ((1, 3.2) node[above]{$ O_{3, \gamma} \cap \Omega_3 \supset \cG \circ \cF (O_{1, \gamma} \cap \Omega_1)$};

\draw[->] ({3.2*cos(30)} ,{ 3.2*sin(30)}) -- ({1.65*cos(30)} , {1.65*sin(30)}); 

\draw ({0.8+3.2*cos(30)} ,{ 3.2*sin(30)})  node[right,above]{$O_{2, \gamma} \cap (\Omega_2 \setminus \Omega_1)$}; 


\draw[->] ({-3.7 + 1.5*cos(45)} ,{-3.3 +  1.5*sin(45)}) -- ({-1+0.75*cos(45)} ,  {-0.6 + 0.75*sin(45)});  

\draw[] ({-3.7 + 1.5*cos(45)} ,{-3.3 +  1.5*sin(45)})  node[below]{$O_{1, \gamma} \cap \Omega_1$};

\newcommand\z{8.0}

\filldraw[blue!40]  (0+\z, 0) circle(2.8);

\draw[blue, line width=0.6mm]  (0+\z, 0) circle(2.8);

\fill[white] (-2.8+\z,0) circle (0.6);

\draw[blue, line width=0.6mm] (-2.8+\z,0) circle (0.6);

\fill [white,even odd rule] (0+ \z,0) circle[radius=2.8cm] circle[radius=3.5cm];

\draw[blue, line width=0.6mm]  (0+\z, 0) circle(1.5);

\fill[green!20] ({\z+1.5*cos(30)},{1.5*sin(30)}) circle (0.5);

\draw[blue, line width=0.6mm] ({\z+ 1.5*cos(30)},{1.5*sin(30)}) circle (0.5);

\fill[white] (0+\z, 0) circle(1.5);

\fill[green!20] (0+\z,0) circle (1.5);

\draw (0.2+ \z, -2.2) node{$D$};

\draw (0+ \z, 0.2) node{$(E_{c, 2, \delta}, H_{c, 2, \delta})$}; 
\draw (0+ \z, -0.8) node{$\Omega_2 \cup \bar \Omega_{2, \hhgamma}$};

\draw(0+ \z, 3.5) node{\textcolor{blue}{$(E_{c, \delta}, H_{c, \delta})  - \Big[  (E_{c, 1, \delta}, H_{c, 1, \delta})  - (E_{c, 2, \delta}, H_{c, 2, \delta})  \Big] $}};

\draw[->, blue] (0+ \z, 3.3) -- (0+ \z, 2.2);

\draw(0+ \z, -3.5) node{$(E_{c, \delta}, H_{c, \delta})$};  
\end{tikzpicture}
\caption{On the left: the geometry of the cloaking setting,  $O_\gamma = (O_{1, \gamma} \cap \Omega_1) \cup \big( O_{2, \gamma} \cap (\Omega_2 \setminus \Omega_1) \big)$ is the cloaked region,  the plasmonic structure in $\Omega_2 \setminus \Omega_1$ is the red region; On the right: the definition of $(\tE_\delta, \tH_\delta)$.} \label{fig-cloaking}
\end{figure}

\begin{remark} \rm
The removing localized singularity technique introduced in \cite{Ng-Negative-Cloaking, Ng-Superlensing} is
inspired by the idea of renormalizing energy in the theory of the 
Ginzburg-Landau equation \cite{BBH}.  The gluing argument was first suggested in \cite{Ng-Complementary}. This plays an important role in 
our study of negative index materials, see \cite{Ng-Survey} for a survey. 
\end{remark}

The following lemma was used in the proof of \Cref{thm-cloaking}. 

\begin{lemma} \label{lem-stability} Under the assumption of \Cref{thm-cloaking},  we have, for $\gamma < \hat \gamma$, 
\begin{equation}\label{stability-1}
\|(E_{c, \delta}, H_{c, \delta}) \|_{L^2(B_R \setminus D_{\hat \gamma})} \le C_R \D(J, \delta), 
\end{equation}
where $\D(J, \delta)$ is defined by \eqref{TC-data}. Here $C_R$ denotes a positive constant depending only on $\hat \gamma$, $R$, $R_0$, $\eps$, $\mu$, $\Gamma_1$, $\Gamma_2$, $\Omega_1$, and $\Omega_2$. 
\end{lemma}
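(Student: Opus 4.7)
The plan is to prove the stability estimate in two stages. First, I derive an energy identity that extracts $L^2$-control of $(E_{c,\delta},H_{c,\delta})$ inside the plasmonic shell $\Omega_2\setminus\Omega_1$ from $\D(J,\delta)$. Second, I propagate this smallness to the full region $B_R\setminus O_{\hat\gamma}$ by a contradiction/compactness argument in which the three-sphere inequalities with partial data (\Cref{cor-Maxwell-Cl}) play the crucial role.

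For the energy identity, I multiply $\nabla\times H_{c,\delta}=-i\omega\eps_{c,\delta}E_{c,\delta}+J$ by $\overline{E_{c,\delta}}$, integrate over $B_R$, integrate by parts using $\nabla\times E_{c,\delta}=i\omega\mu_{c,\delta}H_{c,\delta}$, let $R\to\infty$ with the Silver--M\"uller radiation condition, and take imaginary parts. Since $\Im\eps_{c,\delta}=\Im\mu_{c,\delta}=\delta\,\mathds{1}_{\Omega_2\setminus\Omega_1}I$ and all other coefficients are real, the real contributions cancel and one is left with
\[
\Im\int_{\mR^3} i\omega J\cdot\overline{E_{c,\delta}} + \Im\,\I(H_{c,\delta}) = -\omega^2\delta \int_{\Omega_2\setminus\Omega_1} \bigl(|E_{c,\delta}|^2+|H_{c,\delta}|^2\bigr).
\]
Dividing by $\delta$ and recalling the definition of $\D$ gives $\|(E_{c,\delta},H_{c,\delta})\|_{L^2(\Omega_2\setminus\Omega_1)}^2 \le C\,\D(J,\delta)$.

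For the second stage, I argue by contradiction: if the desired bound fails, there exist sequences $\delta_n>0$, $J_n$ supported in $B_{R_0}$, cloak data $(\eps_{c,n},\mu_{c,n})$ in $O_{\gamma_n}$ with $\gamma_n<\hat\gamma$, and radiating solutions $(E_n,H_n)$ satisfying $\|(E_n,H_n)\|_{L^2(B_R\setminus O_{\hat\gamma})}=1$ while $\D(J_n,\delta_n)\to 0$. The first step then forces $\|(E_n,H_n)\|_{L^2(\Omega_2\setminus\Omega_1)}\to 0$ and $\|J_n\|_{L^2}\to 0$. Outside $(\Omega_2\setminus\Omega_1)\cup O_{\hat\gamma}$ the coefficients reduce to the fixed positive-index pair $(\ep,\mup)$, so $(E_n,H_n)$ solves a uniformly elliptic Maxwell system with $L^2$-small source; by Rellich-type compactness, pass to a subsequence converging weakly in $H_{\loc}(\curl)$ and strongly in $L^2_{\loc}$ to a limit $(E_*,H_*)$. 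Uniqueness for the outgoing Maxwell problem in $\mR^3\setminus\overline{\Omega_2}$ kills $(E_*,H_*)$ in the exterior, and a unique-continuation argument does the same in $\Omega_1$ --- provided one can transfer the shell smallness into boundary smallness on $\partial\Omega_1,\partial\Omega_2$ away from $\Gamma_1,\Gamma_2$.

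The main obstacle is precisely this transfer near the curves $\Gamma_1,\Gamma_2$, where localized resonance may concentrate and where no tangential trace estimate is directly available from the shell $L^2$-bound alone. I would circumvent it by invoking \Cref{cor-Maxwell-Cl} with $\Gamma=\Gamma_j$ applied to $(E_n,H_n)$ on each side of $\partial\Omega_j$ ($j=1,2$): on the shell side, the partial-data tangential trace $(E_n\times\nu,H_n\times\nu)$ on $\Sigma_{r_0}=\partial\Omega_j\setminus\overline{O_{r_0}}$ is controlled via the Maxwell equation and the trace theorem by $\|(E_n,H_n)\|_{L^2(\Omega_2\setminus\Omega_1)}\to 0$, while the full $L^2$-norm on a shell-sized neighbourhood stays bounded; interpolating via \Cref{cor-Maxwell-Cl} gives $L^2$-smallness on a full collar of $\partial\Omega_j$ lying outside $O_{\hat\gamma}$. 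Fixing $\hat\gamma$ small enough so that these collars, combined with exterior/interior uniqueness, cover $B_R\setminus O_{\hat\gamma}$ yields $(E_n,H_n)\to 0$ in $L^2(B_R\setminus O_{\hat\gamma})$, contradicting the normalization and completing the proof.
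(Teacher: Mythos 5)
Your overall two-stage plan (energy identity, then compactness with unique continuation) matches the paper's strategy, but there is a genuine error in the second stage that leads you to invoke machinery the paper does not use here.

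You assert that near $\Gamma_1,\Gamma_2$ ``no tangential trace estimate is directly available from the shell $L^2$-bound alone,'' and therefore you reach for \Cref{cor-Maxwell-Cl} to interpolate from the partial trace on $\Sigma_{r_0}$. This is wrong. By construction $O_\gamma=(O_{1,\gamma}\cup O_{2,\gamma})\setminus(\Omega_2\setminus\Omega_1)$, so the cloak sits entirely \emph{outside} the shell: throughout $\Omega_2\setminus\Omega_1$ the coefficients are the fixed $(\eps^-+i\delta I,\mu^-+i\delta I)$, independent of $(\eps_c,\mu_c)$ and of $\gamma$. Maxwell's equations in the shell then give $\|\nabla\times E_{c,\delta}\|_{L^2(\Omega_2\setminus\Omega_1)}\le C\|H_{c,\delta}\|_{L^2(\Omega_2\setminus\Omega_1)}$ (and symmetrically), so the shell $L^2$-bound from stage 1 upgrades at no cost to an $H(\curl,\Omega_2\setminus\Omega_1)$ bound, and the standard trace theorem for $H(\curl)$ controls $\|(E_{c,\delta}\times\nu,H_{c,\delta}\times\nu)\|_{H^{-1/2}(\dive_\Gamma,\partial\Omega_1\cup\partial\Omega_2)}$ on the \emph{entire} boundaries, including near $\Gamma_1,\Gamma_2$. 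This is precisely what the paper does. With the full Cauchy traces in hand, the compactness argument only needs the unique continuation principle for the fixed positive-index Maxwell system in $B_R\setminus(\Omega_2\cup O_{\hat\gamma})$ and in $\Omega_1\setminus O_{\hat\gamma}$ (plus exterior uniqueness), not the quantitative partial-data three-sphere inequalities. Indeed, a remark following the lemma in the paper notes that even the complementary structure of $(\eps_0,\mu_0)$ is not required here; the three-sphere inequalities and the subtleties near $\Gamma_1,\Gamma_2$ enter only in the removing-localized-singularity step of \Cref{thm-cloaking} itself.

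Your proposed detour through \Cref{cor-Maxwell-Cl} is therefore not only unnecessary but also geometrically delicate: that corollary yields control on the thin collar $O_{r_2}\setminus O_{r_1}$ around $\Gamma_j$ (not a collar of the full $\partial\Omega_j$), and making the radii and the applicable domains line up with $O_{\hat\gamma}$ and the unknown cloak would require extra work that the simpler trace-theorem route renders moot. Replace that passage by the direct trace estimate and the argument closes as in the paper.
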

  
\begin{proof} The proof of this lemma is quite simple as follows. We have, in $\mR^3$,  
\begin{equation*}
\nabla \times ( \mu_{c, \delta}^{-1} \nabla \times E_{c, \delta}) - \omega^2 \eps_{c, \delta} \cE_\delta =   i \omega J. 
\end{equation*}
Multiplying the equation by $\bar E_{c, \delta}$, integrating in $B_R$, and using the fact $\supp J \subset B_{R_0} \setminus \Omega_2$, we obtain, for $R > R_0$, 
\begin{equation*}
\int_{B_R} \langle  \mu_\delta^{-1} \nabla \times E_{c, \delta}, \nabla \times E_{c, \delta} \rangle  + \int_{\partial B_{R}}  \langle i \omega H_{c, \delta} , E_{c, \delta} \times \nu \rangle  - \omega^2 \int_{B_R} \langle \eps_\delta E_{c, \delta}, E_{c, \delta} \rangle =  \int_{B_R}  \langle i \omega J, E_{c, \delta} \rangle.
\end{equation*}
Letting $R \to + \infty$, using the radiation condition,  and considering the imaginary part,  we get
\begin{equation}\label{inE}
 \|(E_{c, \delta}, H_{c, \delta}) \|_{L^2(\Omega_2 \setminus \Omega_1)}^2 \le C  \D(J, \delta).   
\end{equation}
It follows from the trace theory that 
\begin{equation}\label{bdryE}
\| E_{c, \delta} \times \nu \|_{H^{-1/2}(\dive_\Gamma, \partial \Omega_1 \cup \partial \Omega_2)}^2 + \| H_{c, \delta} \times \nu \|_{H^{-1/2}(\dive_\Gamma, \partial \Omega_1 \cup \partial \Omega_2)}^2  \le C \D(J, \delta). 
\end{equation}
A compactness argument involving the unique continuation principle gives, (see e.g. the proof of \cite[Lemma 3]{Ng-Superlensing-Maxwell} for  similar arguments), one has
\begin{equation}\label{inEHo}
\| (E_{c, \delta}, H_{c, \delta})\|_{L^2(B_R \setminus D_{\hat \gamma})}^2 \le C_R \D(J, \delta). 
\end{equation}
The proof is complete. \end{proof}

\begin{remark}  \fontfamily{m} \selectfont In the proof 
\Cref{lem-stability}, the complementary property of $(\eps_0, \mu_0)$ is not required. 
\end{remark}

\section{On superlensing and cloaking using complementary media} \label{sect-discussion}

In this section, we discuss the lensing and cloaking designs using complementary media given in \cite{Ng-Superlensing-Maxwell, Ng-Negative-Cloaking-M}. We show on one hand that  it is necessary to impose additional conditions on the schemes proposed in the physics literature.  On the other hand, we discuss various contexts where a lens can act like a cloak and conversely. 

\subsection{Superlensing using complementary media}\label{sect-CM-Superlensing}

In this section, we analyse the lensing construction given in \cite{Ng-Superlensing-Maxwell} motivated from \cite{Veselago, PendryNegative, Ng-Superlensing}. To magnify $m$-times the region $B_{\tau_0}$ of material parameters $(\eps_O, \mu_O)$ (a pair of uniformly elliptic,  symmetric, matrix-valued functions),  for some $\tau_0> 0$ and $m> 1$, we proposed using two layers. One layer  makes use of complementary media concept
\begin{equation}\label{lens-p1}
\big(\cF^{-1}_*I , \cF^{-1}_*I\big)  \mbox{ in } B_{r_2} \setminus B_{r_1}, 
\end{equation}
and the other layer is given by 
\begin{equation}\label{lens-p2}
\big(m I, m I \big) \mbox{ in } B_{r_1} \setminus B_{r_0}. 
\end{equation}
Here $\cF$ is the Kelvin transform with respect to $\partial B_{r_2}$, and $r_1$ and $r_2$ are required to satisfy $r_1 \ge  m^{1/2} r_0 $ and $r_2 =  m r_0$. The construction in \cite{Ng-Superlensing-Maxwell} is for the case where $r_1 =  m^{1/2} r_0 $,  nevertheless, the case $r_1 \ge  m^{1/2} r_0  $ is its direct consequence.   Other choices for the first and the second layers are possible via the concept of complementary media and were analyzed there.  

Set 
\begin{equation*}
(\eps_\delta, \mu_\delta) = \left\{ \begin{array}{cl} (I, I) & \mbox{ in } \mR^3 \setminus  B_{m r_0}, \\[6pt]
\big(m I, m I \big) & \mbox{ in } B_{r_1} \setminus B_{r_0}, \\[6pt]
 \big(\cF^{-1}_*I + i \delta I , \cF^{-1}_*I + i \delta I\big)  & \mbox{ in } B_{r_2} \setminus B_{r_1}, \\[6pt]
\big(\eps_O, \mu_O \big)  & \mbox{ in } B_{r_0},  
\end{array} \right. 
\end{equation*} 
and 
\begin{equation*}
(\hat \eps, \hat  \mu) = \left\{ \begin{array}{cl} (I, I) & \mbox{ in } \mR^3 \setminus  B_{m r_0}, \\[6pt]
\big(m^{-1}\eps_O(x/ m), m^{-1}\mu_O(x/ m) \big)  & \mbox{ otherwise}. 
\end{array} \right. 
\end{equation*} 

Assume that, with $M = \eps_O$ or $\mu_O$, 
\begin{equation}\label{cond-epsmu-S}
|\nabla M(x)| \le \Lambda, \quad  \Lambda^{-1} |\xi|^2 \le \langle M (x) \xi, \xi \rangle \le \Lambda |\xi|^2  \; \;  \forall \,  \xi \in \mR^d \mbox{ for some } \Lambda \ge 1.   
\end{equation}
Given $J \in [L^{2}(\mR^3)]^3$ with compact support outside $B_{r_3}$ with $r_3 = r_2^2/r_1$, let  $(E_\delta, H_\delta), (\hE, \hH) \in [H_{\loc}(\curl, \mR^3)]^2$ be,  respectively,  the unique radiating solutions  to 
\begin{equation}\label{sys-EH-delta}
\left\{ \begin{array}{lll}
\nabla \times E_\delta &= i \omega \mu_\delta H_\delta & \mbox{ in } \mR^3, \\[6pt]
\nabla \times H_\delta & = - i \omega \eps_\delta E_\delta +  J & \mbox{ in }  \mR^3,  
\end{array} \right.
\end{equation}
and 
\begin{equation}\label{sys-hat-EH}
\left\{ \begin{array}{lll}
\nabla \times \hE &= i \omega \hat  \mu \hH & \mbox{ in } \mR^3, \\[6pt]
\nabla \times \hH & = - i \omega \hat \eps \hE + J & \mbox{ in }  \mR^3.   
\end{array} \right.
\end{equation}
We showed, as $\delta \to 0$,  \cite[Theorem 1]{Ng-Superlensing-Maxwell} that 
$$
(E_\delta, H_\delta) \to (\hE, \hH) \mbox{ in } \mR^3 \setminus B_{r_3}. 
$$
For an observer outside $B_{r_3}$, measuring $(E_\delta, H_\delta)$ using the excitation $J$ gives the same results as measuring $(\hE, \hH)$ using the same excitation.  The object in $B_{\tau_0}$ is magnified $m$-times for such an observer. 
  
The second layer  can be chosen thinner using the analysis in \cite{Ng-Superlensing};  nevertheless, 
the second layer  is necessary. This follows   from the following direct consequence of Theorem~\ref{thm-cloaking}:  

\begin{proposition} \label{pro-lensing} Assume that $(\eps_0, \mu_0)$ is doubly complementary with $\cF$ and $\cG$ being the Kelvin transforms with respect to $\partial B_{r_2}$ and $\partial B_{r_3}$ with $r_3 = r_2^2/ r_1$. Let $\Gamma_1$ be a compact smooth curve on $\partial B_{r_1}$. Set, for $\gamma > 0$,  
\begin{equation*}
O_{1, \gamma} : = \Big\{x \in \mR^3; \dist(x, \Gamma_1) < \gamma \Big\}. 
\end{equation*}
For $\gamma > 0$, let $(\eps_c, \mu_c)$ be a pair of symmetric matrix-valued functions defined in $O_\gamma: = O_{1, \gamma} \cap B_{r_1}$ and define
\begin{equation*}
(\eps_{c, \delta}, \mu_{c, \delta}) = \left\{ \begin{array}{cl} (\eps_c, \mu_c) & \mbox{ in } O_{\gamma}, \\[6pt]
(\eps_\delta, \mu_\delta) & \mbox{ otherwise}. 
\end{array}\right. 
\end{equation*} 
Let $0< \delta < 1$,  $J \in [L^2(\mR^3)]^3$ with $\supp J \subset B_{R_0} \setminus B_{r_3}$,   and let  $(E_{c, \delta}, H_{c, \delta}) \in [H_{\loc}(\curl, \mR^3)]^2$ be  the unique radiating solution of the Maxwell equations 
\begin{equation*}
\left\{\begin{array}{cl}
\nabla \times E_{c, \delta} = i \omega \mu_{c, \delta} H_{c, \delta} &  \mbox{ in } \mR^3, \\[6pt]
\nabla \times H_{c, \delta} = - i \omega \eps_{c, \delta}  E_{c, \delta} + J &  \mbox{ in } \mR^3.  
\end{array} \right.
\end{equation*}
For all $0< \alpha < 1$, there exists $\gamma_0 > 0$ depending only on $\alpha$, $\Gamma_1$,  $r_1$,  and $r_2$ such that for $\gamma \in (0, \gamma_0)$, we have 
\begin{equation*}
\|(E_{c, \delta}, H_{c, \delta}) - (\tE, \tH) \|_{L^2(B_{R} \setminus \Omega_3)} \le C_R \delta^\alpha \| J \|_{L^2},  
\end{equation*}
for some positive constant $C_R$ depending only on $\alpha$, $\Gamma_1$,  $r_1$,  and $r_2$, $\Lambda$, $R_0$, and $R$. 
\end{proposition}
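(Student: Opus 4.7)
\medskip
The plan is to derive the statement as a direct corollary of Theorem~\ref{thm-cloaking} by embedding the present configuration---in which the modification is placed only on the core side near $\Gamma_1$---into the more symmetric framework of that theorem, which accommodates modifications on both the core side of $\partial \Omega_1$ and the exterior side of $\partial \Omega_2$. I take $\Omega_j = B_{r_j}$ for $j=1,2,3$, so that the doubly complementary hypothesis together with the Kelvin transforms $\cF$ and $\cG$ is precisely what is required by Theorem~\ref{thm-cloaking}; moreover, the resulting $(\eps_0, \mu_0)$ is real-analytic on $\Omega_3 \setminus \overline{\Omega_2}$, hence automatically $C^2$ there.

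To match the shape of the cloak in Theorem~\ref{thm-cloaking}, I fix an auxiliary compact smooth curve $\Gamma_2 \subset \partial B_{r_2}$, for instance the equatorial circle $\{x \in \mR^3;\, |x| = r_2,\, x_3 = 0\}$, chosen canonically and independently of $(\eps_c, \mu_c)$. I then extend the given $(\eps_c, \mu_c)$ from $O_{1,\gamma} \cap B_{r_1}$ to the full region $\widetilde{O}_\gamma := (O_{1,\gamma} \cup O_{2,\gamma}) \setminus (B_{r_2} \setminus B_{r_1})$ by setting $(\eps_c, \mu_c) := (I, I)$ on $O_{2,\gamma} \setminus (B_{r_2} \setminus B_{r_1})$. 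The extension is symmetric and uniformly elliptic. Since $(\eps_0, \mu_0) = (I, I)$ outside $B_{r_2}$, the composite medium produced by the extended cloak, through the recipe of Theorem~\ref{thm-cloaking}, coincides pointwise with the $(\eps_{c, \delta}, \mu_{c, \delta})$ of Proposition~\ref{pro-lensing}, so the two Maxwell systems have the same radiating solution. Applying Theorem~\ref{thm-cloaking} to this extended data then delivers the estimate with rate $\delta^{\alpha}$.

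What remains is to verify that the constants $\gamma_0$ and $C_R$ furnished by Theorem~\ref{thm-cloaking} depend only on the parameters allowed by Proposition~\ref{pro-lensing}. Since $\Omega_1, \Omega_2, \Omega_3$ and $(\eps_0, \mu_0)$ are themselves determined by $r_1$ and $r_2$, and since $\Gamma_2$ is fixed once and for all in a manner independent of the modification, the dependencies coming from Theorem~\ref{thm-cloaking} collapse to $\alpha, \Gamma_1, r_1, r_2, \omega, R_0, R$, as required. Theorem~\ref{thm-cloaking} already guarantees independence of the ellipticity and regularity constants of $(\eps_c, \mu_c)$, so no further work is needed. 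There is essentially no obstacle in this reduction---all of the analytic content (three-sphere inequalities, reflections, and the removing localized singularity technique) has been absorbed into Theorem~\ref{thm-cloaking}, and the proof is merely a matter of setting up the extension and invoking that result.
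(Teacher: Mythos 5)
Your reduction to Theorem~\ref{thm-cloaking} is exactly the intended argument (the paper calls Proposition~\ref{pro-lensing} a ``direct consequence'' of that theorem and offers no further detail), and the device of fixing a canonical auxiliary curve $\Gamma_2\subset\partial B_{r_2}$ and extending the cloak trivially to $O_{2,\gamma}$ is a legitimate way to put the configuration into the shape that the theorem expects. A small but real slip: the trivial extension should be by $(\eps_0,\mu_0)=(\eps^+,\mu^+)$ on $O_{2,\gamma}\setminus\overline{B_{r_2}}$, not by $(I,I)$. Under the hypotheses of the proposition as stated, $(\eps_0,\mu_0)$ is only assumed to be doubly complementary with the two Kelvin transforms; nothing forces it to equal $(I,I)$ outside $B_{r_2}$, so setting $(\eps_c,\mu_c)=(I,I)$ there would generally change the medium and you would no longer be analysing the same $(E_{c,\delta},H_{c,\delta})$. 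Extending by $(\eps_0,\mu_0)$ makes the constructed $(\eps_{c,\delta},\mu_{c,\delta})$ of Theorem~\ref{thm-cloaking} coincide pointwise with the proposition's, since $(\eps_\delta,\mu_\delta)=(\eps^+,\mu^+)$ outside $\Omega_2\setminus\Omega_1$. For the same reason, your assertion that $(\eps_0,\mu_0)$ is automatically real-analytic (hence $C^2$) on $\Omega_3\setminus\overline{\Omega_2}$ is unjustified at this level of generality; the $C^2$ regularity there must be taken as (implicit) hypothesis, as in Theorem~\ref{thm-cloaking}. It is also worth remarking that the paper handles the degenerate one-sided situation in the companion Proposition~\ref{pro-cloaking} by the convention $\Gamma_1=\emptyset$, $O_{1,\gamma}=\emptyset$; the analogous convention $\Gamma_2=\emptyset$, $O_{2,\gamma}=\emptyset$ is an equally valid and slightly cleaner alternative to introducing an auxiliary equatorial curve. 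With the extension corrected, your argument goes through and matches the paper's.
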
 

As a consequence of \Cref{pro-cloaking}, an object inside $B_{r_1}$ located near the  layer $B_{r_2} \setminus B_{r_1}$ is cloaked; the second layer in the lensing construction given in \eqref{lens-p2} is hence necessary to achieve superlensing. 

\subsection{Cloaking using complementary media} \label{sect-CM-Cloaking}

In this section, we analyse the construction of the cloaking device in \cite{Ng-Negative-Cloaking-M} motivated from \cite{Lai1, Ng-Negative-Cloaking}.   Assume that the cloaked region is the annulus  $B_{2r_2} \setminus B_{r_2}$ in $\mR^3$ for some $r_{2}> 0$ in which the medium is characterized by  $(\eps_O, \mu_O)$ (a pair of uniformly elliptic symmetric matrix-valued functions).  The cloaking device proposed in \cite{Ng-Negative-Cloaking-M} then contains two parts. The first one, in $B_{r_2} \setminus  B_{r_1}$, makes use of complementary media to cancel the effect of the cloaked region and the second one, in $B_{r_1}$, is to fill the space which ``disappears" from the cancellation by the homogeneous medium. Concerning the first part, 
instead of $B_{2r_2} \setminus B_{r_2}$, we consider $B_{r_3} \setminus B_{r_2}$ for some $r_3 > 0$ as the cloaked region in which the medium is given by 
\begin{equation}\label{tepsmu}
\big(  \teps_O, \tmu_O \big) = \left\{ \begin{array}{cl} \big(\eps_O, \mu_O\big) & \mbox{ in } B_{2 r_2} \setminus B_{r_2}, \\[6pt]
\big(I, I \big) & \mbox{ in } B_{r_3} \setminus B_{2 r_2}. 
\end{array} \right. 
\end{equation} 
The complementary medium  in $B_{r_2} \setminus B_{r_1}$ is then given by 
\begin{equation}\label{first-layer}
\big(\cF^{-1}_*\teps_O, \cF^{-1}_*\tmu_O \big), 
\end{equation}
where $\cF$ is the Kelvin transform with respect to $\partial B_{r_2}$.  Concerning the second part, the medium in $B_{r_1}$ with $r_1 =  r_2^2/ r_3$ is given by, with $m = r_3^2/ r_2^2 = r_2^2/ r_1^2$, 
\begin{equation}\label{choice-Br1}
\big(m I, m I \big). 
\end{equation}
Set 
\begin{equation}\label{def-eps-mu}
(\eps_\delta, \mu_\delta) = \left\{ \begin{array}{cl} 
\big( \teps_O , \tmu_O \big)& \mbox{ in } B_{r_3} \setminus B_{r_2}, \\[6pt]
\big( F^{-1}_* \teps_O +  i  \delta I, F^{-1}_* \tmu_O  + i\delta I  \big) & \mbox{ in } B_{r_2} \setminus B_{r_1}, \\[6pt]
\big( m   I,  m  I \big) & \mbox{ in } B_{r_1},\\[6pt]
 \big(I,  I \big) & \mbox{ in } \mR^3 \setminus B_{r_3}.  
\end{array} \right. 
\end{equation}
Given  $J \in \big[L^2(\mR^3) \big]^3$ with compact support outside $B_{r_3}$,   let  $(E_\delta, H_\delta), (E, H) \in [H_{\loc}(\curl, \mR^3)]^2$ be respectively the unique outgoing solutions to the Maxwell systems
\begin{equation}\label{eq-EHdelta}
\left\{ \begin{array}{llll}
\nabla \times E_\delta &= & i \omega  \mu_\delta H_\delta & \mbox{ in } \mR^3\\[6pt]
\nabla \times H_\delta & =&  - i \omega  \eps_\delta E_\delta + J & \mbox{ in }  \mR^3, 
\end{array} \right.
\end{equation}
and 
\begin{equation}\label{eq-EH}
\left\{ \begin{array}{llll}
\nabla \times E &= & i \omega   H & \mbox{ in } \mR^3\\[6pt]
\nabla \times H & =&  - i \omega  E + J & \mbox{ in }  \mR^3. 
\end{array} \right.
\end{equation}

Assume that 
\begin{equation}\label{smoothness-2}
\mbox{$(\teps_O, \tmu_O)$  is $C^2$}
\end{equation}
and $r_3/ r_2$ is large enough. We have \cite[Theorem 1.1]{Ng-Negative-Cloaking-M} 
$$
(E_\delta, H_\delta) \to (E, H) \mbox{ in } \mR^3 \setminus B_{r_3} \mbox{ as } \delta \to 0. 
$$ 
The object in $B_{2r_2} \setminus B_{r_2}$ is cloaked. 

\medskip 
We next show that the largeness condition on  $r_3/ r_2$ is necessary. More precisely, we have 

\begin{proposition}\label{pro-cloaking}  Let $\Gamma_3$ be a compact smooth curve on $\partial B_{r_3}$. Set, for $\gamma > 0$,  
\begin{equation*}
O_{3, \gamma} : = \Big\{x \in \mR^3; \dist(x, \Gamma_3) < \gamma \Big\}. 
\end{equation*}
Define, in $B_{r_3} \setminus B_{r_2}$,  
\begin{equation}\label{eps-mu-O}
\big( \teps_O , \tmu_O \big) = \left\{\begin{array}{cl} (\eps_O, \mu_O) &  \mbox{ in } (B_{r_3} \setminus B_{r_2})  \cap O_{3, \gamma}, \\[6pt]
(I, I) & \mbox{ in } (B_{r_3} \setminus B_{r_2}) \setminus O_{3, \gamma}.  
\end{array}\right.
\end{equation}
Let $J \in [L^2(\mR^3)]^3$ with compact support in $B_{R_0} \setminus B_{r_3}$ for some $R_0 > r_3$ and let $(E_\delta, H_\delta)$  be the unique radiating solution of \eqref{eq-EHdelta} in which  $(\eps_\delta, \mu_\delta)$ is given in \eqref{def-eps-mu} with $(\teps_O, \tmu_O)$ defined in \eqref{eps-mu-O}. 
Assume that with $M = \eps_O$ or $\mu_O$, 
\begin{equation}\label{cond-epsmu-S}
|\nabla M(x) \le \Lambda, \quad |\Lambda^{-1} |\xi|^2 \le \langle M (x) \xi, \xi \rangle \le \Lambda |\xi|^2  \; \;  \forall \,  \xi \in \mR^d \mbox{ for some } \Lambda \ge 1.   
\end{equation}
For all $0< \alpha < 1$, there exists $\gamma_0$  depending only  on $\alpha$, $r_1$, $r_2$, and  $\Gamma_3$ such that for $\gamma \le \gamma_0$, we have 
\begin{equation*}
\|(E_\delta, H_\delta) - (\hE, \hH)\|_{L^2(B_R \setminus B_{r_3})} \le C_R \delta^{\alpha} \| J\|_{L^2}, 
\end{equation*}
for some positive constant $C_R$ depending only  on $\alpha$, $\Lambda$, $(\eps_0, \mu_0)$,  $r_2$, $r_3$, $\Gamma_3$, $R_0$, and $R$. 
Here $(\hE, \hH)$ is the unique radiating solution of the equation 
\begin{equation*}
\left\{ \begin{array}{llll}
\nabla \times \hE &= & i \omega  \hmu \hH & \mbox{ in } \mR^3\\[6pt]
\nabla \times \hH & =&  - i \omega \heps \hE + J & \mbox{ in }  \mR^3, 
\end{array} \right.
 \mbox{ where } 
(\heps, \hmu) = \left\{
\begin{array}{cl} (\eps_O, \mu_O) & \mbox{ in } B_{r_3} \cap O_{3, \gamma}, \\[6pt]
(I, I) & \mbox{ otherwise}. 
\end{array}\right. 
\end{equation*}
\end{proposition}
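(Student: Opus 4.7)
My plan is to follow the architecture of the proof of \Cref{thm-cloaking} almost verbatim, exploiting the fact that when $\gamma=0$ the cloaking medium~\eqref{def-eps-mu} is doubly complementary (see \cite[(1.6)]{Ng-Superlensing-Maxwell}) with reflections $\cF(x)=r_2^2 x/|x|^2$ (Kelvin about $\partial B_{r_2}$) and $\cG(x)=r_3^2 x/|x|^2$ (Kelvin about $\partial B_{r_3}$). The key geometric fact driving the proposition is that $\cG$ fixes $\partial B_{r_3}$ pointwise, so the perturbation supported in $(B_{r_3}\setminus B_{r_2})\cap O_{3,\gamma}$ sits on the inner side of $\Gamma_3$ onto which the double reflection $\cG\circ\cF$ (combined with the singularity-removal gluing of~\eqref{TC-main-tEH}) is designed to transport it, producing precisely the effective medium $(\heps,\hmu)$ of the statement.

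First I would establish a stability estimate in the spirit of \Cref{lem-stability}: testing~\eqref{eq-EHdelta} against $\overline{E_\delta}$, extracting imaginary parts, and invoking the unique continuation principle off a fixed $\hat\gamma$-neighborhood $D_{\hat\gamma}$ of $\Gamma_3$ gives $\|(E_\delta,H_\delta)\|_{L^2(B_R\setminus D_{\hat\gamma})}^2 \le C_R\D(J,\delta)$, with $\D(J,\delta)$ defined as in~\eqref{TC-data}. Next I define the reflected fields $(E_{1,\delta},H_{1,\delta}) := (\cF*E_\delta,\cF*H_\delta)$ on $\mR^3\setminus\bar B_{r_2}$ and $(E_{2,\delta},H_{2,\delta}) := (\cG*E_{1,\delta},\cG*H_{1,\delta})$ on $B_{r_3}\setminus\{0\}$. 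A change-of-variables identity (cf.\ \cite[Lemma~7]{Ng-Superlensing-Maxwell}) produces the analogues of~\eqref{TC-sys-EH1}--\eqref{TC-bdry-EH2}: both $(E_\delta,H_\delta)$ and $(E_{1,\delta},H_{1,\delta})$ satisfy the same Maxwell system in $B_{r_3}\setminus\bar B_{r_2}$ up to the $O(\delta)$ volume correction $\delta\omega\cF_* I\,(\cdot)$, with matching tangential traces on $\partial B_{r_2}$; similarly $(E_{2,\delta},H_{2,\delta})$ matches $(E_{1,\delta},H_{1,\delta})$ on $\partial B_{r_3}$. Setting $\beta = (\alpha+2)/3 \in (\tfrac12,1)$ and applying \Cref{cor-Maxwell-Cl} to each of these two differences, with partial-data exclusion on $O_{3,\hhgamma}$ near $\Gamma_3$ (and, for the other difference, a corresponding excision near $\partial B_{r_1}$ dictated by the $\cF$-reflection), produces tangential-trace bounds of size $C\delta^\beta\D(J,\delta)^{1/2}$ on the relevant transition surfaces.

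The final step glues a singularity-removed field $(\tE_\delta,\tH_\delta)$ in analogy with~\eqref{TC-main-tEH}: equal to $(E_\delta,H_\delta)$ in $(\mR^3\setminus\bar B_{r_3})\cup O_{3,\hhgamma}$, equal to $(E_{2,\delta},H_{2,\delta})$ in $B_{r_2}$, and equal to $(E_\delta,H_\delta) - [(E_{1,\delta},H_{1,\delta}) - (E_{2,\delta},H_{2,\delta})]$ in the intermediate region $D = (B_{r_3}\setminus\bar B_{r_2})\setminus O_{3,\hhgamma}$. The reflection identities combined with the doubly complementary relations~\eqref{cond-ASigma}--\eqref{DCM} force $(\tE_\delta,\tH_\delta)$ to be radiating and to solve the Maxwell system for $(\heps,\hmu)$ with source $J$, modulo $O(\delta)$ volume terms and interface distributions on $\partial D$ controlled by the previous step. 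A well-posedness bound (analogue of \Cref{lem-EH}) then gives $\|(\tE_\delta-\hE,\tH_\delta-\hH)\|_{L^2(B_R)} \le C_R\delta^\beta\D(J,\delta)^{1/2}$. Since $(\tE_\delta,\tH_\delta) \equiv (E_\delta,H_\delta)$ outside $B_{r_3}$, substituting into the energy identity (compare~\eqref{St-p1}--\eqref{St-p4}) yields $\D(J,\delta) \le C\delta^{\beta-3/2}\|J\|_{L^2}^2$, and iterating the interpolation $\ell$ times exactly as in the proof of \Cref{thm-cloaking} upgrades the rate to $\delta^\alpha$ for any prescribed $\alpha < 1$.

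The main obstacle is the identification of the effective medium in the gluing step. One must verify that the double reflection $\cG\circ\cF$ transports the support of the perturbation, originally in $(B_{r_3}\setminus B_{r_2})\cap O_{3,\gamma}$, precisely onto the set $B_{r_3}\cap O_{3,\gamma}$ that defines $(\heps,\hmu)$, without shift, duplication, or loss. Because $\cG$ is Kelvin about $\partial B_{r_3}$ and therefore fixes $\Gamma_3$ pointwise, these two sets coincide up to a symmetric difference across $\partial B_{r_3}$ of measure $O(\gamma^2)$, absorbable into the $O(\delta^\beta)$ error already present; nonetheless, the interaction of the transition surface $\partial D$ with the support of the perturbation, together with the precise fit between the excised patch $O_{3,\hhgamma}$ and the ``bad'' region of \Cref{cor-Maxwell-Cl}, requires the same delicate bookkeeping as in the proof of \Cref{thm-cloaking}, and this is what dictates how small $\gamma_0$ must be taken relative to $\hhgamma$. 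Once this geometric match is carried out, every remaining estimate is a direct transcription of those in \Cref{thm-cloaking}.
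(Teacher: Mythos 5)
Your overall architecture---stability estimate, reflections, partial-data Cauchy estimates via \Cref{cor-Maxwell-Cl}, gluing by removing the localized singularity, and iteration---is exactly what the paper does (the paper's own proof is a one-paragraph reduction to the proof of \Cref{thm-cloaking}), and your identification of the target effective medium $(\heps,\hmu)$ and of the $\delta^{\beta-3/2}$ bootstrap exponent are both correct. However, two of your geometric explanations are off, and the second one, if pursued literally, would lead you astray.

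First, the ``corresponding excision near $\partial B_{r_1}$'' you invoke for the first Cauchy problem makes no sense: all Cauchy problems here live in $\Omega_3\setminus\Omega_2=B_{r_3}\setminus B_{r_2}$, and $\partial B_{r_1}$ is not in that domain, nor does \Cref{cor-Maxwell-Cl} permit excising a curve on a sphere other than the data sphere $\Sigma$. The fact you are missing is what actually makes \Cref{pro-cloaking} work: the shell medium in \eqref{def-eps-mu} is $\cF^{-1}_*\teps_O$, the \emph{exact} Kelvin image of the medium in the cloaked region, so that $\cF_*(\cF^{-1}_*\teps_O+i\delta I)=\teps_O+i\delta\cF_*I$ in $\Omega_3\setminus\Omega_2$. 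Consequently $(E_\delta,H_\delta)$ and $(E_{1,\delta},H_{1,\delta})=\cF*(E_\delta,H_\delta)$ satisfy the \emph{same} system modulo $O(\delta)$ in all of $\Omega_3\setminus\Omega_2$, and by construction have identical tangential traces on $\partial B_{r_2}$. So the first Cauchy discrepancy is identically zero (in the language of the proof of \Cref{thm-cloaking}, $\Gamma_2=\emptyset$ and $\cO_{2,\hhgamma}=\Omega_2$); the only application of \Cref{cor-Maxwell-Cl} that is actually needed is to $(E_{2,\delta}-E_{1,\delta},H_{2,\delta}-H_{1,\delta})$ with $\Sigma=\partial B_{r_3}$, $\Gamma=\Gamma_3$.

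Second, your account of where $(\heps,\hmu)$ comes from is incorrect. The composition $\cG\circ\cF$ is the dilation $x\mapsto mx$ from $B_{r_1}$ onto $B_{r_3}$, so it neither fixes $\Gamma_3$ nor ``transports'' the perturbation supported in $(B_{r_3}\setminus B_{r_2})\cap O_{3,\gamma}$: that region is not even in its domain, and $(\cG\circ\cF)_*(mI)=I$ is used only to see that $(E_{2,\delta},H_{2,\delta})$ solves the free Maxwell system in $\Omega_3$. The appearance of $(\eps_O,\mu_O)$ in $B_{r_3}\cap O_{3,\gamma}$ in the effective medium comes instead directly from the gluing: $(\tE_\delta,\tH_\delta)=(E_\delta,H_\delta)$ on $O_{3,\hhgamma}\cap(B_{r_3}\setminus B_{r_2})$, and there $(E_\delta,H_\delta)$ solves the system with $(\teps_O,\tmu_O)$, which equals $(\eps_O,\mu_O)$ on $O_{3,\gamma}$. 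Since $O_{3,\gamma}$ is a $\gamma$-tube around a curve on $\partial B_{r_3}$, the sets $(B_{r_3}\setminus B_{r_2})\cap O_{3,\gamma}$ and $B_{r_3}\cap O_{3,\gamma}$ coincide \emph{exactly} for small $\gamma$; there is no $O(\gamma^2)$ symmetric difference, and one could not in any case absorb a $\gamma$-error into a $\delta^\beta$-error since these are independent parameters.
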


As a consequence of Proposition~\ref{pro-cloaking}, the object $(\eps_c, \mu_c)$ in $B(x_3, r_0) \cap B_{r_3}$ does {\it not disappear}: cloaking is {\it not} achieved.  

\begin{proof} The proof of \Cref{pro-cloaking} is almost the same as the one of \Cref{thm-cloaking}.  Using  the notations in the proof of \Cref{thm-cloaking} with $(E_{c, \delta}, H_{c, \delta}) = (E_\delta, H_\delta)$, $(\eps_{c, \delta}, \mu_{c, \delta}) = (\eps_\delta, \mu_\delta)$, $\Omega_j = B_{r_j}$ for $j = 1, \, 2, \, 3$, and  the convention $\Gamma_1 = \emptyset$ and $O_{1, \gamma} = \emptyset$, one just needs to observe  that   $(\tE_\delta - \hE, \tH_\delta - \hH) \in [H_{\loc}(\curl, \mR^3)]^2$ is  radiating  and satisfies  
\begin{equation*} \left\{
\begin{array}{cl}
\nabla \times (\tE_\delta - \hE) = i \omega \hmu  (\tH_\delta - \hH) -  \delta \omega \mathds{1}_{B_{r_3} \setminus B_{r_2}} \cF_*I H_{c, 1, \delta} & \mbox{ in } \mR^3 \setminus (\partial O_{3, \gamma_1} \cap B_{r_3}), \\[6pt]
\nabla \times (\tH_\delta - \hH) = -  i \omega \heps (\tE_\delta - \hE)  +  \delta \omega  \mathds{1}_{B_{r_3} \setminus B_{r_2}} \cF_*I E_{c, 1, \delta} & \mbox{ in } \mR^3 \setminus (\partial O_{3, \gamma_1} \cap B_{r_3}).
\end{array}\right. 
\end{equation*}
The conclusion then follows as in the proof of \Cref{thm-cloaking}. 
\end{proof}

\appendix

\section{Proof of \Cref{lem-prepare2-0}} \label{ap-lem-prepare2-0}

We have, for $x \in \Y$,  
\begin{align*}
	\Div (M\nabla \testfn) =   p \beta  \testfn \big[p \beta r^{-2 p - 4} - ( p + 2) r^{-p - 4}\big] x \cdot Mx +p \beta r^{-p - 2} \testfn \, \Div (Mx).
		\end{align*}
An integration by parts gives
\begin{equation}\label{1.9}
	 \int_\Y   e^{\beta r^{-p}} (M x \cdot \nabla |w|^2) \,   \Div (M\nabla \testfn) =  P + Q. 
\end{equation}	
Here 
\beq
	P = P_1 + P_2 + P_3 \nonumber
\eeq{} with
\begin{equation*}\left\{
\begin{array}{rl}\dsp P_1 &= \ds- \int_{\Y} p^2 \beta^2 |w|^2 \Div \big[ r^{- 2 p - 4} (x \cdot M x) M x \big], \\[6pt]
\dsp P_2 &= \ds \int_{\Y} p  (p+2)  \beta |w|^2 \Div \big[ r^{- p - 4} (x \cdot M x) M x \big], \\[6pt]
\dsp P_3 &=   \ds\int_{\Y} 2 p \beta r^{-p -2} \Div(M x) w \nabla w \cdot Mx, 
\end{array}\right.
\end{equation*}
and
\begin{equation*}
Q =  \int_{\pY}  p \beta |w|^2   \Big( \big[p \beta r^{- 2p - 4} -( p + 2) r^{-p - 4} \big] x \cdot M x \Big) M x \cdot \nu. 
\end{equation*}
We next estimate $P$ and $Q$. By a straightforward  computation, we have  
\begin{equation*}
 - \dive \big[ r^{- 2 p - 4} (x \cdot M x) M x \big] = (2p + 4) (x \cdot Mx)^2  r^{-2p - 6}  -  r^{-2p - 4}   \Div \big[ (x \cdot Mx) Mx\big ]. 
\end{equation*}
This implies 
\begin{equation}\label{R1}
P_1 =  \int_\Y p^2 \beta^2  \Big( (2p + 4) (x \cdot Mx)^2  r^{-2p - 6}  -  r^{-2p - 4}   \Div \big[ (x \cdot Mx) Mx\big ] \Big) |w|^2.
\end{equation}
Similarly, 
\begin{equation}\label{R2}
P_2 =  -   \int_\Y p(p + 2) \beta   \Big( (p + 4) (x \cdot Mx)^2 r^{-p-6}    -  r^{-p - 4}   \Div \big[ (x \cdot Mx) Mx\big ]  \Big) |w|^2. 
\end{equation}
Using Cauchy's inequality, for $a \in \mR$, 
$$
2 |a \nabla w \cdot M x | \le |a|^2 \langle M \nabla w, \nabla w \rangle + \langle  M x, x \rangle, 
$$
we have
\begin{equation}
		|P_3| \leq \int_{\Y} p^2\beta^2 r^{-2p -4}   |\dive (Mx)|^2 \langle M  x, x \rangle  |w|^2 + \langle M \nabla w, \nabla w \rangle. 
\label{R3}
\end{equation}
Combining \eqref{R1}, \eqref{R2}, and \eqref{R3} yields 
\begin{equation*}
	P \ge   \int_\Y  \Big(p^2 \beta^2 r^{-2p -2}T_1 - \beta p (p+2) r^{- p - 2} T_2 \Big) |w|^2 - \int_\Y   \langle M \nabla w, \nabla w \rangle .
\end{equation*}
Since 
\begin{equation*}
|Q| \le   \int_{\pY}  C  \beta^2 p^2 r^{-2p - 1} |M|^2 |w|^2,  
\end{equation*}
assertion~\eqref{lem-prepare-2-1} follows.  \qed

\section{Proof of \Cref{lem1}} \label{ap-lem1}
Set
\begin{equation*}
	w = e^{\beta r^{-p}} v \quad \mbox{ equivalently }  v = e^{-\beta r^{-p}} w.
\end{equation*}
Since  $\dive \big(M \nabla (g h) \big) = 2 \nabla h \cdot  M\nabla g + h \Div(M \nabla g) + g \Div (M \nabla h)$ ($M$ is symmetric), it follows that 
\begin{align*}
	\dive (M\nabla v) = 2\beta p r^{-p  - 2}  e^{-\beta r^{-p}} x \cdot M\nabla w  +  e^{-\beta r^{-p}} \Div (M\nabla w)  + w \Div (M\nabla  e^{-\beta r^{-p}}).
\end{align*}
Using the inequality $(a + b + c)^2 \geq 2 a (b + c)$,  we obtain
\begin{equation*}
\frac{1}{2 }\big[ \Div (M\nabla v)  \big]^2 \ge 2 |\beta| p r^{-p  - 2}  e^{-\beta r^{-p}} (x \cdot M\nabla w) \Big( e^{-\beta r^{-p}} \Div (M\nabla w)  + w \Div( M\nabla  e^{-\beta r^{-p}})\Big). 
\end{equation*}
This implies 
\begin{multline}\label{p1-Lem3-0}
\int_\Y \frac{r^{p + 2}e^{2\beta r^{-p}}}{2p|\beta|}\big[\Div (M\nabla v)\big]^2 
\geq \int_\Y   2  (x \cdot M\nabla w ) \,  \Div (M\nabla w) \\[6pt]+  \int_\Y  e^{\beta r^{-p}} (Mx \cdot \nabla |w|^2) \,  \Div (M\nabla  e^{-\beta r^{-p}}). 
\end{multline}

Applying \Cref{lem-prepare1} and using \eqref{lem1-as1}, we have 
\begin{equation}\label{p1-Lem3-1}
\int_\Y   2  (x \cdot M\nabla w ) \,  \Div (M\nabla w) \ge   -   \int_{\Y} \Lambda \langle M \nabla w, \nabla w \rangle 
   -  \int_{\pY} C \Lambda^2 r   |\nabla w|^2. 
\end{equation}
Applying  \Cref{lem-prepare2},  we obtain 
\begin{multline}\label{p1-Lem3-2}
 \int_\Y    e^{\beta r^{-p}} (M x \cdot \nabla |w|^2) \,  \Div (M\nabla  e^{-\beta r^{-p}}) \\[6pt]
   \ge  \int_\Y   p^3 \beta^2 \Lambda^{-2} r^{-2p - 2}|w|^2 - \int_\Y   \langle M \nabla w, \nabla w\rangle   - \int_{\pY} C \Lambda^2 \beta^2 p^2 r^{-2p - 1}  |w|^2 . 
\end{multline}
Combining \eqref{p1-Lem3-0}, \eqref{p1-Lem3-1},  and \eqref{p1-Lem3-2} yields 
\begin{multline}\label{p1-Lem3}
\int_\Y \frac{r^{p + 2}e^{2\beta r^{-p}}}{2p|\beta|}\big[\Div (M\nabla v)\big]^2  \\[6pt] 
\ge  \int_\Y   p^3 \beta^2 \Lambda^{-2} r^{-2p - 2}   |w|^2 -  C \Lambda \langle M \nabla w. \nabla w \rangle   - \int_\pY C \Lambda^2 r \Big(  |\nabla w|^2 + \beta^2 p^2 r^{-2p -2} |w|^2 \Big ). 
\end{multline}
Since $w = e^{\beta r^{-p}} v $, 
\begin{equation*}
\nabla w =  e^{ \beta r^{-p}} ( \nabla v - p\beta r^{- p - 2}  v x ), 
\end{equation*}
we derive from \eqref{p1-Lem3}  that, for large $p$, 
\begin{multline*}
\int_\Y \frac{r^{p + 2}e^{2\beta r^{-p}}}{2p|\beta|}\big[\Div (M\nabla v)\big]^2   \ge  \int_\Y   \Lambda^{-2} p^3 \beta^2  r^{-2p - 2} e^{2 \beta r^{-p}}   |v|^2 -   C \Lambda  e^{2 \beta r^{-p}}  \langle M \nabla v, \nabla v \rangle \\[6pt]
 -\int_\pY C \Lambda^2 r e^{2 \beta r^{-p}}  \Big(  |\nabla v|^2 + \beta^2 p^2 r^{-2p -2} |v|^2 \Big).  
\end{multline*}
The conclusion follows.\qed 

\section{Proof of \Cref{lem2}} \label{ap-lem2} We have
\begin{equation}\label{s1}
- \int_\Y e^{2\beta r^{-p}} v \,  \Div (M\nabla v)  
= \int_\Y M\nabla v \cdot \nabla (e^{2\beta r^{-p}} v) - \int_{\pY} e^{2\beta r^{-p}} v M \nabla v \cdot \nu. 
\end{equation} 
It is clear that 
\begin{equation}\label{s2}
 \int_\Y M\nabla v \cdot \nabla (e^{2\beta r^{-p}} v) 
	= \int_\Y  \Big(e^{2\beta r^{-p}} M\nabla v \cdot \nabla v - 2 \beta p  r^{-p - 2}e^{2\beta r^{-p}} v M\nabla v  \cdot x \Big) 
\end{equation} 
and 
\begin{equation}\label{s3}
 \int_{\pY} e^{2\beta r^{-p}} v M \nabla v \cdot \nu \le  \int_{\pY} C |M| e^{2\beta r^{-p}} \Big(|\nabla v|^2 + |v|^2 \Big). 
\end{equation}
Combining \eqref{s1}, \eqref{s2}, and \eqref{s3} yields 
\begin{multline}\label{s4-0}
 \int_\Y e^{2\beta r^{-p}} v \,  \Div (M\nabla v)   + \int_\Y e^{2\beta r^{-p}} M\nabla v \cdot \nabla v \\[6pt] \le 
  \int_\Y   2 \beta p  r^{-p - 2}e^{2\beta r^{-p}} v M\nabla v  \cdot x +  \int_{\pY} C |M| e^{2\beta r^{-p}} \Big(|\nabla v|^2 + |v|^2 \Big). 
\end{multline}

Using Cauchy's inequality, for $a, b \in \mR$, 
$$
2 |a b M \nabla v \cdot  x | \le \frac{1}{2} |a|^2 \langle M \nabla v, \nabla v \rangle +  8 |b|^2 \langle  M x, x \rangle, 
$$
we obtain 
\begin{equation}\label{s4-1}
\int_\Y 2 \beta p  r^{-p - 2}e^{2\beta r^{-p}} v M\nabla v  \cdot x 
\le \int_\Y \frac{1}{2}e^{2\beta r^{-p}} \langle M \nabla v, \nabla v\rangle + 
C |M| p^2 \beta^2  r^{-2p - 2} e^{2\beta r^{-p}} |v|^2. 
\end{equation}
We derive from  \eqref{s4-0} and  \eqref{s4-1} that 
\begin{multline*}
 \int_\Y e^{2\beta r^{-p}} v \,  \Div (M\nabla v) +  \int_\Y \frac{1}{2} e^{2 \beta r^{-p}} \langle M \nabla v, \nabla v \rangle \\
 \le \int_\Y  C |M| \beta^2 p^2 r^{-2p -2} e^{2 \beta r^{-p}}  |v|^2+ \int_{\pY} C |M| e^{2\beta r^{-p}} \Big(|\nabla v|^2 +  |v|^2 \Big), 
\end{multline*}
which is the conclusion. \qed

\section{Proof of \Cref{lem-main}} \label{ap-lem-main}
We have, by Lemma~\ref{lem1},  
\begin{multline}\label{lem-main-1}
 \int_\Y   \Lambda^{-2} p^3 \beta^2  r^{-2p - 2} e^{2 \beta r^{-p}}   |v|^2  \le \int_\Y \frac{1}{2p|\beta|}r^{p + 2}e^{2\beta r^{-p}}\big[\Div (M\nabla v)\big]^2  \\[6pt] 
+  \int_\Y C \Lambda e^{2 \beta r^{-p}}  \langle M \nabla v, \nabla v \rangle  + \int_\pY C \Lambda^2 r e^{2 \beta r^{-p}}(  |\nabla v|^2 + \beta^2 p^2 r^{-2p -2} |v|^2). 
\end{multline}
We also have, by Lemma~\ref{lem2},  
\begin{multline}\label{lem-main-2}
 \int_\Y   \frac{1}{2} e^{2 \beta r^{-p}}  \langle M \nabla v, \nabla v \rangle 
 \le \int_\Y  C \Lambda  \beta^2 p^2 r^{-2p - 2} e^{2 \beta r^{-p}}  |v|^2 \\[6pt]+ \int_{\pY} C \Lambda e^{2\beta r^{-p}} (|\nabla v|^2 +  |v|^2) -  \int_\Y e^{2\beta r^{-p}} v \,  \Div (M\nabla v). 
\end{multline}
Combining \eqref{lem-main-1} and \eqref{lem-main-2} yields 
\begin{multline}\label{lem-main-1-1}
 \int_\Y   \Lambda^{-2} p^3 \beta^2  r^{-2p - 2} e^{2 \beta r^{-p}}   |v|^2 + \int_\Y   \frac{1}{2} e^{2 \beta r^{-p}}  \langle M \nabla v, \nabla v \rangle  \\[6pt]  \le \int_\Y \frac{1}{2p|\beta|}r^{p + 2}e^{2\beta r^{-p}}\big[\Div (M\nabla v)\big]^2  
+ \int_\Y  C \Lambda^2  \beta^2 p^2 r^{-2p - 2} e^{2 \beta r^{-p}}  |v|^2 \\[6pt]  + \int_\pY C \Lambda^2 e^{2 \beta r^{-p}}(  |\nabla v|^2 + \beta^2 p^2 r^{-2p -2} |v|^2) +  C \Lambda \int_\Y e^{2\beta r^{-p}}  |v \,  \Div (M\nabla v) |. 
\end{multline}
Using the fact 
\begin{equation*}
C \Lambda |v \dive (M \nabla v)| \le  p|\beta| |v|^2 r^{- p -2 } + \frac{C^2 \Lambda^2 }{ 4 p |\beta|} |\dive (M \nabla v)|^2 r^{p+2}, 
\end{equation*}
for large $p$, 
$$
C \Lambda^2  \beta^2 p^2 r^{-2p - 2} e^{2 \beta r^{-p}}  |v|^2 \le \frac{1}{4}  \Lambda^{-2} p^3 \beta^2  r^{-2p - 2} e^{2 \beta r^{-p}}   |v|^2, 
$$
and,  for large $p$  and $|\beta| \ge 1$, 
$$
p|\beta|  r^{- p -2 }e^{2\beta r^{-p}} |v|^2 \le  \frac{1}{4}  \Lambda^{-2} p^3 \beta^2  r^{-2p - 2} e^{2 \beta r^{-p}}   |v|^2, 
$$ 
we derive from \eqref{lem-main-1-1} that, for large $p$, 
\begin{multline*}
\int_{\Y} \testfnn \Big(p^3 \beta^2 r^{-2p - 2}  |v|^2 +  \langle M \nabla v, \nabla v \rangle  \Big)  \\[6pt]
 \le \int_\Y \frac{C_\Lambda}{p|\beta|}r^{p + 2}e^{2\beta r^{-p}}\big[\Div (M\nabla v)\big]^2   + \int_\pY C_\Lambda   e^{2 \beta r^{-p}}(  |\nabla v|^2 + \beta^2 p^2 r^{-2p -2} |v|^2). 
\end{multline*}
The proof is complete. \qed

\section{Proof of \Cref{lem-EH}} \label{ap-lem-EH}
By the trace theory, see e.g. \cite{AV96}, there exist $E_f, H_g \in H(\curl, D)$ such that  
$$
E_f \times \nu = f \mbox{ on } \partial D,  \quad H_g \times \nu = g \mbox{ on } \partial D, 
$$
$$
\| E_f \|_{H(\curl, D)} \le C \| f\|_{H^{-1/2}( \dive_\Gamma, \partial D) },
\quad \mbox{ and } \quad   \| H_g \|_{H(\curl, D)} \le C \| g\|_{H^{-1/2}(\dive_\Gamma, \partial D)}. 
$$
By considering the pair $(E - E_f \mathds{1}_{D}, H - H_g \mathds{1}_{D} \big)$, one may assume that $f = g = 0$. This fact is assumed from later on. 

An integration by parts gives 
\begin{equation}\label{lem-EH-uniqueness}
\int_{\Omega} \langle \mu^{-1} \nabla \times E, \nabla \times E \rangle =   \int_{\partial \Omega} i \omega |E \times \nu|^2  + \int_{\Omega} \omega^2 \langle \eps E, E  \rangle + \int_{\Omega} i \omega  \langle J_m, E \rangle   + \langle \mu^{-1}J_e, \nabla \times E \rangle. 
\end{equation}

In the rest, we only  establish  \eqref{lem-EH-cl1} by contradiction since the existence and uniqueness are known.  Assume that there exist sequences $\big(\eps^{(n)} \big)$, $\big( \mu^{(n)} \big)$, $\big( J_e^{(n)} \big)$, $\big(J_m^{(n)} \big) \subset [L^2(\Omega)]^3$, and $\big((E^{(n)}, H^{(n)}) \big) \subset [H(\curl, \Omega)]^2$ such that  \eqref{lem-EH-proM} holds for $(\eps^{(n)}, \mu^{(n)})$, 
\begin{equation*}
\left\{\begin{array}{cl}
\nabla \times H^{(n)} = i \omega \eps^{(n)}  E^{(n)} + J_m^{(n)} & \mbox{ in } \Omega, \\[6pt]
\nabla \times E^{(n)} = - i \omega \mu^{(n)} H^{(n)} + J_e^{(n)} & \mbox{ in } \Omega, \\[6pt]
(H^{(n)} \times \nu) \times \nu - E^{(n)} \times \nu = 0 & \mbox{ on } \partial \Omega, 
\end{array}\right. 
\end{equation*}
$$
n \|(J_e^{(n)}, J_m^{(n)})\|_{L^2(\Omega)} \le \|(E^{(n)}, H^{(n)})\|_{L^2(\Omega)} = 1. 
$$
Using \eqref{lem-EH-uniqueness}, we have 
$$
\| E^{(n)} \times \nu \|_{L^2(\Omega)} \le C. 
$$
Without loss of generality, one may assume that $(E^{(n)} \times \nu)$ converges in $H^{-1/2}(\partial \Omega)$. 
By Ascoli's theorem, one may also assume that $(\eps^{(n)}, \mu^{(n)}) \to (\eps, \mu)$ in $L^\infty(\Omega)$ for some $(\eps, \mu) \in W^{1, \infty}(\Omega)$.  We derive that $\big( E^{(n)} \big)$ is bounded in $H(\curl, \Omega)$ and 
$$
\big( \dive (\eps E^{(n)}) \big)  \mbox{ converges in }  [H^{-1}(\Omega)]^3.  
$$
Applying \cite[Lemma 1]{Ng-Superlensing-Maxwell}, one may assume that $(E^{(n)})$ converges  in $[L^2(\Omega)]^3$. 
Similarly, one may assume that $(H^{(n)})$ converges  in $[L^2(\Omega)]^3$. 

Let $(E, H)$ be the limit  of $(E^{(n)}, H^{(n)})$ in $[L^2(\Omega)]^6$. Then  $(E, H) \in [H(\curl, \Omega)]^2$ and 
\begin{equation*}
\left\{\begin{array}{cl}
\nabla \times H = i \omega \eps E  & \mbox{ in } \Omega, \\[6pt]
\nabla \times E = - i \omega \mu H  & \mbox{ in } \Omega, \\[6pt]
(H \times \nu) \times \nu - E\times \nu = 0 & \mbox{ on } \partial \Omega.  
\end{array}\right. 
\end{equation*}
It follows that  $(E, H) = (0, 0)$ in $\Omega$ by the uniqueness. This contradicts the fact $\|(E, H)\|_{L^2(\Omega)} = \lim_{n \to + \infty} \| (E_n, H_n) \|_{L^2(\Omega)} = 1$.  Therefore, \eqref{lem-EH-cl1} holds. The proof is complete. \qed

\section{Proof of  \Cref{lem-M}} \label{ap-lem-M}

Before giving the proof of \Cref{lem-M}, we recall some properties of the spherical Bessel and Neumann functions and the Bessel and Neumann functions of large order. We first introduce,  for $n \ge 1$, 
\begin{equation}\label{def-jn}
\hat j_n(t) =1 \cdot 3 \cdots (2n + 1) j_n(t) \quad \mbox{ and } \quad  \hat y_n = -  \frac{y_n(t)}{1 \cdot 3 \cdots (2n-1)} ,  
\end{equation}
where $j_n$ and $y_n$   are the spherical Bessel and Neumann functions. Then, see, e.g.  \cite[(2.37) and (2.38)]{CK-Inverse}),   as $n \to + \infty$,  
\begin{equation}\label{jy-n}
\hat j_n(r) = r^n \big[1 + O(1/n) \big] \quad \hat y_n(r) = r^{-n-1} \big[1 + O(1/n) \big].  
\end{equation}
One also has, see, e.g.   \cite[(2.36) and (3.56)]{CK-Inverse},
\begin{equation}\label{W3}
  j_n(r)  y_n'(r) -   j_n'(r) y_n(r) = \frac{1}{ r^2}. 
\end{equation}

In what follows,  for $-n\leq m\leq n, n\in \N$, denote $Y_n^m$ the spherical harmonic function of order $n$ and degree $m$ and set    
$$
U_n^m(\hat x) := \nabla_{\partial B_1}Y_n^m(\hat x) \quad \mbox{ and } \quad V_n^m(\hat x) := \hat x \times U_n^m(\hat x) \mbox{ for } \hat x \in \partial B_1.
$$
We recall that $Y_n^m(\hat x)\hat x$, $U_n^m(\hat x)$, and $V_n^m(\hat x)$ for $-n\leq m\leq n, n\in \N$ form an orthonormal basis of $[L^2(\partial B_1)]^3$.  

\begin{proof}[Proof of \Cref{lem-M}]Without loss of generality, one may assume that $\omega =1$.  One then can represent ${E, H}$ in $B_{R_3} \setminus B_{R_1}$ as follows,  see, e.g. \cite{KH15}, with $r = |x|$ and $\hat x = x/ |x|$, 
\begin{align*}
E(x) =  &    \sum_{n=1}^\infty \sum_{|m| \le n} \sqrt{n (n +1)}  \frac{\alpha_{1, n}^m \hj_{n}(r) + \alpha_{2, n}^m \hy_n(r)}{r} Y_n^m(\hat x) \hat x  \\[6pt]
& + \sum_{n=1}^\infty \sum_{|m| \le n}
 \frac{\left(r \big[\alpha_{1, n}^m  \hj_n(r) + \alpha_{2, n}^m \hy_n (r) \big] \right)'}{r} U_n^m (\hat x) \\[6pt]
& + \sum_{n=1}^\infty \sum_{|m| \le n}  \big[\beta_{1, n}^m \hj_n(r) + \beta_{2, n}^m \hy_n(r) \big] V_n^m(\hat x)
\end{align*}
and 
\begin{align*}
H (x)=  &   i \sum_{n=1}^\infty \sum_{|m| \le n} \sqrt{n (n +1)}  \frac{\beta_{1, n}^m \hj_{n}(r) + \beta_{2, n}^m \hy_n(r)}{r} Y_n^m(\hat x) \hat x  \\[6pt]
& + i \sum_{n=1}^\infty \sum_{|m| \le n}
 \frac{\left(r \big[\beta_{1, n}^m  \hj_n(r) + \beta_{2, n}^m \hy_n (r) \big]  \right)'}{r} U_n^m (\hat x) \\[6pt]
& + i  \sum_{n=1}^\infty \sum_{|m| \le n}  \big[\alpha_{1, n}^m \hj_n(r) + \alpha_{2, n}^m \hy_n(r) \big] V_n^m(\hat x). 
\end{align*}
One can then check that 
\begin{equation*}
\| (E \times \nu, H \times \nu)\|_{H^{-1/2}(\dive_\Gamma, \partial B_{r})}^2 \sim \sum_{n =1}^{\infty} \sum_{|m| \le n} \sum_{j=1}^2 n^3 \big(  |\alpha_{j, n}^{m}|^2 + |\beta_{j, n}^{m}|^2 \big) r^{2 n}. 
\end{equation*}
The conclusion now follows  from the interpolation. 
\end{proof}

\providecommand{\bysame}{\leavevmode\hbox to3em{\hrulefill}\thinspace}
\providecommand{\MR}{\relax\ifhmode\unskip\space\fi MR }
\providecommand{\MRhref}[2]{%
  \href{http://www.ams.org/mathscinet-getitem?mr=#1}{#2}
}
\providecommand{\href}[2]{#2}


\begin{thebibliography}{10}

\bibitem{Agmon}
Shmuel Agmon, \emph{Lectures on elliptic boundary value problems}, Prepared for
  publication by B. Frank Jones, Jr. with the assistance of George W. Batten,
  Jr. Van Nostrand Mathematical Studies, No. 2, D. Van Nostrand Co., Inc.,
  Princeton, N.J.-Toronto-London, 1965. \MR{0178246}

\bibitem{AR09}
Giovanni Alessandrini, Luca Rondi, Edi Rosset, and Sergio Vessella, \emph{The
  stability for the {C}auchy problem for elliptic equations}, Inverse Problems
  \textbf{25} (2009), no.~12, 123004, 47. \MR{2565570}

\bibitem{AV96}
Ana Alonso and Alberto Valli, \emph{Some remarks on the characterization of the
  space of tangential traces of {$H({\rm rot};\Omega)$} and the construction of
  an extension operator}, Manuscripta Math. \textbf{89} (1996), no.~2,
  159--178. \MR{1371994}

\bibitem{A-M13}
Habib Ammari, Giulio Ciraolo, Hyeonbae Kang, Hyundae Lee, and Graeme~W. Milton,
  \emph{Spectral theory of a {N}eumann-{P}oincar\'{e}-type operator and
  analysis of cloaking due to anomalous localized resonance}, Arch. Ration.
  Mech. Anal. \textbf{208} (2013), no.~2, 667--692. \MR{3035988}

\bibitem{Ar57}
Nachman Aronszajn, \emph{A unique continuation theorem for solutions of
  elliptic partial differential equations or inequalities of second order}, J.
  Math. Pures Appl. (9) \textbf{36} (1957), 235--249. \MR{92067}

\bibitem{BGQ}
Guillaume Baffou, Christian Girard, and Romain Quidant, \emph{Mapping heat
  origin in plasmonic structures}, Phys. Rev. Lett. \textbf{104} (2010),
  136805.

\bibitem{BCX12}
John~M. Ball, Yves Capdeboscq, and Basang Tsering-Xiao, \emph{On uniqueness for
  time harmonic anisotropic {M}axwell's equations with piecewise regular
  coefficients}, Math. Models Methods Appl. Sci. \textbf{22} (2012), no.~11,
  1250036, 11. \MR{2974174}

\bibitem{BBH}
Fabrice Bethuel, Ha\"{\i}m Brezis, and Fr\'{e}d\'{e}ric H\'{e}lein,
  \emph{Ginzburg-{L}andau vortices}, Progress in Nonlinear Differential
  Equations and their Applications, vol.~13, Birkh\"{a}user Boston, Inc.,
  Boston, MA, 1994. \MR{1269538}

\bibitem{BCC12}
Anne-Sophie Bonnet-Ben~Dhia, Lucas Chesnel, and Patrick Ciarlet, Jr.,
  \emph{{$T$}-coercivity for scalar interface problems between dielectrics and
  metamaterials}, ESAIM Math. Model. Numer. Anal. \textbf{46} (2012), no.~6,
  1363--1387. \MR{2996331}

\bibitem{BK05}
Jean Bourgain and Carlos~E. Kenig, \emph{On localization in the continuous
  {A}nderson-{B}ernoulli model in higher dimension}, Invent. Math. \textbf{161}
  (2005), no.~2, 389--426. \MR{2180453}

\bibitem{Carleman}
Torsten Carleman, \emph{Sur un probl\`eme d'unicit\'{e} pur les syst\`emes
  d'\'{e}quations aux d\'{e}riv\'{e}es partielles \`a deux variables
  ind\'{e}pendantes}, Ark. Mat., Astr. Fys. \textbf{26} (1939), no.~17, 9.
  \MR{0000334}

\bibitem{CK-Inverse}
David Colton and Rainer Kress, \emph{Inverse acoustic and electromagnetic
  scattering theory}, second ed., Applied Mathematical Sciences, vol.~93,
  Springer-Verlag, Berlin, 1998. \MR{1635980}

\bibitem{Coron07}
Jean-Michel Coron, \emph{Control and nonlinearity}, Mathematical Surveys and
  Monographs, vol. 136, American Mathematical Society, Providence, RI, 2007.
  \MR{2302744}

\bibitem{CS85}
Martin Costabel and Ernst Stephan, \emph{A direct boundary integral equation
  method for transmission problems}, J. Math. Anal. Appl. \textbf{106} (1985),
  no.~2, 367--413. \MR{782799}

\bibitem{Lai-Cloak}
Jian-Wen Dong, Hui~Huo Zheng, Yun Lai, He-Zhou Wang, and C.~T. Chan,
  \emph{Metamaterial slab as a lens, a cloak, or an intermediate}, Phys. Rev. B
  \textbf{83} (2011), 115124.

\bibitem{FI96}
Andrei~V. Fursikov and O.~Yu. Imanuvilov, \emph{Controllability of evolution
  equations}, Lecture Notes Series, vol.~34, Seoul National University,
  Research Institute of Mathematics, Global Analysis Research Center, Seoul,
  1996. \MR{1406566}

\bibitem{GL86}
Nicola Garofalo and Fang-Hua Lin, \emph{Monotonicity properties of variational
  integrals, {$A_p$} weights and unique continuation}, Indiana Univ. Math. J.
  \textbf{35} (1986), no.~2, 245--268. \MR{833393}

\bibitem{GL87}
Nicola Garofalo and Fang-Hua Lin, \emph{Unique continuation for elliptic operators: a
  geometric-variational approach}, Comm. Pure Appl. Math. \textbf{40} (1987),
  no.~3, 347--366. \MR{882069}

\bibitem{Hadamard}
Jacques Hadamard, \emph{{Sur les fonction enti\`eres}}, Bull. Soc. Math. France
  \textbf{24} (1896), 94--96.

\bibitem{HorIII}
Lars H\"{o}rmander, \emph{The analysis of linear partial differential
  operators. {III}}, Grundlehren der Mathematischen Wissenschaften [Fundamental
  Principles of Mathematical Sciences], vol. 274, Springer-Verlag, Berlin,
  1985, Pseudodifferential operators. \MR{781536}

\bibitem{Jain}
Prashant~K Jain, Kyeong~Seok Lee, Ivan~H El-Sayed, and Mostafa~A El-Sayed,
  \emph{{Calculated absorption and scattering properties of gold nanoparticles
  of different size, shape, and composition: Applications in biomedical imaging
  and biomedicine}}, J. Phys. Chem. B \textbf{110} (2006), 7238--7248.

\bibitem{JK85}
David Jerison and Carlos~E. Kenig, \emph{Unique continuation and absence of
  positive eigenvalues for {S}chr\"{o}dinger operators}, Ann. of Math. (2)
  \textbf{121} (1985), no.~3, 463--494, With an appendix by E. M. Stein.
  \MR{794370}

\bibitem{KSW15}
Carlos Kenig, Luis Silvestre, and Jenn-Nan Wang, \emph{{On Landis' conjecture
  in the plane}}, Comm. Partial Differential Equations \textbf{40} (2015),
  no.~4, 766--789. \MR{3299355}

\bibitem{KRS87}
Carlos~E. Kenig, Alberto Ruiz, and Christopher~D. Sogge, \emph{Uniform
  {S}obolev inequalities and unique continuation for second order constant
  coefficient differential operators}, Duke Math. J. \textbf{55} (1987), no.~2,
  329--347. \MR{894584}

\bibitem{KSU07}
Carlos~E. Kenig, Johannes Sj\"{o}strand, and Gunther Uhlmann, \emph{The
  {C}alder\'{o}n problem with partial data}, Ann. of Math. (2) \textbf{165}
  (2007), no.~2, 567--591. \MR{2299741}

\bibitem{KH15}
Andreas Kirsch and Frank Hettlich, \emph{The mathematical theory of
  time-harmonic {M}axwell's equations}, Applied Mathematical Sciences, vol.
  190, Springer, Cham, 2015, Expansion-, integral-, and variational methods.
  \MR{3288313}

\bibitem{KT01}
Herbert Koch and Daniel Tataru, \emph{Carleman estimates and unique
  continuation for second-order elliptic equations with nonsmooth
  coefficients}, Comm. Pure Appl. Math. \textbf{54} (2001), no.~3, 339--360.
  \MR{1809741}

\bibitem{KLSW14}
Robert~V. Kohn, Jianfeng Lu, Ben Schweizer, and Michael~I. Weinstein, \emph{A
  variational perspective on cloaking by anomalous localized resonance}, Comm.
  Math. Phys. \textbf{328} (2014), no.~1, 1--27. \MR{3196978}

\bibitem{KOVW10}
Robert~V. Kohn, Daniel Onofrei, Michael~S. Vogelius, and Michael~I. Weinstein,
  \emph{Cloaking via change of variables for the {H}elmholtz equation}, Comm.
  Pure Appl. Math. \textbf{63} (2010), no.~8, 973--1016. \MR{2642383}

\bibitem{Lai1}
Yun Lai, Huanyang Chen, Zhao-Qing Zhang, and CT. Chan, \emph{{Complementary
  media invisibility cloak that cloaks objects at a distance outside the
  cloaking shell}}, Phys. Rev. Lett. \textbf{102} (2009), 093901.

\bibitem{Landis}
Evgenii~M. Landis, \emph{Some questions in the qualitative theory of
  second-order elliptic equations (case of several independent variables)},
  Uspehi Mat. Nauk \textbf{18} (1963), no.~1 (109), 3--62. \MR{0150437}

\bibitem{RL12}
J\'{e}r\^{o}me Le~Rousseau and Gilles Lebeau, \emph{On {C}arleman estimates for
  elliptic and parabolic operators. {A}pplications to unique continuation and
  control of parabolic equations}, ESAIM Control Optim. Calc. Var. \textbf{18}
  (2012), no.~3, 712--747. \MR{3041662}

\bibitem{LR95}
Gilles Lebeau and Luc Robbiano, \emph{Contr\^{o}le exact de l'\'{e}quation de
  la chaleur}, Comm. Partial Differential Equations \textbf{20} (1995),
  no.~1-2, 335--356. \MR{1312710}

\bibitem{Leis}
Rolf Leis, \emph{Initial-boundary value problems in mathematical physics}, B.
  G. Teubner, Stuttgart; John Wiley \& Sons, Ltd., Chichester, 1986.
  \MR{841971}

\bibitem{Meshkov91}
V.~Z. Meshkov, \emph{On the possible rate of decrease at infinity of the
  solutions of second-order partial differential equations}, Mat. Sb.
  \textbf{182} (1991), no.~3, 364--383. \MR{1110071}

\bibitem{MN06}
Graeme~W. Milton and Nicolae-Alexandru~P. Nicorovici, \emph{On the cloaking
  effects associated with anomalous localized resonance}, Proc. R. Soc. Lond.
  Ser. A Math. Phys. Eng. Sci. \textbf{462} (2006), no.~2074, 3027--3059.
  \MR{2263683}

\bibitem{Ng-Complementary}
Hoai-Minh Nguyen, \emph{Asymptotic behavior of solutions to the {H}elmholtz
  equations with sign changing coefficients}, Trans. Amer. Math. Soc.
  \textbf{367} (2015), no.~9, 6581--6595. \MR{3356948}

\bibitem{Ng-CALR}
Hoai-Minh Nguyen, \emph{Cloaking via anomalous localized resonance for doubly
  complementary media in the quasistatic regime}, J. Eur. Math. Soc. (JEMS)
  \textbf{17} (2015), no.~6, 1327--1365. \MR{3353803}

\bibitem{Ng-Superlensing}
Hoai-Minh Nguyen, \emph{Superlensing using complementary media}, Ann. Inst. H.
  Poincar\'{e} Anal. Non Lin\'{e}aire \textbf{32} (2015), no.~2, 471--484.
  \MR{3325246}

\bibitem{Ng-Negative-Cloaking}
Hoai-Minh Nguyen, \emph{Cloaking using complementary media in the quasistatic regime},
  Ann. Inst. H. Poincar\'{e} Anal. Non Lin\'{e}aire \textbf{33} (2016), no.~6,
  1509--1518. \MR{3569240}

\bibitem{Ng-WP}
Hoai-Minh Nguyen, \emph{Limiting absorption principle and well-posedness for the
  {H}elmholtz equation with sign changing coefficients}, J. Math. Pures Appl.
  (9) \textbf{106} (2016), no.~2, 342--374. \MR{3515306}

\bibitem{Ng-CALR-O}
Hoai-Minh Nguyen, \emph{Cloaking an arbitrary object via anomalous localized resonance:
  the cloak is independent of the object}, SIAM J. Math. Anal. \textbf{49}
  (2017), no.~4, 3208--3232. \MR{3689138}

\bibitem{Ng-Superlensing-Maxwell}
Hoai-Minh Nguyen, \emph{Superlensing using complementary media and reflecting
  complementary media for electromagnetic waves}, Adv. Nonlinear Anal.
  \textbf{7} (2018), no.~4, 449--467. \MR{3871415}

\bibitem{Ng-Negative-Cloaking-M}
Hoai-Minh Nguyen, \emph{Cloaking using complementary media for electromagnetic waves},
  ESAIM Control Optim. Calc. Var. \textbf{25} (2019), Art. 29, 19. \MR{3990650}

\bibitem{Ng-CALR-F}
Hoai-Minh Nguyen, \emph{Cloaking via anomalous localized resonance for doubly
  complementary media in the finite frequency regime}, J. Anal. Math.
  \textbf{138} (2019), no.~1, 157--184. \MR{3996036}

\bibitem{Ng-CALR-M}
Hoai-Minh Nguyen, \emph{The invisibility via anomalous localized resonance of a source
  for electromagnetic waves}, Res. Math. Sci. \textbf{6} (2019), no.~4, Paper
  No. 32, 22. \MR{4011564}

\bibitem{Ng-Survey}
Hoai-Minh Nguyen, \emph{Negative index materials: some mathematical perspectives}, Acta
  Math. Vietnam. \textbf{44} (2019), no.~2, 325--349. \MR{3947973}

\bibitem{MinhLoc2}
Hoai-Minh Nguyen and Loc~Hoang Nguyen, \emph{Cloaking using complementary media
  for the {H}elmholtz equation and a three spheres inequality for second order
  elliptic equations}, Trans. Amer. Math. Soc. Ser. B \textbf{2} (2015),
  93--112. \MR{3418646}

\bibitem{NgSil}
Hoai-Minh Nguyen and Swarnendu Sil, \emph{Limiting {A}bsorption {P}rinciple and
  {W}ell-{P}osedness for the {T}ime-{H}armonic {M}axwell {E}quations with
  {A}nisotropic {S}ign-{C}hanging {C}oefficients}, Comm. Math. Phys.
  \textbf{379} (2020), no.~1, 145--176. \MR{4152269}

\bibitem{Ng-Vinoles}
Hoai-Minh Nguyen and Valentin Vinoles, \emph{Electromagnetic wave propagation
  in media consisting of dispersive metamaterials}, C. R. Math. Acad. Sci.
  Paris \textbf{356} (2018), no.~7, 757--775. \MR{3811748}

\bibitem{NgV-A}
Hoai-Minh Nguyen and Michael~S. Vogelius, \emph{Full range scattering estimates
  and their application to cloaking}, Arch. Ration. Mech. Anal. \textbf{203}
  (2012), no.~3, 769--807. \MR{2928133}

\bibitem{Tu}
Tu~Nguyen and Jenn-Nan Wang, \emph{Quantitative uniqueness estimate for the
  {M}axwell system with {L}ipschitz anisotropic media}, Proc. Amer. Math. Soc.
  \textbf{140} (2012), no.~2, 595--605. \MR{2846328}

\bibitem{NMM94}
Nicolae-Alexandru~P. Nicorovici, Ross~C. McPhedran, and Graeme~W. Milton,
  \emph{{Optical and dielectric properties of partially resonant composites}},
  Phys. Rev. B \textbf{49} (1994), 8479--8482.

\bibitem{PendryNegative}
John~B. Pendry, \emph{Negative refraction makes a perfect lens}, Phys. Rev.
  Lett. \textbf{85} (2000), 3966--3969.

\bibitem{Protter60}
Murray~H. Protter, \emph{Unique continuation for elliptic equations}, Trans.
  Amer. Math. Soc. \textbf{95} (1960), 81--91. \MR{113030}

\bibitem{SSS01}
Richard~A Shelby, David~R Smith, and Seldon Schultz, \emph{Experimental
  verification of a negative index of refraction}, Science \textbf{292} (2001),
  no.~5514, 77--79.

\bibitem{Veselago}
Victor~G. Veselago, \emph{{The electrodynamics of substances with
  simultaneously negative values of $\eps$ and $\mu$}}, Usp. Fiz. Nauk
  \textbf{92} (1964), 517--526.

\bibitem{Y09}
Masahiro Yamamoto, \emph{Carleman estimates for parabolic equations and
  applications}, Inverse Problems \textbf{25} (2009), no.~12, 123013, 75.
  \MR{3460049}

\end{thebibliography}
\end{document}